\newcommand{\A}{{\mathcal{A}}}
\newcommand{\B}{{\mathcal{B}}}
\newcommand{\C}{{\mathcal{C}}}
\newcommand{\F}{{\mathcal{F}}}
\newcommand{\pol}{{\mathcal{P}ol}}
\newcommand{\E}{{\mathcal{E}}}
\newcommand{\I}{{\mathcal{I}}}
\newcommand{\Q}{{\operatorname{Q}}}
\newcommand{\Qse}{{\mathcal{Q}}}
\newcommand{\LL}{{\mathcal{L}}}
\newcommand{\M}{{\mathcal{M}}}
\newcommand{\N}{{\mathcal{N}}}
\newcommand{\col}{{\rm colim}\,}
\newcommand{\Md}{\text{-}\mathbf{Mod}}
\newcommand{\FF}{\mathbb{F}}
\newcommand{\GL}{\operatorname{GL}}
\newcommand{\cd}{\operatorname{Card}}
\newcommand{\op}{\mathrm{op}}
\newcommand{\rep}{\mathbf{Rep}}
\newcommand{\lgr}{\ell}
\newcommand{\db}{\bar{\delta}}
\newcommand{\tb}{\bar{\tau}}
\newcommand{\rg}{\operatorname{rg}}
\newcommand{\rad}{\operatorname{Rad}}
\newcommand{\soc}{\operatorname{Soc}}
\newcommand{\surj}{\operatorname{Surj}}
\newcommand{\red}{\mathrm{red}}
\newcommand{\R}{\operatorname{R}}
\newcommand{\irr}{\operatorname{Irr}}
\author[A. DJAMENT]{Aur\'elien DJAMENT}
\address{CNRS, laboratoire Analyse, Géométrie et Applications (UMR7539)\\ Institut Galilée\\ Université Sorbonne Paris Nord\\ 99 avenue Jean-Baptiste Clément\\ 93430 VILLETANEUSE\\ FRANCE}
\email{djament@math.cnrs.fr}
\urladdr{https://djament.perso.math.cnrs.fr/}
\author[T. GAUJAL]{Thomas GAUJAL}
\address{Université de Lille\\ laboratoire Paul Painlev\'e (UMR8524)\\ Cité scientifique, bât. M2\\ 59655 VILLENEUVE D'ASCQ CEDEX\\ FRANCE}
\email{thomas.gaujal@gmail.com}
\title[Représentations génériques en inégale caractéristique]{Représentations génériques des groupes linéaires finis en inégale caractéristique}
\newtheorem{thi}{Th\'eor\`eme}
\newtheorem{dfii}{D\'efinition}
\begin{document}
\newenvironment{nota}{\begin{enonce}{Notation}}{\end{enonce}}
\newenvironment{hyp}{\begin{enonce}{Hypoth\`ese}}{\end{enonce}}

\frontmatter

\begin{abstract}
On étudie les \emph{représentations génériques} des groupes linéaires sur un anneau fini $R$ à coefficients dans un corps $k$ dans lequel le cardinal de $R$ est inversible, c'est-à-dire les foncteurs depuis les $R$-modules projectifs de type fini vers les $k$-espaces vectoriels. On obtient en particulier une classification de ces représentations \emph{simples} qui permet de démontrer une conjecture de Djament-Touzé-Vespa sur les dimensions prises par un tel foncteur.
\end{abstract}

\begin{altabstract}
We study \emph{generic representations} of general linear groups over a finite ring $R$ with coefficients in a field $k$ in which the cardinality of $R$ is invertible, that is functors from finitely-generated projective $R$-modules to $k$-vector spaces. We obtain especially a classification of such \emph{simple} representations, what allows to prove a conjecture of Djament-Touzé-Vespa on dimensions taken by such a functor.
\end{altabstract}

\subjclass{18A25, 18E05, 18E10, 18E35, 19A99, 20G99, 20J99.}

\keywords{catégories de foncteurs, catégories additives, foncteurs antipolynomiaux, foncteurs simples, recollements de catégories abéliennes.}

\altkeywords{functor categories, additive categories, antipolynomial functors, simple functors, recollements of abelian categories.}

\thanks{Les auteurs ont bénéficié du soutien partiel de l’Agence Nationale de la Recherche, via le projet ANR ChroK (ANR-16-CE40-0003), le Labex CEMPI (ANR-11-LABX-0007-01), et, pour le premier auteur, le projet ANR AlMaRe (ANR-19-CE40-0001-01).}

\maketitle


\mainmatter

\section*{Introduction}

\paragraph*{Notations} Dans tout l'article, la lettre $\A$ désigne une catégorie additive essentiellement petite. On travaille sur un corps commutatif $k$ (en particulier, les produits tensoriels de base non spécifiée sont pris sur $k$).

On note $k[-]$ le foncteur de $k$-linéarisation des ensembles vers les $k$-espaces vectoriels. Ainsi, si $E$ est un ensemble, le $k$-espace vectoriel $k[E]$ a une base canonique qui s'identifie à $E$ ; pour $e\in E$, on notera $[e]\in k[E]$ l'élément correspondant de cette base.

\paragraph*{Prélude : représentations des groupes linéaires finis en inégale caractéristique} Soit $R$ un anneau  de cardinal fini $N$. Il est classique que la théorie des représentations $k$-linéaires des groupes linéaires $\GL_n(R)$ est totalement différente dans les deux situations suivantes.
\begin{enumerate}
    \item \textbf{Égale caractéristique :} \emph{$N$ est une puissance de la caractéristique $p$ de $k$}. Toutes ces représentations sont alors \emph{polynomiales} ; si $J$ désigne le radical de $R$, le morphisme de groupes $\GL_n(R)\to\GL_n(R/J)$ induit par la réduction modulo $J$ est surjectif et son noyau est un $p$-groupe, de sorte que les représentations $k$-linéaires irréductibles de $\GL_n(R)$ sont les mêmes que celles de $\GL_n(R/J)$. Ces représentations sont classifiées (au moins si $k$ est assez gros), via la théorie des représentations des groupes algébriques par exemple.
    \item \textbf{Inégale caractéristique :} \emph{$N$ est inversible dans $k$}. Les représentations $k$-linéaires irréductibles de $\GL_n(R)$ sont bien comprises si $R$ est semi-simple, mais ce n'est plus le cas sinon, du moins pour $n$ arbitraire. Ainsi, pour chaque nombre premier $l$, la classification des représentations complexes irréductibles de tous les groupes $\GL_n(\mathbb{Z}/l^2\mathbb{Z})$ constitue un problème sauvage.
    \end{enumerate}
Cette dichotomie possède un analogue pour les \emph{représentations génériques} des groupes linéaires sur $R$ à coefficients dans $k$, c'est-à-dire les objets de catégories de foncteurs dont nous allons rappeler ci-dessous la définition et l'origine. Le présent article est consacré à ces représentations génériques dans le cas d'inégale caractéristique.

\paragraph*{Représentations des petites catégories} Suivant l'article classique de B. Mitchell \cite{Mi72}, on peut considérer les petites catégories préadditives comme des anneaux à plusieurs objets et les foncteurs additifs de ces catégories vers la catégorie $\mathbf{Ab}$ des groupes abéliens comme des modules sur celles-ci. Si $\C$ est une petite catégorie (non nécessairement préadditive), un foncteur de $\C$ vers les $k$-espaces vectoriels s'identifie à un foncteur additif de la catégorie préadditive $k[\C]$, obtenue par linéarisation des morphismes de $\C$, vers $\mathbf{Ab}$, ou encore à une représentation $k$-linéaire de $\C$. L'étude de ces foncteurs, dont la catégorie sera notée $\F(\C;k)$, constitue de fait une généralisation naturelle des représentations linéaires des groupes, et possède des interactions fécondes avec ces dernières.

\paragraph*{Représentations des catégories additives} Un cas important est celui des foncteurs depuis une source \emph{additive} $\A$. On peut alors distinguer les foncteurs additifs (notamment étudiés par Auslander \cite{Aus74}, lorsque $\A$ possède suffisamment de propriétés de finitude), qui constituent une sous-catégorie remarquable de la catégorie de tous les foncteurs, en général beaucoup plus difficile à comprendre. Si $R$ est un anneau, on nomme depuis la série d'articles de N. Kuhn \cite{Ku1,Ku2,Ku3} (consacrée au cas où $R=k$ est un corps fini) \emph{représentations génériques des groupes linéaires} sur $R$ à coefficients dans $k$ les foncteurs (non nécessairement additifs) de la catégorie des $R$-modules à gauche projectifs de type fini vers les $k$-modules. Ces représentations génériques forment une catégorie abélienne aux bonnes propriétés notée $\F(R,k)$. Elles interviennent notamment dans des questions de topologie algébrique (notamment dans le cas susmentionné où $R=k$ est un corps fini --- cf. \cite{HLS}) et d'homologie des groupes --- cf. par exemple le survol \cite{fct-surv}.

\paragraph*{Foncteurs polynomiaux et antipolynomiaux} Fondamentale dans l'étude des foncteurs d'une catégorie additive vers une catégorie abélienne et dans leurs applications, la notion de \emph{foncteur polynomial} introduite par Eilenberg et Mac Lane \cite{EML} constitue une généralisation de la notion de foncteur additif. Il existe toutefois des situations non triviales où il n'existe pas de foncteur polynomial non constant (voir la proposition~\ref{pr-pfpnc}), en particulier la situation \emph{antipolynomiale} dont on rappelle maintenant la définition :
\begin{dfii}[Cf. définition~\ref{df-ktriv}] Soit $\A$ une catégorie additive.
\begin{enumerate}
    \item On dit que $\A$ est \emph{$k$-triviale} si, pour tous objets $x$ et $y$ de $\A$, le groupe abélien $\A(x,y)$ est d'ordre fini et inversible dans $k$.
    \item Un foncteur de $\F(\A;k)$ est dit \emph{antipolynomial} s'il se factorise à travers un foncteur additif de $\A$ vers une catégorie additive $k$-triviale.
\end{enumerate}
\end{dfii}
 De plus, on peut ramener l'étude des foncteurs de $\F(\A;k)$ possédant assez de propriétés de finitude à celles de foncteurs polynomiaux et de foncteurs antipolynomiaux en un sens précisé dans \cite[§\,4]{DTV} (qui motive l'introduction de la notion d'antipolynomialité) et brièvement rappelé dans la section~\ref{sfpap}.

Si $R$ est un anneau fini, la situation d'égale caractéristique est exactement celle où tous les foncteurs de longueur finie de $\F(R,k)$ sont polynomiaux, tandis que la situation d'inégale caractéristique (on dit aussi que $R$ est \emph{$k$-trivial}) est exactement celle où tous les foncteurs de $\F(R,k)$ sont antipolynomiaux.

Si les foncteurs polynomiaux ont fait l'objet de nombreux travaux et possèdent une structure qui est assez bien comprise (voir par exemple Pirashvili \cite{Pira88} ou, plus récemment, \cite[§\,5]{DTV}, parmi un grand nombre d'autres articles), les foncteurs antipolynomiaux, auxquels notre article est consacré, ont reçu fort peu d'attention jusqu'à présent.

\paragraph*{Précédents et inspirations}
En 2015, Kuhn \cite{Ku-adv} a démontré, à partir de résultats de Kov\'acs \cite{Kov} sur les représentations de monoïdes finis de matrices, le théorème suivant :
\begin{thi}[Kuhn]\label{thiK} Si $\FF$ est un corps fini de caractéristique n'ayant pas la même caractéristique que $k$, il existe une équivalence de catégories
$$\F(\FF,k)\simeq\prod_{n\in\mathbb{N}}k[\GL_n(\FF)]\Md\;.$$
\end{thi}

Ce résultat contraste fortement avec la situation d'égale caractéristique : il implique en particulier que la catégorie $\F(\FF,k)$ est \emph{localement finie} (c'est-à-dire engendrée par des objets de longueur finie), alors que $\F(k,k)$ n'est jamais localement finie.

Le présent article est issu en grande partie de la thèse de doctorat \cite{these} du second auteur, menée sous la direction du premier auteur. Cette thèse contient également une nouvelle démonstration, purement fonctorielle (sans recours aux résultats de Kov\'acs), du théorème~\ref{thiK} de Kuhn, qui n'apparaît pas ici, tandis que les résultats des sections~\ref{sfl} et~\ref{shom} de notre article ne figurent pas dans \cite{these}.

Les travaux récents de Nagpal \cite{Nag1,Nag2} nous ont inspirés : ils sont consacrés à une situation assez similaire à celle de Kuhn \cite{Ku-adv}, mais qui donne lieu à une structure plus subtile, à savoir l'étude des foncteurs depuis la catégorie des \emph{monomorphismes} des espaces vectoriels de dimension finie sur un corps fini $\FF$ vers les $k$-espaces vectoriels lorsque $k$ et $\FF$ sont de caractéristiques distinctes. Nagpal \cite[§\,4.2]{Nag1} a en particulier introduit des variantes des outils classiques que sont les foncteurs de \emph{décalage} et de \emph{différence} (dont la définition est rappelée au début de la section~\ref{sfpap}) qui constituent en quelque sorte des formes fonctorielles de la \emph{restriction parabolique} en théorie des représentations des groupes linéaires (les foncteurs de décalage usuels étant des analogues de la restriction classique). La définitions de ces foncteurs fait également sens pour les foncteurs de source additive, même si leurs propriétés formelles ne sont pas tout à fait les mêmes que celles des foncteurs de Nagpal. Nous étudions ces foncteurs, que nous notons $\tb_x$ et $\db_x$, dans le contexte d'une source additive $k$-triviale (dont $x$ est ici un objet), à la section~\ref{sdecp} ; ils constituent un ingrédient crucial de notre travail.

Une autre source d'inspiration provient de l'article \cite{GP-cow} de G. Powell, qui introduit, dans un cas particulier d'égale caractéristique (la catégorie $\F(k,k)$, où $k$ est un corps fini), des foncteurs fondamentaux, qu'il nomme foncteurs \emph{co-Weyl}, dont la définition se généralise sans difficulté à une source additive arbitraire, et qui possèdent des propriétés spécifiques (notamment cohomologiques) dans la situation d'inégale caractéristique que nous considérons (voir les sections~\ref{sQA} et~\ref{sQAM}).

\medskip
 Le théorème~\ref{thiK} montre en particulier que $\F(\FF,k)$ est une catégorie semi-simple lorsque $k$ est un corps de caractéristique nulle et $\FF$ un corps fini. On voit en revanche facilement que la catégorie $\F(R,k)$ ne peut pas être semi-simple si $R$ est un anneau non semi-simple (cf. l'exemple~\ref{ex-Qsom}). En particulier, on ne peut pas s'attendre à une structure aussi simple pour les foncteurs d'une catégorie additive $k$-triviale arbitraire vers les $k$-espaces vectoriels qu'une décomposition telle que celle du théorème~\ref{thiK}. Plutôt qu'une décomposition en produit direct, on obtient en général une stratification de la catégorie de foncteurs par des sous-catégories remarquables.

\paragraph*{Résultats principaux}
Dans la section~\ref{sFd}, nous définissons, lorsque la catégorie additive $\A$ est $k$-triviale, des sous-catégories bilocalisantes\,\footnote{La définition de sous-catégorie \emph{bilocalisante}, ainsi que d'autres notions classiques dans les catégories abéliennes utiles dans cet article, est rappelée dans l'appendice~\ref{ap-catab}.} $\F_d(\A;k)$ de $\F(\A;k)$ (où $d$ est un entier) telles que :
\begin{enumerate}
    \item $\F_d(\A;k)\subset\F_{d+1}(\A;k)$ ; $\F_d(\A;k)$ est réduite à $0$ (resp. aux foncteurs constants) pour $d<0$ (resp. $d=0$) ;
    \item si la catégorie des foncteurs \emph{additifs} de $\A$ vers $\mathbf{Ab}$ est localement finie, alors tout foncteur de type fini de $\F(\A;k)$ appartient à $\F_d(\A;k)$ pour un certain entier $d$ (cela s'applique en particulier à $\F(R,k)$, où $R$ est un anneau $k$-trivial) ;
    \item si $A : \A\to\mathbf{Ab}$ est un foncteur additif, alors le foncteur linéarisé $k[A]$ appartient à $\F_d(\A;k)$ si et seulement si $A$ est de longueur finie au plus égale à $d$.
\end{enumerate}
La définition explicite des sous-catégories $\F_d(\A;k)$ s'inspire d'une des définitions des foncteurs polynomiaux et fait intervenir les foncteurs de décalage parabolique susmentionnés.

Nous démontrons dans la section~\ref{sec-ThStruc} le théorème de structure suivant, qui constitue un analogue du théorème~\ref{thiK} pour une source beaucoup plus générale :
\begin{thi}\label{thif} Supposons que la catégorie additive $\A$ est $k$-triviale. Pour tout $d\in\mathbb{N}$, il existe une équivalence de catégories
$$\F_d(\A;k)/\F_{d-1}(\A;k)\simeq\prod k[\mathrm{Aut}(A)]\Md$$
où le produit est pris sur les classes d'isomorphisme de foncteurs additifs $A : \A\to\mathbf{Ab}$ de longueur finie $d$.
\end{thi}
On montre en fait au théorème~\ref{th-princ} un résultat plus précis, qui décrit explicitement l'équivalence de catégories précédente ainsi que les foncteurs section et co-section associés.

Une conséquence importante du théorème précédent est la description des foncteurs simples des catégories $\F_d(\A;k)$ à partir des représentations $k$-linéaires irréductibles des groupes $\mathrm{Aut}(A)$, où $A$ est un foncteur additif de longueur finie sur $\A$. Sous une légère hypothèse de finitude supplémentaire sur $\A$, cela décrit tous les foncteurs simples de $\F(\A;k)$. On peut ainsi démontrer, au théorème~\ref{th-fct_dim}, une conjecture émise dans \cite{DTV} sur les dimensions prises par les foncteurs de type fini de $\F(R,k)$, où $R$ est un anneau $k$-trivial.

Un autre corollaire important est le résultat de finitude suivant :
\begin{thi}[Proposition~\ref{pr-lin_fini}] Supposons que la catégorie additive $\A$ est $k$-triviale. Soit $A : \A\to\mathbf{Ab}$ un foncteur additif de longueur finie. Alors le foncteur $k[A]$ de $\F(\A;k)$ est de longueur finie.
\end{thi}

Nous explorons également, dans la section~\ref{shom}, des conséquences homologiques du théorème~\ref{thif} : nous montrons que les groupes d'extensions entre deux foncteurs de $\F_d(\A;k)$ sont les mêmes calculés dans cette catégorie et dans $\F(\A;k)$ (corollaire~\ref{cor-plg_hom}) et que, lorsque $k$ est de caractéristique nulle, les catégories $\F_d(\A;k)$ sont de dimension globale finie (proposition~\ref{pr-gldim}).

\section{Représentations des catégories additives}

\paragraph*{Représentations additives et non additives}

On note $\rep(\A)$ la catégorie des foncteurs additifs de $\A$ vers les groupes abéliens, les morphismes étant les transformations naturelles. Les objets de $\rep(\A)$ sont souvent appelés \emph{représentations additives} de la catégorie $\A$, ou $\A$-modules à gauche (cf. Mitchell \cite{Mi72}).

On note $\F(\A;k)$ la catégorie des foncteurs (non nécessairement additifs) de $\A$ vers la catégorie des $k$-espaces vectoriels. Si $R$ est un anneau, on note $\mathbf{P}(R)$ la catégorie des $R$-modules à gauche projectifs de type fini et $\F(R,k):=\F(\mathbf{P}(R);k)$ la catégorie des représentations génériques des groupes linéaires sur $R$ à coefficients dans $k$. On notera que la catégorie $\rep(\mathbf{P}(R))$ est équivalente à la catégorie des $R$-modules à \emph{droite} (l'équivalence étant donnée dans une direction par l'évaluation en $R$ et dans l'autre par la tensorisation au-dessus de $R$).

Les catégories $\rep(\A)$ et $\F(\A;k)$ sont des catégories abéliennes (où l'exactitude se teste au but) aux bonnes propriétés : ce sont des catégories de Grothendieck \cite{Pop}, et elles possèdent assez d'objets projectifs. Plus précisément, elles sont respectivement engendrées par les foncteurs représentables $\A(a,-)$ et $k[\A(a,-)]$ (où $a$ parcourt un squelette de $\A$), qui sont projectifs de type fini, car représentant l'évaluation en $a$ grâce au lemme de Yoneda.

Tout foncteur $F$ de $\F(\A;k)$ se scinde naturellement en somme directe de son terme constant $F(0)$ et d'un foncteur réduit, c'est-à-dire nul en $0$, noté $F^\red$.

\paragraph*{Foncteurs de type fini}

Il résulte de ce qui précède que les catégories abéliennes $\rep(\A)$ et $\F(\A;k)$ sont \emph{localement de type fini} (i.e. engendrées par des objets de type fini\,\footnote{La notion d'objet de type fini ainsi que les autres propriétés de finitude dans les catégories abéliennes qui apparaîtront dans l'article sont rappelées dans l'appendice~\ref{ap-catab}.}). Comme $\A$ est additive, un objet de $\rep(\A)$ est de type fini si et seulement s'il est quotient d'un foncteur représentable $\A(a,-)$. 

\begin{nota}\label{noteng}
Étant donné des objets $a$ et $A$ de $\A$ et $\rep(\A)$ respectivement et un élément $\xi$ de $A(a)$, on note $A_\xi$ le sous-foncteur de $A$ engendré par $\xi$, c'est-à-dire l'image du morphisme $\A(a,-)\to A$ associé à $\xi\in A(a)\simeq\mathrm{Hom}(\A(a,-),A)$.
\end{nota}

Ainsi, un foncteur $A$ de $\rep(\A)$ est de type fini si et seulement s'il existe $a\in\mathrm{Ob}\,\A$ et $\xi\in A(a)$ tels que $A_\xi=A$.

Un foncteur de $\F(\A;k)$ est de type fini si et seulement s'il est isomorphe à un quotient d'une somme directe \emph{finie} de foncteurs de la forme $k[\A(a_i,-)]$ pour des objets $a_i$ de $\A$.
\begin{prop}\label{pr-lin_tf} Si $A$ est un foncteur de type fini de $\rep(\A)$, alors $k[A]$ est un foncteur de type fini de $\F(\A;k)$.
\end{prop}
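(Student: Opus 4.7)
L'approche consiste à se ramener à un unique générateur, puis à transporter cette donnée par le foncteur $k[-]$. Plus précisément, je commence par invoquer la caractérisation rappelée juste au-dessus de l'énoncé (notation~\ref{noteng} et remarque qui la suit) : puisque $A$ est de type fini dans $\rep(\A)$, il existe $a\in\mathrm{Ob}\,\A$ et $\xi\in A(a)$ tels que $A_\xi=A$, c'est-à-dire tels que le morphisme $\A(a,-)\to A$ associé à $\xi$ par le lemme de Yoneda soit un épimorphisme de $\rep(\A)$.

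Ensuite, comme l'exactitude se teste au but à la fois dans $\rep(\A)$ et dans $\F(\A;k)$, je traduis cet épimorphisme par le fait que, pour chaque objet $b$ de $\A$, l'application $\A(a,b)\to A(b)$ est surjective, et en particulier surjective comme application d'ensembles (les deux structures d'ensembles sous-jacentes aux groupes abéliens coïncidant). Le foncteur de $k$-linéarisation $k[-]$ transformant les surjections d'ensembles en surjections de $k$-espaces vectoriels, j'en déduis, naturellement en $b$, une surjection linéaire $k[\A(a,b)]\twoheadrightarrow k[A(b)]$, donc un épimorphisme $k[\A(a,-)]\twoheadrightarrow k[A]$ dans $\F(\A;k)$.

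Pour conclure, il suffit d'observer que $k[\A(a,-)]$ est un objet de type fini (et même un projectif représentant l'évaluation en $a$, comme rappelé dans le paragraphe précédant l'énoncé) de $\F(\A;k)$, et que tout quotient d'un objet de type fini l'est encore.

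Le seul point qui mérite attention est la première étape : la finitude d'une représentation additive $A$ ne signifie pas que chaque groupe abélien $A(b)$ soit individuellement de type fini, mais qu'il existe un \emph{unique} générateur global $\xi$ réalisant simultanément, en chaque $b$, la surjection $\A(a,b)\twoheadrightarrow A(b)$ ; cet aspect global de la notion de type fini dans $\rep(\A)$ étant acquis, la suite est formelle et n'offre pas d'obstacle sérieux.
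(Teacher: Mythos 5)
Votre démonstration est correcte et suit exactement la même voie que celle du papier, qui se contente d'invoquer en une ligne la préservation des épimorphismes par $k[-]$ et la caractérisation des objets de type fini comme quotients de représentables ; vous ne faites qu'expliciter ces deux ingrédients. Rien à redire.
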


\begin{proof}
Cela résulte de la préservation des épimorphismes par le foncteur $k[-]$ et des observations précédentes.
\end{proof}

\paragraph*{Dualités}
On note $(-)^\sharp$ le foncteur $\mathrm{Hom}_\mathbb{Z}(-,\mathbb{Q}/\mathbb{Z}) : \mathbf{Ab}^\op\to\mathbf{Ab}$, qui induit une équivalence entre la sous-catégorie pleine des groupes abéliens finis et sa catégorie opposée. On note encore $(-)^\sharp : \rep(\A)^\op\to\rep(\A^\op)$ la post-composition par ce foncteur. Elle induit donc une équivalence involutive entre les sous-catégories pleines de foncteurs additifs à valeurs finies.

On note $(-)^\vee:=\mathrm{Hom}(-,k)$ le foncteur de dualité des $k$-espaces vectoriels. On note encore $(-)^\vee : \F(\A;k)^\op\to\F(\A^\op;k)$  la post-composition par le foncteur précédent. Elle induit une équivalence involutive entre les sous-catégories pleines de foncteurs à valeurs de dimensions finies.

Si $A$ (resp. $F$) est un foncteur de $\rep(\A)$ (resp. $\F(\A;k)$) prenant des valeurs finies (resp. de dimensions finies sur $k$), alors $A$ (resp. $F$) est de type fini si et seulement si $A^\sharp$ (resp. $F^\vee$) est de type cofini, noethérien si et seulement si $A^\sharp$ (resp. $F^\vee$) est artinien, fini si et seulement si $A^\sharp$ (resp. $F^\vee$) est fini.

\paragraph*{Condition de finitude sur les homomorphismes (FH)}
Considérons la condition suivante sur la catégorie additive $\A$ :

\noindent
(FH)\;\;\; Pour tous objets $a$ et $b$ de $\A$, le groupe abélien $\A(a,b)$ est fini.

Celle-ci garantit que tous les foncteurs de type fini de $\rep(\A)$ (resp. $\F(\A;k)$) sont à valeurs finies (resp. de dimensions finies). On peut donc alors utiliser les foncteurs de dualité précédents, et travailler indifféremment avec les foncteurs de type fini de $\F(\A;k)$ ou les foncteurs de type cofini de $\F(\A^\op;k)$, par exemple.

La condition (FH) implique aussi que les anneaux (resp. $k$-algèbres) d'endomorphismes des objets de type fini de $\rep(\A)$ (resp. $\F(\A;k)$) sont finis (resp. de dimension finie) ; en particulier, la proposition~\ref{sousobjets-nbfini} s'applique à tous les objets finis de $\rep(\A)$. La condition (FH) prémunit également contre un certain nombre de difficultés --- par exemple, $\F(\A;k)$ ne peut être localement noethérienne que si (FH) est vérifiée \cite[prop.~11.1]{DTV}.

\section{Foncteurs polynomiaux et antipolynomiaux}\label{sfpap}

\paragraph*{Foncteurs de décalage et de différence}

Si $a$ est un objet de $\A$, on note $\tau_a$ l'endofoncteur de $\F(\A;k)$ donné par la précomposition par $a\oplus - : \A\to\A$, qu'on appelle \emph{décalage par $a$}. Comme tout foncteur de précomposition il est bicontinu, et en particulier exact. On note de plus que $a\mapsto\tau_a$ définit un foncteur de $\A$ vers les endofoncteurs de $\F(\A;k)$. En particulier, les morphismes $0\to a\to 0$ induisent des transformations naturelles $\mathrm{Id}\simeq\tau_0\to\tau_a\to\mathrm{Id}$ dont la composée égale l'identité. Le noyau de $\tau_a\to\mathrm{Id}$, noté $\delta_a$ et appelé foncteur de \emph{différence associé à $a$}, donne donc lieu à un scindement naturel $\tau_a\simeq\mathrm{Id}\oplus\delta_a$. Ainsi, $\delta_a$
 est lui aussi un foncteur bicontinu. Comme précédemment, $a\mapsto\delta_a$ définit un foncteur de $\A$ vers les endofoncteurs de $\F(\A;k)$.
 
\paragraph*{Foncteurs polynomiaux}

Étant donné un entier $d\ge -1$, on note $\pol_d(\A;k)$ la sous-catégorie pleine de $\F(A;k)$ des foncteurs $F$ tels que $\delta_{a_0}\dots\delta_{a_d}(F)=0$ pour tout $(a_0,\dots,a_d)\in\mathrm{Ob}\,\A^{d+1}$. Comme les foncteurs de différence sont bicontinus, cette sous-catégorie est \emph{bilocalisante}. Ses objets sont appelés foncteurs polynomiaux de degré au plus $d$.

\begin{rema} La définition classique des foncteurs polynomiaux dans notre contexte remonte aux débuts des années 1950 ; due à Eilenberg et Mac Lane \cite{EML}, elle est donnée en termes d'\emph{effets croisés}. L'équivalence entre ce point de vue et celui ici utilisé en termes de foncteurs de différence, facile, est discutée en détail dans \cite[§\,3.2]{DV-pol}.
\end{rema}

Les foncteurs polynomiaux jouissent de nombreuses propriétés remarquables ; en particulier, on dispose de résultats de classification des foncteurs polynomiaux simples \cite{Pira88,DTV} beaucoup plus précis que pour des foncteurs simples généraux, ou de propriétés de finitude beaucoup plus favorables pour les foncteurs polynomiaux \cite{DT-schw,DT-noeth}.

\paragraph*{Catégories $k$-triviales}
Toutefois, il y a des situations où la théorie des foncteurs polynomiaux n'est d'aucun secours pour comprendre la catégorie $\F(\A;k)$, car il n'existe pas de foncteur polynomial non constant. Précisément, on a le résultat élémentaire suivant :

\begin{prop}\cite[prop.~2.13]{DTV}\label{pr-pfpnc} Les assertions suivantes sont équivalentes :
\begin{enumerate}
    \item la catégorie $\F(\A;k)$ ne possède pas de foncteur polynomial non constant ;
    \item il n'existe pas de foncteur additif non nul dans $\F(\A;k)$ ;
    \item pour tous objets $x$ et $y$ de $\A$, on a $\A(x,y)\otimes_\mathbb{Z}k=0$ ;
    \item pour tout objet $x$ de $\A$, on a $\A(x,x)\otimes_\mathbb{Z}k=0$.
\end{enumerate}
\end{prop}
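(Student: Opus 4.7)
The plan is to prove the equivalence $(1) \Leftrightarrow (2)$ via the calculus of difference functors just introduced, and then to close the loop with the short cycle $(2) \Rightarrow (3) \Rightarrow (4) \Rightarrow (2)$, which is essentially formal. Most of the substance lies in $(2) \Leftrightarrow (1)$.

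For $(1) \Rightarrow (2)$: any additive functor $A : \A \to k\text{-Vect}$ is polynomial of degree $\leq 1$, because additivity gives the natural splitting $A(a \oplus b) \simeq A(a) \oplus A(b)$, so $\delta_a A$ is the constant functor with value $A(a)$ and hence $\delta_b \delta_a A = 0$. Moreover $A$ is non-constant whenever $A \neq 0$, since $A(0) = 0$. By contraposition, (1) forces (2). For $(2) \Rightarrow (1)$: assume a non-constant polynomial functor exists, and choose the smallest $d \geq 1$ such that some non-constant $F \in \pol_d(\A;k)$ exists. By minimality there must exist objects $a_1, \ldots, a_{d-1}$ for which $G := \delta_{a_1} \cdots \delta_{a_{d-1}}(F)$ is non-constant — otherwise every such iterated difference would be killed by one further $\delta$, forcing $F \in \pol_{d-1}(\A;k)$ and contradicting the minimality of $d$ (the case $d = 1$ being handled directly, since then $G = F$ is non-constant by assumption). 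Now $G$ is polynomial of degree $\leq 1$, its reduced summand $G^\red$ is non-zero, and the relation $\delta_a G^\red = G^\red(a)$ (a constant functor of the second variable) combined with the natural splitting $\tau_a = \mathrm{Id} \oplus \delta_a$ yields a natural isomorphism $G^\red(a \oplus b) \simeq G^\red(a) \oplus G^\red(b)$, which makes $G^\red$ additive and contradicts (2).

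For the cycle among (2), (3), (4): assuming (2), apply it to the functor $a \mapsto \A(x, a) \otimes_\mathbb{Z} k$, which is an object of $\F(\A;k)$ and is additive (tensor product of additive functors); this gives $\A(x, y) \otimes k = 0$ for every $x, y$, hence (3). Then (4) is just the case $y = x$ of (3). Conversely, assuming (4), for any additive $A \in \F(\A;k)$ and any object $x$, functoriality together with additivity turn $\phi \mapsto A(\phi)$ into a ring homomorphism $\A(x, x) \to \mathrm{End}_k(A(x))$ landing in a $k$-algebra, so it factors through $\A(x, x) \otimes_\mathbb{Z} k = 0$; thus $1_{A(x)} = A(1_x) = 0$ and $A(x) = 0$, proving (2).

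The main obstacle is the closing step of $(2) \Rightarrow (1)$: verifying that a reduced polynomial functor of degree $\leq 1$ is genuinely additive. This is the classical observation of Eilenberg--Mac Lane and follows cleanly from the splitting $\tau_a = \mathrm{Id} \oplus \delta_a$ applied to $G^\red$, but one must track the naturality of the resulting biproduct decomposition carefully in order to recover the full additivity (compatibility with the codiagonal $a \oplus a \to a$). Everything else in the argument reduces to routine bookkeeping with direct-sum decompositions and the universal property of the tensor product.
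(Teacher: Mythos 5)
Your proof is correct. The paper itself recalls this statement from \cite[prop.~2.13]{DTV} without reproducing a proof, so there is nothing to compare against line by line; your argument follows the expected route (the Eilenberg--Mac Lane observation that a reduced functor of polynomial degree $\le 1$ is additive, obtained here via the splitting $\tau_a\simeq\mathrm{Id}\oplus\delta_a$ and the constancy of $\delta_aG^\red$, together with the extension-of-scalars adjunction for $(4)\Rightarrow(2)$), and all the steps you flag as delicate --- the minimality argument producing a non-constant degree-$\le 1$ iterated difference, and the identification of the splitting $G^\red(a\oplus b)\simeq G^\red(a)\oplus G^\red(b)$ with the canonical map so as to deduce additivity on morphisms via the codiagonal --- do go through as you indicate.
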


\begin{defi}\cite[déf.~4.1]{DTV}\label{df-ktriv} La catégorie additive $\A$ est dite \emph{$k$-triviale} si elle vérifie la condition (FH) ainsi que les conditions équivalentes de la proposition précédente.

Un anneau $R$ est dit $k$-trivial si la catégorie $\mathbf{P}(R)$ est $k$-triviale, i.e. si $R$ est fini et de cardinal inversible dans $k$.
\end{defi}

Dans cet article, on supposera le plus souvent (et même toujours à partir du milieu de la section~\ref{sFd}) que la catégorie $\A$ est $k$-triviale.

\paragraph*{Foncteurs antipolynomiaux}
On dit \cite[déf.~4.2]{DTV} qu'un foncteur de $\F(\A;k)$ est \emph{antipolynomial} s'il se factorise, à isomorphisme près, à travers un foncteur additif $\A\to\B$ dont le but $\B$ est une catégorie additive $k$-triviale.

\paragraph*{Décompositions à la Steinberg} L'introduction des foncteurs antipolynomiaux est motivée par le théorème suivant \cite[cor.~4.11]{DTV}, qui constitue l'un des principaux résultats de \cite{DTV} et un analogue fonctoriel de théorèmes de décomposition dus à R. Steinberg :
\begin{theo}[Djament-Touzé-Vespa]
Tout foncteur de type fini et de type cofini de $\F(\A;k)$ dont les valeurs sont des espaces vectoriels de dimensions finies sur $k$ est isomorphe à la composée du foncteur diagonale $\A\to\A\times\A$ et d'un foncteur, unique à isomorphisme près, de $\F(\A\times\A;k)$ qui est polynomial par rapport à la première variable et antipolynomial par rapport à la deuxième variable.
\end{theo}

On en déduit en particulier \cite[cor.~4.13]{DTV} que, si le corps $k$ est algébriquement clos, les foncteurs simples à valeurs de dimensions finies de $\F(\A;k)$ sont exactement les produits tensoriels d'un foncteur simple polynomial à valeurs de dimensions finies et d'un foncteur antipolynomial simple. Si l'on sait dire beaucoup sur la structure des foncteurs polynomiaux simples à valeurs de dimensions finies \cite[§\,5]{DTV}, celle des foncteurs antipolynomiaux simples demeurait essentiellement inconnue, hormis dans le cas d'une source semi-simple \cite{Ku-adv}. 

\section{Linéarisation des foncteurs additifs}

\subsection{Morphismes}

Le principe de linéarisation suivant, qui figure dans \cite[lemme~C.1]{DTV}, est simple mais fondamental.

\begin{prop}\label{pr-morlin} Soient $A$ et $B$ des foncteurs de $\rep(\A)$. L'application $k$-linéaire canonique
$$\Theta_{A,B} : k[\mathrm{Hom}(A,B)]\to\mathrm{Hom}(k[A],k[B])$$
est injective ; elle est bijective si $A$ est de type fini.
\end{prop}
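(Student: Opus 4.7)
Mon plan sépare l'injectivité de la surjectivité : pour l'injectivité, j'utilise un argument classique de « séparation par somme directe » ; pour la surjectivité, je me ramène par Yoneda au cas représentable. Pour l'injectivité, je considère un élément $\sum_i \lambda_i [f_i]$ du noyau de $\Theta_{A,B}$, avec les $f_i : A \to B$ supposés (sans perte de généralité) deux à deux distincts. L'annulation de son image sous $\Theta$ signifie en particulier que, pour tout objet $a$ de $\A$ et tout $\xi \in A(a)$, on a $\sum_i \lambda_i [f_i(\xi)] = 0$ dans $k[B(a)]$. Il suffit donc d'exhiber un couple $(a,\xi)$ tel que les $f_i(\xi)$ soient deux à deux distincts. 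Pour chaque couple d'indices $(i,j)$ avec $i \ne j$, je fixe $a_{i,j} \in \A$ et $\xi_{i,j} \in A(a_{i,j})$ attestant que $f_i \ne f_j$ ; puis je pose $a := \bigoplus_{i \ne j} a_{i,j}$ et $\xi := \sum_{i \ne j} A(\iota_{i,j})(\xi_{i,j})$, où $\iota_{i,j}$ est l'inclusion canonique. L'additivité de $B$ et la naturalité des $f_i$ par rapport aux projections $a \to a_{i,j}$ garantissent $f_i(\xi) \ne f_j(\xi)$ pour tous $i \ne j$, d'où $\lambda_i = 0$ pour tout $i$.

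Pour la surjectivité dans le cas où $A$ est de type fini, je choisis un générateur $\xi \in A(a)$ et la surjection associée $p : \A(a, -) \twoheadrightarrow A$. Par adjonction entre $k[-]$ et l'oubli, combinée au lemme de Yoneda, $\mathrm{Hom}_{\F(\A;k)}(k[\A(a,-)], k[B])$ s'identifie à $k[B(a)]$, identification via laquelle $\Theta_{\A(a,-), B}$ est l'identité. L'application de $\mathrm{Hom}(-, B)$ et $\mathrm{Hom}(k[-], k[B])$ à $p$ fournit un carré commutatif dont les flèches verticales (induites par $p$) sont injectives --- celle de droite parce que $k[p]$ est un épimorphisme, le foncteur $k[-]$ préservant les épimorphismes. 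Ceci redémontre l'injectivité de $\Theta_{A, B}$ et réduit la surjectivité à l'égalité des images des deux flèches verticales dans $k[B(a)]$ : l'image de gauche est $k[\mathrm{Hom}(A,B)]$, où $\mathrm{Hom}(A,B)$ s'identifie via Yoneda à $\{b \in B(a) \mid B(v)(b) = 0 \text{ pour tout } v \in I(c),\ c \in \A\}$, avec $I := \ker p$.

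L'étape non évidente --- et, à mon sens, l'obstacle principal --- est la suivante. Étant donné $\Psi = \sum_i \lambda_i [b_i] \in k[B(a)]$ ($\lambda_i \ne 0$, $b_i$ deux à deux distincts) appartenant à l'image de droite, i.e. satisfaisant $k[B(u+v)](\Psi) = k[B(u)](\Psi)$ pour tout $u : a \to c$ et tout $v \in I(c)$, montrer que $B(v)(b_i) = 0$ pour chaque $i$. L'application « naïve » de la condition à $u = 0$ ne donne que l'identité $\sum_i \lambda_i [B(v)(b_i)] = (\sum_i \lambda_i)[0]$, qui ne distingue pas les différents $b_i$. L'astuce consiste à remplacer $c$ par $c \oplus a$ et à poser $\tilde{u} := (0, \mathrm{id}_a) : a \to c \oplus a$ et $\tilde{v} := (v, 0) : a \to c \oplus a$ --- ce dernier appartenant encore à $I(c \oplus a)$ puisque $A(\tilde{v})(\xi) = (A(v)(\xi), 0) = 0$. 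La condition, appliquée à ces choix et utilisant l'additivité de $B$, fournit l'égalité $\sum_i \lambda_i [(B(v)(b_i), b_i)] = \sum_i \lambda_i [(0, b_i)]$ dans $k[B(c) \oplus B(a)]$. Comme les $b_i$ sont distincts, les vecteurs de base indexés par $i$ sont sans collision sur les deux membres, et l'identification coefficient par coefficient entraîne $B(v)(b_i) = 0$ pour chaque $i$. C'est donc l'additivité de $B$, exploitée via le passage à $c \oplus a$, qui constitue l'ingrédient non évident permettant la « séparation » des $b_i$.
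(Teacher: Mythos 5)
Votre démonstration est correcte. Notez d'abord que l'article ne démontre pas cet énoncé : il renvoie à \cite[lemme~C.1]{DTV} ; votre preuve est donc à évaluer comme un argument autonome, et il tient. L'injectivité par séparation sur une somme directe finie $a=\bigoplus_{i\ne j}a_{i,j}$ est l'argument standard et fonctionne (l'additivité de $A$ et $B$ et la naturalité par rapport aux projections garantissent que les composantes $(i,j)$ de $f_i(\xi)$ et $f_j(\xi)$ diffèrent). Pour la surjectivité, la réduction au cas représentable via $p:\A(a,-)\twoheadrightarrow A$ est bien menée : le carré commute, les deux flèches verticales sont injectives, et la surjectivité de $\Theta_{A,B}$ se ramène exactement à l'inclusion des images que vous énoncez. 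Le point délicat est correctement identifié et correctement traité : le passage de $c$ à $c\oplus a$ avec $\tilde{u}=(0,\mathrm{id}_a)$ et $\tilde{v}=(v,0)\in I(c\oplus a)$ utilise l'additivité de $B$ pour obtenir $\sum_i\lambda_i[(B(v)(b_i),b_i)]=\sum_i\lambda_i[(0,b_i)]$, et la seconde coordonnée sépare les $b_i$, ce qui force $B(v)(b_i)=0$ pour chaque $i$. La seule vérification implicite qu'il vaudrait la peine d'expliciter est que la factorisation \emph{ponctuelle} à travers $k[p]_c$ suffit : comme $k[p]$ est un épimorphisme ponctuel, la factorisée est unique et automatiquement naturelle.
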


Le résultat suivant relie l'image d'un morphisme $\Theta_{A,B}(\sum_{i=1}^n\lambda_i[f_i])$ (où les $\lambda_i$ sont non nuls et les $f_i$ deux à deux distincts) aux images des $f_i$. Il interviendra dans la démonstration de l'importante proposition~\ref{pr-Fd-tftcf}.

\begin{prop}\label{pr-sqtau} Soient $(A_i)$ et $(B_j)$ des familles d'objets de $\rep(\A)$ et, pour tous $i, j$, $(\lambda_{i,j}(f))_{f\in\mathrm{Hom}(A_i,B_j)}$ une famille presque nulle d'éléments de $k$. On suppose que les $A_i$ sont de type fini. Notons $F$ l'image du morphisme
$$\bigoplus_i k[A_i]\xrightarrow{\big(\Theta_{A_i,B_j}(\sum_{f\in\mathrm{Hom}(A_i,B_j)}\lambda_{i,j}(f)[f])\big)_{i,j}}\prod_j k[B_j]\;.$$
Si $\lambda_{i,j}(f)\in k^\times$, alors il existe un objet $x$ de $\A$ tel que $k[\mathrm{Im}\,f]$ soit un sous-quotient de $\tau_x(F)$.
\end{prop}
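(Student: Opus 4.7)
Le plan est de se ramener d'abord à un unique morphisme par manipulations exactes, puis d'isoler la contribution du $f$ privilégié au moyen d'une translation et d'une projection. Fixons $i_0, j_0$ et $f_0 \in \mathrm{Hom}(A_{i_0}, B_{j_0})$ tels que $\lambda_{i_0,j_0}(f_0) \neq 0$. L'image $G$ de la restriction du morphisme global au facteur $k[A_{i_0}]$ est un sous-foncteur de $F$, et sa projection $H$ sur le facteur $k[B_{j_0}]$ du produit est un quotient de $G$ ; par exactitude de $\tau_x$, $\tau_x(H)$ est un sous-quotient de $\tau_x(F)$ pour tout $x$. Il suffit donc de traiter le cas réduit, où $F$ est l'image d'un unique morphisme $\phi = \Theta_{A,B}(\sum_{i=1}^n \lambda_i [f_i])$ avec $A := A_{i_0}$, $B := B_{j_0}$, les $f_i : A \to B$ deux à deux distincts, les $\lambda_i$ non nuls, et $f_1 = f_0$ à isoler.

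L'hypothèse de type fini sur $A$ fournit un objet $a_0$ et un élément $\xi_0 \in A(a_0)$ avec $A_{\xi_0} = A$. Toute transformation naturelle $A \to B$ étant alors déterminée par sa valeur en $\xi_0$, les éléments $\zeta_i := f_i(\xi_0) \in B(a_0)$ sont deux à deux distincts. On prendra $x := a_0$ et on exploitera l'isomorphisme, naturel en $b$, $\tau_{a_0}(k[B])(b) = k[B(a_0)\oplus B(b)] \cong k[B(a_0)] \otimes k[B(b)]$, pour construire un morphisme $\pi : \tau_{a_0}(k[B]) \to k[B]$ dans $\F(\A;k)$, obtenu en contractant le facteur $k[B(a_0)]$ via la forme linéaire $k[B(a_0)] \to k$ qui envoie $[\zeta_1]$ sur $1$ et les autres éléments de base sur $0$. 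Un calcul direct sur l'élément de base $(\xi_0, \beta) \in A(a_0) \oplus A(b)$ donnera alors
$$(\pi \circ \tau_{a_0}(\phi))(\xi_0, \beta) = \lambda_1 [f_1(\beta)],$$
les autres termes étant éliminés par $\pi$ grâce à la distinction des $\zeta_i$.

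On conclut ainsi : lorsque $\beta$ parcourt $A(b)$, les $[f_1(\beta)]$ engendrent $k[\mathrm{Im}\,f_1(b)]$ et appartiennent tous, à un scalaire $\lambda_1^{-1}$ près, à $\pi(\tau_{a_0}(H))(b)$, d'où une inclusion $k[\mathrm{Im}\,f_1] \hookrightarrow \pi(\tau_{a_0}(H))$ de sous-foncteurs de $k[B]$. Comme $\pi(\tau_{a_0}(H))$ est un quotient de $\tau_{a_0}(H)$, le foncteur $k[\mathrm{Im}\,f_1]$ apparaît bien comme sous-quotient de $\tau_{a_0}(H)$, et donc de $\tau_{a_0}(F)$.

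Le principal point délicat sera de garantir que la projection séparatrice $\pi$ est un morphisme de $\F(\A;k)$ et non une simple application définie au niveau des ensembles sous-jacents : il faudra vérifier que la décomposition $\tau_{a_0}(k[B]) \simeq k[B(a_0)] \otimes k[B]$ est effectivement naturelle en $b$, ce qui repose sur le fait que les morphismes du type $\mathrm{id}_{a_0} \oplus g$ préservent la première coordonnée dans $B(a_0) \oplus B(b)$. Une fois cette naturalité acquise, le coeur combinatoire de la preuve — la séparation des $f_i$ par combinaison linéaire — est formel.
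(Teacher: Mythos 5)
Votre démonstration est correcte et suit essentiellement la même stratégie que celle de l'article : on choisit $x=a_0$ tel qu'un $\xi_0\in A_{i_0}(a_0)$ engendre $A_{i_0}$, on utilise la décomposition naturelle $\tau_{a_0}(k[B])\simeq k[B(a_0)]\otimes k[B]$ (dont la naturalité en $b$, que vous signalez à juste titre, tient à ce que le facteur $B(a_0)$ est constant), et l'injectivité de $f\mapsto f_*\xi_0$ permet d'isoler la composante $\lambda_1[f_1]$ dont l'image est $k[\mathrm{Im}\,f_1]$. Votre projection $\pi$ n'est autre que la projection sur le facteur indexé par $\zeta_1$ dans l'isomorphisme $\tau_{a_0}(k[B])\simeq\bigoplus_{\zeta\in B(a_0)}k[B]$ qu'utilise l'article ; la réduction préalable à un seul couple $(i_0,j_0)$ est un simple reparamétrage du même argument.
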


\begin{proof} Comme les foncteurs $\tau_x$ sont bicontinus, utilisant les isomorphismes $\tau_x k[T]\simeq k[T]\otimes k[T(x)]\simeq\bigoplus_{\xi\in T(x)} k[T]$ naturels en le foncteur $T$ de $\rep(\A)$, on voit que $\tau_x(F)$ s'identifie à l'image du morphisme
$$\bigoplus_{i,\xi\in A_i(x)} k[A_i]\to\prod_{j,\zeta\in B_j(x)} k[B_j]$$
dont la composante correspondant aux facteurs $(i,\xi)$ à la source et $(j,\zeta)$ au but est l'image par $\Theta_{A_i,B_j}$ de
$$\underset{f_*\xi=\zeta}{\sum_{f\in\mathrm{Hom}(A_i,B_j)}}\lambda_{i,j}(f)[f].$$

Comme $A$ est de type fini, on peut choisir $x$ tel que $A_i$ soit engendré par un élément $\xi$ de $A_i(x)$.  Ainsi, la fonction
$$\mathrm{Hom}(A_i,B_j)\to B_j(x)\qquad f\mapsto f_*\xi$$
est \emph{injective}. Par conséquent, une composante du morphisme précédent est $\lambda_{i,j}(f)[f]$, dont l'image est $k[\mathrm{Im}\,f]$ puisque $\lambda_{i,j}\in k^\times$, d'où la conclusion.
\end{proof}

\subsection{Dualité ; applications}

Les deux énoncés  suivants, élémentaires mais très importants, figurent essentiellement dans \cite[lemme~11.5]{DTV}.

\begin{prop}\label{pr-dual_lin}
Supposons que $A$ est un foncteur de type fini de $\rep(\A)$ tel que, pour tout objet $a$ de $\A$, $A(a)$ soit un groupe abélien dont le cardinal est fini et inversible dans $k$. Alors il existe un isomorphisme $k[A]^\vee\simeq k[A^\sharp]$ dans $\F(\A^\op;k)$.
\end{prop}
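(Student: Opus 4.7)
Le plan est de construire un isomorphisme naturel explicite à l'aide de l'accouplement canonique d'évaluation. Pour chaque objet $a$ de $\A$, on pose $G := A(a)$ : c'est un groupe abélien fini d'ordre inversible dans $k$, donc $|G^\sharp| = |G|$ par dualité de Pontryagin, et $k[A(a)]^\vee$, $k[A^\sharp(a)]$ ont la même dimension finie sur $k$. Par ailleurs, toute transformation naturelle $k[A^\sharp] \to k[A]^\vee$ dans $\F(\A^\op;k)$ correspond, par adjonction $k$-linéarisation / foncteur oubli, à une famille de fonctions ensemblistes $A^\sharp(a) \times A(a) \to k$ naturelle en $a$ ; et pour toute application $\omega : \mathbb{Q}/\mathbb{Z} \to k$, la formule $([\eta], x) \mapsto \omega(\eta(x))$ en fournit une, la naturalité résultant directement de celle de l'accouplement d'évaluation canonique $A^\sharp \times A \to \mathbb{Q}/\mathbb{Z}$.

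Ensuite, j'exploite l'hypothèse de finitude : $A$ étant engendré par un élément $\xi \in A(a_0)$ d'ordre $n$ divisant $|A(a_0)|$ (et donc inversible dans $k$), tous les $A(a)$ sont annulés par $n$, et l'accouplement d'évaluation prend ses valeurs dans $\tfrac{1}{n}\mathbb{Z}/\mathbb{Z} \simeq \mathbb{Z}/n\mathbb{Z}$. Il suffit donc de choisir $\omega|_{\mathbb{Z}/n\mathbb{Z}}$ comme un caractère injectif à valeurs dans $k^\times$ : la matrice de Fourier $(\omega(\chi(g)))_{\chi \in G^\sharp,\, g \in G}$ est alors inversible en chaque $a$, grâce à l'inversibilité de $|A(a)|$ dans $k$ et à l'orthogonalité classique des caractères, ce qui fournit l'isomorphisme cherché point par point et, par construction, naturellement en $a$.

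La principale difficulté est l'existence d'un tel caractère $\omega$ à valeurs dans $k^\times$, ce qui exige en général que $k$ contienne une racine primitive $n$-ième de l'unité. En l'absence de cette hypothèse, on procède par extension des scalaires à $\bar k$ (où $\omega$ existe toujours), obtenant un isomorphisme dans $\F(\A^\op;\bar k)$, puis on redescend à $k$ par un argument de descente galoisienne utilisant que les deux foncteurs $k[A^\sharp]$ et $k[A]^\vee$ sont définis sur $k$ et que leurs dimensions pointuelles coïncident. Une approche alternative, plus intrinsèque, consisterait à exploiter la structure d'algèbre de Hopf commutative et cocommutative de $k[G]$ et sa duale afin de construire l'isomorphisme directement sans introduire de caractère auxiliaire.
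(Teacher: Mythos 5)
La première moitié de votre argument est correcte et revient essentiellement à la première étape de la démonstration de l'article : lorsque $k$ contient une racine primitive $n$-ième de l'unité (où $n$ est l'exposant commun des $A(a)$, que vous contrôlez bien par l'hypothèse de type fini), l'accouplement d'évaluation $A^\sharp\times A\to\tfrac1n\mathbb{Z}/\mathbb{Z}$ composé avec un caractère injectif $\omega$ fournit un morphisme naturel $k[A^\sharp]\to k[A]^\vee$ dont la matrice de Fourier est inversible en chaque objet ; c'est exactement l'isomorphisme $k[V]\simeq k^{\mathrm{Hom}_{\mathbb{Z}}(V,k^\times)}$ utilisé dans l'article, et la naturalité est correctement vérifiée.

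En revanche, l'étape de descente au cas général présente une lacune réelle. Affirmer qu'on « redescend à $k$ par un argument de descente galoisienne utilisant que les deux foncteurs sont définis sur $k$ et que leurs dimensions pointuelles coïncident » n'est pas une démonstration : l'égalité des dimensions pointuelles de deux $k$-formes d'un même objet sur $\bar k$ n'entraîne nullement qu'elles sont isomorphes sur $k$ (la descente galoisienne classifie les formes par un $H^1$ qu'il faudrait calculer, ou bien il faut invoquer un théorème de type Noether--Deuring, qui est précisément le point non trivial). De plus, descendre depuis $\bar k$, extension infinie, suppose d'abord de savoir que l'isomorphisme est défini sur une sous-extension \emph{finie} de $k$ ; cela repose sur le fait que $k[A]$ est de type fini et que le morphisme canonique $\F(\A;k)(F,G)\otimes K\to\F(\A;K)(F\otimes K,G\otimes K)$ est bijectif pour $F$ de type fini, point que vous n'abordez pas. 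La démonstration de l'article comble exactement ces deux trous : réduction à une extension finie $K/k$ de degré $d$, restriction des scalaires donnant $(k[A])^{\oplus d}\simeq(k^{A^\sharp})^{\oplus d}$ dans $\F(\A;k)$, puis propriété de Krull--Schmidt (licite car les anneaux d'endomorphismes sont de dimension finie) pour simplifier par $d$. Notez enfin que votre piste alternative via la dualité des algèbres de Hopf ne contourne pas l'obstacle : identifier le dual $k^G$ de $k[G]$ à $k[G^\sharp]$ \emph{est} la transformée de Fourier et requiert les mêmes racines de l'unité.
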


\begin{proof}
Supposons d'abord que $k$ contient toutes les racines de l'unité. Alors, si $V$ est un groupe abélien fini tel que $V\otimes_\mathbb{Z}k=0$, la $k$-algèbre $k[V]$ est semi-simple \emph{déployée} ; autrement dit, le morphisme canonique de $k$-algèbres $k[V]\to k^{\mathrm{Hom}_\mathbb{Z}(V,k^\times)}$ (où $k^E$ désigne l'algèbre produit de copies de $k$ indexées par $E$, pour tout ensemble $E$) est un isomorphisme. De plus, $\mathrm{Hom}_\mathbb{Z}(V,k^\times)$ est naturellement isomorphe à $V^\sharp$. On en déduit un isomorphisme naturel $k[A]\simeq k^{A^\sharp}$ dans $\F(\A;k)$, puis la conclusion recherchée, en dualisant.

Dans le cas général, soit $K$ une extension du corps $k$ contenant les racines de l'unité. D'après ce qui précède, les foncteurs $k[A]^\vee\otimes K$ et $k[A^\sharp]\otimes K$ de $\F(\A;K)$ sont isomorphes. De plus, cet isomorphisme persiste en remplaçant $K$ par une sous-extension \emph{finie} du corps $k$ --- cela provient de ce que le morphisme canonique $\F(\A;k)\otimes K\to \F(\A;K)(F\otimes K,G\otimes K)$ est un isomorphisme pour tous foncteurs $F$ et $G$ de $\F(\A;k)$, avec $F$ de type fini (appliquer ensuite cet isomorphisme avec $F=k[A]$ et $G=k^{A^\sharp}$, puis avec $F=k^{A^\sharp}$ et $G=k[A]$ --- $k[A]$ est de type fini dans $\F(\A;k)$ car $A$ l'est dans $\rep(\A)$, et $k^{A^\sharp}$ est de type fini dans $\F(\A;k)$ car $k^{A^\sharp}\otimes K\simeq K[A]$ l'est dans $\F(\A;K)$ d'après ce qui précède). Autrement dit, on peut supposer que $K$ est de degré fini $d\in\mathbb{N}^*$ sur $k$. Post-composant par la restriction des scalaires de $K$ à $k$, on en déduit que $(k[A])^{\oplus d}\simeq (k^{A^\sharp})^{\oplus d}$ dans $\F(\A;k)$. Maintenant, $k[A]$ a un anneau d'endomorphismes de dimension finie sur $k$ grâce à la proposition~\ref{pr-dual_lin} (et au fait que $A$ est de type fini et à valeurs finies), de même que $k^{A^\sharp}$ (car c'est comme $k[A]$ un foncteur de type fini à valeurs de dimensions finies), de sorte que la propriété de Krull-Schmidt (cf. par exemple \cite[§\,5.1]{Pop}) donne la conclusion.
\end{proof}

\begin{rema}
La fin de la démonstration repose sur une variante dans les catégories de foncteurs considérées de \cite[§\,2, th.~3]{Bki}.
\end{rema}

\begin{rema}
Lorsque $k$ contient assez de racines de l'unité, on peut s'affranchir de l'hypothèse que $A$ est de type fini dans l'énoncé précédent, et l'isomorphisme obtenu est de plus fonctoriel en le foncteur additif $A$ (à valeurs finies et inversibles dans $k$).
\end{rema}

\begin{coro}\label{cor-dual_lin} Si la catégorie additive $\A$ est $k$-triviale, alors pour tout foncteur de type fini $A$ de $\rep(\A)$, il existe un isomorphisme naturel $k[A]^\vee\simeq k[A^\sharp]$ dans $\F(\A^\op;k)$.
\end{coro}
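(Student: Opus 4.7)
The plan is to derive the corollary as a direct consequence of Proposition~\ref{pr-dual_lin}, for which it suffices to verify that its hypothesis holds automatically under the $k$-triviality of $\A$. Unpacking Definition~\ref{df-ktriv}, $k$-triviality packages two facts: condition (FH), so that every $\A(x,y)$ is a finite abelian group, and $\A(x,y)\otimes_{\mathbb{Z}}k=0$, which for a finite abelian group is equivalent to its cardinality being a unit of $k$.

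Now fix a finitely generated $A\in\rep(\A)$ and a generator $\xi\in A(a_0)$, i.e.\ an epimorphism $\A(a_0,-)\twoheadrightarrow A$ (Notation~\ref{noteng}). Evaluating at an arbitrary object $a\in\mathrm{Ob}\,\A$ yields a surjection $\A(a_0,a)\twoheadrightarrow A(a)$, so $A(a)$ is a finite abelian group whose order divides $|\A(a_0,a)|$ and is thus invertible in $k$. The hypothesis of Proposition~\ref{pr-dual_lin} therefore holds for $A$, and the proposition supplies the desired isomorphism $k[A]^\vee\simeq k[A^\sharp]$ in $\F(\A^\op;k)$.

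No substantial obstacle is expected: the word \emph{naturel} in the corollary simply records that the isomorphism lives in the functor category $\F(\A^\op;k)$, so it is automatically a natural transformation between the two underlying functors $\A^\op\to k\text{-}\mathbf{Vec}$. If one also wished for functoriality in $A$, in the spirit of the remark following Proposition~\ref{pr-dual_lin}, the strategy would be to bound uniformly the exponents of the $A(a)$ by the exponent $n$ of $A(a_0)$ (using that each $A(a)$ is generated by the images $f_*\xi$ for $f\in\A(a_0,a)$), apply the explicit character-pairing construction from the first part of the proof of Proposition~\ref{pr-dual_lin} over the finite extension $K=k(\mu_n)$ to obtain a \emph{functorial} isomorphism $K[A]^\vee\simeq K[A^\sharp]$, and descend via $\mathrm{Gal}(K/k)$-invariants; verifying the Galois-equivariance of the character pairing would be the most delicate point of that extension.
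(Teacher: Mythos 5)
Your proposal is correct and is precisely the intended deduction: since a finitely generated $A$ is a quotient of some representable $\A(a_0,-)$, each $A(a)$ is a quotient of the finite group $\A(a_0,a)$ whose order is invertible in $k$ by $k$-triviality, so the hypothesis of Proposition~\ref{pr-dual_lin} is automatic and the corollary follows. Your reading of \emph{naturel} and the remark on how one would upgrade to functoriality in $A$ (via $k(\mu_n)$ and Galois descent) are consistent with what the paper itself says in the remark following Proposition~\ref{pr-dual_lin}, so there is nothing missing.
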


\begin{coro}\label{cor-lin_tcf} Supposons que la catégorie additive $\A$ est $k$-triviale. Si $A$ est un foncteur de type cofini de $\rep(\A)$, alors $k[A]$ est un foncteur de type cofini de $\F(\A;k)$.
\end{coro}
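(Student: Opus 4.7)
Le plan consiste à ramener l'énoncé, par dualité, à la Proposition~\ref{pr-lin_tf} à l'aide du Corollaire~\ref{cor-dual_lin}. L'idée directrice est qu'on cherche à identifier $k[A]$ avec le foncteur $k[A^\sharp]^\vee$, pour ensuite invoquer la caractérisation du type cofini par dualité du paragraphe \emph{Dualités}.

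On commence par quelques observations préparatoires. La catégorie $\A^\op$ est encore $k$-triviale, puisque les conditions de la définition~\ref{df-ktriv}, à savoir (FH) et l'annulation des $\A(x,y)\otimes_\mathbb{Z}k$, sont invariantes par passage à l'opposée. Par ailleurs, comme $\A$ vérifie (FH), tout foncteur de type cofini de $\rep(\A)$ prend des valeurs finies : un tel foncteur se plonge dans un produit fini de cogénérateurs injectifs de $\rep(\A)$, lesquels sont à valeurs finies par (FH) et le lemme de Yoneda. Donc $A$ est à valeurs finies, et l'équivalence involutive $\sharp$ entre sous-catégories pleines de foncteurs additifs à valeurs finies fournit un foncteur $A^\sharp$ qui est à valeurs finies et de type fini dans $\rep(\A^\op)$.

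On applique alors la Proposition~\ref{pr-lin_tf} (avec $\A^\op$ en lieu et place de $\A$) à $A^\sharp$ : le foncteur $k[A^\sharp]$ est de type fini dans $\F(\A^\op;k)$, et prend des valeurs de dimensions finies (puisque $A^\sharp$ est à valeurs finies). Le Corollaire~\ref{cor-dual_lin}, appliqué à la catégorie $k$-triviale $\A^\op$ et au foncteur de type fini $A^\sharp$ de $\rep(\A^\op)$, donne alors un isomorphisme naturel
$$k[A^\sharp]^\vee\simeq k\big[(A^\sharp)^\sharp\big]\simeq k[A]$$
dans $\F(\A;k)$, la deuxième identification provenant du caractère involutif de $\sharp$ sur les foncteurs à valeurs finies. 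Comme le dual $\vee$ d'un foncteur de type fini à valeurs de dimensions finies est de type cofini (cf. paragraphe \emph{Dualités}), on conclut que $k[A]$ est de type cofini dans $\F(\A;k)$.

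Le point qui demande le plus d'attention est le premier : s'assurer que tout foncteur de type cofini de $\rep(\A)$ est bien à valeurs finies, sans quoi la dualité $\sharp$ ne s'applique pas et l'identification $k[A^\sharp]^\vee\simeq k[A]$ perd son sens. Sous l'hypothèse (FH) que fournit gratuitement la $k$-trivialité de $\A$, cela découle de la structure des (co)générateurs de $\rep(\A)$ ; une fois ce point acquis, le reste de la démonstration n'est qu'une application mécanique du Corollaire~\ref{cor-dual_lin} et de la Proposition~\ref{pr-lin_tf}.
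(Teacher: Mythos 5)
Votre démonstration est correcte et suit essentiellement la même voie que celle de l'article : ramener l'énoncé à la proposition~\ref{pr-lin_tf} via la dualité, en utilisant le corollaire~\ref{cor-dual_lin} et le fait que, sous (FH), les foncteurs $(-)^\sharp$ et $(-)^\vee$ échangent type fini et type cofini. Vous explicitez simplement (à juste titre) le point, laissé implicite dans l'article, que tout foncteur de type cofini de $\rep(\A)$ est à valeurs finies sous (FH).
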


\begin{proof}
En raison de la condition (FH), les foncteurs de dualité sur $\rep(\A)$ et sur $\F(\A;k)$ échangent les foncteurs de type fini et les foncteurs de type cofini. La conclusion résulte donc de la proposition~\ref{pr-lin_tf} et du corollaire~\ref{cor-dual_lin}.
\end{proof}

Nous utiliserons abondamment la conséquence suivante de la proposition~\ref{pr-dual_lin}.

\begin{coro}\label{cor-tftcf} Supposons que la catégorie $\A$ est $k$-triviale. Étant donné un foncteur $F$ de $\F(\A;k)$, les assertions suivantes sont équivalentes :
\begin{enumerate}
\item[(a)] $F$ est de type fini et de type cofini ;
\item[(b)] il existe une famille finie $(A_i)$ de foncteurs de type fini de $\rep(\A)$, une famille finie $(B_j)$ de foncteur de type cofini de $\rep(\A)$ et des familles $(\lambda_{i,j}(f))_{f\in\mathrm{Hom}(A_i,B_j)}$ d'éléments de $k$ telles que
$$F=\mathrm{Im}\;\bigoplus_i k[A_i]\xrightarrow{\big(\Theta_{A_i,B_j}(\sum_{f\in\mathrm{Hom}(A_i,B_j)}\lambda_{i,j}(f)[f])\big)_{i,j}}\bigoplus_j k[B_j]\;;$$
\item[(c)]  il existe des familles finies $(A_i)$ et $(B_j)$ de foncteurs de type fini et de type cofini de $\rep(\A)$ et des familles $(\lambda_{i,j}(f))_{f\in\mathrm{Hom}(A_i,B_j)}$ d'éléments de $k$ telles que
$$F=\mathrm{Im}\;\bigoplus_i k[A_i]\xrightarrow{\big(\Theta_{A_i,B_j}(\sum_{f\in\mathrm{Hom}(A_i,B_j)}\lambda_{i,j}(f)[f])\big)_{i,j}}\bigoplus_j k[B_j].$$
\end{enumerate}
\end{coro}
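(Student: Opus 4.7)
The plan is to establish the cycle $(c) \Rightarrow (b) \Rightarrow (a) \Rightarrow (c)$. The first implication is tautological, since (c) is simply a strengthening of (b). For $(b) \Rightarrow (a)$, Proposition~\ref{pr-lin_tf} makes $\bigoplus_i k[A_i]$ of finite type, so its quotient $F$ is of finite type; dually, Corollary~\ref{cor-lin_tcf} makes $\bigoplus_j k[B_j]$ of cofinite type, so its subobject $F$ is of cofinite type.

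For the main direction $(a) \Rightarrow (c)$, I would first produce a presentation in the weaker form (b). Finite-typeness of $F$ yields a surjection from $\bigoplus_i k[\A(a_i,-)]$, contributing $A_i := \A(a_i,-) \in \rep(\A)$ of finite type. Since $F$ has values of finite dimension by (FH), $F^\vee$ is of finite type in $\F(\A^\op;k)$ and admits an analogous surjection from $\bigoplus_j k[\A(-,b_j)]$; dualising and rewriting via Corollary~\ref{cor-dual_lin} in $\F(\A^\op;k)$ (applied to the finite-type representables $\A(-,b_j) \in \rep(\A^\op)$) produces an embedding $F \hookrightarrow \bigoplus_j k[B_j]$ with $B_j := \A(-,b_j)^\sharp \in \rep(\A)$ of cofinite type. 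Proposition~\ref{pr-morlin}, valid here since each $A_i$ is of finite type, writes the composite $\bigoplus_i k[A_i] \to \bigoplus_j k[B_j]$ in the required matrix form.

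It remains to refine the two families to obtain (c). Set $B'_j := \sum_{i,f} \mathrm{Im}(f) \subset B_j$, summed over the finitely many pairs with $\lambda_{i,j}(f) \neq 0$. The morphism factors through $\bigoplus k[B'_j]$ with unchanged image $F$, and $B'_j$ is of finite type as a finite sum of quotients of the representables $A_i$. Crucially, $B'_j$ remains of cofinite type because $B'_j \hookrightarrow B_j$ dualises, the functor $(-)^\sharp$ being exact on functors with finite values, to a surjection $B_j^\sharp \twoheadrightarrow (B'_j)^\sharp$, with $B_j^\sharp \simeq \A(-, b_j)$ of finite type in $\rep(\A^\op)$ as a representable. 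Dually, set $N_i := \bigcap_{j,f} \ker(f : A_i \to B'_j)$ and $A'_i := A_i / N_i$: the morphism descends to $\bigoplus k[A'_i] \to \bigoplus k[B'_j]$ with the same image $F$, the quotient $A'_i$ of a representable is of finite type, and $A'_i$ is of cofinite type because it embeds by construction into the finite direct sum $\bigoplus_{j,f} B'_j$, whose dual $\bigoplus_{j,f} (B'_j)^\sharp$ is already of finite type by the cofiniteness of $B'_j$ just established. The main obstacle is synchronising the two trimmings: the cofiniteness of $A'_i$ relies on having first imposed cofiniteness on the $B'_j$'s, which in turn hinges on the representable nature of $B_j^\sharp$.
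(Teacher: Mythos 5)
Your proof is correct and follows essentially the same route as the paper's: representables and their duals give the presentation (b), the trimming $B_j\rightsquigarrow B'_j=\sum\mathrm{Im}\,f$ and $A_i\rightsquigarrow A_i/\bigcap\ker f$ gives (c), and the preservation results (propositions~\ref{pr-lin_tf} and \ref{cor-lin_tcf}) close the cycle. The only cosmetic differences are the orientation of the cycle of implications and your use of dualisation where the paper simply invokes the stability of cofinite type under subobjects.
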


(Comme $\A$ vérifie l'hypothèse (FH), $\mathrm{Hom}(A,B)$ est fini pour $A$ de type fini et $B$ de type cofini.)

\begin{proof}
$(a)\Rightarrow (b)$ : si $F$ est de type fini, c'est un quotient d'une somme directe finie de foncteurs de la forme $k[\A(x_i,-)]$, et les $\A(x_i,-)$ sont des foncteurs de type fini de $\rep(\A)$. Si $F$ est de type cofini, c'est un sous-foncteur d'une somme directe finie de foncteurs de la forme $k^{\A(-,y_j)}$, qui sont isomorphes à $k[\A(-,y_j)^\sharp]$ par la proposition~\ref{pr-dual_lin}. Comme $\A(-,y_j)^\sharp$ est un foncteur de type cofini de $\rep(\A)$ (car $\A$ vérifie (FH)), on obtient (b) en appliquant la proposition~\ref{pr-morlin}.

$(b)\Rightarrow (c)$ : si les $(A_i)$, $(B_j)$ et $(\lambda_{i,j}(f))$ sont comme dans (b), on peut remplacer chaque $B_j$ par son sous-foncteur $B'_j$ somme des images des $f\in\mathrm{Hom}(A_i,B_j)$ tels que $\lambda_{i,j}(f)\ne 0$. Il n'y a qu'un nombre fini de tels $f$, et $\mathrm{Im}\,f$ est de type fini, comme quotient de $A_i$. Il s'ensuit que $B'_j$ est de type fini ; ce foncteur est par ailleurs de type cofini comme sous-foncteur de $B_j$. Un raisonnement dual montre qu'on peut remplacer chaque $A_i$ par un quotient $A'_i$ de type fini et de type cofini, d'où (c).

$(c)\Rightarrow (a)$ : cela provient de ce que la classe des foncteurs de type fini (resp. cofini) est stable par somme directe finie et par quotient (resp. sous-objet) et de ce que la linéarisation $k[-] : \rep(\A)\to\F(\A;k)$ préserve les objets de type fini (proposition~\ref{pr-lin_tf}), ainsi que les objets de type cofini (corollaire~\ref{cor-lin_tcf}).
\end{proof}

\subsection{Groupes d'extensions}

Le résultat fondamental suivant, qui repose sur la correspondance de Dold-Kan, est bien connu des experts ; il s'appuie sur des rappels figurant dans \cite{DT-ext}, dont il constitue une variation.

\begin{prop}\label{pr-ExtLin} Supposons que la catégorie $\A$ est $k$-triviale. Soient $A$ et $B$ des foncteurs $\rep(\A)$ et $n>0$ un entier. On suppose que $B$ est à valeurs finies. Alors $\mathrm{Ext}^n(k[A],k[B])=0$.
\end{prop}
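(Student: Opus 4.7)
My approach is to construct a projective resolution of $k[A]$ in $\F(\A;k)$ via the Dold--Kan correspondence and then compute directly. Start with a simplicial resolution $P_\bullet \to A$ in $\rep(\A)$ by finite direct sums of representables, so that each $P_n$ has the form $\A(b_n,-)$; such a resolution exists because $\rep(\A)$ is a Grothendieck category generated by the representables. Applying $k[-]$ pointwise yields a simplicial object $k[P_\bullet]$ in $\F(\A;k)$, and since $k[\A(b,-)]$ represents evaluation at $b$ by Yoneda, each $k[P_n] \simeq k[\A(b_n,-)]$ is projective.

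The crux is to show that the augmented normalized Moore complex $N(k[P_\bullet]) \to k[A]$ is a projective resolution. Evaluated at an arbitrary $a\in\mathrm{Ob}\,\A$, this reduces to checking that the simplicial $k$-vector space $k[P_\bullet(a)]$ resolves $k[A(a)]$, where $P_\bullet(a) \to A(a)$ is a simplicial resolution by free abelian groups. Here the $k$-triviality of $\A$ enters decisively: since $A(a)$ is annihilated by $|\mathrm{End}_\A(a)|$, it is a torsion abelian group of exponent invertible in $k$. A Dold--Puppe-style argument, whose technical core is Maschke's theorem on semisimplicity of $k[G]$ for $|G|$ invertible in $k$, shows that pointwise $k$-linearization preserves the homotopy type of such simplicial resolutions. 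This is the main piece of input I borrow from \cite{DT-ext}.

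With this resolution in hand, one computes
$$\mathrm{Ext}^n(k[A], k[B]) \cong \pi^n\bigl(\mathrm{Hom}(k[P_\bullet], k[B])\bigr).$$
Proposition \ref{pr-morlin}, applied at each simplicial level (the $P_n$ are of finite type), identifies this with $\pi^n\bigl(k[\mathrm{Hom}(P_\bullet, B)]\bigr)$. By Yoneda, $\mathrm{Hom}(P_n, B) \simeq B(b_n)$ is a finite abelian group of order invertible in $k$, using both the $k$-triviality of $\A$ and the hypothesis that $B$ has finite values. A dual Dold--Puppe argument---now for cosimplicial linearization with values in $k$-trivial abelian groups---yields the desired vanishing for $n > 0$. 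The main obstacle throughout is the underlying (co)simplicial linearization lemma that packages the Maschke semisimplicity argument into a homotopical statement; once granted, everything else is formal.
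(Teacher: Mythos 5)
Your strategy — resolve $A$ simplicially by finite-type projectives, linearize, and identify $\mathrm{Hom}(k[P_\bullet],k[B])$ with $k[\mathrm{Hom}(P_\bullet,B)]$ via Proposition~\ref{pr-morlin} — has a genuine gap at its very first step. The fact that $\rep(\A)$ is generated by the representables only yields simplicial resolutions whose levels are \emph{arbitrary direct sums} of representables, not single objects $\A(b_n,-)$. To get each $P_n$ of finite type you need $A$ itself to be of finite type \emph{and} all its syzygies to remain of finite type; neither is assumed in the statement ($A$ is an arbitrary object of $\rep(\A)$), and the second can fail even for $A$ simple when $\rep(\A)$ is not locally noetherienne — the $k$-trivial category of the remarque~\ref{rq-pasFE} gives a concrete example, where the first syzygy of the simple $S_0$ is an infinite direct sum of simples. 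Once the levels are infinite direct sums, everything downstream breaks: $k[P_n]$ is no longer projective (only flat, as a filtered colimit of projectives), $\Theta_{P_n,B}$ is only injective rather than bijective, and $\mathrm{Hom}(P_n,B)$ becomes an infinite product of finite groups, which need not be of exponent invertible in $k$, so the dual Dold--Puppe vanishing no longer applies. This is exactly the situation flagged in the remark following the proposition in the paper: your argument is the one that works \emph{under the extra hypothesis} that $A$ admits a projective resolution of finite type (and then, as you implicitly notice, no hypothesis on $B$ is needed, since each $B(b_n)$ is automatically killed by the invertible integer $\mathrm{Card}\,\A(b_n,b_n)$).

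The paper's actual proof avoids resolving $A$ altogether. It uses the hypothesis that $B$ has finite values — which your argument never exploits structurally — to write $k[B]\simeq (k^B)^\vee$ and convert the $\mathrm{Ext}$ into a $\mathrm{Tor}$ against $k^B$; a pointwise-trivial filtered colimit argument together with the duality $k^{T^\sharp}\simeq k[T]$ (corollaire~\ref{cor-dual_lin}) reduces to $B^\sharp$ of finite type; and one then takes a simplicial projective resolution of $B^\sharp$, where only \emph{flatness} of the linearized levels is needed and no finite-type condition on the resolution is required. The final vanishing is then the homology-with-$k$-coefficients statement for the simplicial abelian group $P_\bullet\otimes_\A A$, whose terms are killed by integers invertible in $k$ — the same Maschke-type input you invoke, but applied on the correct side. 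To repair your write-up you would either have to add the hypothesis that $A$ has a finite-type projective resolution, or switch to the dualization-and-Tor route.
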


\begin{proof} Comme $B$ est à valeurs finies, on a $k[B]\simeq (k^{B})^\vee$, donc le $k$-espace vectoriel $\mathrm{Ext}^n(k[A],k[B])$ est le dual de $\mathrm{Tor}_n^{k[\A]}(k^B,k[A])$ (cf. par exemple \cite[prop.~2.12]{DT-ext}). Écrivons $B^\sharp\simeq\underset{i\in\I}{\col}T_i$ dans $\rep(\A^\op)$, où $\I$ est une petite catégorie filtrante, les $T_i$ sont de type fini et les flèches structurales $T_i\to T_j$ des monomorphismes. Cette colimite est \emph{ponctuellement triviale} au sens où, pour tout objet $a$ de $\A$, la colimite $\underset{i\in\I}{\col}T_i(a)$ est triviale, c'est-à-dire qu'il existe $i\in\mathrm{Ob}\,\I$ tel que toute flèche $i\to j$ de $\I$ induise un isomorphisme $T_i(a)\to T_j(a)$, car $B(a)$ est fini. Il s'ensuit que $B\simeq\underset{i\in\I}{\lim}\,T_i^\sharp$, puis que $k^B\simeq \underset{i\in\I}{\col}k^{T_i^\sharp}$. Maintenant, on a $k^{T_i^\sharp}\simeq k[T_i]$ par le corollaire~\ref{cor-dual_lin}. Comme les foncteurs Tor commutent aux colimites filtrantes, on a finalement $\mathrm{Tor}_n^{k[\A]}(k^B,k[A])\simeq\underset{i\in\I}{\col}\mathrm{Tor}_n^{k[\A]}(k[B_i^\sharp],k[A])$.

Autrement dit, on peut supposer $B$ de type fini, et il s'agit de montrer la nullité de $\mathrm{Tor}_n^{k[\A]}(k[B^\sharp],k[A])$. Soit $P_\bullet$ une résolution projective \emph{simpliciale} de $B^\sharp$ dans $\rep(\A^\op)$. Alors $k[P_\bullet]$ est une résolution projective simpliciale de $k[B^\sharp]$ dans $\F(\A^\op;k)$ (cf. \cite[lemme~9.8]{DT-ext}). Il s'ensuit, en utilisant \cite[lemme~5.3]{DT-ext}, que $\mathrm{Tor}_n^{k[\A]}(k[B^\sharp],k[A])$ est isomorphe à l'homologie à coefficients dans $k$ du groupe abélien simplicial $P_\bullet\underset{\A}{\otimes}A$, qui est nulle en raison de l'hypothèse $k$-triviale sur $\A$ et de \cite[cor.~3.6]{DT-ext}.
\end{proof}

\begin{rema}
Si $A$ possède une résolution projective de type fini (par exemple, si $A$ est de type fini et que $\rep(\A)$ est localement noethérienne), on peut s'affranchir de toute hypothèse sur $B$ et éviter le recours à la dualité qui occupe la première partie de la démonstration précédente. On peut en effet utiliser la proposition~\ref{pr-dual_lin} et raisonner directement à partir de $k$-espaces vectoriels (co)simpliciaux, comme dans la deuxième partie de la démonstration.
\end{rema}

\section{Les foncteurs $\Q_A$ et $\Q^A$}\label{sQA}

\subsection{Généralités}

\begin{nota}
Soit $A$ un foncteur de $\rep(\A)$. On note $\Q_A$ le conoyau du morphisme canonique
$$\bigoplus_{B\subsetneq A} k[B]\to k[A]$$
de $\F(\A;k)$ dont les composantes sont les inclusions, et $\Q^A$ le noyau du morphisme canonique
$$k[A]\to\prod_{0\ne B\subset A} k[A/B]$$
dont les composantes sont les projections.
\end{nota}

\begin{exem}\label{ex-Qsom}
Les foncteurs $\Q_0$ et $\Q^0$ sont constants en $k$. Si $A$ est un foncteur simple de $\rep(\A)$, alors $\Q_A\simeq\Q^A\simeq k[A]^\red$.

Si $A\in\mathrm{Ob}\,\rep(\A)$ possède un sous-foncteur strict maximal $B$, alors on dispose d'une suite exacte courte
\begin{equation}\label{eq-secQ}
    0\to k[B]\to k[A]\to\Q_A\to 0.
\end{equation}
Si $A$ n'est pas simple, comme $A$ est de type fini et que $B$ n'est pas facteur direct de $A$, la proposition~\ref{pr-morlin} montre que cette suite exacte n'est pas scindée.
\end{exem}

La propriété de dualité suivante, qui résulte du corollaire~\ref{cor-dual_lin}, explique pourquoi nous nous concentrerons, dans la suite du présent travail, sur les foncteurs $\Q_A$, laissant au lecteur le soin d'écrire les énoncés qu'on peut en déduire pour les $\Q^A$.

\begin{prop}\label{pr-dualQ} Supposons que la catégorie $\A$ est $k$-triviale et que $A$ est un foncteur de type fini de $\rep(\A)$. Alors il existe un isomorphisme $(\Q_A)^\vee\simeq Q^{A^\sharp}$ dans $\F(\A^\op;k)$.
\end{prop}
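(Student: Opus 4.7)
The plan is to apply the duality functor $(-)^\vee$ to the defining cokernel presentation of $\Q_A$ and to recognize the result as the defining kernel presentation of $\Q^{A^\sharp}$, relying on Corollary~\ref{cor-dual_lin} to identify $k[-]^\vee$ with $k[(-)^\sharp]$.

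First I would apply $(-)^\vee$ to the right exact sequence
$$\bigoplus_{B\subsetneq A} k[B]\to k[A]\to\Q_A\to 0.$$
Since $(-)^\vee$ is exact on the relevant subcategory of finite-dimensional-valued functors, turns direct sums into products and exchanges cokernels with kernels, this yields a left exact sequence
$$0\to\Q_A^\vee\to k[A]^\vee\to\prod_{B\subsetneq A} k[B]^\vee.$$
Next I would identify each term. Corollary~\ref{cor-dual_lin} gives $k[A]^\vee\simeq k[A^\sharp]$ directly. To handle $k[B]^\vee$ for a possibly non-type-fini sub-functor $B\subset A$, I would write $B$ as the filtered colimit of its type fini sub-functors (a colimit that stabilizes pointwise since each $B(a)\subset A(a)$ is finite) and apply Corollary~\ref{cor-dual_lin} termwise, passing to the resulting inverse limit to obtain $k[B]^\vee\simeq k[B^\sharp]$.

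Finally I would invoke the bijection induced by $(-)^\sharp$ between the family of proper sub-functors $B\subsetneq A$ and the family of non-zero sub-functors $C\subset A^\sharp$, given by $C=\ker(A^\sharp\twoheadrightarrow B^\sharp)=(A/B)^\sharp$ and $B^\sharp\simeq A^\sharp/C$. Under these identifications, the dual of the inclusion $k[B]\hookrightarrow k[A]$ is the map $k[A^\sharp]\to k[A^\sharp/C]$ induced by the projection $A^\sharp\twoheadrightarrow A^\sharp/C$, so the dualized sequence takes the form
$$0\to\Q_A^\vee\to k[A^\sharp]\to\prod_{0\neq C\subset A^\sharp} k[A^\sharp/C],$$
which is exactly the defining sequence of $\Q^{A^\sharp}$, yielding the desired isomorphism.

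The main technical obstacle is the extension of Corollary~\ref{cor-dual_lin} from type fini functors to arbitrary sub-functors of $A$ with finite values, and the verification that the resulting identification $k[B]^\vee\simeq k[B^\sharp]$ is natural with respect to the inclusions $B\hookrightarrow A$, so that the reindexing via $(-)^\sharp$ is compatible with the structure maps on both sides. Once this is in hand, all other steps are formal consequences of the exactness and limit/colimit properties of $(-)^\vee$ and of the basic duality bijection between sub-functors and quotients.
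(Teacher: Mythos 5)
Votre démonstration est correcte et suit exactement la voie que l'article sous-entend (l'article ne donne pas de preuve et se contente d'affirmer que l'énoncé « résulte du corollaire~\ref{cor-dual_lin} ») : dualiser la présentation par conoyau définissant $\Q_A$, identifier $k[-]^\vee$ à $k[(-)^\sharp]$ et réindexer les sous-objets stricts $B\subsetneq A$ par les sous-objets non nuls $C=(A/B)^\sharp$ de $A^\sharp$. Le passage à la limite pour traiter les sous-foncteurs $B$ qui ne sont pas de type fini (ou, de façon équivalente, l'observation que la somme des images des $k[B]$ ne change pas si l'on se restreint aux $B$ de type fini) est un détail que l'article passe sous silence mais que vous réglez correctement.
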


\begin{nota}
Si $x$ et $y$ sont des objets d'une catégorie $\C$, on note $\surj_\C(x,y)$, ou $\surj(x,y)$ s'il n'y a pas d'ambiguïté possible, l'ensemble des épimorphismes de $x$ vers $y$ dans $\C$. On fait de $x\mapsto k[\surj(x,y)]$ un foncteur $\C^\op\to k\Md$ en associant à un morphisme $\varphi : x'\to x$ de $\C$ l'application linéaire $k[\surj(x,y)]\to k[\surj(x',y)]$ associant à $[f]$ (pour $f\in\surj(x,y)$) $[f\circ\varphi]$ si $f\circ\varphi$ est un épimorphisme, et $0$ sinon. Cette définition fait sens car, si $\psi$, $\varphi$ et $f$ sont des flèches composables de $\C$, $f\circ\varphi\circ\psi$ ne peut être un épimorphisme que si $f\circ\varphi$ en est un. On note que le foncteur $k[\surj(-,y)] : \C^\op\to k\Md$ est muni d'une action tautologique du groupe $\mathrm{Aut}_\C(y)$.
\end{nota}

Les trois énoncés simples suivants sont laissés en exercice.

\begin{prop}\label{pr-descrQ} Soient $A$ un foncteur de $\rep(\A)$ et $a$ un objet de $\A$. Il existe un diagramme commutatif
$$\xymatrix{k[A(a)]\ar@{->>}[d]\ar@{->>}[dr] & & \\
\Q_A(a)\ar[r]^-\simeq & k[\{\xi\in A(a)\,|\,A_\xi=A\}]\ar[r]^-\simeq & k[\surj(\A(a,-),A)]
}$$
où la flèche verticale de gauche est l'évaluation en $a$ de la projection canonique $k[A]\twoheadrightarrow\Q_A$ et la flèche oblique l'application linéaire envoyant $[\xi]\in k[A(a)]$, pour $\xi\in A(a)$, sur $[\xi]$ si $A_\xi=A$ et sur $0$ sinon.
\end{prop}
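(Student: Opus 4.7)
The plan is to compute $\Q_A(a)$ directly from its definition as a cokernel and then identify the result with both descriptions appearing on the bottom row of the diagram.

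First I would use the fact that colimits in $\F(\A;k)$ are computed pointwise at the target, so that $\Q_A(a)$ is the cokernel of the canonical map
\[
\bigoplus_{B\subsetneq A} k[B(a)]\longrightarrow k[A(a)]
\]
whose components are the inclusions $B(a)\hookrightarrow A(a)$ linearized. The image of this map inside $k[A(a)]$ is the subspace spanned by those $[\xi]$ with $\xi\in B(a)$ for some strict subfunctor $B\subsetneq A$. I would observe the easy equivalence: such a $B$ exists if and only if $A_\xi\subsetneq A$. Indeed, if $\xi\in B(a)$ with $B\subsetneq A$ then $A_\xi\subseteq B\subsetneq A$ by the very definition of $A_\xi$ as the subfunctor generated by $\xi$; conversely, if $A_\xi\subsetneq A$, then $B:=A_\xi$ is a strict subfunctor containing $\xi$ in its value at $a$.

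From this it follows that $\Q_A(a)$ is canonically isomorphic to the quotient of $k[A(a)]$ by the subspace generated by $\{[\xi]\mid A_\xi\ne A\}$, which is free on the complement $\{\xi\in A(a)\mid A_\xi=A\}$. This yields the first horizontal isomorphism, and by construction the triangle with $k[A(a)]$ at the apex commutes: the oblique arrow sending $[\xi]$ to $[\xi]$ or $0$ according to whether $A_\xi=A$ or not is exactly the passage to this quotient followed by the identification of bases.

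For the second horizontal isomorphism, I would invoke the Yoneda lemma: the map $A(a)\to\mathrm{Hom}(\A(a,-),A)$ sending $\xi$ to the unique natural transformation $f_\xi:\A(a,-)\to A$ with $f_\xi(\mathrm{id}_a)=\xi$ is a bijection, and the image of $f_\xi$ is precisely $A_\xi$. Hence $A_\xi=A$ if and only if $f_\xi$ is an epimorphism in $\rep(\A)$ (epimorphisms being tested pointwise, and surjectivity of $f_\xi$ at every object following from its being generated by $\xi$ in the sense of Notation~\ref{noteng}). Linearizing this bijection gives the desired isomorphism $k[\{\xi\in A(a)\mid A_\xi=A\}]\simeq k[\surj(\A(a,-),A)]$. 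No step here is a real obstacle; the only point requiring a little care is the identification of the image of the cokernel map with the span of $\{[\xi]\mid A_\xi\ne A\}$, and this is handled by the two-line argument above.
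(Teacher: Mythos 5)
Votre démonstration est correcte ; le texte laisse d'ailleurs cet énoncé en exercice, et l'argument attendu est exactement celui que vous donnez (calcul ponctuel du conoyau, identification de l'image avec le sous-espace engendré par les $[\xi]$ tels que $A_\xi\ne A$, puis lemme de Yoneda joint au fait que $\mathrm{Im}\,f_\xi=A_\xi$). Rien à redire.
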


\begin{exem}\label{ex-Q-rep} Pour tous objets $x$ et $t$ de $\A$, $\Q_{\A(x,-)}(t)$ s'identifie au quotient de $k[\A(x,t)]$ par le sous-espace vectoriel engendré par $[f]$, où $f : x\to t$ n'est pas un monomorphisme scindé.
\end{exem}

\begin{coro}\label{cor-valQ} Soient $A$ un foncteur de $\rep(A)$ et $a$ un objet de $\A$.
\begin{enumerate}
    \item L'espace vectoriel $\Q_A(a)$ est non nul si et seulement si $A$ est engendré par un élément de $A(a)$, i.e. est isomorphe à un quotient de $\A(a,-)$.
    \item Le foncteur $\Q_A$ de $\F(\A;k)$ est de type fini. Il est non nul si et seulement si $A$ est un foncteur de type fini de $\rep(\A)$.
\end{enumerate}
\end{coro}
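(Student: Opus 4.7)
The plan is to derive both assertions directly from Proposition~\ref{pr-descrQ}, which identifies $\Q_A(a)$ with $k[\{\xi \in A(a)\,|\,A_\xi = A\}]$.

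For assertion (1), I would simply read off from that identification that $\Q_A(a)$ is non-zero precisely when some $\xi \in A(a)$ satisfies $A_\xi = A$. By Notation~\ref{noteng} and the Yoneda lemma, this last condition is equivalent to the existence of an epimorphism $\A(a,-) \twoheadrightarrow A$, i.e.\ to $A$ being (isomorphic to) a quotient of $\A(a,-)$, hence to $A$ being generated by an element of $A(a)$.

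For assertion (2), I would handle the non-vanishing characterization first: if $\Q_A \neq 0$, pick any $a$ with $\Q_A(a) \neq 0$ and apply (1) to conclude that $A$ is of finite type; conversely, for $A$ of finite type, any generator $\xi_0 \in A(a_0)$ yields $\Q_A(a_0) \neq 0$ by (1). For the finite-type property of $\Q_A$ itself, I would split cases: when $A$ is not of finite type, (1) forces $\Q_A$ to vanish at every object, so $\Q_A = 0$; when $A$ is of finite type, generated by some $\xi_0 \in A(a_0)$, the associated Yoneda epimorphism $\A(a_0,-) \twoheadrightarrow A$ linearizes (since $k[-]$ preserves epimorphisms pointwise) to a surjection $k[\A(a_0,-)] \twoheadrightarrow k[A]$, which composed with the canonical projection $k[A] \twoheadrightarrow \Q_A$ exhibits $\Q_A$ as a quotient of the finitely generated projective $k[\A(a_0,-)]$ of $\F(\A;k)$, hence as a finite-type object.

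No genuine obstacle should arise: the whole corollary is essentially a translation of Proposition~\ref{pr-descrQ}, and the only technical input is the preservation of epimorphisms by the linearization functor $k[-]$, which reduces the finite-generation claim to the set-theoretic surjectivity already supplied by the choice of generator $\xi_0$.
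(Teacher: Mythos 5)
Your proof is correct and follows exactly the route the paper intends: the corollary is left as an exercise immediately after Proposition~\ref{pr-descrQ}, and both assertions are read off from the identification $\Q_A(a)\simeq k[\{\xi\in A(a)\,|\,A_\xi=A\}]$ together with the characterization of finite-type objects of $\rep(\A)$ as quotients of representables. The finite-generation step via the surjection $k[\A(a_0,-)]\twoheadrightarrow k[A]\twoheadrightarrow\Q_A$ is sound (it is the same mechanism as Proposition~\ref{pr-lin_tf}).
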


\begin{prop}[fonctoralité de $A\mapsto\Q_A$]\label{pr-foncQ} Soit $\alpha : A\to A'$ un morphisme de $\rep(\A)$, alors il existe un (unique) morphisme $\Q_\alpha$ faisant commuter le diagramme suivant
$$\xymatrix{k[A]\ar[r]^-{k[\alpha]}\ar@{->>}[d] & k[A']\ar@{->>}[d] \\
\Q_A\ar[r]^-{\Q_\alpha} & \Q_{A'}
}$$
où les flèches verticales sont les projections canoniques si et seulement si l'une des deux conditions suivantes est vérifiée :
\begin{enumerate}
    \item $\alpha$ n'est pas un épimorphisme --- on a alors $\Q_\alpha=0$ ;
    \item $\alpha$ est un épimorphisme essentiel --- $\Q_\alpha$ est alors un épimorphisme.
\end{enumerate}
En particulier, $\Q_A$ est muni d'une action canonique du groupe $\operatorname{Aut}(A)$.
\end{prop}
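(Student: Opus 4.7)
The plan is to exploit the universal property of $\Q_A$ as a cokernel. Since the projection $\pi_A\colon k[A]\twoheadrightarrow\Q_A$ is an epimorphism, a morphism $\Q_\alpha$ fitting in the square is unique when it exists, and its existence is equivalent, by the universal property of cokernels, to the vanishing of $\pi_{A'}\circ k[\alpha]$ on $\bigoplus_{B\subsetneq A}k[B]$. Concretely, for each strict subfunctor $B\subsetneq A$, the restricted composite
$$k[B]\hookrightarrow k[A]\xrightarrow{k[\alpha]} k[A']\xrightarrow{\pi_{A'}}\Q_{A'}$$
factors through $k[\alpha(B)]\to\Q_{A'}$, and the latter vanishes as soon as $\alpha(B)\subsetneq A'$, by the very definition of $\Q_{A'}$.

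The two sufficient conditions then follow at once. In case~(1), if $\alpha$ is not an epimorphism, $\alpha(A)\subsetneq A'$, so already $\pi_{A'}\circ k[\alpha]=0$, whence $\Q_\alpha=0$. In case~(2), if $\alpha$ is an essential epimorphism, then $\alpha(B)\subsetneq A'$ for every $B\subsetneq A$ (that is precisely essentiality), so the required vanishing holds and $\Q_\alpha$ is defined. It is moreover an epimorphism because $k[-]$ preserves surjections, so $k[\alpha]$ is epi, hence $\pi_{A'}\circ k[\alpha]$ is epi, and this factors through $\Q_\alpha$.

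The main obstacle is the converse: if $\alpha$ is epi but not essential, I must exhibit a non-vanishing obstruction. I would fix $B\subsetneq A$ with $\alpha|_B$ still epi, and use Proposition~\ref{pr-descrQ}, which identifies $\Q_{A'}(a)$ with the free $k$-module on $\{\zeta\in A'(a)\mid A'_\zeta=A'\}$. It suffices to produce $a\in\mathrm{Ob}\,\A$ and $\xi\in B(a)$ such that $\alpha(\xi)$ generates $A'$: this is the only interesting case, since otherwise $\Q_{A'}=0$ by Corollary~\ref{cor-valQ} and the claim is vacuous. Choosing any generator $\zeta\in A'(a)$ of $A'$ and lifting it through the epimorphism $\alpha|_B$ to some $\xi\in B(a)$, Proposition~\ref{pr-descrQ} shows that the image of $[\xi]$ in $\Q_{A'}(a)$ is non-zero, obstructing the existence of $\Q_\alpha$. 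Finally, every $\alpha\in\operatorname{Aut}(A)$ is in particular an essential epimorphism, so case~(2) provides a $\Q_\alpha$, and the uniqueness established in the first paragraph promotes $\alpha\mapsto\Q_\alpha$ to a group action of $\operatorname{Aut}(A)$ on $\Q_A$.
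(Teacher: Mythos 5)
The paper gives no proof of this statement: it is one of the ``trois énoncés simples suivants [...] laissés en exercice'', so there is nothing to compare against. Your argument is the expected one and is correct: reading the existence of $\Q_\alpha$ off the universal property of the cokernel, observing that the composite $k[B]\to\Q_{A'}$ factors through $k[\alpha(B)]$ and hence dies whenever $\alpha(B)\subsetneq A'$ (which covers both sufficient conditions), and, for the converse, lifting a generator of $A'$ through $\alpha|_B$ for a proper $B$ surjecting onto $A'$ and detecting it via the identification of $\Q_{A'}(a)$ in Proposition~\ref{pr-descrQ}.

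One small imprecision: your dismissal of the case $\Q_{A'}=0$ as ``vacuous'' is not quite right. If $A'$ is not of type fini then $\Q_{A'}=0$ forces $\Q_A=0$ as well ($A'$ being a quotient of $A$), so $\Q_\alpha=0$ \emph{does} exist even when $\alpha$ is a non-essential epimorphism --- e.g.\ the projection $A\oplus S\twoheadrightarrow A$ with $A$ not of type fini --- and the ``only if'' direction of the equivalence literally fails there. This is a defect of the statement (which implicitly concerns the case where $\Q_{A'}\ne 0$, i.e.\ $A'$ of type fini, the only case used in the paper) rather than of your proof, but you should say ``the statement is empty of content in this degenerate case'' rather than claim your obstruction argument covers it.
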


\subsection{Résolution fondamentale}

Pour un foncteur $A$ de $\rep(\A)$ et un entier $d\in\mathbb{N}$, on note $\N_d(A)$ l'ensemble des $(d+1)$-uplets $T=(T_0,\dots,T_d)$ de sous-objets \emph{stricts} de $A$ tels que $T_0\subsetneq\dots\subsetneq T_d$. Ainsi\,\footnote{On rappelle que $\lgr(A)$ désigne la longueur de $A$ --- cf. appendice~\ref{ap-catab}.}, $\lgr(A)\le d$ si et seulement si $\N_d(A)=\varnothing$. Si $A$ est de longueur finie et que $\A$ vérifie (FH), alors chaque ensemble $\N_d(A)$ est fini en vertu de la proposition~\ref{sousobjets-nbfini}.

\begin{prop}\label{pr-ressimpl}
Soit $A$ un foncteur de $\rep(\A)$. Il existe une suite exacte
$$\cdots\to\bigoplus_{T\in\N_d(A)}k[T_0]\to\dots\to\bigoplus_{T\in\N_0(A)}k[T_0]\to k[A]\to\Q_A\to 0.$$
\end{prop}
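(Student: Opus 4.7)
The plan is to recognize the displayed sequence as the \emph{bar/nerve complex} attached to the poset of strict subobjects of $A$, with coefficients in the covariant functor $B\mapsto k[B]$, extended by the canonical augmentation to $k[A]$ and then to $\Q_A$.

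First, I would make the differentials explicit. The $d$-th differential $\bigoplus_{T\in\N_d(A)}k[T_0]\to\bigoplus_{T\in\N_{d-1}(A)}k[T_0]$ is the usual alternating sum $\sum_{i=0}^d(-1)^i d_i$: for $i\ge 1$, the face $d_i$ drops $T_i$ from the chain and acts by the identity on $k[T_0]$; for $i=0$, $d_0$ drops $T_0$ and acts through the inclusion $k[T_0]\hookrightarrow k[T_1]$. The augmentation $\bigoplus_{B\subsetneq A}k[B]\to k[A]$ is the sum of the canonical inclusions, and its composite with $k[A]\twoheadrightarrow\Q_A$ vanishes by the very definition of $\Q_A$.

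Since exactness in $\F(\A;k)$ is tested objectwise, I would next fix $a\in\mathrm{Ob}\,\A$ and analyze the resulting complex of $k$-vector spaces. The key observation is that $k[B(a)]=\bigoplus_{\eta\in B(a)}k$ and that all face maps and the augmentation preserve the natural grading by the image in $A(a)$. Thus the complex evaluated at $a$ decomposes as a direct sum, indexed by $\xi\in A(a)$, of subcomplexes $C_\bullet^\xi$ whose $d$-th term is spanned by chains $T\in\N_d(A)$ with $\xi\in T_0(a)$, whose $(-1)$-th term is $k$ (the $\xi$-basis vector in $k[A(a)]$), and whose augmentation target (in the $\Q_A$ column) is $k$ if $A_\xi=A$ and $0$ otherwise, by Proposition~\ref{pr-descrQ}. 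After this decomposition $d_0$ becomes the identity between one-dimensional pieces, so $C_\bullet^\xi$ is literally the augmented simplicial chain complex (with $k$-coefficients) of the nerve of the subposet $\mathcal{P}_\xi:=\{B\,|\,A_\xi\subseteq B\subsetneq A\}$ of strict subobjects of $A$ containing $\xi$.

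It remains to treat two cases. If $A_\xi=A$, then $\mathcal{P}_\xi=\varnothing$ and $C_\bullet^\xi$ reduces to $k\xrightarrow{\mathrm{id}}k$, which is exact. If $A_\xi\subsetneq A$, then $A_\xi$ is the minimum of $\mathcal{P}_\xi$, so the nerve of $\mathcal{P}_\xi$ is a cone with apex $A_\xi$ and therefore contractible; the explicit prepending $(T_0\subsetneq\dots\subsetneq T_d)\mapsto(A_\xi\subsetneq T_0\subsetneq\dots\subsetneq T_d)$ when $T_0\ne A_\xi$ (and zero otherwise) provides a contracting homotopy, so $C_\bullet^\xi$ is acyclic. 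I expect no serious obstacle; the only delicate check is that $d_0$, which a priori uses the nontrivial inclusion $k[T_0]\hookrightarrow k[T_1]$, is compatible with the grading by $\xi$. This is immediate from the commutativity of $T_0(a)\hookrightarrow T_1(a)\hookrightarrow A(a)$: the $\xi$-preimage in $T_0(a)$, if it exists, is $\xi$ itself, and maps to the $\xi$-preimage in $T_1(a)$, which again is $\xi$.
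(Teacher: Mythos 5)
Your proof is correct and follows essentially the same route as the paper: evaluate at an object $a$, decompose the complex according to $\xi\in A(a)$, and recognize each graded piece as the (normalized, augmented) chain complex of the nerve of the poset of strict subobjects containing $\xi$, which is contractible because it has the minimum $A_\xi$ when $A_\xi\ne A$ and is empty otherwise. The extra details you supply (explicit face maps, compatibility of $d_0$ with the grading, the explicit cone homotopy) are exactly the checks the paper leaves implicit.
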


\begin{proof}
Étant donné un objet $a$ de $A$ et un élément $\xi$ de $A(a)$, considérons l'ensemble $E_\xi$ des sous-foncteurs \emph{stricts} $T$ de $A$ tels que $\xi\in T(a)$, ordonné par inclusion. Si $\xi$ engendre $A$, $E_\xi$ est vide ; sinon, cet ensemble ordonné possède un plus petit élément, à savoir $A_\xi$, il est donc \emph{contractile}. Pour tout $d\in\mathbb{N}$, l'ensemble des $d$-simplexes non dégénérés du nerf de $E_\xi$ est $\{T\in\N_d(A)\,|\,\xi\in T_0\}$. Par conséquent, on en déduit un complexe de la forme
$$\cdots\to\underset{\xi\in T_0}{\bigoplus_{T\in\N_d(A)}}k\to\dots\to\underset{\xi\in T_0}{\bigoplus_{T\in\N_0(A)}}k$$
dont l'homologie est concentrée en degré nul, où elle est isomorphe à $k$ (via l'augmentation) si $A_\xi\ne A$, et est identiquement nulle si $A_\xi=A$.

En considérant la somme directe sur tous les $\xi\in A(a)$ de ces complexes, on en déduit une suite exacte
$$\cdots\to\bigoplus_{T\in\N_d(A)}k[T_0(a)]\to\dots\to\bigoplus_{T\in\N_0(A)}k[T_0(a)]\to k[\{\xi\in A(a)\,|\,A_\xi\ne A\}]$$
puis en utilisant la proposition~\ref{pr-descrQ} une suite exacte
$$\cdots\to\bigoplus_{T\in\N_d(A)}k[T_0(a)]\to\dots\to\bigoplus_{T\in\N_0(A)}k[T_0(a)]\to k[A(a)]\to\Q_A(a)\to 0$$
qui est manifestement l'évaluation en $a$ d'une suite exacte de $\F(\A;k)$ de la forme souhaitée : la flèche $k[A(a)]\to\Q_A(a)$ est induite par la projection canonique $k[A]\twoheadrightarrow\Q_A$, et les composantes des autres morphismes sont soit nulles pour tout $a$, soit induites par des inclusions canoniques $k[T_0]\hookrightarrow k[U]$ pour des sous-foncteurs $T_0\subset U$ de $A$. Cela achève la démonstration.
\end{proof}

On note $\Qse_d(X)$, pour un foncteur $X$ de $\rep(\A)$ et $d\in\mathbb{N}$, l'ensemble des $(d+1)$-uplets $N=(N_0,\dots,N_d)$ de sous-foncteurs de $X$ tels que $N_0\supsetneq N_1\supsetneq\dots\supsetneq N_d\ne 0$. Le résultat suivant découle des propositions~\ref{pr-ressimpl} et~\ref{pr-dualQ}.

\begin{coro}\label{cor-res_cosimpl}
Supposons que la catégorie $\A$ est $k$-triviale. Si $X$ est un foncteur de type cofini de $\rep(\A)$, on dispose d'une suite exacte
$$0\to\Q^X\to k[X]\to\prod_{N\in\Qse_0(X)}k[X/N_0]\to\cdots\to\prod_{N\in\Qse_d(X)}k[X/N_0]\to\cdots\;.$$
\end{coro}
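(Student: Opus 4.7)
Le plan est d'appliquer la proposition~\ref{pr-ressimpl} au foncteur $X^\sharp$ de $\rep(\A^\op)$, puis de dualiser pour revenir dans $\F(\A;k)$. Comme $X$ est de type cofini et que $\A$ vérifie (FH), le foncteur $X^\sharp$ est de type fini dans $\rep(\A^\op)$ et $X$ prend des valeurs finies, de sorte que $X^{\sharp\sharp}\simeq X$. La proposition~\ref{pr-ressimpl} donne alors une suite exacte
$$\cdots\to\bigoplus_{T\in\N_d(X^\sharp)}k[T_0]\to\dots\to\bigoplus_{T\in\N_0(X^\sharp)}k[T_0]\to k[X^\sharp]\to\Q_{X^\sharp}\to 0$$
dans $\F(\A^\op;k)$.

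On établit ensuite une bijection entre les ensembles d'indexation. L'application $T\mapsto\ker(X\twoheadrightarrow T^\sharp)$ fournit, grâce à $X^{\sharp\sharp}\simeq X$, une bijection entre les sous-foncteurs stricts de $X^\sharp$ et les sous-foncteurs non nuls de $X$, qui renverse les inclusions. Cela donne une bijection $\N_d(X^\sharp)\simeq\Qse_d(X)$, $T=(T_0,\dots,T_d)\mapsto N=(N_0,\dots,N_d)$, où $N_i:=\ker(X\twoheadrightarrow T_i^\sharp)$, sous laquelle on a en particulier $T_0^\sharp\simeq X/N_0$.

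Il restera à appliquer la dualité $(-)^\vee$ à la suite exacte ci-dessus. Cette dualité des $k$-espaces vectoriels étant exacte et échangeant sommes directes et produits, on obtiendra la suite exacte de $\F(\A;k)$ souhaitée, pourvu qu'on identifie $k[X^\sharp]^\vee\simeq k[X]$ (corollaire~\ref{cor-dual_lin} appliqué à $X^\sharp$, combiné à $X^{\sharp\sharp}\simeq X$), $(\Q_{X^\sharp})^\vee\simeq\Q^X$ (proposition~\ref{pr-dualQ}), et, pour chaque terme intermédiaire, $k[T_0]^\vee\simeq k[T_0^\sharp]\simeq k[X/N_0]$.

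L'obstacle principal sera ce dernier isomorphisme~: le sous-foncteur $T_0$ de $X^\sharp$ n'est pas en général de type fini, de sorte que le corollaire~\ref{cor-dual_lin} ne s'y applique pas immédiatement. On s'appuiera alors sur la variante fonctorielle et sans hypothèse de finitude mentionnée dans la remarque suivant la proposition~\ref{pr-dual_lin} (valable pour tout foncteur additif à valeurs finies et inversibles dans $k$ dès que $k$ contient assez de racines de l'unité, ce qui est bien le cas ici puisque $T_0(a)$ est un sous-groupe de $X^\sharp(a)$, d'ordre inversible dans $k$), le cas d'un corps $k$ général s'obtenant par extension des scalaires à une extension convenable puis descente, selon le schéma déjà utilisé dans la démonstration de la proposition~\ref{pr-dual_lin}.
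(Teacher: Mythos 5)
Votre démonstration est correcte et suit exactement la voie du texte, qui se borne à indiquer que le corollaire découle des propositions~\ref{pr-ressimpl} et~\ref{pr-dualQ} : on applique la résolution simpliciale à $X^\sharp$ (de type fini dans $\rep(\A^\op)$ puisque $X$ est de type cofini à valeurs finies), on dualise, et la bijection $\N_d(X^\sharp)\simeq\Qse_d(X)$ que vous décrivez identifie les termes. Le point technique que vous relevez (les $T_0$ ne sont pas de type fini en général, ce que le texte passe sous silence) est réel, mais plutôt que la descente à la Krull--Schmidt — dont la validité pour un $T_0$ quelconque n'est pas immédiate puisque $\mathrm{End}(k[T_0])$ peut être de dimension infinie — il se règle en écrivant $T_0$ comme colimite filtrante \emph{ponctuellement triviale} de ses sous-foncteurs de type fini, comme au début de la démonstration de la proposition~\ref{pr-ExtLin}, ce qui ramène l'isomorphisme $k[T_0]^\vee\simeq k[T_0^\sharp]$ au corollaire~\ref{cor-dual_lin}.
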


\subsection{Groupes d'extensions}

\begin{theo}\label{th-ExtQ} Supposons que la catégorie $\A$ est $k$-triviale. Soient $A$ un foncteur fini et $X$ un foncteur de type fini de $\rep(\A)$.
\begin{enumerate}
    \item Il existe un isomorphisme $\mathrm{Hom}(k[X],\Q_A)\simeq k[\surj(X,A)]$ naturel en $X$ et équivariant par rapport à l'action de $\mathrm{Aut}(A)$. Plus précisément, il existe un diagramme commutatif
    $$\xymatrix{\mathrm{Hom}(k[X],k[A])\ar[r] & \mathrm{Hom}(k[X],\Q_A)\ar[d]^\simeq \\
    k[\mathrm{Hom}(X,A)]\ar@{->>}[r]\ar[u]^{\Theta_{X,A}}_\simeq & k[\surj(X,A)]
    }$$
    où la flèche horizontale supérieure est induite par la projection canonique $k[A]\twoheadrightarrow\Q_A$ et la flèche horizontale inférieure envoie $[f]\in k[\mathrm{Hom}(X,A)]$ sur $[f]\in k[\surj(X,A)]$ si $f$ est un épimorphisme et sur $0$ sinon.
    \item On a $\mathrm{Ext}^n(k[X],\Q_A)=0$ pour tout $n>0$.
    \item On a $\mathrm{Ext}^n(\Q_A,k[X])=0$ pour tout $n>\lgr(A)$.
\end{enumerate}
\end{theo}

\begin{proof} La proposition~\ref{pr-ressimpl} fournit un complexe \emph{exact} $C^\bullet$ tel que $C^{\lgr(A)+1}=\Q_A$, $C^{\lgr(A)}=k[A]$, $C^{\lgr(A)-1}=\bigoplus_{B\subsetneq A}k[B]$, $C^i$ est une somme directe finie de linéarisations de foncteurs de $\rep(\A)$ pour $i\in\{0,\dots,\lgr(A)\}$, et $C^i=0$ sinon. On en déduit une suite spectrale d'hypercohomologie d'aboutissement nul dont la première page est donnée par
$$E_1^{i,j}=\mathrm{Ext}^j(k[X],C^i).$$
Par la proposition~\ref{pr-ExtLin}, on a $E_1^{i,j}=0$ pour $j>0$ et $i\ne\lgr(A)+1$. Il s'ensuit que $E_1^{\lgr(A)+1,j}=0$ pour $j>0$ (aucune différentielle ne peut entrer ni sortir de ce terme, et l'aboutissement est nul), d'où la deuxième assertion du théorème, et que la suite
$$\bigoplus_{B\subsetneq A}\mathrm{Hom}(k[X],k[B])=E_1^{\lgr(A)-1,0}\to\mathrm{Hom}(k[X],k[A])=E_1^{\lgr(A),0}\to E_1^{\lgr(A)+1,0}\to 0$$
est exacte, d'où en utilisant la proposition~\ref{pr-morlin} une suite exacte
$$\bigoplus_{B\subsetneq A}k[\mathrm{Hom}(X,B)]\to k[\mathrm{Hom}(X,A)]\to\mathrm{Hom}(k[X],\Q_A)\to 0$$
où la première flèche est induite par les inclusions $B\hookrightarrow A$, et a donc pour image le sous-espace vectoriel de $k[\mathrm{Hom}(X,A)]$ engendré par les $[f]$ où $f : X\to A$ est un morphisme dont l'image est un sous-objet strict de $A$, c'est-à-dire un morphisme qui n'est pas un épimorphisme. Cela établit la première assertion.

Pour le dernier point, la proposition~\ref{pr-ExtLin} montrant que la linéarisation d'un foncteur de $\rep(\A)$ est $\mathrm{Ext}^*(-,k[X])$-acyclique, la proposition~\ref{pr-ressimpl} implique que $\mathrm{Ext}^*(\Q_A,k[X])$ est isomorphe à l'homologie d'un complexe $\mathrm{Hom}(C_\bullet,k[X])$, où $C_0=k[A]$ et $C_i=\bigoplus_{T\in\N_{i-1}(A)} k[T_0]$ pour $i>0$. Comme $\N_i(A)$ est vide pour $\lgr(A)\le i$, cela conclut la démonstration du théorème.
\end{proof}

\begin{theo}\label{thm-Ext-QA}
Supposons que la catégorie $\A$ est $k$-triviale. Soient $A$ et $B$ des foncteurs finis de $\rep(\A)$ tels que $\lgr(B)\le\lgr(A)$ et $n\in\mathbb{N}$. Alors $\mathrm{Ext}^n(\Q_B,\Q_A)$ est nul sauf si $n=0$ et que $A$ et $B$ sont isomorphes. De plus, le morphisme d'anneaux canonique $k[\mathrm{Aut}(A)]\to\mathrm{End}(\Q_A)$ est un isomorphisme.
\end{theo}

\begin{proof}
La proposition~\ref{pr-ressimpl} fournit une suite spectrale d'hypercohomologie
\begin{equation*}
  E_2^{i,j}= \bigoplus_{B_j\subsetneq \dots \subsetneq B_0=B}\mathrm{Ext}^i( k[B_j],\Q_A) \Rightarrow \mathrm{Ext}^{i+j}(\Q_B,\Q_A)
\end{equation*}

Le théorème~\ref{th-ExtQ} montre que $E_2^{i,j}$ est nul pour $i>0$ ainsi que dans les cas où $\surj(B_j,A)$ est vide pour tout $B_j\subsetneq\dots\subsetneq B_0=B$. Comme $\lgr(B_j)\le\lgr(B)-j\le\lgr(A)-j$, c'est toujours le cas, sauf peut-être si $j=0$ et que $B$ est isomorphe à $A$, et l'on obtient alors que le morphisme canonique $\mathrm{Hom}(\Q_B,\Q_A)\to E_2^{0,0}=\mathrm{Hom}(k[B],\Q_A)\simeq k[\surj(B,A)]$ est un isomorphisme, d'où le théorème.
\end{proof}

\begin{rema}\label{rq-ExtQnz}
L'annulation de $\mathrm{Ext}^1(\Q_B,\Q_A)$ (la catégorie $\A$ étant toujours supposée $k$-triviale) ne vaut plus en général si $\lgr(B)>\lgr(A)$, dès lors que la catégorie $\rep(\A)$ n'est pas semi-simple. Soit en effet $0\to S\to T\to S'\to 0$ une suite exacte non scindée de $\rep(\A)$ avec $S$ et $S'$ simples. Alors $\mathrm{Ext}^1(\Q_T,k[S])\ne 0$ par l'exemple~\ref{ex-Qsom} ; comme $\Q_S\simeq k[S]^\red$ par le même exemple, on a également $\mathrm{Ext}^1(\Q_T,\Q_S)\ne 0$.

On notera qu'il peut aussi exister un morphisme non nul $\Q_A\to\Q_B$ avec $A$ et $B$ non isomorphes ; c'est par exemple le cas si $A$ est de type fini et qu'il existe un épimorphisme essentiel $A\twoheadrightarrow B$, car on dispose alors d'un épimorphisme $\Q_A\twoheadrightarrow\Q_B$ par la proposition~\ref{pr-foncQ}.
\end{rema}

Nous aurons également de la variante suivante du résultat d'annulation du théorème~\ref{thm-Ext-QA} :

\begin{prop}\label{pr-ExtQQ}
Supposons que la catégorie $\A$ est $k$-triviale. Soient $A$ et $B$ des foncteurs finis de $\rep(\A)$ tels que $\lgr(B)<\lgr(A)$. Alors $\mathrm{Ext}^*(\Q^B,\Q_A)=0$.
\end{prop}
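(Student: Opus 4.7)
The plan is to mimic the strategy of Theorem~\ref{thm-Ext-QA}, but using the fundamental \emph{co}resolution of $\Q^B$ from Corollary~\ref{cor-res_cosimpl} in place of the fundamental resolution of $\Q_B$ from Proposition~\ref{pr-ressimpl}. The key numerical input will be that $\lgr(B)<\lgr(A)$ forces every term of this coresolution to have length strictly less than $\lgr(A)$, which kills all surjections onto $A$.

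First I would observe that since $B$ is finite and $\A$ satisfies (FH), Proposition~\ref{sousobjets-nbfini} guarantees $B$ has only finitely many subfunctors, so each $\Qse_d(B)$ is finite and is empty for $d\ge\lgr(B)$. Corollary~\ref{cor-res_cosimpl} therefore furnishes a \emph{finite} exact sequence
\begin{equation*}
0\to\Q^B\to k[B]\to\bigoplus_{N\in\Qse_0(B)}k[B/N_0]\to\cdots\to\bigoplus_{N\in\Qse_{\lgr(B)-1}(B)}k[B/N_0]\to 0
\end{equation*}
(products becoming direct sums thanks to finiteness). Every term is a finite direct sum of linearizations $k[Y]$ where $Y$ is a quotient of $B$; in particular $\lgr(Y)\le\lgr(B)<\lgr(A)$.

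Next I invoke Theorem~\ref{th-ExtQ}. For any such $Y$, any surjection $Y\twoheadrightarrow A$ would force $\lgr(Y)\ge\lgr(A)$, so $\surj(Y,A)=\varnothing$; part (1) then yields $\mathrm{Hom}(k[Y],\Q_A)\simeq k[\surj(Y,A)]=0$, and part (2) yields $\mathrm{Ext}^n(k[Y],\Q_A)=0$ for all $n>0$. Hence $\mathrm{Ext}^*(C^i,\Q_A)=0$ identically for every term $C^i$ of the above coresolution.

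To conclude, I would run the hypercohomology spectral sequence for $\mathrm{Hom}(-,\Q_A)$ applied to this exact complex: the $E_1$-page vanishes identically, so the abutment $\mathrm{Ext}^*(\Q^B,\Q_A)$ vanishes. Equivalently, one slices the coresolution into short exact sequences $0\to K^i\to C^i\to K^{i+1}\to 0$ with $K^0=\Q^B$ and $K^{\lgr(B)}=C^{\lgr(B)}$, and proceeds by descending induction on $i$: the long exact sequence for $\mathrm{Ext}^*(-,\Q_A)$ combined with the vanishing on $C^i$ and on $K^{i+1}$ forces vanishing on $K^i$. I do not anticipate a real obstacle; the proof is essentially dual in spirit to that of Theorem~\ref{thm-Ext-QA}, the only delicate point being to verify that (FH) together with $\lgr(B)<\infty$ truly renders the coresolution finite, which is what legitimizes the use of direct sums and the clean inductive argument.
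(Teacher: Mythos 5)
Your proposal is correct and follows exactly the route the paper intends: it replaces the fundamental resolution of $\Q_B$ by the finite coresolution of $\Q^B$ from the corollaire~\ref{cor-res_cosimpl}, observes that every term $k[B/N_0]$ has $\lgr(B/N_0)\le\lgr(B)<\lgr(A)$ so that le théorème~\ref{th-ExtQ} kills all the $\mathrm{Ext}^*(-,\Q_A)$ groups, and concludes by the hypercohomology spectral sequence. This is precisely the « démonstration entièrement analogue » that the paper leaves to the reader, with the finiteness of the coresolution (via la proposition~\ref{sousobjets-nbfini}) correctly identified as the point that legitimizes the argument.
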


\begin{proof}
Ce résultat se démontre de façon entièrement analogue au théorème~\ref{thm-Ext-QA}, en utilisant la corésolution de $\Q^B$ donnée par le corollaire~\ref{cor-res_cosimpl}, corésolution qui est finie et fait intervenir des produits finis puisque $B$ est fini.
\end{proof}

\section{Les foncteurs $\Q_{A,M}$ et $\Q^{A,M}$}\label{sQAM}

Nous introduisons maintenant des protagonistes fondamentaux de ce travail, qui constituent des \guillemotleft~briques élémentaires~\guillemotright\ de la catégorie $\F(\A;k)$, au moins lorsque $\A$ est $k$-triviale, en un sens que nos résultats ultérieurs (notamment le théorème~\ref{th-princ} et la proposition~\ref{pr-G0Fd}) préciseront.

\begin{defi}
Soient $A$ un foncteur de $\rep(\A)$ et $M$ un $k[\mathrm{Aut}(A)]$-module. On note $\Q_{A,M}$ (resp. $\Q^{A,M}$) le foncteur $M\underset{k[\mathrm{Aut}(A)]}{\otimes}\Q_A\simeq (M\otimes\Q_A)_{\mathrm{Aut}(A)}$ (resp. $(M\otimes\Q_A)^{\mathrm{Aut}(A)}$) de $\F(\A;k)$, où l'indice (resp. l'exposant) indique les co-invariants (resp. les invariants) sous l'action diagonale du groupe $\mathrm{Aut}(A)$.
\end{defi}

\begin{rema}
Ces constructions sont directement inspirées des travaux de Powell \cite{GP-cow}. Précisément, dans la catégorie $\F(k,k)$, lorsque $k$ est un corps fini de caractéristique $p$, si $\lambda$ est une partition $p$-régulière de longueur $n$ et $M_\lambda$ la représentation irréductible correspondante de $\GL_n(k)$ à coefficients dans $k$, alors $Q_{\mathrm{Hom}_k(k^n,-),M_\lambda}$ est exactement le foncteur $DJ_\lambda$ de \cite[déf.~3.0.1]{GP-cow}.

De plus, Powell fournit une présentation conceptuelle de ces foncteurs en termes d'adjonctions, qui s'étend essentiellement sans changement à tous les cas où la catégorie abélienne $\rep(\A)$ est semi-simple. La généralisation de ce point de vue au cas général semble toutefois poser des difficultés substantielles. 
\end{rema}

La propriété de dualité suivante, qui découle de la proposition~\ref{pr-dualQ} (il n'y a pas besoin d'hypothèse de finitude sur $A$, car $\Q_A=0$ si $A$ n'est pas de type fini --- cf. corollaire~\ref{cor-valQ}), justifie que nous 
nous concentrions sur l'étude des foncteurs $\Q_{A,M}$.

\begin{prop}\label{pr-dQAM} Supposons que la catégorie $\A$ est $k$-triviale, que $A$ est un foncteur de type fini de $\rep(\A)$ et $M$ un $k[\mathrm{Aut}(A)]$-module. On dispose dans $\F(\A^\op;k)$ d'un isomorphisme $(\Q_{A,M})^\vee\simeq\Q^{A^\sharp,\mathrm{Hom}_k(M,k)}$ naturel en $A$ et en $M$.
\end{prop}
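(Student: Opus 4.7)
The plan is to take the dual of the definition $\Q_{A,M} = M \otimes_{k[G]} \Q_A$, with $G = \mathrm{Aut}(A)$, and invoke proposition~\ref{pr-dualQ} as the main input. Since $A$ is of finite type and $\A$ satisfies (FH), the values $A(a)$ are finite, hence $\Q_A(a)$ is a finite-dimensional $k[G]$-module for every $a \in \mathrm{Ob}\,\A$. The standard tensor--hom adjunction over the group algebra $k[G]$ then produces a natural isomorphism
$$(\Q_{A,M})^\vee \;=\; \bigl(M \otimes_{k[G]} \Q_A\bigr)^\vee \;\simeq\; \mathrm{Hom}_{k[G]}\bigl(M,(\Q_A)^\vee\bigr)$$
of functors in $\F(\A^\op;k)$.

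I would then apply proposition~\ref{pr-dualQ} to substitute $(\Q_A)^\vee \simeq \Q^{A^\sharp}$, using that the involution $(-)^\sharp$ supplies a canonical isomorphism $\mathrm{Aut}(A) \simeq \mathrm{Aut}(A^\sharp)$ intertwining the natural $G$-actions on the two sides. Pointwise at each object $a$, since $\Q^{A^\sharp}(a)$ is finite-dimensional, one has
$$\mathrm{Hom}_{k[G]}\bigl(M,\Q^{A^\sharp}(a)\bigr) \;=\; \mathrm{Hom}_k\bigl(M,\Q^{A^\sharp}(a)\bigr)^{G} \;\simeq\; \bigl(M^\vee \otimes \Q^{A^\sharp}(a)\bigr)^{G},$$
which by definition of the $\Q^{-,-}$ construction is the value at $a$ of $\Q^{A^\sharp,M^\vee}$. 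The $k$-triviality of $\A$ ensures that $|G|$ is invertible in $k$, so that invariants and coinvariants are naturally interchangeable and the bookkeeping between the two variants of $Q$ is harmless. The remark after the proposition about dropping the finite-type hypothesis on $A$ is then automatic, since $\Q_A = 0$ if $A$ is not of finite type (corollary~\ref{cor-valQ}).

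The main obstacle will be verifying the $\mathrm{Aut}(A)$-equivariance of the isomorphism in proposition~\ref{pr-dualQ}, since that proof proceeds via an extension of scalars and a Krull--Schmidt descent, and is not \emph{a priori} manifestly natural in $A$. The remark following proposition~\ref{pr-dual_lin} provides functoriality in $A$ when $k$ contains enough roots of unity; to transport this through the descent argument one has only to observe that the Krull--Schmidt decomposition is canonical up to automorphisms of the summands, which is exactly what is required for the $\mathrm{Aut}(A)$-actions to align. Once this equivariance is in hand, the rest is a formal unwinding of tensor--hom adjunction and pointwise linear duality, and the naturality in $A$ and $M$ asserted in the statement follows from the naturality of each of these steps.
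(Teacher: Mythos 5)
Votre démonstration est correcte et suit essentiellement la même voie que l'article, qui se contente d'affirmer que l'énoncé \og découle de la proposition~\ref{pr-dualQ}\fg : vous explicitez ce déroulement formel (adjonction tensorielle--Hom sur $k[\mathrm{Aut}(A)]$, substitution de $(\Q_A)^\vee\simeq\Q^{A^\sharp}$, puis conversion de $\mathrm{Hom}_k(M,-)^G$ en $(M^\vee\otimes -)^G$ grâce à la dimension finie des valeurs de $\Q^{A^\sharp}$). Le point délicat que vous signalez --- l'équivariance de l'isomorphisme de la proposition~\ref{pr-dualQ} --- est précisément couvert par la naturalité en $A$ affirmée au corollaire~\ref{cor-dual_lin}, de sorte que votre argument est complet.
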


\begin{prop}\label{pr-Qexact} Le foncteur
$$k[\mathrm{Aut}(A)]\Md\to\F(\A;k)\qquad M\mapsto\Q_{A,M}$$
est exact. Il est bicontinu si $A$ est à valeurs finies. Il est fidèle si $A$ est de type fini.
\end{prop}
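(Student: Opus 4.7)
Le plan est d'exploiter la description ponctuelle de $\Q_A$ donnée par la proposition~\ref{pr-descrQ}, à savoir $\Q_A(a)\simeq k[\surj(\A(a,-),A)]$, et de ramener ainsi chacune des trois assertions à de la théorie élémentaire des modules sur un anneau de groupe. La première observation clé à vérifier est que l'action par post-composition du groupe $\mathrm{Aut}(A)$ sur $\surj(\A(a,-),A)$ est \emph{libre} : en effet, un automorphisme $\varphi$ de $A$ vérifiant $\varphi\circ f=f$ pour une surjection $f$ coïncide nécessairement avec $\mathrm{id}_A$ sur l'image de $f$, qui est $A$ tout entier. Il en résulterait que, pour tout objet $a$ de $\A$, le $k[\mathrm{Aut}(A)]$-module $\Q_A(a)$ est \emph{libre}, une base étant donnée par un système de représentants des orbites.

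L'exactitude se déduirait aussitôt de ce fait : puisque l'exactitude dans $\F(\A;k)$ se teste ponctuellement au but et que la tensorisation par un module plat (a fortiori libre) est exacte, le foncteur $M\mapsto\Q_{A,M}$ serait exact. Pour la bicontinuité sous l'hypothèse que $A$ est à valeurs finies, j'observerais que $\surj(\A(a,-),A)\subset\mathrm{Hom}(\A(a,-),A)\simeq A(a)$ est alors fini (via Yoneda), de sorte que $\Q_A(a)$ est un $k[\mathrm{Aut}(A)]$-module libre de rang \emph{fini} $n_a$. Par évaluation en $a$, le foncteur deviendrait $M\mapsto M^{\oplus n_a}$ ; comme une somme directe finie coïncide avec un produit fini, ceci préserve à la fois les limites et les colimites, qui dans $\F(\A;k)$ sont calculées ponctuellement.

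Pour la fidélité dans le cas où $A$ est de type fini, le corollaire~\ref{cor-valQ} fournit un objet $a$ tel que $\Q_A(a)\ne 0$, autrement dit $n_a\ge 1$ ; tout morphisme $f : M\to M'$ envoyé sur $0$ par $\Q_{A,-}$ satisferait alors $f^{\oplus n_a}=0$ après évaluation en $a$, d'où $f=0$.

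Je n'anticipe pas de véritable obstacle technique : le seul point non purement formel est la liberté de l'action de $\mathrm{Aut}(A)$ sur les surjections, dont la vérification ne demande qu'une ligne ; le reste relève du calcul ponctuel des (co)limites dans une catégorie de foncteurs et de la platitude évidente d'un module libre sur un anneau de groupe.
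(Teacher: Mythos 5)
Votre démonstration est correcte et suit essentiellement la même stratégie que celle de l'article : la liberté de l'action de $\mathrm{Aut}(A)$ sur l'ensemble $\{\xi\in A(a)\,|\,A_\xi=A\}\simeq\surj(\A(a,-),A)$ (proposition~\ref{pr-descrQ}), qui fait de $\Q_A(a)$ un $k[\mathrm{Aut}(A)]$-module libre, fini si $A$ est à valeurs finies, et non nul pour un $a$ bien choisi si $A$ est de type fini. Les trois assertions s'en déduisent ponctuellement exactement comme vous le faites.
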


\begin{proof} Soit $a$ un objet de $\A$. Le $\mathrm{Aut}(A)$-ensemble
$$\Gamma(a):=\{\xi\in A(a)\,|\,A_\xi=A\}$$
est \emph{libre}. En effet, si $\varphi\in\mathrm{Aut}(A)$ et $\xi\in A(a)$ sont tels que $\varphi_*\xi=\xi$, alors $\varphi$ induit l'identité sur $A_\xi$, donc $\varphi=\mathrm{Id}_A$ si $A_\xi=A$. Par ailleurs, $\Gamma(a)$ est fini si $A$ est à valeurs finies, et si $A$ est de type fini, il existe $a\in\mathrm{Ob}\,\A$ tel que $\Gamma(a)$ soit non vide.
La conclusion découle donc de la proposition~\ref{pr-descrQ}.
\end{proof}

Le résultat suivant découle des propositions~\ref{pr-dQAM} et~\ref{pr-Qexact}.

\begin{coro}\label{cor-QAMdual}
Supposons que la catégorie $\A$ est $k$-triviale. Soient $A$ un foncteur de $\rep(\A)$ et $M$ une représentation du groupe $\mathrm{Aut}(A)$.
\begin{enumerate}
    \item Le foncteur
$$k[\mathrm{Aut}(A)]\Md\to\F(\A;k)\qquad M\mapsto\Q^{A,M}$$
est bicontinu.
    \item Le morphisme naturel en $M$
    $$M\underset{k[\mathrm{Aut}(A)]}{\otimes}\Q^A\simeq (M\otimes\Q^A)_{\mathrm{Aut}(A)}\to (M\otimes\Q^A)^{\mathrm{Aut}(A)}=\Q^{A,M}$$
    donné par la norme est un isomorphisme.
    \item En appliquant aux morphismes canoniques $\Q^A\hookrightarrow k[A]\twoheadrightarrow\Q_A$ le foncteur $M\underset{k[\mathrm{Aut}(A)]}{\otimes} -$, on obtient, à isomorphisme naturel en $M$ près, des flèches $\Q^{A,M}\hookrightarrow M\underset{k[\mathrm{Aut}(A)]}{\otimes}k[A]\twoheadrightarrow\Q_{A,M}$ dont la première est un monomorphisme et la seconde un épimorphisme.
\end{enumerate}
\end{coro}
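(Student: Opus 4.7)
My plan is to derive all three parts of the corollary from one structural fact: for each object $a$ of $\A$, the $k[\mathrm{Aut}(A)]$-module $\Q^A(a)$ is a finitely generated free module. Granted this, the rest reduces to standard manipulations over the group algebra $kG$, where $G := \mathrm{Aut}(A)$ is finite by (FH) whenever $A$ is of finite type (the degenerate cases, where $\Q_A$ or $\Q^A$ vanishes, are trivial).

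The interesting case is when $A$ is finite in $\rep(\A)$, so both of finite type and of cotype finite. Proposition \ref{pr-dualQ} applied with $A^\sharp$ in place of $A$ then gives $\Q^A \simeq (\Q_{A^\sharp})^\vee$ in $\F(\A;k)$, under the identification $\mathrm{Aut}(A^\sharp) \simeq G^{\op}$ induced by the dualization. The proof of Proposition \ref{pr-Qexact} exhibits $\Q_{A^\sharp}(a)$ as the permutation module on a free $G^{\op}$-set, hence as a finitely generated free $kG^{\op}$-module. Because $kG$ is a Frobenius algebra for finite $G$, the $k$-dual of a finitely generated free $kG^{\op}$-module is a finitely generated free $kG$-module, yielding the desired freeness of $\Q^A(a)$.

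Granted this, part (2) follows from the classical norm-map isomorphism for projective modules: for any $kG$-module $M$ and any projective $kG$-module $P$, the norm map $(M \otimes P)_G \to (M \otimes P)^G$ is an isomorphism, which reduces by additivity to $P = kG$, where the intertwining $m \otimes g \mapsto g^{-1}m \otimes g$ of diagonal and second-factor actions identifies both sides with $M$ and the norm with the obvious isomorphism. For part (1), the pointwise identification $\Q^{A,M}(a) \simeq M \otimes_{kG} \Q^A(a)$ from part (2), together with the fact that $\Q^A(a)$ is a finitely generated free $kG$-module, reduces $M \mapsto \Q^{A,M}(a)$ to a finite direct sum of copies of $M$. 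Bicontinuity at each $a$, combined with bicontinuity of evaluation in $\F(\A;k)$, yields bicontinuity of $M \mapsto \Q^{A,M}$.

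For part (3), right-exactness of $\otimes_{kG}$ immediately shows that $M \otimes_{kG} k[A] \to \Q_{A,M}$ is an epimorphism. For the first map to be a monomorphism, I exploit that $\Q^A(a)$, being projective over the Frobenius algebra $kG$, is also injective; hence the inclusion $\Q^A(a) \hookrightarrow k[A(a)]$ splits as a $kG$-module map, and applying $M \otimes_{kG} -$ preserves split monomorphisms. The main obstacle is the first step: correctly transporting the free $kG$-module structure through the contragredient duality $\Q^A \simeq (\Q_{A^\sharp})^\vee$ and the identification $\mathrm{Aut}(A^\sharp) \simeq \mathrm{Aut}(A)^{\op}$. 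Everything downstream is routine Frobenius-algebra bookkeeping.
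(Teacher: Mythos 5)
Your proof is correct and follows essentially the same route as the paper, which simply derives the corollary from Propositions~\ref{pr-dQAM} and~\ref{pr-Qexact}, i.e.\ from the duality $(\Q_{A,M})^\vee\simeq\Q^{A^\sharp,\mathrm{Hom}_k(M,k)}$ together with the pointwise freeness of $\Q_{A^\sharp}(a)$ over the group algebra --- exactly the two ingredients you combine, with the Frobenius-algebra bookkeeping (norm map, projective $=$ injective over $k[G]$) made explicit. One small imprecision: your reduction assumes that a non-finite $A$ forces $\Q_A=0$ or $\Q^A=0$, which can fail (cf.\ remark~\ref{rq-simpathol}, there may exist $A$ of finite type and of cofinite type that are not finite), but your argument only ever uses that $A$ is of cofinite type, so it in fact covers all cases where the statement has content.
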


La définition suivante n'interviendra pas dans la suite de la présente section, mais elle s'avérera importante pour classifier les foncteurs simples de $\F(\A;k)$ (cf. théorème~\ref{th-simp_Fd} ci-après).

\begin{nota}\label{not-prl_interm}
Supposons que la catégorie $\A$ est $k$-triviale. Soient $A$ un foncteur de $\rep(\A)$ et $M$ une représentation du groupe $\mathrm{Aut}(A)$. On note $\Q(A,M)$ l'image du morphisme $\Q^{A,M}\hookrightarrow M\underset{k[\mathrm{Aut}(A)]}{\otimes}k[A]\twoheadrightarrow\Q_{A,M}$ du corollaire précédent.

On note simplement $\Q(A)$ pour $\Q(A,k[\mathrm{Aut}(A)])$.
\end{nota}

Avant d'étudier, dans la section~\ref{sect-Qprolint}, les foncteurs $\Q(A,M)$, nous donnons quelques propriétés générales des foncteurs $\Q_{A,M}$.

\begin{prop}\label{pr-tauQAM} Soit $A$ un foncteur de $\rep(\A)$.
Soient $A$ un foncteur de type fini de $\rep(\A)$ et $M$ un $k[\mathrm{Aut}(A)]$-module non nul. Il existe un objet $x$ de $\A$ tel que $k[A]$ soit facteur direct de $\tau_x(\Q_{A,M})$.
\end{prop}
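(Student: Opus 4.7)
My plan is to compute $\tau_x\Q_A$ explicitly via Proposition~\ref{pr-descrQ} and the additivity of $A$, exhibit a canonical direct summand involving $k[A]$ compatibly with the action of $\mathrm{Aut}(A)$, and then descend to $\Q_{A,M}$ by tensoring with $M$ over $k[\mathrm{Aut}(A)]$.

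First, since $A$ is of finite type, I would choose $x\in\mathrm{Ob}\,\A$ and $\xi_0\in A(x)$ with $A_{\xi_0}=A$, so that the $\mathrm{Aut}(A)$-set $\Gamma(x):=\{\xi\in A(x)\mid A_\xi=A\}$ is nonempty. Combining Proposition~\ref{pr-descrQ} with the additivity isomorphism $A(x\oplus t)=A(x)\oplus A(t)$ — which gives $A_{(\xi,\zeta)}=A_\xi+A_\zeta$ — I would obtain
$$\tau_x\Q_A(t)=\Q_A(x\oplus t)\simeq k[\{(\xi,\zeta)\in A(x)\times A(t)\mid A_\xi+A_\zeta=A\}].$$
The condition $A_\xi=A$ (on $\xi$ alone) is preserved both by $\mathrm{Aut}(A)$ and by the functoriality in $t$, since $\xi$ is left untouched by $\mathrm{id}_x\oplus f$; hence the subset indexed by $\Gamma(x)\times A(t)$ splits off as a natural, $\mathrm{Aut}(A)$-equivariant direct summand, yielding a decomposition in $\F(\A;k)$
$$\tau_x\Q_A\simeq(k[\Gamma(x)]\otimes k[A])\oplus K$$
with $\mathrm{Aut}(A)$ acting diagonally on the first summand.

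To descend, I would invoke the proof of Proposition~\ref{pr-Qexact}: $\mathrm{Aut}(A)$ acts freely on $\Gamma(x)$, so $k[\Gamma(x)]$ is a free $k[\mathrm{Aut}(A)]$-module; let $I$ denote a nonempty set indexing its free basis. Using the classical twist $g\otimes v\mapsto g\otimes g^{-1}v$ that turns the diagonal action on $k[\mathrm{Aut}(A)]\otimes k[A]$ into the left regular action on the first factor alone, I would conclude
$$M\otimes_{k[\mathrm{Aut}(A)]}(k[\Gamma(x)]\otimes k[A])\simeq\bigoplus_I M\otimes k[A].$$
Since $\tau_x$ commutes with colimits, one has $\tau_x\Q_{A,M}\simeq M\otimes_{k[\mathrm{Aut}(A)]}\tau_x\Q_A$, so applying the additive functor $M\otimes_{k[\mathrm{Aut}(A)]}-$ to the splitting above identifies $M\otimes k[A]$ with a direct summand of $\tau_x\Q_{A,M}$.

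Finally, since $M\ne 0$, picking any nonzero $m\in M$ provides a $k$-line $km$ which splits off as a $k$-factor of $M$, so $k[A]\simeq k\otimes k[A]$ is a direct factor of $M\otimes k[A]$, and hence of $\tau_x\Q_{A,M}$. No single step is difficult; the only real bookkeeping is to confirm simultaneously the naturality in $t$ and the $\mathrm{Aut}(A)$-equivariance of the splitting of $\tau_x\Q_A$, and to apply the twist isomorphism correctly so that the tensor product over $k[\mathrm{Aut}(A)]$ absorbs the group-algebra factor while leaving $k[A]$ intact.
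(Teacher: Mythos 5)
Your proof is correct and follows essentially the same route as the paper's: choose $x$ so that $\Gamma(x)=\{\xi\in A(x)\mid A_\xi=A\}$ is nonempty, split off $k[\Gamma(x)]\otimes k[A]$ as an $\mathrm{Aut}(A)$-equivariant direct summand of $\tau_x(\Q_A)$, then use the cocontinuity of $\tau_x$ and the freeness of $k[\Gamma(x)]$ as a $k[\mathrm{Aut}(A)]$-module to descend to $\Q_{A,M}$. The only (harmless) difference is that you obtain the splitting of $\tau_x(\Q_A)$ from the pointwise description of Proposition~\ref{pr-descrQ} together with a naturality check, whereas the paper reads it off from the cokernel presentation defining $\Q_A$ after applying the exact functor $\tau_x$.
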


\begin{proof} On choisit $x$ tel que $A$ soit quotient de $\A(x,-)$. Comme le foncteur $\tau_x$ est exact, $\tau_x(\Q_A)$ est le conoyau du morphisme
$$\underset{\zeta\in B(x)}{\bigoplus_{B\subsetneq A}} k[B]\to\bigoplus_{\xi\in A(x)}k[A]$$
dont la composante $k[B]\to k[A]$ étiquetée par $\zeta$ à la source et $\xi$ au but est la linéarisation de l'inclusion $B\hookrightarrow A$ si $\zeta=\xi\in A(x)\supset B(x)$ et $0$ sinon. En particulier, cette composante est toujours nulle si $A_\xi=A$. Il s'ensuit que $k[\{\xi\in A(x)\,|\,A_\xi=A\}]\otimes k[A]$ est facteur direct de $\tau_x(\Q_A)$.

Comme $\tau_x$ est cocontinu, $\tau_x(\Q_{A,M})\simeq M\underset{k[\mathrm{Aut}(A)]}{\otimes}\tau_x(\Q_A)$, donc
$$M\underset{k[\mathrm{Aut}(A)]}{\otimes}\big(k[\{\xi\in A(x)\,|\,A_\xi=A\}]\otimes k[A]\big)$$
est facteur direct de $\tau_x(\Q_{A,M})$. Or $k[\{\xi\in A(x)\,|\,A_\xi=A\}]$ est un $k[\mathrm{Aut}(A)]$-module \emph{libre} (cf. la démonstration de la proposition~\ref{pr-Qexact}), de rang non nul vu le choix de $x$, donc $M\underset{k[\mathrm{Aut}(A)]}{\otimes}\big(k[\{\xi\in A(x)\,|\,A_\xi=A\}]\otimes k[A]\big)$ est la somme directe d'un nombre strictement positif de copies de $k[A]$, d'où la proposition.
\end{proof}

Nous donnons maintenant des résultats homologiques fondamentaux spécifiques à l'inégale caractéristique.

\begin{theo}\label{thm-Ext-QAM}
Supposons que la catégorie $\A$ est $k$-triviale. Soient $A$ et $B$ des foncteurs finis de $\rep(\A)$ tels que $\lgr(A)\le\lgr(B)$, $M$ et $N$ des objets de $k[\mathrm{Aut}(A)]\Md$ et $k[\mathrm{Aut}(B)]\Md$ respectivement.
\begin{enumerate}
    \item Si $A$ et $B$ ne sont pas isomorphes, alors $\mathrm{Ext}^*_{\F(\A;k)}(\Q_{A,M},\Q_{B,N})$ est nul.
    \item Si $A=B$, alors le morphisme naturel
    $$\mathrm{Ext}^*_{k[\mathrm{Aut}(A)]}(M,N)\to\mathrm{Ext}^*_{\F(\A;k)}(\Q_{A,M},\Q_{A,N})$$ induit par le foncteur exact $\Q_{A,-}$ est un isomorphisme.
\end{enumerate}
\end{theo}

\begin{proof} Comme le foncteur $\Q_{A,-}$ est exact (proposition~\ref{pr-Qexact}),
$$(M,N)\mapsto\mathrm{Ext}^*_{\F(\A;k)}(\Q_{A,M},\Q_{B,N})$$
définit un bifoncteur sur $k[\mathrm{Aut}(A)]\Md\times k[\mathrm{Aut}(B)]\Md$ qui est cohomologique par rapport à chaque variable. Le théorème~\ref{thm-Ext-QA} montre qu'il est nul lorsque $M=k[\mathrm{Aut}(A)]$ et $N=k[\mathrm{Aut}(B)]$, sauf si $A$ et $B$ sont isomorphes, et que, si $A=B$, le morphisme naturel $\mathrm{Ext}^*_{k[\mathrm{Aut}(A)]}(M,N)\to\mathrm{Ext}^*_{\F(\A;k)}(\Q_{A,M},\Q_{A,N})$ est un isomorphisme si $M=N=k[\mathrm{Aut}(A)]$. Comme $Q_{A,-}$ est bicontinu (proposition~\ref{pr-Qexact}) et qu'un $k[\mathrm{Aut}(B)]$-module est injectif si et seulement s'il est projectif, on en déduit que, pour $M$ projectif et $N$ injectif, $\mathrm{Ext}^*_{\F(\A;k)}(\Q_{A,M},\Q_{B,N})=0$, sauf si $A\simeq B$, et que pour $A=B$ le morphisme naturel $\mathrm{Ext}^*_{k[\mathrm{Aut}(A)]}(M,N)\to\mathrm{Ext}^*_{\F(\A;k)}(\Q_{A,M},\Q_{A,N})$ est un isomorphisme (pour $M$ projectif et $N$ injectif). Par comparaison de foncteurs cohomologiques, l'annulation ou l'isomorphisme s'étendent à tous les modules $M$ et $N$, d'où le théorème.
\end{proof}

La proposition suivante nous sera utile pour établir un autre résultat d'annulation cohomologique, l'importante proposition~\ref{pr-Ext_HR}.
\begin{prop}\label{pr-ExtQAMQ}
Supposons que la catégorie $\A$ est $k$-triviale. Soient $A$ et $B$ des foncteurs finis de $\rep(\A)$ tels que $\lgr(B)<\lgr(A)$, $M$ et $N$ des objets de $k[\mathrm{Aut}(A)]\Md$ et $k[\mathrm{Aut}(B)]\Md$ respectivement. Alors $\mathrm{Ext}^*(\Q^{B,N},\Q_{A,M})=0$.
\end{prop}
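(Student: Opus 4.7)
The plan is to imitate the proof of Theorem~\ref{thm-Ext-QAM} essentially verbatim, using Proposition~\ref{pr-ExtQQ} in the role that Theorem~\ref{thm-Ext-QA} played there. The key structural observation is that both functorial variables behave cohomologically.

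First I would check that $\Q^{B,-}$ is an \emph{exact} functor: by Corollary~\ref{cor-QAMdual}(1) it is bicontinuous, and a bicontinuous functor between abelian categories preserves both finite limits and finite colimits, hence is exact. Combined with the exactness of $\Q_{A,-}$ (Proposition~\ref{pr-Qexact}), this shows that
$$(N,M)\mapsto\mathrm{Ext}^*_{\F(\A;k)}(\Q^{B,N},\Q_{A,M})$$
is a bifunctor that is cohomological in each variable (contravariantly in $N$, covariantly in $M$).

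Next, I would install the base case. Using Corollary~\ref{cor-QAMdual}(2) and Proposition~\ref{pr-Qexact}, one has the natural identifications $\Q^{B,k[\mathrm{Aut}(B)]}\simeq\Q^B$ and $\Q_{A,k[\mathrm{Aut}(A)]}\simeq\Q_A$. Hence, under the hypothesis $\lgr(B)<\lgr(A)$, Proposition~\ref{pr-ExtQQ} supplies the vanishing
$$\mathrm{Ext}^*_{\F(\A;k)}(\Q^{B,k[\mathrm{Aut}(B)]},\Q_{A,k[\mathrm{Aut}(A)]})=0.$$

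From here one promotes the vanishing to the case where $N$ is projective in $k[\mathrm{Aut}(B)]\Md$ and $M$ is injective in $k[\mathrm{Aut}(A)]\Md$, by exactly the same mechanism as in the proof of Theorem~\ref{thm-Ext-QAM}: bicontinuity of $\Q^{B,-}$ and $\Q_{A,-}$ converts direct sums and direct products of copies of $k[\mathrm{Aut}(B)]$ (resp. $k[\mathrm{Aut}(A)]$) into direct sums and direct products of copies of $\Q^B$ (resp. $\Q_A$); the Frobenius property of the group algebra of a finite group (projective $=$ injective for $k[\mathrm{Aut}(A)]\Md$ and $k[\mathrm{Aut}(B)]\Md$) ensures that this covers both ``projective'' and ``injective'' roles on each side. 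Finally, the cohomological bifunctoriality together with the standard comparison/effaceability of $\delta$-functors extends the vanishing from projective/injective arguments to all $N$ and $M$.

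The main obstacle, as in Theorem~\ref{thm-Ext-QAM}, is the last dévissage step: one must carefully control Ext with respect to the (possibly infinite) coproducts and products produced by resolutions in $k[\mathrm{Aut}(A)]\Md$ and $k[\mathrm{Aut}(B)]\Md$, and check that bicontinuity of $\Q^{B,-}$ and $\Q_{A,-}$, combined with the self-injective (Frobenius) nature of finite group algebras, is enough to make the $\delta$-functor comparison go through. Since this is handled exactly as in the proof of Theorem~\ref{thm-Ext-QAM}, no new ingredient beyond Proposition~\ref{pr-ExtQQ} is required.
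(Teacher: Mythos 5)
Your proposal is correct and follows exactly the route the paper indicates: the published proof is a one-line reference to "formal arguments analogous to those of Theorem~\ref{thm-Ext-QAM}, based on Propositions~\ref{pr-ExtQQ}, \ref{pr-Qexact} and the first assertion of Corollary~\ref{cor-QAMdual}", which is precisely the dévissage you carry out (exactness/bicontinuity of both $\Q^{B,-}$ and $\Q_{A,-}$, the base case from Proposition~\ref{pr-ExtQQ}, promotion to projective/injective coefficients via the Frobenius property of finite group algebras, then comparison of cohomological functors).
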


\begin{proof}
Ce résultat s'établit par des arguments formels analogues à ceux de la démonstration du théorème~\ref{thm-Ext-QAM}, à partir des propositions~\ref{pr-ExtQQ}, \ref{pr-Qexact} et de la première assertion du corollaire~\ref{cor-QAMdual}.
\end{proof}

\section{Les foncteurs $\Q(A,M)$}\label{sect-Qprolint}

Cette section vise à préciser la structure des foncteurs $\Q(A,M)$ à partir de celle des foncteurs de la forme $\Q_{A,M}$ ou $\Q_{A,M}$. Ses résultats --- plus précisément, la proposition~\ref{pr-QA_copres} et le corollaire~\ref{cor-QAMequiv} (qui est immédiat lorsque $k$ est de caractéristique nulle) --- joueront un rôle crucial dans la démonstration de l'importante proposition~\ref{pr-lin_fini}. Ils interviendront aussi pour montrer la première assertion de la proposition~\ref{pr-bicont}, mais cette assertion n'est pas essentielle (en particulier on peut démontrer sans cela le théorème~\ref{th-princ} qui constitue l'un des principaux résultats de l'article).

Dans ce qui suit, on note $\rad(A)$ le radical d'un foncteur $A$ de $\rep(A)$, c'est-à-dire l'intersection de ses sous-foncteurs stricts maximaux. Sous l'hypothèse (FH), si $A$ est de type fini, alors $A/\rad(A)$ est semi-simple fini (c'est le plus grand quotient semi-simple, ou cosocle, de $A$) ; si $N$ est un sous-foncteur de $A$, alors l'épimorphisme $A\twoheadrightarrow A/N$ est essentiel si et seulement si $N\subset\rad(A)$.

\begin{prop}\label{pr-QA_copres} Supposons que $\A$ est $k$-triviale et que $A$ est un foncteur de type fini et de type cofini de $\rep(\A)$. Alors la suite
$$0\to\Q(A)\to\Q_A\to\prod_{0\ne N\subset\rad(A)}\Q_{A/N}$$
dont le morphisme $\Q(A)\to\Q_A$ est l'inclusion canonique et le dernier morphisme a pour composantes les morphismes $\Q_A\twoheadrightarrow\Q_{A/N}$ induits par les projections $A\twoheadrightarrow B$ (cf. proposition~\ref{pr-foncQ}) est exacte.
\end{prop}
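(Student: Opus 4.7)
Le plan est de démontrer les deux inclusions entre sous-foncteurs de $\Q_A$ séparément, la première étant formelle et la seconde, qui constitue le cœur de la preuve, étant vérifiée ponctuellement.

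Pour l'inclusion facile $\Q(A)\subset\ker\bigl(\Q_A\to\prod_{0\ne N\subset\rad(A)}\Q_{A/N}\bigr)$, j'utiliserais que, d'après la proposition~\ref{pr-foncQ}, le morphisme $\Q_A\to\Q_{A/N}$ est induit par $k[A]\to k[A/N]$. La composée $\Q^A\hookrightarrow k[A]\twoheadrightarrow\Q_A\to\Q_{A/N}$ coïncide donc avec $\Q^A\hookrightarrow k[A]\to k[A/N]\twoheadrightarrow\Q_{A/N}$, composée dont les deux premiers morphismes sont déjà de composée nulle par la définition même de $\Q^A$ comme noyau de $k[A]\to\prod_{0\ne B\subset A}k[A/B]$. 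En prenant l'image dans $\Q_A$, cela fournit l'inclusion voulue.

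Pour l'inclusion réciproque, je vérifierais l'égalité en chaque objet $a\in\mathrm{Ob}\,\A$, les sous-foncteurs étant déterminés par leurs valeurs ponctuelles. D'après la proposition~\ref{pr-descrQ}, $\Q_A(a)=k[G]$ avec $G=\{\xi\in A(a):A_\xi=A\}$. Pour $\bar\alpha=\sum_{\xi\in G}\lambda_\xi[\xi]$ dans le noyau, l'essentialité de $A\twoheadrightarrow A/N$ (due à $N\subset\rad(A)$) entraîne $\xi_0+N(a)\subset G$ pour tout $\xi_0\in G$, de sorte que la condition se réécrit $\sum_{\xi\in\xi_0+N(a)}\lambda_\xi=0$ pour tout $\xi_0\in G$ et tout $0\ne N\subset\rad(A)$. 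Il s'agit alors de trouver des scalaires $\mu_\xi\in k$ pour $\xi\in A(a)\setminus G$ tels que le relèvement $\tilde\alpha=\sum_{\xi\in A(a)}\mu_\xi[\xi]$ (avec $\mu_\xi=\lambda_\xi$ pour $\xi\in G$) satisfasse $\sum_{\eta\in\eta_0+B(a)}\mu_\eta=0$ pour \emph{tout} $0\ne B\subset A$ et tout $\eta_0\in A(a)$, ou encore appartienne à $\Q^A(a)$.

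La difficulté principale est l'existence de ce relèvement. Une remarque structurelle cruciale est que si $B\not\subset\rad(A)$, il existe un sous-foncteur maximal $M$ avec $B+M=A$ ; l'exactitude de l'évaluation donne alors $B(a)+M(a)=A(a)$, donc tout translaté $\eta_0+B(a)$ rencontrant $G$ contient aussi un élément de $M(a)\subset A(a)\setminus G$. Il y a ainsi suffisamment de latitude sur les non-générateurs pour absorber les contraintes associées à ces $B$, ne laissant subsister que celles pour $B\subset\rad(A)$, partiellement gérées par l'hypothèse de noyau sur les générateurs. Pour résoudre explicitement le système restant, j'exploiterais la $k$-trivialité de $\A$ — qui rend $|A(a)|$ inversible dans $k$ — en diagonalisant $k[A(a)]=\bigoplus_\chi k\cdot e_\chi$ selon les caractères de $A(a)$ après extension éventuelle de $k$ (descente fidèlement plate analogue à celle utilisée dans la proposition~\ref{pr-dual_lin}) : $\Q^A(a)$ s'identifie à la somme directe des $k\cdot e_\chi$ pour $\chi$ non trivial sur $B(a)$ pour tout $0\ne B\subset A$, et l'on construit $(\mu_\xi)$ par une inversion de Möbius sur le treillis des sous-foncteurs de $A$. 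Une approche alternative serait la récurrence sur la longueur de Loewy de $A$ : le cas de base semi-simple se traite par un comptage de dimensions à partir de la même analyse harmonique (les caractères admissibles se restreignent en une famille linéairement indépendante sur $G$, d'où la surjectivité $\Q^A(a)\twoheadrightarrow\Q_A(a)$ par égalité des dimensions), et le pas récurrent utilise la filtration radicale $A\supset\rad(A)\supset\rad^2(A)\supset\cdots$ pour ramener les conditions de nullité sur $A$ à celles sur les sous-quotients plus simples $A/\rad(A)$ et $\rad(A)$.
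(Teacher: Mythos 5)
Votre traitement de l'inclusion facile est correct et identique à celui du texte. Votre réduction de l'inclusion réciproque à un problème ponctuel d'extension d'une fonction de $G=\{\xi\in A(a)\mid A_\xi=A\}$ à $A(a)$ tout entier, sous des contraintes de sommes nulles sur les translatés des $B(a)$, est également exacte et correspond au cadre de la démonstration du texte (qui utilise de même la proposition~\ref{pr-descrQ} et réduit, grâce au type cofini de $A$, les conditions définissant $\Q^A(a)$ aux seuls sous-foncteurs \emph{simples} de $A$, lesquels sont soit facteurs directs, soit contenus dans $\rad(A)$).

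En revanche, le cœur de la preuve --- l'existence effective du relèvement --- n'est pas établi. Votre remarque que tout translaté $\eta_0+B(a)$ contient un non-générateur lorsque $B\not\subset\rad(A)$ est juste, mais elle ne montre pas que le système de contraintes est compatible : les conditions associées aux simples scindés interagissent entre elles et avec celles portées par le radical. Vos deux stratégies restent à l'état d'esquisse : l'inversion de Möbius n'est pas spécifiée, et la récurrence sur la longueur de Loewy pose deux problèmes. D'une part, son cas de base repose sur l'indépendance linéaire des restrictions à $G$ des caractères admissibles ; or c'est précisément le point difficile --- pour $A=S^{\oplus n}$, c'est en substance le théorème de Kov\'acs sur la décomposition de $k[\M_n(\mathbb{F}_q)]$ selon le rang, et non une conséquence du comptage de dimensions (celui-ci donne l'égalité des cardinaux, pas l'indépendance). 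D'autre part, le pas récurrent est douteux : les conditions définissant $\Q^A(a)$ sont indexées par les sous-objets simples de $A$, c'est-à-dire par le socle, qui ne se lit pas sur la filtration radicale. La démonstration du texte contourne exactement cette difficulté en construisant une section explicite $\theta : \Q_A(x)\to k[A(x)]$ au moyen des idempotents de Kov\'acs $\varepsilon_j\in k[\M_{i_j}(\mathrm{End}(S_j))]$, dont les deux propriétés caractéristiques ($[1]-\varepsilon_j$ porté par les matrices singulières, $\varepsilon_j$ annulant leur espace) règlent d'un coup toutes les contraintes provenant des simples scindés. Pour compléter votre argument, il vous faudrait soit invoquer ce résultat de \cite{Kov}, soit démontrer l'indépendance linéaire en question.
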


\begin{proof}
Pour $0\ne N\subset\rad(A)$, considérons le diagramme commutatif
$$\xymatrix{\Q^A\ar[r]^{\subset}\ar@{->>}[d] & k[A]\ar@{->>}[r]\ar@{->>}[d] & k[A/N]\ar@{->>}[d] \\
\Q(A)\ar[r]^{\subset} & \Q_A\ar@{->>}[r] & \Q_{A/N}
}$$
dont les flèches verticales sont les projections canoniques, les flèches horizontales de gauche  les inclusions et les flèches de droite sont induites par $A\twoheadrightarrow A/N$. La composée horizontale supérieure est nulle par définition de $\Q^A$, ce qui entraîne que la composée $\Q(A)\to\Q_A\to\Q_{A/N}$ est nulle.

Réciproquement, utilisons les identifications
$$\Q_A(x)\simeq k[\{\xi\in A(x)\,|\,A_\xi=A\}]\simeq k[\surj(\A(x,-),A)]$$
de la proposition~\ref{pr-descrQ} et considérons une décomposition $A\simeq S_1^{\oplus i_1}\oplus\dots\oplus S_n^{\oplus i_n}\oplus A'$ où les $S_j$ sont des simples deux à deux non isomorphes, les $i_j>0$ des entiers, et $A'$ est tel que, pour tout sous-objet simple $S$ de $A'$, l'épimorphisme $A'\twoheadrightarrow A'/S$ est essentiel. Considérons le morphisme composé
$$\theta : \Q_A(x)\simeq k[\{\xi\in A(x)\,|\,A_\xi=A\}]\hookrightarrow k[A(x)]\to k[A(x)]\,,$$
où la deuxième flèche est l'endomorphisme de
$$k[A(x)]\simeq k[S_1(x)^{\oplus i_1}]\otimes\dots\otimes k[S_n(x)^{\oplus i_n}]\otimes k[A'(x)]$$
donné par $\varepsilon_1\otimes\dots\otimes\varepsilon_n\otimes\mathrm{Id}$, où $\varepsilon_j$ est l'\emph{idempotent de Kov\'acs} \cite{Kov} de $k[\M_{i_j}(\mathrm{End}(S_j))]$, c'est-à-dire un idempotent central tel que $[1]-\varepsilon_j\in k[\M_{i_j}(\mathrm{End}(S_j))\setminus\GL_{i_j}(\mathrm{End}(S_j))]$ et $\varepsilon_j.k[\M_{i_j}(\mathrm{End}(S_j))\setminus\GL_{i_j}(\mathrm{End}(S_j))]=0$. Alors la composée
$$\Q_A(x)\xrightarrow{\theta}k[A(x)]\twoheadrightarrow\Q_A(x)$$
égale l'identité, grâce à la condition $[1]-\varepsilon_j\in k[\M_{i_j}(\mathrm{End}(S_j))\setminus\GL_{i_j}(\mathrm{End}(S_j))]$. Considérons la restriction $\theta'$ de $\theta$ au noyau de la flèche $\Q_A\to\prod_{0\ne N\subset\rad(A)}\Q_{A/N}$ évaluée en $x$ et vérifions qu'elle prend ses valeurs dans le sous-espace $\Q^A(x)$ de $k[A(x)]$. Pour cela, comme $A$ est de type cofini (ainsi, tout sous-foncteur non nul de $A$ contient un foncteur simple), il suffit de vérifier que, si $S$ est un sous-foncteur simple de $A$, alors la composée de $\theta'$ avec la projection $k[A(x)]\twoheadrightarrow k[A(x)/S(x)]$ est nulle. Si l'inclusion $S\hookrightarrow A$ est scindée, alors $S\subset S_j^{\oplus i_j}$ pour un entier $1\le j\le n$, et cette nullité résulte de ce que l'idempotent $\varepsilon_j$ de $k[\M_{i_j}(\mathrm{End}(S_j))]$ annule la projection $k[\mathrm{End}(S_j)^{\oplus i_j}]\twoheadrightarrow k[\mathrm{End}(S_j)^{\oplus i_j}/D]$ pour toute droite $D$ du $\mathrm{End}(S_j)$-espace vectoriel $\mathrm{End}(S_j)^{\oplus i_j}$. Sinon, $0\ne S\subset\rad(A)$, et l'annulation résulte de la définition, ce qui termine la démonstration.
\end{proof}

On rappelle que la notation $\Qse_d$ (resp. $\N_d$) qui intervient dans l'énoncé ci-dessous a été introduite juste avant le corollaire~\ref{cor-res_cosimpl} (resp. la proposition~\ref{pr-ressimpl}). On note également $\soc(A)$ le socle (i.e. la somme des sous-objets simples) d'un foncteur $A$ de $\rep(\A)$ et, si $T$ est un sous-foncteur de $A/\soc(A)$, on note $\tilde{T}$ le sous-foncteur de $A$ contenant $\soc(A)$ qui lui est canoniquement associé (qui est donc un sous-objet essentiel de $A$ si $A$ est de type cofini).

\begin{prop}\label{pr-resol_QA} Supposons que $\A$ est $k$-triviale et que $A$ est un foncteur fini de $\rep(\A)$. Alors il existe dans $\F(\A;k)$ des suites exactes
\begin{equation}\label{eq-seQ1}
    0\to\Q(A)\to\bigoplus_{N\in\Qse_0(\rad(A))}\Q_{A/N_0}\to\cdots\to\bigoplus_{N\in\Qse_d(\rad(A))}\Q_{A/N_0}\to\cdots
\end{equation}
et
\begin{equation}\label{eq-seQ2}
\cdots\to\bigoplus_{T\in\N_d(A/\soc(A))}\Q^{\tilde{T}_0}\to\cdots\to\bigoplus_{T\in\N_0(A/\soc(A))}\Q^{\tilde{T}_0}\to\Q^A\to\Q(A)\to 0
\end{equation}
équivariantes pour les actions canoniques $\mathrm{Aut}(A)$.
\end{prop}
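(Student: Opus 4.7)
L'approche consiste à construire la suite~\eqref{eq-seQ2} directement par une méthode simpliciale analogue à la démonstration de la proposition~\ref{pr-ressimpl}, puis à déduire~\eqref{eq-seQ1} par dualité. Pour cette dernière étape, on applique la proposition~\ref{pr-dQAM} combinée à la correspondance orthogonale classique $N\mapsto N^\perp=(A/N)^\sharp$ entre sous-foncteurs de $A$ et sous-foncteurs de $A^\sharp$ : cette correspondance échange $\rad(A)$ et $\soc(A^\sharp)$, si bien que les chaînes indexées par $\Qse_d(\rad(A))$ correspondent à celles indexées par $\N_d(A^\sharp/\soc(A^\sharp))$. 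En vérifiant au passage que $(\Q(A))^\vee\simeq\Q(A^\sharp)$, on obtient que le dual de~\eqref{eq-seQ2} appliquée à $A^\sharp$ s'identifie à~\eqref{eq-seQ1} pour $A$.

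Pour construire~\eqref{eq-seQ2}, on commence par établir le lemme-clé suivant : pour tous sous-foncteurs $\tilde U\subset\tilde V$ de $A$ contenant $\soc(A)$, l'inclusion $k[\tilde U]\hookrightarrow k[\tilde V]$ se restreint en un monomorphisme $\Q^{\tilde U}\hookrightarrow\Q^{\tilde V}$. En effet, tout sous-foncteur non nul $B$ de $\tilde V$ contient un sous-foncteur simple, nécessairement inclus dans $\soc(A)\subset\tilde U$, donc $B\cap\tilde U\ne 0$ ; la composée $\Q^{\tilde U}\hookrightarrow k[\tilde U]\hookrightarrow k[\tilde V]\twoheadrightarrow k[\tilde V/B]$ se factorise alors par $k[\tilde U/(B\cap\tilde U)]$, sur lequel $\Q^{\tilde U}$ s'annule par définition. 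De même on obtient des inclusions $\Q^{\tilde T_0}\hookrightarrow\Q^A$ pour tout $\tilde T_0\subsetneq A$ contenant $\soc(A)$. Ces inclusions définissent la différentielle simpliciale du complexe (identité pour les faces $\partial_i$ avec $i>0$, inclusion pour $\partial_0$), l'augmentation étant la composée avec la surjection canonique $\Q^A\twoheadrightarrow\Q(A)$.

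L'exactitude s'établit en évaluant en un objet $a\in\A$, ce qui plonge tous les termes $\Q^{\tilde T_0}(a)$ dans $k[A(a)]$. L'argument reprend alors celui de la proposition~\ref{pr-ressimpl} : pour chaque $\xi\in A(a)$, on considère le poset $Q_\xi$ des sous-foncteurs $\tilde T\subsetneq A$ contenant $\soc(A)$ et vérifiant $A_\xi\subseteq\tilde T$. Si $A_\xi+\soc(A)\ne A$, ce poset admet $A_\xi+\soc(A)$ pour minimum, donc son nerf est contractile ; sinon (cas incluant celui où $\xi$ engendre $A$), $Q_\xi$ est vide et $\xi$ contribue à $\Q(A)(a)$ via l'augmentation. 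Un ingrédient utile est l'égalité $\Q^A\cap k[\tilde T]=\Q^{\tilde T}$ valable pour $\tilde T\supset\soc(A)$, conséquence de ce que tout sous-foncteur non nul de $A$ rencontre $\soc(A)$ de façon non triviale.

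L'équivariance pour $\mathrm{Aut}(A)$ est automatique puisque toutes les constructions (socle, radical, inclusions canoniques, projections) sont fonctorielles en $A$. L'obstacle principal sera la formalisation rigoureuse de la décomposition pointuelle selon $\xi$ : les $\Q^{\tilde T_0}(a)$ étant des \emph{sous-espaces stricts} des $k[\tilde T_0(a)]$ libres, la somme sur $\xi$ n'est pas une somme directe, et l'argument combinatoire doit tenir compte des coefficients variables le long du nerf. Il faudra vraisemblablement recourir à une suite spectrale filtrant par le sous-foncteur $\tilde T_0$ intervenant, ou bien d'abord établir explicitement l'égalité $\sum_{\tilde T_0}\Q^{\tilde T_0}=\Q^A\cap\sum_{\tilde T_0}k[\tilde T_0]$ pour l'exactitude en $\Q^A$ puis itérer cette analyse aux degrés supérieurs.
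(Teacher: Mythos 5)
Votre stratégie est le miroir de celle du papier : vous construisez~\eqref{eq-seQ2} directement et en déduisez~\eqref{eq-seQ1} par dualité, alors que le papier fait l'inverse. La partie dualité est correcte (la correspondance $N\mapsto N^\perp=(A/N)^\sharp$ échange bien $\Qse_d(\rad(A))$ et $\N_d(A^\sharp/\soc(A^\sharp))$, et $(\Q(A))^\vee\simeq\Q(A^\sharp)$ découle de la proposition~\ref{pr-dualQ} et de l'exactitude de $(-)^\vee$), de même que votre lemme-clé sur les inclusions $\Q^{\tilde U}\hookrightarrow\Q^{\tilde V}$. Mais le c\oe ur de la démonstration --- l'exactitude du complexe --- n'est pas établi, et l'obstacle que vous signalez vous-même est réel et bloquant : contrairement aux $k[T_0(a)]$ de la proposition~\ref{pr-ressimpl}, les espaces $\Q^{\tilde T_0}(a)$ n'ont aucune base canonique indexée par les éléments de $\tilde T_0(a)$ (ce sont des noyaux, pas des quotients), de sorte que la décomposition \guillemotleft~simplexe par simplexe au-dessus de chaque $\xi$~\guillemotright\ ne s'applique pas. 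Les pistes que vous évoquez pour y remédier (suite spectrale, identité $\sum_{\tilde T_0}\Q^{\tilde T_0}=\Q^A\cap\sum_{\tilde T_0}k[\tilde T_0]$) restent des déclarations d'intention sans argument.

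C'est précisément pour éviter cette difficulté que le papier choisit l'autre sens : les termes $\Q_{A/N_0}$ de~\eqref{eq-seQ1} possèdent, par la proposition~\ref{pr-descrQ}, une base canonique formée des générateurs de $A/N_0$ dans $(A/N_0)(x)$. Après le début de suite donné par la proposition~\ref{pr-QA_copres}, on fixe une section ensembliste de $A(x)\twoheadrightarrow(A/\rad(A))(x)$ ; comme un morphisme $\A(x,-)\to A/N$ est surjectif si et seulement si sa composée avec $A/N\twoheadrightarrow A/\rad(A)$ l'est, on obtient $\Q_{A/N}(x)\simeq\bigoplus_{\alpha\in\surj(\A(x,-),A/\rad(A))}k[(\rad(A)/N)(x)]$, et le complexe évalué en $x$ se scinde en une somme, indexée par $\alpha$, de copies du complexe $\bigoplus_{N\in\Qse_\bullet(\rad(A))}k[\rad(A)/N_0]$ évalué en $x$, dont l'acyclicité en degrés strictement positifs est exactement le corollaire~\ref{cor-res_cosimpl} appliqué à $\rad(A)$. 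Si vous tenez à votre sens de construction, il vous faudrait un substitut de cette décomposition pour les $\Q^{\tilde T_0}$ (par exemple via les idempotents de l'algèbre semi-simple $k[A(a)]$), ce qui revient essentiellement à dualiser l'argument du papier ; en l'état, votre démonstration est incomplète.
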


\begin{proof} Comme $A$, et donc a fortiori $\rad(A)$ et $A/\soc(A)$, sont finis et que $\A$ vérifie (FH), les ensembles $\Qse_d(\rad(A))$ et $\N_d(A/\soc(A))$ sont finis, de sorte qu'on peut remplacer indifféremment toutes les sommes directes qui apparaissent dans l'énoncé et la présente démonstration par des produits. En particulier, au vu de la proposition~\ref{pr-dualQ}, il suffit de construire la première suite exacte, dont la deuxième se déduit par dualité.

Le début de la suite exacte~\eqref{eq-seQ1} est donné par la proposition~\ref{pr-QA_copres}.

Considérons l'ensemble, ordonné par inclusion, des sous-foncteurs non nuls de $\rad(A)$, vu comme une petite catégorie notée $\C$ : on dispose d'un foncteur $\C\to\F(\A;k)\quad N\mapsto\Q_{A/N}$, par la proposition~\ref{pr-foncQ}. Pour tout $d\in\mathbb{N}$, $\bigoplus_{N\in\Qse_d(\rad(A))}\Q_{A/N_0}$ n'est autre que les $d$-cosimplexes non dégénérés de l'objet cosimplicial de $\F(\A;k)$ associé. On obtient ainsi un complexe de cochaînes
\begin{equation}\label{eq-cossaux}
    \bigoplus_{N\in\Qse_0(\rad(A))}\Q_{A/N_0}\to\cdots\to\bigoplus_{N\in\Qse_d(\rad(A))}\Q_{A/N_0}\to\cdots
\end{equation}
équivariant pour les actions canoniques $\mathrm{Aut}(A)$ dont l'homologie en degré nul est $\Q(A)$ et dont il s'agit de montrer que l'homologie est nulle en degrés strictement positifs.

Soit $x$ un objet de $\A$. Fixons une section \emph{ensembliste} du morphisme surjectif de groupes abéliens $A(x)\twoheadrightarrow (A/\rad(A))(x)$, notée $\alpha\mapsto\tilde{\alpha}$. Si $N$ est un sous-foncteur de $\rad(A)$, un morphisme $\A(x,-)\to A/N$ de $\rep(A)$ est un épimorphisme si et seulement si sa composée avec la projection $A/N\twoheadrightarrow A/\rad(A)$ est un épimorphisme. La proposition~\ref{pr-descrQ} fournit donc un isomorphisme d'espaces vectoriels
\begin{equation}\label{eq-isointerm}
    \Q_{A/N}(x)\simeq\bigoplus_{\alpha\in\surj(\A(x,-),A/\rad(A))}k[(\rad(A)/N)(x)]
\end{equation}
linéarisation de la bijection associant à un élément $\xi$ de $(A/N)(x)$ tel que le morphisme $\A(x,-)\to A/N$ correspondant soit surjectif le couple formé de sa composée $\alpha$ avec $A/N\twoheadrightarrow A/\rad(A)$ (vue indifféremment comme élément de $(A/\rad(A))(x)$) et de $\xi-\pi(\tilde{\alpha})$, où $\pi : A(x)\twoheadrightarrow (A/N)(x)$ désigne la projection.

Les isomorphismes~\eqref{eq-isointerm} montrent que l'évaluation en $x$ du complexe \eqref{eq-cossaux} est isomorphe à la somme directe sur $\alpha\in\surj(\A(x,-),A/\rad(A))$ de l'évaluation en $x$ des complexes de cochaînes
$$\bigoplus_{N\in\Qse_0(\rad(A))}k[\rad(A)/N_0]\to\cdots\to\bigoplus_{N\in\Qse_d(\rad(A))}k[\rad(A)/N_0]\to\cdots$$
dont l'homologie est nulle en degrés strictement positifs d'après le corollaire~\ref{cor-res_cosimpl}.  Cela établit la proposition.
\end{proof}

\begin{lemm}\label{lm-stab_ord} Supposons que la catégorie $\A$ est $k$-triviale. Soient $A$ un foncteur de $\rep(\A)$, $d\in\mathbb{N}$ et $N\in\Qse_d(\rad(A))$. Notons $G$ le sous-groupe de $\mathrm{Aut}(A)$ stabilisateur de $N$. Alors le foncteur
$$k[\mathrm{Aut}(A/N_0)]\Md\to k[\mathrm{Aut}(A)]\Md$$
composé de la restriction $k[\mathrm{Aut}(A/N_0)]\Md\to k[G]\Md$ le long du morphisme de groupes canonique $G\to\mathrm{Aut}(A/N_0)$ et de l'induction $k[G]\Md\to k[\mathrm{Aut}(A)]\Md$
 préserve les modules projectifs.
\end{lemm}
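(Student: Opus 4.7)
The plan is the following. First I would observe that induction along the inclusion $G \hookrightarrow \mathrm{Aut}(A)$ is the left adjoint of the (exact) restriction functor $\mathrm{Res}_G^{\mathrm{Aut}(A)}$ and hence preserves projectives. The problem therefore reduces to showing that the restriction $\phi^* : k[\mathrm{Aut}(A/N_0)]\Md \to k[G]\Md$ along $\phi : G \to \mathrm{Aut}(A/N_0)$ preserves projectives; and since $\phi^*$ commutes with arbitrary direct sums and preserves direct summands, this further reduces to the projectivity of $\phi^* k[\mathrm{Aut}(A/N_0)]$ as a $k[G]$-module.

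Setting $H := \mathrm{Aut}(A/N_0)$ and $K := \ker\phi$, I would view $H$ as a left $G$-set via $\phi$ and left multiplication. All point-stabilizers then equal $K$, so a decomposition into $G$-orbits produces an isomorphism $\phi^* k[H] \simeq k[G/K]^{(H/\phi(G))}$ of $k[G]$-modules. The classical Maschke-type argument (the normalized norm $|K|^{-1}\sum_{\kappa \in K}\kappa$ provides a $k[G]$-linear retraction of $k[G]\twoheadrightarrow k[G/K]$) then shows that $k[G/K]$ is projective over $k[G]$ as soon as $K$ is finite of order invertible in $k$. The lemma thus reduces to this single numerical statement about $K$.

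By definition, $K$ consists of those $\varphi \in \mathrm{Aut}(A)$ that stabilize each $N_i$ and satisfy $\varphi \equiv \mathrm{Id} \pmod{N_0}$; for such a $\varphi$ the natural transformation $\varphi - \mathrm{Id} : A \to A$ has pointwise image contained in $N_0$, so $\varphi \mapsto \varphi - \mathrm{Id}$ defines an injection of sets $K \hookrightarrow \mathrm{Hom}_{\rep(\A)}(A, N_0)$. In the intended applications of this lemma $A$ is a finite functor, in particular finitely generated; fixing generators $\xi_i \in A(x_i)$ for $1\le i\le n$, any morphism $A \to N_0$ is determined by its values on the $\xi_i$, so $\mathrm{Hom}_{\rep(\A)}(A, N_0)$ embeds in the finite product $\prod_i N_0(x_i)$.

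It then remains to bound each $|N_0(x_i)|$. Each is finite by condition (FH); moreover, an induction on the (finite) length of $N_0$ using that every simple subquotient is a quotient of a representable $\A(y,-)$, whose value at $x_i$ is a subquotient of $\A(y,x_i)$, combined with the $k$-triviality of $\A$, forces each $|N_0(x_i)|$ to be invertible in $k$. The main obstacle is carrying out this last bookkeeping cleanly---certifying that every numerical invariant to be inverted in $k$ ultimately divides a product of the $k$-invertible cardinalities $|\A(x,y)|$---after which the lemma follows by composing the reductions above.
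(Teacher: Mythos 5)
Your overall strategy coincides with the paper's: both arguments reduce, via the standard facts that induction always preserves projectives and that restriction along a surjection-onto-its-image with kernel of order invertible in $k$ does too, to showing that $K=\ker\big(G\to\mathrm{Aut}(A/N_0)\big)$ has order invertible in $k$; and both control this kernel by an abelian group of homomorphisms into $N_0$ whose order is governed by the $k$-trivialité de $\A$. The paper passes to the larger stabilizer $G'$ of $N_0$ alone, relates the corresponding kernel to $\mathrm{Hom}(A/N_0,N_0)$, and bounds that group inside $\A(x,y)^\sharp$ using that $A/N_0$ is a quotient of a representable and $N_0$ a subobject of an $\A(-,y)^\sharp$; you instead inject $K$ into $\mathrm{Hom}(A,N_0)$ (which is in fact the more accurate target, since an element of the kernel need not act trivially on $N_0$ itself) and bound the latter by values at a finite generating family. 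Both routes implicitly use that $A$ is finite, which is indeed the only case in which the lemma is applied, so your caveat on that point is acceptable.

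One step must be tightened. From the injection of \emph{sets} $K\hookrightarrow\mathrm{Hom}(A,N_0)$ and the invertibility of $|\mathrm{Hom}(A,N_0)|$ in $k$ you cannot conclude that $|K|$ is invertible: a subset of a finite set of cardinality prime to $\mathrm{char}(k)$ need not have cardinality prime to $\mathrm{char}(k)$, and the image of $\varphi\mapsto\varphi-\mathrm{Id}$ is not a subgroup isomorphic to $K$ (the group law gets twisted by the term $(\varphi-\mathrm{Id})(\psi-\mathrm{Id})$). The repair is short: the image of $K$ is exactly the subset $\{u\in\mathrm{Hom}(A,N_0)\mid u(N_i)\subseteq N_i\ \text{pour tout}\ i\}$, which \emph{is} a subgroup of the finite abelian group $\mathrm{Hom}(A,N_0)$; equivalently, $K$ is a subgroup of $\mathrm{Id}+\mathrm{Hom}(A,N_0)\subseteq\mathrm{Aut}(A)$, a group of cardinality $|\mathrm{Hom}(A,N_0)|$ because $N_0\subseteq\rad(A)$ and $A$ is finite force every $\mathrm{Id}+u$ to be an automorphism. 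Lagrange then gives that $|K|$ divides $|\mathrm{Hom}(A,N_0)|$, and with this observation your chain of reductions closes.
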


\begin{proof}
Tout foncteur d'induction préserve les modules projectifs, il suffit donc de vérifier que le foncteur de restriction $k[\mathrm{Aut}(A/N_0)]\Md\to k[G]\Md$ préserve les modules projectifs. Il suffit pour cela de vérifier que l'ordre du noyau du morphisme canonique $G\to\mathrm{Aut}(A/N_0)$ (tous les groupes en jeu sont finis) est inversible dans $k$. Soit $G'$ le sous-groupe de $\mathrm{Aut}(A)$ stabilisateur de $N_0$ : $G$ est un sous-groupe de $G'$, il suffit donc de montrer que l'ordre du noyau $H$ du morphisme canonique $G'\to\mathrm{Aut}(A/N_0)$ est inversible dans $k$. Or $H$ est isomorphe au groupe abélien $\mathrm{Hom}(A/N_0,N_0)$, l'isomorphisme s'obtenant en associant à $\varphi\in\mathrm{Hom}(A/N_0,N_0)$ la somme de l'identité de $A$ et de la composée $A\twoheadrightarrow A/N_0\xrightarrow{\varphi} N_0\hookrightarrow A$. Comme $A/N_0$ (resp. $N_0$) est un foncteur de type fini (resp. cofini) de $\rep(\A)$, c'est un quotient d'un foncteur représentable $\A(x,-)$ (resp. un sous-foncteur d'un $\A(-,y)^\sharp$), de sorte que $H$ est isomorphe à un sous-groupe de $\A(x,y)^\sharp$, dont l'ordre est inversible dans $k$ puisque $\A$ est $k$-triviale, d'où le lemme.
\end{proof}

\begin{coro}\label{cor-val_projQ} Supposons que $\A$ est $k$-triviale et que $A$ est un foncteur fini de $\rep(\A)$. Alors $\Q(A)$ est à valeurs projectives sur $k[\mathrm{Aut}(A)]$.
\end{coro}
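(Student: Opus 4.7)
Le plan est d'évaluer la suite exacte~\eqref{eq-seQ1} de la proposition~\ref{pr-resol_QA} en un objet quelconque $x$ de $\A$, de vérifier que tous les termes à droite de $\Q(A)(x)$ sont des $k[\mathrm{Aut}(A)]$-modules projectifs, puis de conclure par un scindage descendant le long d'une résolution finie.

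Comme $A$ est de longueur finie, $\rad(A)$ l'est aussi et l'ensemble $\Qse_d(\rad(A))$ est vide pour $d$ assez grand. La suite~\eqref{eq-seQ1} évaluée en $x$ est donc une suite exacte \emph{finie}
$$0\to\Q(A)(x)\to P^0\xrightarrow{d^0}P^1\to\cdots\to P^m\to 0,\qquad P^d:=\bigoplus_{N\in\Qse_d(\rad(A))}\Q_{A/N_0}(x),$$
de $k[\mathrm{Aut}(A)]$-modules, l'action au niveau des $P^d$ provenant de la nature équivariante de~\eqref{eq-seQ1}.

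L'étape-clé est de montrer que chaque $P^d$ est un $k[\mathrm{Aut}(A)]$-module projectif. On décompose pour cela $\Qse_d(\rad(A))$ en $\mathrm{Aut}(A)$-orbites : la sous-somme de $P^d$ associée à l'orbite d'un représentant $N$, de stabilisateur $G_N\subset\mathrm{Aut}(A)$, s'identifie canoniquement au module induit $k[\mathrm{Aut}(A)]\otimes_{k[G_N]}\Q_{A/N_0}(x)$, où $G_N$ agit sur $\Q_{A/N_0}(x)$ à travers le morphisme canonique $G_N\to\mathrm{Aut}(A/N_0)$. Or la proposition~\ref{pr-descrQ}, combinée à l'argument de liberté développé au début de la démonstration de la proposition~\ref{pr-Qexact}, montre que $\Q_{A/N_0}(x)$ est un $k[\mathrm{Aut}(A/N_0)]$-module \emph{libre}. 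Le lemme~\ref{lm-stab_ord}, appliqué précisément au $(d+1)$-uplet $N$ et à son stabilisateur $G_N$, garantit alors que $k[\mathrm{Aut}(A)]\otimes_{k[G_N]}\Q_{A/N_0}(x)$ est un $k[\mathrm{Aut}(A)]$-module projectif, d'où la projectivité de $P^d$ comme somme directe (finie) de tels modules.

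Reste le scindage descendant, qui est formel. Notons $Z^i:=\ker(d^i)$ pour $0\le i<m$ et $Z^m:=P^m$ ; on dispose alors de suites exactes courtes $0\to Z^i\to P^i\to Z^{i+1}\to 0$ pour $0\le i\le m-1$, et $Z^0$ s'identifie à $\Q(A)(x)$. Une récurrence descendante sur $i$ donne la projectivité de chaque $Z^i$ : dès que $Z^{i+1}$ est projectif, la suite considérée se scinde (tout épimorphisme sur un module projectif admet une section), donc $Z^i$ est facteur direct du module projectif $P^i$. Le cas $i=0$ fournit la conclusion voulue. Le point conceptuel le plus délicat est la décomposition en orbites couplée au lemme~\ref{lm-stab_ord} ; une fois celle-ci assimilée, la partie finale est purement formelle.
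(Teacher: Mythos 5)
Votre démonstration est correcte et suit pour l'essentiel la même voie que celle de l'article : finitude de la résolution \eqref{eq-seQ1}, décomposition de chaque terme en orbites sous $\mathrm{Aut}(A)$ identifiée à une induction depuis le stabilisateur via le lemme~\ref{lm-stab_ord}, liberté des valeurs de $\Q_{A/N_0}$ tirée de la proposition~\ref{pr-Qexact}, puis scindage formel le long de la résolution finie. La seule différence, mineure, est que votre récurrence descendante travaille directement avec la projectivité des termes, là où l'article invoque l'équivalence projectif/injectif sur $k[\mathrm{Aut}(A)]$ pour conclure à partir d'une résolution injective finie.
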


\begin{proof}
Comme $A$, donc $\rad(A)$, est fini, on a $\Qse_d(\rad(A))=\varnothing$ pour $d$ assez grand : la suite exacte \eqref{eq-seQ1} est finie. La structure de $k[\mathrm{Aut}(A)]$-module sur $\bigoplus_{N\in\Qse_d(\rad(A))}\Q_{A/N_0}$ s'obtient en appliquant à $\bigoplus_N\Q_{A/N_0}$, où la somme est prise sur un ensemble complet de représentants des classes d'équivalence de $\Qse_d(\rad(A))$ modulo l'action de $\mathrm{Aut}(A)$, le foncteur $$k[\mathrm{Aut}(A/N_0)]\Md\to k[\mathrm{Aut}(A)]\Md$$
du lemme~\ref{lm-stab_ord}. Comme $Q_{A/N_0}$ est à valeurs dans les $k[\mathrm{Aut}(A/N_0)]$-modules projectifs (cf. proposition~\ref{pr-Qexact}), le lemme~\ref{lm-stab_ord}, la proposition~\ref{pr-resol_QA} et le fait qu'un $k[\mathrm{Aut}(A)]$-module est projectif si et seulement s'il est injectif ($k$ étant un corps et $\mathrm{Aut}(A)$ un groupe fini) montrent que les valeurs de $\Q(A)$ possèdent une résolution injective finie et sont donc projectives comme $k[\mathrm{Aut}(A)]$-modules, comme souhaité.
\end{proof}

Le résultat suivant (qui est trivial lorsque $k$ est de caractéristique nulle) découle de la définition de $\Q(A,M)$ et du corollaire~\ref{cor-val_projQ}.

\begin{coro}\label{cor-QAMequiv}
Supposons que $\A$ est $k$-triviale et que $A$ est un foncteur fini de $\rep(\A)$. Alors on dispose d'isomorphismes
$$\Q(A,M)\simeq (\Q(A)\otimes M)_{\mathrm{Aut}(A)}\simeq (\Q(A)\otimes M)^{\mathrm{Aut}(A)}$$
naturels en la représentation $M$ du groupe $\mathrm{Aut}(A)$.

En particulier, le foncteur $\Q(A,-) : k[\mathrm{Aut}(A)]\Md\to\F(\A;k)$ est bicontinu.
\end{coro}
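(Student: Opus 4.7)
Le plan est d'exploiter le corollaire~\ref{cor-val_projQ}, qui assure que $\Q(A)$ prend ses valeurs dans les $k[\mathrm{Aut}(A)]$-modules projectifs, combiné à l'observation que $k[\mathrm{Aut}(A)]$ est une algèbre de Frobenius (puisque $\mathrm{Aut}(A)$ est un groupe fini), de sorte que projectif et injectif coïncident dans $k[\mathrm{Aut}(A)]\Md$. Cette dernière remarque est le seul ingrédient non formel et constitue précisément ce qui rend l'énoncé non trivial lorsque $\mathrm{car}(k)$ divise $|\mathrm{Aut}(A)|$ ; en caractéristique nulle, tout se déduit immédiatement de la semi-simplicité.

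Pour le premier isomorphisme $\Q(A,M)\simeq(\Q(A)\otimes M)_{\mathrm{Aut}(A)}$, la notation~\ref{not-prl_interm} combinée à la seconde assertion du corollaire~\ref{cor-QAMdual} montre que $\Q(A,M)$ est l'image du morphisme $M\underset{k[\mathrm{Aut}(A)]}{\otimes}\Q^A\to M\underset{k[\mathrm{Aut}(A)]}{\otimes}\Q_A$, et le morphisme sous-jacent $\Q^A\to\Q_A$ se factorise canoniquement en $\Q^A\twoheadrightarrow\Q(A)\hookrightarrow\Q_A$. L'exactitude à droite de $M\underset{k[\mathrm{Aut}(A)]}{\otimes}-$ transforme le premier facteur en un épimorphisme, ce qui ramène l'énoncé à l'injectivité de $M\underset{k[\mathrm{Aut}(A)]}{\otimes}\Q(A)\to M\underset{k[\mathrm{Aut}(A)]}{\otimes}\Q_A$. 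Je la vérifierais ponctuellement en un objet $a\in\A$ : l'inclusion $\Q(A)(a)\hookrightarrow\Q_A(a)$ est un monomorphisme dont la source est \emph{injective} comme $k[\mathrm{Aut}(A)]$-module, par la conjonction du corollaire~\ref{cor-val_projQ} et de la propriété de Frobenius, donc est scindée ; tensoriser un monomorphisme scindé par $M$ au-dessus de $k[\mathrm{Aut}(A)]$ préserve l'injectivité. Comme monomorphismes et images dans $\F(\A;k)$ se calculent ponctuellement, le premier isomorphisme s'en déduit.

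Pour le second isomorphisme $(\Q(A)\otimes M)_{\mathrm{Aut}(A)}\simeq(\Q(A)\otimes M)^{\mathrm{Aut}(A)}$, le point clef est que $\Q(A)\otimes M$ reste ponctuellement projectif comme $k[\mathrm{Aut}(A)]$-module pour l'action diagonale : pour $P=k[\mathrm{Aut}(A)]$, l'application $g\otimes m\mapsto g\otimes g^{-1}m$ identifie $P\otimes M$ muni de l'action diagonale à $P\otimes M$ muni de l'action triviale sur $M$ (qui est libre), et le cas projectif général s'en déduit par rétractes et sommes directes. Pour tout $k[\mathrm{Aut}(A)]$-module projectif $Q$, le morphisme norme $Q_{\mathrm{Aut}(A)}\to Q^{\mathrm{Aut}(A)}$ est un isomorphisme (vérifié pour $Q=k[\mathrm{Aut}(A)]$ et étendu par rétractes et sommes directes) ; son application ponctuelle donne le second isomorphisme. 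La bicontinuité de $\Q(A,-)$ en résulte : ponctuellement, $\Q(A)(a)$ est un $k[\mathrm{Aut}(A)]$-module projectif \emph{de type fini} ($A$ étant fini et $\A$ vérifiant (FH)), donc dualisable, si bien que $M\mapsto\Q(A)(a)\underset{k[\mathrm{Aut}(A)]}{\otimes}M$ commute aux (co)limites quelconques (comme tensorisation par un module dualisable) ; les (co)limites dans $\F(\A;k)$ étant calculées ponctuellement, la bicontinuité suit.
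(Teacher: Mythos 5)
Votre démonstration est correcte et suit exactement la voie que le texte indique : le papier se contente de dire que l'énoncé « découle de la définition de $\Q(A,M)$ et du corollaire~\ref{cor-val_projQ} », et votre rédaction en est le développement fidèle — scindage ponctuel de $\Q(A)\hookrightarrow\Q_A$ via la projectivité (donc l'injectivité, $k[\mathrm{Aut}(A)]$ étant auto-injective) des valeurs de $\Q(A)$, puis isomorphisme de norme sur les modules projectifs, ingrédient déjà exploité dans la preuve du corollaire~\ref{cor-val_projQ} lui-même. L'argument de bicontinuité par dualisabilité ponctuelle est également valide.
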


\begin{rema}
On vérifie facilement à partir des résultats de cette section que, si $\A$ est $k$-triviale, que $A$ est un foncteur fini de $\rep(\A)$ et $M$ un $k[\mathrm{Aut}(A)]$-module, alors l'inclusion $\Q(A,M)\hookrightarrow\Q_{A,M}$ est un isomorphisme si et seulement si $M_{\mathrm{Hom}(A/S,S)}=0$ pour tout sous-foncteur simple $S$ de $\rad(A)$, l'indice $\mathrm{Hom}(A/S,S)$ désignant les co-invariants sous l'action de ce groupe, plongé dans $\mathrm{Aut}(A)$ comme dans la démonstration du lemme~\ref{lm-stab_ord}.
\end{rema}

\section{Foncteurs de décalage et de différence paraboliques}\label{sdecp}

\subsection{Les foncteurs $\tb_x$}

Soient $x$ et $a$ des objets de $\A$ et $F$ un foncteur de $\F(\A;k)$. Le groupe abélien $\A(x,a)$ opère sur $F(x\oplus a)$ par l'intermédiaire du monomorphisme de groupes
$$\A(x,a)\hookrightarrow\mathrm{Aut}_\A(x\oplus a)\qquad u\mapsto\left(\begin{array}{cc}
 \mathrm{Id}_x    &  0\\
    u & \mathrm{Id}_a
\end{array}\right).$$

De plus, si $f : a\to b$ est une flèche de $\A$, alors il existe une unique flèche $f_*$ faisant commuter le diagramme de $k$-modules suivant :
$$\xymatrix{F(x\oplus a)\ar[r]^-{F(x\oplus f)}\ar@{->>}[d] & F(x\oplus b)\ar@{->>}[d]\\
F(x\oplus a)_{\A(x,a)}\ar[r]^-{f_*} & F(x\oplus b)_{\A(x,b)}
}$$
car le diagramme
$$\xymatrix{x\oplus a\ar[rr]^-{x\oplus f}\ar[dd]_-{\left(\begin{array}{cc}
 \mathrm{Id}_x    &  0\\
    u & \mathrm{Id}_a
\end{array}\right)} & & x\oplus b\ar[dd]^-{\left(\begin{array}{cc}
 \mathrm{Id}_x    &  0\\
    fu & \mathrm{Id}_b
\end{array}\right)}\\
& & \\
x\oplus a\ar[rr]^-{x\oplus f} & & x\oplus b
}$$
de $\A$ commute pour tout $u\in\A(x,a)$.

Par conséquent, la projection canonique $\tau_x(F)(a)=F(x\oplus a)\twoheadrightarrow F(x\oplus a)_{\A(x,a)}$ fait de $(F(x\oplus a)_{\A(x,a)})_{a\in\A}$ un foncteur quotient de $\tau_x(F)$. On note $\tb_x(F)$ ce foncteur. Il est clair que $\tb_x$ définit un quotient de l'endofoncteur $\tau_x$ de $\F(\A;k)$. On l'appelle foncteur de \emph{décalage parabolique} associé à $x$. La terminologie provient de l'analogie avec la théorie des représentations des groupes algébriques (comme représentation de $\mathrm{Aut}(a)$, $\tb_x(F)(a)$ s'obtient en restreignant la représentation $F(x\oplus a)$ de $\mathrm{Aut}(x\oplus a)$ au sous-groupe parabolique $\big(\mathrm{Aut}(a)\times\mathrm{Aut}(x)\big)\ltimes\A(x,a)$ puis en prenant les co-invariants sous l'action de $\A(x,a)$). Cet outil a été introduit (avec la notation $\bar{\Sigma}$ plutôt que $\tb$) dans un cadre de catégories de foncteurs similaire au nôtre par Nagpal \cite[§\,4.2]{Nag1} (la source étant la sous-catégorie des \emph{monomorphismes} des espaces vectoriels de dimension finie sur un corps fini chez Nagpal). Nous utiliserons toutefois les foncteurs de décalage parabolique d'une façon assez différente de celle employée par Nagpal. 

\begin{prop}\label{pr-taubarexact} Pour tout objet $x$ de $\A$, l'endofoncteur $\tb_x$ de $\F(\A;k)$ est cocontinu. Si $\A$ est $k$-triviale, alors $\tb_x$ est bicontinu, et en particulier exact.
\end{prop}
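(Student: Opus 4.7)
The plan is to analyze $\tb_x$ pointwise: by construction, for every $a\in\mathrm{Ob}\,\A$, we have $\tb_x(F)(a)=F(x\oplus a)_{\A(x,a)}$, naturally in $F$ and functorially in $a$. Since all limits and colimits in the Grothendieck categories $\F(\A;k)$ and $k\Md$ are computed pointwise (via the evaluation functors), it suffices to analyze, for each fixed $a$, the two operations that compose to give $\mathrm{ev}_a\circ\tb_x$: first the evaluation $\mathrm{ev}_{x\oplus a}:\F(\A;k)\to k\Md$, and then the coinvariants $(-)_{\A(x,a)}:k[\A(x,a)]\Md\to k\Md$.

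First I would treat the cocontinuity, which requires no hypothesis on $\A$. Evaluation functors are cocontinuous; equivalently, the precomposition functor $\tau_x$ (being a Kan-extension pullback) admits a right adjoint, hence is cocontinuous. The coinvariants functor $M\mapsto M_G$ is the coequalizer of the actions of the elements of $G$, hence a colimit construction, hence cocontinuous (it is a left adjoint to the trivial-representation functor). Composing and using that colimits commute with colimits, I conclude that $\tb_x$ preserves all small colimits.

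For the bicontinuity in the $k$-trivial case, I would use that an additive functor between abelian categories is continuous if and only if it is left exact and preserves all products; combined with cocontinuity, this yields bicontinuity. Exactness: the evaluation $F\mapsto F(x\oplus a)$ is exact, and under the $k$-trivial hypothesis the group $\A(x,a)$ is finite with $|\A(x,a)|$ invertible in $k$, so by Maschke the coinvariants functor $(-)_{\A(x,a)}$ on $k[\A(x,a)]\Md$ is exact (equivalently, higher $\mathrm{Tor}$ against the trivial module vanishes). Preservation of products: evaluation obviously commutes with products, and for the same invertibility reason the averaging idempotent $e_{x,a}=\frac{1}{|\A(x,a)|}\sum_{u\in\A(x,a)}u$ identifies $M_{\A(x,a)}$ with $M^{\A(x,a)}$ functorially in $M$; the invariants functor, as a right adjoint, preserves products.

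I do not expect a real obstacle: the two steps are each a routine combination of standard facts (Kan extensions make $\tau_x$ bicontinuous; Maschke makes coinvariants by $\A(x,a)$ exact and compatible with invariants when $|\A(x,a)|$ is invertible in $k$). The only mildly subtle point is that the averaging elements $e_{x,a}$ depend on $a$, so one must not be tempted to split the surjection $\tau_x(F)\twoheadrightarrow\tb_x(F)$ globally in the endofunctor category, but rather argue at the level of (co)limits computed pointwise, where the argument is unambiguous.
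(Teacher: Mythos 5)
Your proof is correct and follows essentially the same route as the paper, which simply observes that coinvariants under a group action are always cocontinuous and become bicontinuous when the group is finite of order invertible in the ambient abelian category (here $\A(x,a)$, finite of invertible order by $k$-triviality). Your more detailed pointwise unpacking, including the averaging idempotent identifying coinvariants with invariants, is exactly the standard argument the paper is invoking.
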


\begin{proof}
Le foncteur des co-invariants sous l'action d'un groupe est cocontinu, et lorsque le groupe est fini d'ordre inversible dans la catégorie abélienne sur laquelle il agit, ce foncteur est bicontinu, d'où le résultat.
\end{proof}

Nous verrons un peu plus loin (remarque~\ref{rq-eqcar}) que les foncteurs $\tb_x$ ne sont généralement pas exacts à gauche, lorsque $\A$ n'est pas supposée $k$-triviale.

On prendra garde par ailleurs que $x\mapsto\tb_x$ ne définit \emph{pas} un quotient du \emph{foncteur} $x\mapsto\tau_x$ --- sinon, la naturalité en $x\in\A$ entraînerait que l'identité de $\F(\A;k)$ est facteur direct de $\tb_x$, ce qui n'est pas le cas (cf. remarque~\ref{rq-nonscinde} ci-après). Toutefois, une forme faible de fonctorialité en $x$ de la construction $\tb_x$ subsiste : si $\varphi : x\to y$ est un monomorphisme scindé de $\A$, alors tout scindement $\psi : y\to x$ de $\varphi$ procure, pour tout $a\in\mathrm{Ob}\,\A$ et tout $u\in\A(x,a)$, un diagramme commutatif 
$$\xymatrix{x\oplus a\ar[rr]^-{\varphi\oplus a}\ar[dd]_-{\left(\begin{array}{cc}
 \mathrm{Id}_x    &  0\\
    u & \mathrm{Id}_a
\end{array}\right)} & & y\oplus a\ar[dd]^-{\left(\begin{array}{cc}
 \mathrm{Id}_y    &  0\\
    u\psi & \mathrm{Id}_a
\end{array}\right)}\\
& & \\
x\oplus a\ar[rr]^-{\varphi\oplus a} & & y\oplus a
}$$
dans $\A$, d'où l'existence d'une (unique) flèche en pointillé faisant commuter le diagramme ci-dessous.
$$\xymatrix{F(x\oplus a)\ar[r]^-{F(\varphi\oplus a)}\ar@{->>}[d] & F(y\oplus a)\ar@{->>}[d]\\
F(x\oplus a)_{\A(x,a)}\ar@{-->}[r] & F(y\oplus a)_{\A(y,a)}
}$$
Ainsi, $x\mapsto\tb_x$ définit un sous-foncteur de la restriction de $x\mapsto\tau_x$ à la sous-catégorie des morphismes scindés de $\A$.

\subsection{Les foncteurs $\db_x$}

En particulier, l'unique morphisme $0\to x$ induit une transformation naturelle $\mathrm{Id}_{\F(\A;k)}\simeq\tb_0\to\tb_x$, dont le conoyau sera noté $\db_x$. On le nomme foncteur de \emph{différence parabolique} associé à $x$. Nos foncteurs $\db$ sont analogues aux foncteurs $\bar{\Delta}$ de Nagpal \cite[§\,4.2]{Nag1}.

\begin{prop}\label{pr-delbarexact} Pour tout objet $x$ de $\A$, l'endofoncteur $\db_x$ de $\F(\A;k)$ est cocontinu. Si $\A$ est $k$-triviale, alors $\db_x$ est bicontinu, et en particulier exact. De plus, la transformation naturelle $\mathrm{Id}_{\F(\A;k)}\to\tb_x$ est un monomorphisme, et $\db_x(F)(a)$ s'identifie aux co-invariants de $\delta_x(F)(a)$ sous l'action de $\A(x,a)$ déduite du monomorphisme canonique $\delta_x(F)(a)\hookrightarrow\tau_x(F)(a)=F(x\oplus a)$.
\end{prop}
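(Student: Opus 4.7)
The plan is to split the argument into three steps, matching the three claims in the statement.

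\emph{Cocontinuity.} By construction, $\db_x$ is the cokernel of $\mathrm{Id}_{\F(\A;k)} \to \tb_x$ in the category of endofunctors of $\F(\A;k)$. Since colimits in $\F(\A;k)$ are computed pointwise in $k\Md$ and cokernels commute with colimits, $\db_x$ will be a cokernel of a morphism between cocontinuous endofunctors ($\mathrm{Id}$ trivially and $\tb_x$ by Proposition~\ref{pr-taubarexact}), and is therefore cocontinuous.

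\emph{Identification of $\db_x(F)(a)$.} Fix $F$ and $a$. I would start from the functorial splitting $\tau_x \simeq \mathrm{Id} \oplus \delta_x$ which, evaluated at $a$, yields an exact sequence
$$0 \to F(a) \to F(x \oplus a) \to \delta_x(F)(a) \to 0$$
in $k\Md$. The key observation is that this is an exact sequence of $\A(x,a)$-modules: a direct matrix computation shows that the canonical inclusion $a \to x \oplus a$ is fixed by the automorphisms $\left(\begin{smallmatrix} \mathrm{Id}_x & 0 \\ u & \mathrm{Id}_a \end{smallmatrix}\right)$ of $x \oplus a$, so $F(a)$ sits inside $F(x \oplus a)$ as a sub-$\A(x,a)$-module with trivial action, and $\delta_x(F)(a)$ inherits the quotient action, which coincides (via the splitting) with the action deduced from the canonical monomorphism mentioned in the statement. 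Applying the right-exact coinvariants functor $(-)_{\A(x,a)}$ then gives
$$F(a) \to \tb_x(F)(a) \to \delta_x(F)(a)_{\A(x,a)} \to 0,$$
and the identification $\db_x(F)(a) \simeq \delta_x(F)(a)_{\A(x,a)}$ follows from the definition of $\db_x$ as the cokernel of $\mathrm{Id} \to \tb_x$.

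\emph{Monomorphism and bicontinuity under $k$-triviality.} Under the $k$-triviality hypothesis, $|\A(x,a)|$ is invertible in $k$ (combining condition (FH) with the definition of $k$-triviality), so by Maschke's theorem the coinvariants functor $(-)_{\A(x,a)}$ is exact. Hence the preceding sequence extends to a short exact sequence $0 \to F(a) \to \tb_x(F)(a) \to \delta_x(F)(a)_{\A(x,a)} \to 0$, establishing the monomorphism claim. There results a short exact sequence of endofunctors $0 \to \mathrm{Id} \to \tb_x \to \db_x \to 0$, from which I will deduce exactness of $\db_x$ by applying the $3 \times 3$ lemma to an arbitrary short exact sequence in $\F(\A;k)$, and commutation with arbitrary products by pointwise exactness of products in $k\Md$ together with the bicontinuity of $\mathrm{Id}$ and $\tb_x$.

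The delicate point will be the monomorphism itself: the naive candidate for a retraction, namely the projection $F(x \oplus a) \to F(a)$ coming from the splitting, is \emph{not} $\A(x,a)$-equivariant, so injectivity cannot be obtained formally from the splitting. It must instead be extracted indirectly via the exactness of the coinvariants functor, which genuinely exploits the invertibility of $|\A(x,a)|$ in $k$ and is thus the reason why the bicontinuity claim requires the $k$-triviality assumption.
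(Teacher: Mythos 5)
Your proposal is correct and follows essentially the same route as the paper: the whole point is the $\A(x,a)$-equivariant short exact sequence $0\to F(a)\to\tau_x(F)(a)\to\delta_x(F)(a)\to 0$ with trivial action on $F(a)$, to which one applies the coinvariants functor, exact when $\A$ is $k$-triviale; you even single out the same delicate point (the splitting retraction is not equivariant, so the monomorphism must come from exactness of coinvariants rather than formally). The only caveat is that, since that retraction is not equivariant, the subspace $\delta_x(F)(a)\subset\tau_x(F)(a)$ is not literally $\A(x,a)$-stable, so the action must be understood as the quotient action transported through the splitting — which is exactly what your argument (and the paper's) actually uses.
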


\begin{proof} La première assertion est immédiate. Supposons maintenant que $\A$ est $k$-triviale.

Le monomorphisme canonique $F(a)\hookrightarrow F(x\oplus a)$ est équivariant pour l'action de $\A(x,a)$, qu'on fait opérer \emph{trivialement} sur  $F(a)$, car le diagramme
$$\xymatrix{a\ar[r]\ar[rdd] & x\oplus a\ar[dd]^-{\left(\begin{array}{cc}
 \mathrm{Id}_x    &  0\\
    u & \mathrm{Id}_a
\end{array}\right)}\\
& \\
 & x\oplus a
}$$
de $\A$ (dont les flèches horizontale et oblique sont le monomorphisme canonique) commute pour tout $u\in\A(x,a)$. Par conséquent, on dispose d'une suite exacte $\A(x,a)$-équivariante
$$0\to F(a)\to\tau_x(F)(a)\to\delta_x(F)(a)\to 0\,,$$
de sorte que la conclusion résulte de l'exactitude (et de la continuité) des co-invariants sous l'action du groupe $\A(x,a)$ lorsque $\A$ est $k$-triviale.
\end{proof}

\begin{rema}\label{rq-nonscinde}
On prendra garde que, contrairement au monomorphisme canonique $\mathrm{Id}\hookrightarrow\tau_x$, le monomorphisme canonique $\mathrm{Id}\hookrightarrow\tb_x$ n'est généralement pas scindé --- cf. remarque~\ref{rq-tbQd} ci-après, qui montrera même qu'il n'existe en général aucun épimorphisme $\tb_x\twoheadrightarrow\mathrm{Id}$. En particulier, $\tb_x$ n'est pas auto-dual : il n'existe (généralement) pas d'isomorphisme $\tb_x(F^\vee)\simeq\tb_x(F)^\vee$ dans $\F(\A^\op;k)$ naturel en le foncteur $F$ de $\F(\A;k)$. 
\end{rema}

\subsection{(Non-)commutation}

Les foncteurs $\tau_x$ et $\delta_y$ usuels possèdent de nombreuses propriétés de commutation : on a $\tau_x\circ\tau_y\simeq\tau_{x\oplus y}\simeq\tau_y\circ\tau_x$ et $\tau_x\circ\delta_y\simeq\delta_y\circ\tau_x$. On perd ces propriétés avec les foncteurs $\tb_x$ et $\db_y$. Nous donnons toutefois ci-dessous une propriété relative à des compositions de foncteurs qui interviendront dans la section~\ref{sFd}.

\begin{prop}\label{pr-comtbdb} Soient $x$ et $y$ des objets de $\A$. Il existe un diagramme commutatif
$$\xymatrix{\tb_y\circ\tau_x\ar@{->>}[r]\ar@{->>}[d] & \tau_x\circ\tb_y\ar@{->>}[d] \\
\db_y\circ\tau_x\ar@{->>}[r] & \tau_x\circ\db_y
}$$
d'endofoncteurs de $\F(\A;k)$ dont toutes les flèches sont des épimorphismes.
\end{prop}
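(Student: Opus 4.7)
Le plan est de construire d'abord la flèche horizontale supérieure $\tb_y\circ\tau_x\twoheadrightarrow\tau_x\circ\tb_y$ et d'en déduire le reste du diagramme par passage aux conoyaux. Pour un foncteur $F$ de $\F(\A;k)$ et un objet $a$ de $\A$, on dispose des identifications canoniques $(\tb_y\tau_x F)(a)\simeq F(x\oplus y\oplus a)_{\A(y,a)}$ et $(\tau_x\tb_y F)(a)\simeq F(y\oplus x\oplus a)_{\A(y,x\oplus a)}$ ; la décomposition canonique $\A(y,x\oplus a)\simeq\A(y,x)\oplus\A(y,a)$ fournit un monomorphisme de groupes abéliens $\A(y,a)\hookrightarrow\A(y,x\oplus a)$ comme deuxième facteur direct.

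Le cœur du travail --- et l'étape la plus technique --- sera de vérifier, via un calcul matriciel explicite sous forme $3\times 3$ par blocs des matrices agissant sur $x\oplus y\oplus a$ ou $y\oplus x\oplus a$, que l'isomorphisme de symétrie $x\oplus y\oplus a\simeq y\oplus x\oplus a$ entrelace l'action de $\A(y,a)$ définissant $\tb_y\tau_x$ avec la restriction à $\A(y,a)\subset\A(y,x\oplus a)$ de l'action définissant $\tau_x\tb_y$. Une fois cette compatibilité établie, le passage aux co-invariants sous le groupe plus grand fournit, de manière naturelle en $a$ et en $F$, un épimorphisme canonique $F(x\oplus y\oplus a)_{\A(y,a)}\twoheadrightarrow F(y\oplus x\oplus a)_{\A(y,x\oplus a)}$, dont on vérifie sans peine la naturalité en $a$, d'où la transformation naturelle épimorphique voulue $\tb_y\tau_x\twoheadrightarrow\tau_x\tb_y$.

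Les flèches verticales s'obtiennent formellement en pré- ou post-composant avec $\tau_x$ la projection canonique $\tb_y\twoheadrightarrow\db_y$, épimorphique par définition de $\db_y$ comme conoyau de $\mathrm{Id}\hookrightarrow\tb_y$. Comme $\tau_x$ est exact, $\tau_x\db_y$ s'identifie au conoyau de l'inclusion canonique $\tau_x\hookrightarrow\tau_x\tb_y$, tandis que $\db_y\tau_x$ est par définition le conoyau de $\tau_x\hookrightarrow\tb_y\tau_x$. Pour en déduire la flèche horizontale inférieure et la commutativité du carré, il suffira de vérifier que la composée $\tau_x\to\tb_y\tau_x\twoheadrightarrow\tau_x\tb_y$ coïncide avec l'inclusion canonique $\tau_x\to\tau_x\tb_y$, ce qui se ramène, après évaluation en $a$, à l'identité évidente entre les deux morphismes composés $x\oplus a\hookrightarrow x\oplus y\oplus a\simeq y\oplus x\oplus a$ et $x\oplus a\hookrightarrow y\oplus x\oplus a$ de $\A$. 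La flèche $\db_y\tau_x\twoheadrightarrow\tau_x\db_y$ en résulte par passage aux conoyaux, et son caractère épimorphique est automatique. On notera qu'aucune hypothèse de $k$-trivialité de $\A$ n'intervient dans cette démonstration, le foncteur $\tau_x$ étant toujours exact.
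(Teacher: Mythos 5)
Votre démonstration est correcte et suit essentiellement la même démarche que celle de l'article : construction de l'épimorphisme supérieur via l'isomorphisme de symétrie $x\oplus y\oplus a\simeq y\oplus x\oplus a$, qui conjugue le sous-groupe $\A(y,a)$ dans le sous-groupe $\A(y,x\oplus a)$, compatibilité avec les inclusions canoniques de $\tau_x$, puis passage aux conoyaux. Seule remarque cosmétique : la flèche $\mathrm{Id}\to\tb_y$ n'est un monomorphisme qu'en présence de la $k$-trivialité (cf. proposition~\ref{pr-delbarexact}), de sorte que la notation $\hookrightarrow$ est abusive dans ce cadre général, mais votre argument n'utilise que le conoyau et n'en est pas affecté.
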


\begin{proof} Soit $t$ un objet de $\A$. On a $(\tau_y\circ\tau_x)(F)(t)=F(x\oplus y\oplus t)$ et $(\tb_y\circ\tau_x)(F)(t)=(\tau_x F)(y\oplus t)_{\A(y,t)}=F(x\oplus y\oplus t)_{\A(y,t)}$, tandis que $(\tau_x\circ\tau_y)(F)(t)=F(y\oplus x\oplus t)$ et $(\tau_x\circ\tb_y)(F)(t)=\tb_y(F)(x\oplus t)=F(y\oplus x\oplus t)_{\A(y,x\oplus t)}$. Comme la conjugaison par l'isomorphisme canonique $x\oplus y\oplus t\xrightarrow{\simeq}y\oplus x\oplus t$ envoie le sous-groupe $\A(y,t)$ de $\mathrm{Aut}(x\oplus y\oplus t)$ dans le sous-groupe $\A(y,x\oplus t)$ de $\mathrm{Aut}(y\oplus x\oplus t)$, on peut donc trouver une flèche en pointillé faisant commuter le diagramme
$$\xymatrix{\tau_y\circ\tau_x\ar[r]^-\simeq\ar@{->>}[d] & \tau_x\circ\tau_y\ar@{->>}[d] \\
\tb_y\circ\tau_x\ar@{-->}[r] & \tau_x\circ\tb_y
}$$
dans lequel la flèche supérieure est l'isomorphisme canonique et les flèches verticales sont les épimorphismes canoniques. On obtient ainsi un épimorphisme $\tb_y\circ\tau_x\twoheadrightarrow\tau_x\circ\tb_y$ tel que le diagramme
$$\xymatrix{\tau_x\ar@{=}[r]\ar[d]_-{(\mathrm{Id}\hookrightarrow\tb_y)\circ\tau_x} & \tau_x\ar[d]^-{\tau_x\circ(\mathrm{Id}\hookrightarrow\tb_y)} \\
\tb_y\circ\tau_x\ar@{->>}[r] & \tau_x\circ\tb_y
}$$
commute, puisque le diagramme
$$\xymatrix{\tau_x\ar@{=}[r]\ar[d]_-{(\mathrm{Id}\hookrightarrow\tau_y)\circ\tau_x} & \tau_x\ar[d]^-{\tau_x\circ(\mathrm{Id}\hookrightarrow\tau_y)} \\
\tau_y\circ\tau_x\ar[r]^-\simeq & \tau_x\circ\tau_y
}$$
commute lui-même. En considérant le conoyau des flèches verticales, on en déduit la proposition.
\end{proof}

Nous verrons dans l'exemple~\ref{ex-commdt} que les épimorphismes de la proposition précédente ne sont pas des isomorphismes (et même qu'il n'existe pas d'isomorphisme entre $\tb_y\circ\tau_x$ et $\tau_x\circ\tb_y$, par exemple).

\subsection{Calcul sur les $k[A]$}

\begin{prop}\label{pr-taubarlin}
Soient $A$ un foncteur de $\rep(\A)$ et $x$ un objet de $\A$. Il existe dans $\F(\A;k)$ un isomorphisme
$$\tb_x (k[A])\simeq\bigoplus_{\xi\in A(x)}k[A/A_\xi]$$
naturel en $A$ au sens où, pour tout morphisme $f : A\to B$ de $\rep(\A)$, le morphisme
$$\bigoplus_{\xi\in A(x)}k[A/A_\xi]\simeq\tb_x (k[A])\xrightarrow{\tb_x(k[f])}\tb_x (k[B])\simeq\bigoplus_{\zeta\in B(x)}k[B/B_\zeta]$$
a pour composante $k[A/A_\xi]\to k[B/B_\zeta]$ le morphisme $k[\bar{f}]$, où $\bar{f} : A/A_\xi\to B/B_\zeta$ est induit par $f$, lorsque $f_*\xi=\zeta$ et $0$ sinon,
et faiblement naturel en $x$ au sens où pour tout monomorphisme scindé $\varphi : x\to y$ de $\A$, le morphisme
$$\bigoplus_{\xi\in A(x)}k[A/A_\xi]\simeq\tb_x (k[A])\xrightarrow{\tb_\varphi(k[A])}\tb_y (k[A])\simeq\bigoplus_{\zeta\in A(y)}k[A/A_\zeta]$$
a pour composante $k[A/A_\xi]\to k[A/A_\zeta]$ l'identité si $A(\varphi)(\xi)=\zeta$ (ce qui implique $A_\xi=A_\zeta$) et $0$ sinon. 
\end{prop}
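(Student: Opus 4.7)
Le plan est de calculer explicitement $\tb_x(k[A])$ en partant de $\tau_x(k[A])$ évalué en un objet $a$ de $\A$, puis de prendre les co-invariants sous $\A(x,a)$ comme dans la définition. Comme $A$ est additif, on a un isomorphisme canonique $A(x\oplus a)\simeq A(x)\oplus A(a)$ de groupes abéliens, d'où $\tau_x(k[A])(a)=k[A(x\oplus a)]\simeq\bigoplus_{\xi\in A(x)}k[A(a)]$, dont on notera $[\xi,\alpha]$ les éléments de base. Sous cette identification, l'action de $u\in\A(x,a)$, induite par la matrice triangulaire définissant l'action sur $x\oplus a$ et par l'additivité de $A$, envoie $(\xi,\alpha)$ sur $(\xi,\alpha+A(u)(\xi))$ : elle préserve donc la première coordonnée et translate la seconde.

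À $\xi$ fixé, l'orbite de $[\xi,\alpha]$ est donc la classe de $\alpha$ modulo le sous-groupe $\{A(u)(\xi)\,|\,u\in\A(x,a)\}$ de $A(a)$. Or la bijection de Yoneda identifie cet ensemble à l'image en $a$ du morphisme $\A(x,-)\to A$ associé à $\xi$, c'est-à-dire à $A_\xi(a)$ (cf. notation~\ref{noteng}). Les co-invariants du facteur indexé par $\xi$ sont donc $k[A(a)/A_\xi(a)]=k[(A/A_\xi)(a)]$, et en sommant sur $\xi$ on obtient l'isomorphisme annoncé $\tb_x(k[A])(a)\simeq\bigoplus_{\xi\in A(x)}k[(A/A_\xi)(a)]$. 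La naturalité en $a$ se vérifie immédiatement : pour $g:a\to b$, la flèche $k[A](x\oplus g)$ envoie $[\xi,\alpha]$ sur $[\xi,A(g)(\alpha)]$, ce qui est compatible au passage au quotient par $A_\xi$ puisque $A_\xi$ est un sous-foncteur de $A$.

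Pour la naturalité en $A$, on observe qu'un morphisme $f:A\to B$ envoie $[\xi,\alpha]$ sur $[f_*\xi,f_*\alpha]$ ; comme $f(A_\xi)\subset B_{f_*\xi}$ (par fonctorialité de la construction $\xi\mapsto A_\xi$), la projection $k[A(a)]\twoheadrightarrow k[A(a)/A_\xi(a)]$ induit bien, composante par composante, le morphisme $k[\bar f]$ pour $\zeta=f_*\xi$ et zéro sinon. Pour la naturalité faible en $x$, étant donné un mono scindé $\varphi:x\to y$ de scindement $\psi$, le morphisme $\tau_\varphi(k[A])$ envoie $[\xi,\alpha]$ sur $[A(\varphi)\xi,\alpha]$ ; il reste seulement à constater que, lorsque $\zeta=A(\varphi)\xi$, on a $A_\xi=A_\zeta$ (l'inclusion $A_\zeta\subset A_\xi$ provient de la factorisation $\zeta=A(\varphi)\xi$ et l'inclusion inverse de $\xi=A(\psi)(\zeta)$), ce qui permet d'affirmer que la composante est l'identité.

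Le principal obstacle n'est pas conceptuel mais plutôt un soin à apporter à l'étape clé : l'identification de l'image de $u\mapsto A(u)(\xi)$ au sous-groupe $A_\xi(a)\subset A(a)$, qui repose sur la description de $A_\xi$ via Yoneda. Une fois cette identification obtenue, tout le reste relève d'une vérification formelle, facilitée par le fait que l'action de $\A(x,a)$ stabilise la décomposition en $\xi\in A(x)$, ce qui permet de traiter chaque facteur séparément et donc d'éviter toute subtilité sur la structure globale des co-invariants.
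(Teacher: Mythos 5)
Votre preuve est correcte et suit essentiellement la même démarche que celle de l'article : décomposition $\tau_x(k[A])(a)\simeq\bigoplus_{\xi\in A(x)}k[A(a)]$ via l'additivité de $A$, identification de l'action de $\A(x,a)$ comme translation par $A(u)(\xi)$ dont l'image est $A_\xi(a)$ par Yoneda, puis passage aux co-invariants facteur par facteur. Vos vérifications des fonctorialités (que l'article expédie en une phrase) sont également exactes.
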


\begin{proof} Soit $a\in\mathrm{Ob}\,\A$. L'action de $\A(x,a)$ sur
\begin{equation}\label{eq-tlin}
    \tau_x(k[A])(a)=k[A(x\oplus a)]\simeq\bigoplus_{\xi\in A(x)}k[A(a)]
\end{equation}
est donnée par $f.(\xi,[\alpha])=(f,[f_*\xi+\alpha])$, où $(\xi,[\alpha])$ désigne l'image de $[\alpha]\in k[A(a)]$ par l'injection du facteur étiqueté par $\xi$. On en tire un isomorphisme
$$\tb_x(k[A])(a)\simeq\bigoplus_{\xi\in A(x)}k[A(a)]/<[f_*\xi+\alpha]-[\alpha]\,|\,(f,\alpha)\in\A(x,a)\times A(a)>$$
$$\simeq\bigoplus_{\xi\in A(x)}k[A(a)]/<[\beta]-[\alpha]\,|\,\beta-\alpha\in A_\xi(a)>\;\simeq\bigoplus_{\xi\in A(x)}k[A(a)/A_\xi(a)].$$
Les fonctorialités en $A$, $a$ et $x$ de ces isomorphismes se déduisent directement de celles de la formule~\eqref{eq-tlin}.
\end{proof}

\begin{coro}\label{cor-delbarlin} Soient $A$ un foncteur de $\rep(\A)$ et $x$ un objet de $\A$. Il existe dans $\F(\A;k)$ un isomorphisme
$$\db_x (k[A])\simeq\bigoplus_{\xi\in A(x)\setminus\{0\}}k[A/A_\xi].$$
\end{coro}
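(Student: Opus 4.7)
The plan is to read off the statement from Proposition~\ref{pr-taubarlin} by applying it to the canonical monomorphism $0\to x$ and taking cokernels.

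First, observe that $\tb_0$ is the identity endofunctor of $\F(\A;k)$: for any object $a$ of $\A$, one has $\tb_0(F)(a)=F(0\oplus a)_{\A(0,a)}=F(a)$, since $\A(0,a)=0$. By definition, $\db_x$ is the cokernel of the natural transformation $\mathrm{Id}\simeq\tb_0\to\tb_x$ induced by the (split) monomorphism $\iota:0\to x$ of $\A$. Applied to the functor $k[A]$, this yields an exact sequence
\begin{equation*}
k[A]\xrightarrow{\tb_\iota(k[A])}\tb_x(k[A])\twoheadrightarrow\db_x(k[A])\to 0.
\end{equation*}

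Next, I identify the first map via the weak naturality in $x$ part of Proposition~\ref{pr-taubarlin}. Since $\iota$ is a split monomorphism of $\A$, Proposition~\ref{pr-taubarlin} identifies $\tb_\iota(k[A])$ with a map
\begin{equation*}
\bigoplus_{\xi\in A(0)}k[A/A_\xi]\to\bigoplus_{\zeta\in A(x)}k[A/A_\zeta]
\end{equation*}
whose component indexed by $(\xi,\zeta)$ is the identity when $A(\iota)(\xi)=\zeta$ and zero otherwise. The source is just $k[A/A_0]=k[A]$ (indexed by $\xi=0\in A(0)$), and $A(\iota)(0)=0\in A(x)$, so the map is the identity inclusion of the $\zeta=0$ summand into $\tb_x(k[A])\simeq\bigoplus_{\zeta\in A(x)}k[A/A_\zeta]$.

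Taking cokernels, one obtains the desired isomorphism
\begin{equation*}
\db_x(k[A])\simeq\bigoplus_{\xi\in A(x)\setminus\{0\}}k[A/A_\xi].
\end{equation*}
There is essentially no obstacle: the only subtlety is to verify that the map $\iota=(0\to x)$ belongs to the class for which the weak naturality of Proposition~\ref{pr-taubarlin} applies, which is immediate since $0\to x$ is split by the unique morphism $x\to 0$.
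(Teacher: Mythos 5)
Your proof is correct and is exactly the intended derivation: the paper leaves this corollary without proof precisely because it follows by applying the weak naturality (in $x$, along the split monomorphism $0\to x$) of Proposition~\ref{pr-taubarlin} to identify $\mathrm{Id}\simeq\tb_0\to\tb_x$ evaluated on $k[A]$ with the inclusion of the summand indexed by $\zeta=0$, and then taking cokernels. No gaps.
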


\begin{exem}\label{ex-commdt}
Évalué sur $k[A]$, l'épimorphisme $\tb_y\circ\tau_x\twoheadrightarrow\tau_x\circ\tb_y$ (resp. $\db_y\circ\tau_x\twoheadrightarrow\tau_x\circ\db_y$) de la proposition~\ref{pr-comtbdb} s'identifie à la somme directe sur les $\xi\in A(x)$ (resp. $\xi\in A(x)\setminus\{0\}$) des épimorphismes canoniques $k[A/A_\xi]\otimes k[A(x)]\twoheadrightarrow k[A/A_\xi]\otimes k[(A/A_\xi)(x)]$, qui ne sont généralement pas des isomorphismes (et il n'existe même généralement aucun isomorphisme entre leur but et leur source).
\end{exem}

\subsection{Calcul sur les $\Q_A$}

\begin{prop}\label{pr-tbQ} Soient $A$ un foncteur de $\rep(\A)$ et $x$ un objet de $\A$. Il existe dans $\F(\A;k)$ des isomorphismes $\mathrm{Aut}(A)$-équivariants
$$\tb_x(\Q_A)\simeq\bigoplus_{\xi\in A(x)}\Q_{A/A_\xi}\qquad\text{et}\qquad\db_x(\Q_A)\simeq\bigoplus_{\xi\in A(x)\setminus\{0\}}\Q_{A/A_\xi}\,,$$
où l'action de $\varphi\in\mathrm{Aut}(A)$ sur les membres de droite a pour composante $\Q_{A/A_\xi}\to\Q_{A/A_\zeta}$ l'isomorphisme induit par $\varphi$ si $\varphi_*\xi=\zeta$ et $0$ sinon.
\end{prop}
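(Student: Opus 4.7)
The plan is to apply $\tb_x$ to the defining presentation of $\Q_A$ and then use the explicit computation of $\tb_x$ on linearised functors (proposition~\ref{pr-taubarlin}). Since $\tb_x$ is cocontinuous (proposition~\ref{pr-taubarexact}), applying it to the presentation
$$\bigoplus_{B\subsetneq A}k[B]\to k[A]\to\Q_A\to 0$$
yields the exact sequence
$$\bigoplus_{B\subsetneq A}\tb_x(k[B])\to\tb_x(k[A])\to\tb_x(\Q_A)\to 0.$$
Using proposition~\ref{pr-taubarlin}, this rewrites as
$$\bigoplus_{B\subsetneq A}\bigoplus_{\eta\in B(x)}k[B/B_\eta]\to\bigoplus_{\xi\in A(x)}k[A/A_\xi]\to\tb_x(\Q_A)\to 0,$$
with the left-hand map having component $k[B/B_\eta]\to k[A/A_\xi]$ induced by the inclusion $B\hookrightarrow A$ when $\eta\mapsto\xi$ under $B(x)\hookrightarrow A(x)$, and zero otherwise.

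The key combinatorial observation is that the left-hand map is block-diagonal in $\xi$. Indeed, for a fixed $\xi\in A(x)$, the nonzero contributions come exactly from those pairs $(B,\eta)$ with $B\subsetneq A$ and $\eta=\xi\in B(x)$; but $\xi\in B(x)$ is equivalent to $A_\xi\subset B$, and in that case $B_\xi=A_\xi$. Consequently,
$$\tb_x(\Q_A)\simeq\bigoplus_{\xi\in A(x)}\mathrm{coker}\Bigl(\bigoplus_{A_\xi\subset B\subsetneq A}k[B/A_\xi]\to k[A/A_\xi]\Bigr).$$
The lattice correspondence $B\leftrightarrow B/A_\xi$ identifies the set $\{B\,|\,A_\xi\subset B\subsetneq A\}$ with the set of strict subfunctors of $A/A_\xi$, so the inner cokernel is precisely $\Q_{A/A_\xi}$ by the definition of $\Q$. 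This gives the first isomorphism.

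For equivariance, given $\varphi\in\mathrm{Aut}(A)$ the induced action on $\tb_x(k[A])$ is described by proposition~\ref{pr-taubarlin}: its component $k[A/A_\xi]\to k[A/A_\zeta]$ is $k[\bar\varphi]$ (where $\bar\varphi$ is the isomorphism induced by $\varphi$, using $\varphi(A_\xi)=A_{\varphi_*\xi}$) when $\varphi_*\xi=\zeta$, and $0$ otherwise. Passing to cokernels and invoking the functoriality of $\Q$ (proposition~\ref{pr-foncQ}) transports this action to the claimed action on $\bigoplus_\xi\Q_{A/A_\xi}$. Finally, the isomorphism for $\db_x$ follows by taking the quotient of $\tb_x(\Q_A)$ by the image of the canonical morphism $\Q_A=\tb_0(\Q_A)\to\tb_x(\Q_A)$: under our identification this morphism is the inclusion of the $\xi=0$ summand (since $A_0=0$ gives $\Q_{A/A_0}=\Q_A$, and the naturality clause of proposition~\ref{pr-taubarlin} applied to the scindé morphism $0\to x$ shows the map hits exactly this summand via the identity), hence the quotient is $\bigoplus_{\xi\neq 0}\Q_{A/A_\xi}$ with the described equivariant action. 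The only mildly delicate point is the block-diagonality of the presenting map; everything else is a formal consequence of the functoriality statements already in hand.
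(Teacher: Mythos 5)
Your proof is correct and follows essentially the same route as the paper's: apply the cocontinuous $\tb_x$ to the defining presentation of $\Q_A$, compute via proposition~\ref{pr-taubarlin}, observe the block-diagonality coming from the equivalence $\xi\in B(x)\Leftrightarrow A_\xi\subset B$ (with $B_\xi=A_\xi$), and identify each block's cokernel with $\Q_{A/A_\xi}$ through the lattice correspondence, the equivariance coming from the naturality in $A$. Your treatment of $\db_x$ (as the cokernel of the inclusion of the $\xi=0$ summand, via the weak naturality applied to $0\to x$) is a harmless variant of the paper's appeal to corollaire~\ref{cor-delbarlin}.
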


\begin{proof} La cocontinuité du foncteur $\tb_x$ (proposition~\ref{pr-taubarexact}) et la proposition~\ref{pr-taubarlin} fournissent un isomorphisme
\begin{equation}\label{eq-tbQA}
    \tb_x(\Q_A)\simeq\mathrm{Coker}\,\Big(\underset{\zeta\in B(x)}{\bigoplus_{B\subsetneq A}} k[B/B_\zeta]\to\bigoplus_{\xi\in A(x)}k[A/A_\xi]\Big)
\end{equation}
dont la composante $k[B/B_\zeta]\to k[A/A_\xi]$ est la linéarisation du morphisme $B/B_\zeta\to A/A_\xi$ qu'induit l'inclusion $B\hookrightarrow A$ si $\zeta=\xi$ dans $A(x)\supset B(x)$ et $0$ sinon. Or pour $B\subset A$ et $\xi\in A(x)$, la relation $\xi\in B(x)$ équivaut à $A_\xi\subset B$, et l'on a alors $B_\xi=A_\xi$, de sorte qu'on déduit de~\eqref{eq-tbQA} des isomorphismes
$$\tb_x(\Q_A)\simeq\bigoplus_{\xi\in A(x)}\mathrm{Coker}\,\Big(\bigoplus_{A_\xi\subset B\subsetneq A}k[B/A_\xi]\to k[A/A_\xi]\Big)$$
$$\simeq\bigoplus_{\xi\in A(x)}\mathrm{Coker}\,\Big(\bigoplus_{T\subsetneq A/A_\xi}k[T]\to k[A/A_\xi]\Big)=\bigoplus_{\xi\in A(x)}\Q_{A/A_\xi}.$$

L'équivariance résulte de la naturalité en $A$ des isomorphismes de la proposition~\ref{pr-taubarlin} ; la démonstration du deuxième isomorphisme est entièrement analogue à partir du corollaire~\ref{cor-delbarlin}.
\end{proof}

La remarque ci-dessous illustre que la proposition~\ref{pr-tbQ} n'a pas d'analogue direct pour les foncteurs $\Q^A$, même avec une catégorie source $k$-triviale, et en particulier que les foncteurs $\tb_x$ et $\db_x$ ne sont pas auto-duaux (cf. remarque~\ref{rq-nonscinde}).

\begin{rema}\label{rq-tbQd} Supposons que $\A$ est $k$-triviale et que $0\to S\to A\to T\to 0$ est une suite exacte \emph{non scindée} de $\rep(\A)$, avec $S$ et $T$ simples non isomorphes.

À partir de la suite exacte
\begin{equation}\label{sec-nsc}
0\to\Q^A\to k[A]\to k[T]\to 0\;,    
\end{equation}
les propositions~\ref{pr-taubarexact} et~\ref{pr-taubarlin} fournissent une suite exacte
$$0\to\tb_x(\Q^A)\to\bigoplus_{\xi\in A(x)}k[A/A_\xi]\to\bigoplus_{\zeta\in T(x)}k[T/T_\zeta]\to 0\;;$$
pour $\xi\in A(x)\setminus\{0\}$, si $A_\xi\ne A$, alors $A_\xi=S$, et la restriction au facteur étiqueté par $\xi$ du morphisme de droite est l'injection du facteur direct correspondant à $\zeta=0$. On en déduit une suite exacte
$$0\to\tb_x(\Q^A)^\red\to k[A]^\red\oplus (k[T]^\red)^{\oplus n}\to k[T]^\red\to 0$$
où $n$ est le cardinal de $S(x)\setminus\{0\}$ et la flèche de droite a pour composante les parties réduites de la linéarisation $k[A]\to k[T]$ de la projection $A\twoheadrightarrow T$ et l'identité de $k[T]$ sur chacun des $n$ facteurs. Si $n>0$ (i.e. si $S(x)\ne 0$), cette suite exacte est scindée procure un isomorphisme $\tb_x(\Q^A)^\red\simeq k[A]^\red\oplus (k[T]^\red)^{\oplus n-1}$. Or il n'y a pas de morphisme non nul de $k[T]$ vers $\Q^A$ grâce à la proposition~\ref{pr-morlin}, qui montre que $\mathrm{Hom}(k[T],\Q^A)$ s'identifie au noyau de l'application linéaire $k[\mathrm{Hom}(T,A)]\to k[\mathrm{Hom}(T,T)]$ linéarisation du morphisme $\mathrm{Hom}(T,A)\to\mathrm{Hom}(T,T)$ induit par la projection $A\twoheadrightarrow T$. En effet, ce morphisme est injectif vu la suite exacte \eqref{sec-nsc} ($S$ et $T$ sont simples et non isomorphes). On voit de façon analogue (en utilisant que \eqref{sec-nsc} est non scindée) que $\mathrm{Hom}(k[A],\Q^A)=0$.

Il s'ensuit qu'il n'y a pas de morphisme non nul $\tb_x(\Q^A)\to\Q^A$ ; en particulier, la suite exacte $0\to\Q^A\to\tb_x(\Q^A)\to\db_x(\Q^A)\to 0$ n'est pas scindée.
\end{rema}

\subsection{Calcul sur les $\Q_{A,M}$}

Précisons tout d'abord quelques notations utilisées dans la suite de ce paragraphe. Si $\xi$ est un élément de l'ensemble quotient $A(x)/\mathrm{Aut}(A)$ (où $x$ et $A$ sont des objets de $\A$ et $\rep(\A)$ respectivement), le sous-foncteur $A_{\tilde{\xi}}$ de $A$ ne dépend pas du choix du relevé $\tilde{\xi}$ de $\xi$ dans $A(x)$, on le notera donc simplement $A_\xi$. Par ailleurs, on note $\Pi(\xi)$ le sous-groupe de $\mathrm{Aut}(A)$ stabilisateur de $\tilde{\xi}$, qui ne dépend que de $\xi$ \emph{à conjugaison près}. Tout élément de $\Pi(\xi)$ induit un automorphisme de $A/A_\xi$, d'où un morphisme de groupes $\Pi(\xi)\to\mathrm{Aut}(A/A_\xi)$ dont le noyau s'identifie au groupe additif $\mathrm{Hom}(A/A_\xi,A_\xi)$, plongé dans $\mathrm{Aut}(A)$ par le morphisme envoyant $g$ sur la somme de l'identité de $A$ et du morphisme $A\twoheadrightarrow A/A_\xi\xrightarrow{g}A_\xi\hookrightarrow A$. On note $\R_\xi : k[\mathrm{Aut}(A)]\Md\to k[\mathrm{Aut}(A/A_\xi)]\Md$ le foncteur de restriction parabolique donné par la composée de la restriction $k[\mathrm{Aut}(A)]\Md\to k[\Pi(\xi)]\Md$ à $\Pi(\xi)$ et du foncteur $k[\mathrm{Aut}(A/A_\xi)]\underset{k[\Pi(\xi)]}{\otimes} -$, qui s'identifie d'après l'observation précédente à la composée du foncteur des co-invariants sous l'action de $\mathrm{Hom}(A/A_\xi,A_\xi)$ et de l'induction de l'image de $\Pi(\xi)$ dans $\mathrm{Aut}(A/A_\xi)$ à $\mathrm{Aut}(A/A_\xi)$.

\begin{prop}\label{pr-tbQAM} Soient $A$ un foncteur de $\rep(\A)$, $M$ un $k[\mathrm{Aut}(A)]$-module et $x$ un objet de $\A$. Il existe dans $\F(\A;k)$ des isomorphismes
$$\tb_x(\Q_{A,M})\simeq\bigoplus_{\xi\in A(x)/\mathrm{Aut}(A)}\Q_{A/A_\xi,\R_\xi(M)}\qquad\text{et}$$
$$\db_x(\Q_{A,M})\simeq\bigoplus_{\xi\in (A(x)\setminus\{0\})/\mathrm{Aut}(A)}\Q_{A/A_\xi,\R_\xi(M)}$$
naturels en $M$.
\end{prop}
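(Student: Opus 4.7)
The plan is to reduce the statement to Proposition~\ref{pr-tbQ} by exploiting the cocontinuity of $\tb_x$ (Proposition~\ref{pr-taubarexact}). Since $\Q_{A,M} = M \otimes_{k[\mathrm{Aut}(A)]} \Q_A$ and $\tb_x$ commutes with colimits (in particular with tensor products over a group algebra), we obtain a natural isomorphism
$$\tb_x(\Q_{A,M}) \simeq M \otimes_{k[\mathrm{Aut}(A)]} \tb_x(\Q_A).$$
Proposition~\ref{pr-tbQ} then rewrites the right-hand side as $M \otimes_{k[\mathrm{Aut}(A)]} \bigoplus_{\xi \in A(x)} \Q_{A/A_\xi}$, with the $\mathrm{Aut}(A)$-action on the sum permuting the indexing set $A(x)$ via $\varphi \cdot \xi = \varphi_*\xi$ and simultaneously transporting the factors $\Q_{A/A_\xi} \to \Q_{A/A_{\varphi_*\xi}}$ by the isomorphism induced by~$\varphi$.

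Next I would break the sum $\bigoplus_{\xi \in A(x)} \Q_{A/A_\xi}$ according to $\mathrm{Aut}(A)$-orbits. Fixing a system of representatives $\xi$ of $A(x)/\mathrm{Aut}(A)$, the sub-$k[\mathrm{Aut}(A)]$-module corresponding to the orbit of $\xi$ is, by a straightforward verification (the orbit $\mathrm{Aut}(A) \cdot \xi$ is $\mathrm{Aut}(A)/\Pi(\xi)$ and the stabilizer $\Pi(\xi)$ acts on $\Q_{A/A_\xi}$ through the canonical morphism $\Pi(\xi) \to \mathrm{Aut}(A/A_\xi)$), naturally isomorphic to the induced module $k[\mathrm{Aut}(A)] \otimes_{k[\Pi(\xi)]} \Q_{A/A_\xi}$. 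Tensoring with $M$ over $k[\mathrm{Aut}(A)]$ and using the associativity/cancellation identity $M \otimes_{k[\mathrm{Aut}(A)]} (k[\mathrm{Aut}(A)] \otimes_{k[\Pi(\xi)]} -) \simeq M \otimes_{k[\Pi(\xi)]} -$ yields
$$\tb_x(\Q_{A,M}) \simeq \bigoplus_{\xi \in A(x)/\mathrm{Aut}(A)} M \otimes_{k[\Pi(\xi)]} \Q_{A/A_\xi}.$$

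Finally, I identify the summand $M \otimes_{k[\Pi(\xi)]} \Q_{A/A_\xi}$ with $\Q_{A/A_\xi,\,\R_\xi(M)}$. Writing $\Pi(\xi)$ as an extension of its image in $\mathrm{Aut}(A/A_\xi)$ by the kernel $\mathrm{Hom}(A/A_\xi, A_\xi)$ (which acts trivially on $\Q_{A/A_\xi}$), the tensor product factors as the co-invariants of $M$ under $\mathrm{Hom}(A/A_\xi, A_\xi)$, followed by tensoring over $k[\Pi(\xi)/\mathrm{Hom}(A/A_\xi,A_\xi)]$, which by induction-cancellation and the definition of $\R_\xi$ equals $\R_\xi(M) \otimes_{k[\mathrm{Aut}(A/A_\xi)]} \Q_{A/A_\xi} = \Q_{A/A_\xi,\,\R_\xi(M)}$. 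This establishes the first isomorphism; the second one, for $\db_x$, is obtained by restricting the whole argument to $\xi \ne 0$ (using the second isomorphism of Proposition~\ref{pr-tbQ}), and noting that the $\mathrm{Aut}(A)$-orbit decomposition restricts accordingly since $\varphi_*\xi = 0$ iff $\xi = 0$.

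The only slightly delicate point, which I would verify with care, is the compatibility of the $\mathrm{Aut}(A)$-equivariance in Proposition~\ref{pr-tbQ} with the identification of the orbit summands as induced modules: one must check that the transport isomorphisms $\Q_{A/A_\xi} \to \Q_{A/A_{\varphi_*\xi}}$ furnished by Proposition~\ref{pr-foncQ} are precisely those corresponding to the $\Pi(\xi)$-module structure on $\Q_{A/A_\xi}$ through $\Pi(\xi) \to \mathrm{Aut}(A/A_\xi)$. This is where the definition of $\R_\xi$ as a composite of co-invariants under $\mathrm{Hom}(A/A_\xi, A_\xi)$ and induction from the image of $\Pi(\xi)$ to $\mathrm{Aut}(A/A_\xi)$ is used crucially, but once the equivariance is carefully tracked, the identification is formal.
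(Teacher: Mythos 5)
Your argument is correct and follows essentially the same route as the paper: cocontinuity of $\tb_x$ to commute it past $M\otimes_{k[\mathrm{Aut}(A)]}-$, the $\mathrm{Aut}(A)$-equivariant decomposition of Proposition~\ref{pr-tbQ} regrouped into orbits as induced modules $\Q_{A/A_\xi}\uparrow_{\Pi(\xi)}^{\mathrm{Aut}(A)}$, then induction-cancellation and the definition of $\R_\xi$ (with the kernel $\mathrm{Hom}(A/A_\xi,A_\xi)$ acting trivially on $\Q_{A/A_\xi}$). The equivariance compatibility you flag as delicate is indeed the only point needing care, and it is handled exactly as you describe.
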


\begin{proof}
Nous nous contenterons d'établir le premier isomorphisme, le second se montrant de façon similaire (ou s'en déduisant).

La proposition~\ref{pr-tbQ} fournit un isomorphisme $\mathrm{Aut}(A)$-équivariant
$$\tb_x(\Q_A)\simeq\bigoplus_{\xi\in A(x)/\mathrm{Aut}(A)}\Q_{A/A_\xi}\uparrow_{\Pi(\xi)}^{\mathrm{Aut}(A)}\;,$$
d'où, comme $\tb_x$ est cocontinu (proposition~\ref{pr-taubarexact}), des isomorphismes
$$\tb_x(\Q_{A,M})\simeq M\underset{k[\mathrm{Aut}(A)]}{\otimes}\tb_x(\Q_A)\simeq\bigoplus_{\xi\in A(x)/\mathrm{Aut}(A)}(M\downarrow^{\mathrm{Aut}(A)}_{\Pi(\xi)})\underset{k[\Pi(\xi)]}{\otimes}\Q_{A/A_\xi}$$
$$\simeq\bigoplus_{\xi\in A(x)/\mathrm{Aut}(A)}\R_\xi(M)\underset{k[\mathrm{Aut}(A/A_\xi)]}{\otimes}\Q_{A/A_\xi}=\bigoplus_{\xi\in A(x)/\mathrm{Aut}(A)}\Q_{A/A_\xi,\R_\xi(M)}$$
naturels en le $k[\mathrm{Aut}(A)]$-module $M$.
\end{proof}

\begin{rema}\label{rq-annuldb} Si la catégorie $\A$ est $k$-triviale et le foncteur $A$ de type fini, alors $\mathrm{Hom}(A/A_\xi,A_\xi)$ est un groupe fini d'ordre inversible dans $k$, de sorte que le foncteur $\R_\xi$ est \emph{exact}. Il n'est en revanche généralement pas fidèle.

La proposition~\ref{pr-tbQAM} montre toutefois que, si $A$ est un foncteur de type fini non nul de $\rep(\A)$ et $M$ une représentation non nulle de $\mathrm{Aut}(A)$, alors il existe un objet $x$ de $\A$ tel que $\db_x(\Q_{A,M})$ soit non nul, car on peut trouver $\xi$ tel que $A_\xi=A$ si $x$ est bien choisi, de sorte que le facteur $\Q_{A/A_\xi,\R_\xi(M)}=\Q_{0,M}$ est un foncteur constant non nul. Néanmoins, on peut trouver des foncteurs $A$ de $\rep(\A)$ de longueur finie et \emph{arbitrairement grande} (si $\A$ est $k$-triviale et non nulle) tels qu'il existe une représentation irréductible $M$ de $\mathrm{Aut}(A)$ telle que $\db_x(\Q_{A,M})$ soit \emph{constant} pour tout $x\in\mathrm{Ob}\,\A$. Ainsi, si $\A=\mathbf{P}(\FF)$ pour un corps fini $\FF$ de caractéristique distincte de celle de $k$, identifiant $\rep(\A)$ aux $\FF$-espaces vectoriels, pour $A:=\FF^d$ et $M$ une représentation $k$-linéaire irréductible \emph{cuspidale} de $\GL_d(\FF)$, on a $\R_\xi(M)=0$ pour tout $\xi\in A(x)\setminus\{0\}$ tel que $A_\xi\ne 0$, de sorte que $\db_x(\Q_{A,M})$ se réduit à son terme constant (correspondant aux $\xi$ tels que $A_\xi=A$ dans l'isomorphisme de la proposition~\ref{pr-tbQAM}). Cet exemple se propage facilement à toute catégorie $k$-triviale non nulle $\A$ par changement additif de base à la source.
\end{rema}

\begin{rema}\label{rq-eqcar}
Supposons que $\tb_x$ ou $\db_x$ est un foncteur exact. Alors la proposition~\ref{pr-tbQAM} implique que, pour tout foncteur $A$ de $\rep(\A)$ et tout $\xi\in (A(x)\setminus\{0\})/\mathrm{Aut}(A)$, le foncteur
$$k[\mathrm{Aut}(A)]\Md\to\F(\A;k)\qquad M\mapsto \Q_{A/A_\xi,\R_\xi(M)}$$
est exact, et donc, si $A$ est de type fini, que le foncteur $\R_\xi : k[\mathrm{Aut}(A)]\Md\to k[\mathrm{Aut}(A/A_\xi)]\Md$ est également exact, grâce à la proposition~\ref{pr-Qexact}. Ce n'est généralement pas le cas lorsque $\A$ n'est pas $k$-triviale. Par exemple, si $R$ est un anneau fini dont la caractéristique est une puissance de celle de $k$, alors le foncteur $k[\GL_2(R)]\Md\to k\Md$ des co-invariants sous l'action du groupe additif sous-jacent à $R$ (plongé dans $\GL_2(R)$ via les matrices triangulaires supérieures strictes) n'est pas exact à gauche (par exemple, l'injection diagonale $k\hookrightarrow k^{\GL_2(R)}$ est équivariante, où $\GL_2(R)$ opère trivialement à la source, mais induit le morphisme nul entre les co-invariants, qui sont non nuls). On en déduit facilement que si $\A$ vérifie la condition (FH) mais n'est pas $k$-triviale, alors il existe un objet $x$ de $\A$ tel que ni $\tb_x$ ni $\db_x$ ne soient exacts à gauche dans $\F(\A;k)$.
\end{rema}

\section{Les sous-catégories $\F_d(\A;k)$}\label{sFd}

\begin{defi}\label{df-Fd} Soit $d\ge -1$ un entier. On note $\F_d(\A;k)$ la sous-catégorie pleine de $\F(\A;k)$ constituée des foncteurs $F$ tels que $\db_{a_0}\dots\db_{a_d}\tau_x(F)=0$ pour tout $(a_0,\dots,a_d,x)\in\mathrm{Ob}\,\A^{d+2}$.
\end{defi}

On remarquera que le dernier endofoncteur de $\F(\A;k)$ qui intervient dans la composition ci-dessus est $\tau_x$ et non $\tb_x$ ; on voit facilement (en s'appuyant sur la remarque ci-après, par exemple) que l'omission du terme $\tau_x$ (ou son remplacement par $\tb_x$) donnerait lieu à une notion non équivalente, et moins intéressante.

\begin{rema}\label{rq-Fd-def}
La définition précédente est très semblable à la définition de foncteur polynomial de degré au plus~$d$ rappelée au début de la section~\ref{sfpap}, \emph{à l'exception de l'ajout du terme $\tau_x$} dans la définition~\ref{df-Fd}. Il n'est pas difficile de voir qu'un foncteur $F$ de $\F(\A;k)$ est polynomial de degré au plus~$d$ si et seulement si $\delta_{a_0}\dots\delta_{a_d}\tau_x(F)=0$ pour tout $(a_0,\dots,a_d,x)\in\mathrm{Ob}\,\A^{d+2}$ : l'ajout du terme supplémentaire $\tau_x$ dans la définition d'un foncteur polynomial ne change rien, car les foncteurs $\tau_x$ et $\delta_a$ commutent. Ce n'est en revanche pas le cas dans la définition précédente.

Supposons ainsi que $\A$ est une catégorie $k$-triviale non nulle. La remarque~\ref{rq-annuldb} montre qu'il existe des foncteurs finis $A$ de $\rep(\A)$ de longueur $d$ et des représentations irréductibles $M_A$ de $\mathrm{Aut}(A)$ tels que $\db_{a_0}\db_{a_1}(Q_{A,M_A})=0$ pour tout $(a_0,a_1)\in\mathrm{Ob}\,\A^2$ pour $d$ arbitrairement grand, alors que $\Q_{A,M_A}$ n'appartient à $\F_1(\A;k)$ que si $d\le 1$, par le corollaire~\ref{cor-QAM-Fd} ci-après. Ainsi, l'annulation d'itérées en nombre fixé de foncteurs de la forme $\db_x$ ne fournit pas une notion de \guillemotleft~taille~\guillemotright\ appropriée pour les foncteurs de $\F(\A;k)$, contrairement à la définition~\ref{df-Fd}, comme l'illustre la proposition~\ref{pr-Fdlin} ci-dessous.
\end{rema} 

\begin{prop}\label{pr-Fd-tau} Soient $d\in\mathbb{N}$, $F$ un foncteur de $\F_d(\A;k)$ et $y$ un objet de $\A$. Alors $\tau_y(F)$ appartient à $\F_d(\A;k)$ et $\db_y(F)$ appartient à $\F_{d-1}(\A;k)$.
\end{prop}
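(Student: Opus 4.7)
My plan is to reduce each of the two assertions to the defining vanishing condition for $F\in\F_d(\A;k)$ by exploiting the two main pieces of machinery available: the fact that $\tau_x\circ\tau_y\simeq\tau_{x\oplus y}$ (which is immediate from the definitions) and the canonical epimorphism $\db_y\circ\tau_x\twoheadrightarrow\tau_x\circ\db_y$ provided by the proposition~\ref{pr-comtbdb}. Exactness of the $\db_a$'s, which holds thanks to proposition~\ref{pr-delbarexact} since we are in the $k$-trivial setting, will be used to propagate an epimorphism through a finite composition of difference functors.

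First I would treat the statement about $\tau_y$: for any $(a_0,\dots,a_d,x)\in\mathrm{Ob}\,\A^{d+2}$ one has
\[
\db_{a_0}\cdots\db_{a_d}\tau_x\tau_y(F)\;\simeq\;\db_{a_0}\cdots\db_{a_d}\tau_{x\oplus y}(F),
\]
and the right-hand side vanishes because $F\in\F_d(\A;k)$ (apply the definition with the object $x\oplus y$ in the last slot). Hence $\tau_y(F)\in\F_d(\A;k)$. This step is entirely formal.

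The more interesting step is $\db_y(F)\in\F_{d-1}(\A;k)$. Fix $(a_0,\dots,a_{d-1},x)\in\mathrm{Ob}\,\A^{d+1}$. By proposition~\ref{pr-comtbdb} applied to $F$ one has a canonical epimorphism
\[
\db_y\tau_x(F)\;\twoheadrightarrow\;\tau_x\db_y(F).
\]
Applying the composite $\db_{a_0}\cdots\db_{a_{d-1}}$ to this epimorphism and using that each $\db_{a_i}$ is exact (proposition~\ref{pr-delbarexact}, the $k$-triviality hypothesis being in force in this section), one obtains an epimorphism
\[
\db_{a_0}\cdots\db_{a_{d-1}}\db_y\tau_x(F)\;\twoheadrightarrow\;\db_{a_0}\cdots\db_{a_{d-1}}\tau_x\db_y(F).
\]
The source is zero by the defining property of $\F_d(\A;k)$ (take $a_d:=y$), whence so is the target. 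This being true for every $(a_0,\dots,a_{d-1},x)$, we conclude $\db_y(F)\in\F_{d-1}(\A;k)$.

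The only potentially delicate point is the second step, and it hinges entirely on the correct direction of the canonical epimorphism in proposition~\ref{pr-comtbdb}; this direction is precisely what allows one to push the vanishing through. No other ingredient beyond exactness of the $\db_a$'s and associativity of precomposition with $-\oplus z$ is needed.
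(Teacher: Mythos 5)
Your proof is correct and follows exactly the paper's argument: the $\tau_y$ case via the isomorphism $\tau_x\circ\tau_y\simeq\tau_{x\oplus y}$, and the $\db_y$ case via the epimorphism $\db_y\circ\tau_x\twoheadrightarrow\tau_x\circ\db_y$ of Proposition~\ref{pr-comtbdb}. One small remark: this proposition is stated \emph{before} the blanket $k$-triviality hypothesis, so you should not appeal to exactness of the $\db_{a_i}$; fortunately you only need them to preserve epimorphisms, which follows from their unconditional cocontinuity (Proposition~\ref{pr-delbarexact}), so the argument goes through as written.
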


\begin{proof}
L'assertion relative à $\tau_y(F)$ résulte de l'isomorphisme canonique $\tau_x\circ\tau_y\simeq\tau_{x\oplus y}$. Celle relative à $\db_y(F)$ provient de l'épimorphisme $\db_y\circ\tau_x\twoheadrightarrow\tau_x\circ\db_y$ donné par la proposition~\ref{pr-comtbdb}.
\end{proof}

\begin{prop}\label{pr-Fdlin} Soient $A$ un foncteur de $\rep(\A)$ et $d\ge -1$ un entier. Le foncteur $k[A]$ appartient à $\F_d(\A;k)$ si et seulement si $\lgr(A)\le d$.
\end{prop}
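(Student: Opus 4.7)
The plan is to compute $\db_{a_d}\cdots\db_{a_0}\tau_x(k[A])$ explicitly and read off when it vanishes in terms of the length of $A$.

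Since $A$ is an additive functor, $A(x\oplus t) = A(x)\oplus A(t)$, so
\[
\tau_x(k[A])(t) = k[A(x\oplus t)] \simeq k[A(x)]\otimes k[A(t)].
\]
Thus $\tau_x(k[A])$ is just a direct sum of $|A(x)|$ copies of $k[A]$. By cocontinuity of each $\db_{a_i}$ (Proposition~\ref{pr-delbarexact}), the vanishing condition defining $\F_d(\A;k)$ applied to $k[A]$ reduces to $\db_{a_d}\cdots\db_{a_0}(k[A]) = 0$ for every tuple $(a_0,\dots,a_d)$: the case $x=0$ already suffices since $\tau_0=\mathrm{Id}$.

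Next I compute $\db_{a_d}\cdots\db_{a_0}(k[A])$ by induction on $d$, using at each step Corollary~\ref{cor-delbarlin} to turn $\db_{a_i}$ applied to $k[A/B_{i-1}]$ into a direct sum indexed by $\eta_i\in(A/B_{i-1})(a_i)\setminus\{0\}$, together with the standard bijection between subfunctors of $A/B_{i-1}$ and subfunctors of $A$ containing $B_{i-1}$. The upshot is
\[
\db_{a_d}\cdots\db_{a_0}(k[A]) \;\simeq\; \bigoplus k[A/B_d],
\]
where the direct sum runs over tuples $(\eta_0,\dots,\eta_d)$ with $\eta_i\in (A/B_{i-1})(a_i)\setminus\{0\}$, $B_{-1}=0$, and $B_i = B_{i-1} + A_{\tilde{\eta}_i}$ for any lift $\tilde{\eta}_i \in A(a_i)$ of $\eta_i$ (well defined because any two lifts differ by an element of $B_{i-1}(a_i)$). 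The condition $\eta_i\ne 0$ forces $B_i\supsetneq B_{i-1}$, so such a tuple encodes a strictly increasing chain $0\subsetneq B_0\subsetneq\dots\subsetneq B_d\subseteq A$ of subfunctors of $A$ with each $B_i/B_{i-1}$ cyclic. Each summand $k[A/B_d]$ is non-zero (it contains the constant functor $k$), so the sum vanishes if and only if no such chain exists.

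To conclude: if $\lgr(A)\le d$, no strict chain of $d+2$ subfunctors exists in $A$, the sum is empty, and $k[A]\in\F_d(\A;k)$. Conversely, if $\lgr(A)>d$, start from any strict chain $0\subsetneq C_0\subsetneq\dots\subsetneq C_d$ of subfunctors of $A$ and refine it inductively: having built $B_{-1}=0,B_0,\dots,B_{i-1}$ with $B_j\subseteq C_j$ and each step singly generated, the inclusion $B_{i-1}\subseteq C_{i-1}\subsetneq C_i$ supplies some $a_i\in\mathrm{Ob}\,\A$ and $\eta_i\in C_i(a_i)\setminus B_{i-1}(a_i)$, and $B_i:=B_{i-1}+A_{\eta_i}\subseteq C_i$ extends the chain strictly with a cyclic quotient. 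This produces a non-zero term $k[A/B_d]$ in the direct sum above, so $k[A]\notin\F_d(\A;k)$. The only delicate point is correctly iterating Corollary~\ref{cor-delbarlin} through the successive quotients; after that, the chain combinatorics are routine.
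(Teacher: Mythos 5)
Your argument is correct and follows essentially the same route as the paper: reduce to the vanishing of $\db_{a_0}\cdots\db_{a_d}(k[A])$ using that $\tau_x(k[A])$ is a nonzero direct sum of copies of $k[A]$ and that the $\db_a$ are cocontinuous, then iterate Corollary~\ref{cor-delbarlin}. The paper packages the iteration as an induction on $d$ (using that every nonzero finitely generated subfunctor is some $A_\xi$, plus the existence, when $\lgr(A)=\infty$, of finitely generated $B_n$ with $\lgr(A/B_n)>n$), whereas you unroll it into an explicit chain-indexed decomposition, but the content is the same.
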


\begin{proof}
Comme $\tau_x(k[A])$ est somme directe d'un nombre non nul de copies de $k[A]$ et que les $\db_a$ sont cocontinus, $k[A]$ appartient à $\F_d(\A;k)$ si et seulement si $\db_{a_0}\dots\db_{a_d}(k[A])$ est nul pour tout $(a_0,\dots,a_d)\in\mathrm{Ob}\,\A^{d+1}$. La conclusion s'obtient alors par récurrence sur $d$ à partir du corollaire~\ref{cor-delbarlin}. En effet, tout sous-foncteur de type fini non nul de $A$ est de la forme $A_\xi$ pour un $a\in\mathrm{Ob}\,\A$ et un $\xi\in A(a)\setminus\{0\}$, et si $A$ n'est pas de longueur finie, pour tout $n\in\mathbb{N}$, il possède un sous-foncteur de type fini non nul $B_n$ tel que $A/B_n$ n'est pas de longueur finie au plus $n$.
\end{proof}

\begin{coro}\label{cor-QAM-Fd} Soient $A$ un foncteur de type fini de $\rep(\A)$, $M$ un $k[\mathrm{Aut}(A)]$-module non nul et $d\ge -1$ un entier. Le foncteur $\Q_{A,M}$ appartient à $\F_d(\A;k)$ si et seulement si $\lgr(A)\le d$.
\end{coro}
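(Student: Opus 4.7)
Le plan consiste à ramener cet énoncé à la proposition~\ref{pr-Fdlin} (qui traite des linéarisés $k[A]$) en exploitant la bilocalisation de $\F_d(\A;k)$ et le fait que le foncteur $M\mapsto\Q_{A,M}$ est compatible avec les présentations des $k[\mathrm{Aut}(A)]$-modules. Le cas $d=-1$ est trivial : $\F_{-1}=0$, tandis que $\Q_{A,M}$ est non nul d'après la proposition~\ref{pr-Qexact}, et $\lgr(A)\ge 0>-1$ de sorte que les deux assertions sont également fausses. On suppose donc $d\ge 0$.

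Pour l'implication directe, supposons $\Q_{A,M}\in\F_d(\A;k)$. La proposition~\ref{pr-Fd-tau} garantit que $\tau_x(\Q_{A,M})$ appartient à $\F_d(\A;k)$ pour tout objet $x$ de $\A$. Comme $M$ est non nul et $A$ de type fini, la proposition~\ref{pr-tauQAM} fournit un objet $x$ pour lequel $k[A]$ est facteur direct de $\tau_x(\Q_{A,M})$. Puisque $\F_d$ est bilocalisante, donc stable par facteur direct, on en déduit que $k[A]\in\F_d(\A;k)$, et la proposition~\ref{pr-Fdlin} donne alors $\lgr(A)\le d$.

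Pour la réciproque, supposons $\lgr(A)\le d$. La proposition~\ref{pr-Fdlin} place $k[A]$ dans $\F_d(\A;k)$ ; comme $\Q_A$ est par construction un quotient de $k[A]$ et que $\F_d$ est stable par quotient, on a $\Q_A\in\F_d(\A;k)$. Choisissons maintenant une présentation $k[\mathrm{Aut}(A)]^{(J)}\to k[\mathrm{Aut}(A)]^{(I)}\to M\to 0$. L'exactitude du foncteur $\Q_{A,-}$ (proposition~\ref{pr-Qexact}) fournit alors la suite exacte $\Q_A^{(J)}\to\Q_A^{(I)}\to\Q_{A,M}\to 0$. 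La stabilité des sous-catégories bilocalisantes par coproduit donne $\Q_A^{(I)}\in\F_d(\A;k)$, et le quotient $\Q_{A,M}$ y appartient également.

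Il n'y a aucune véritable difficulté technique : tous les ingrédients nécessaires ont déjà été mis en place dans les sections antérieures. Le seul point qui réclame un instant d'attention est que la proposition~\ref{pr-Fd-tau} fournit l'implication $F\in\F_d\Rightarrow\tau_y(F)\in\F_d$ (et non sa réciproque) --- c'est précisément ce sens-là qui permet de faire descendre l'appartenance à $\F_d$ depuis $\Q_{A,M}$ jusqu'au facteur direct $k[A]$ extrait par la proposition~\ref{pr-tauQAM}, et donc d'invoquer \emph{in fine} la caractérisation de $k[A]\in\F_d$ par la longueur de $A$.
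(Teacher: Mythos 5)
Votre démonstration est correcte et suit essentiellement la même stratégie que celle de l'article : pour la réciproque, on observe que $\Q_{A,M}$ est quotient d'une somme directe de copies de $k[A]$ (vous passez par une présentation de $M$ et le foncteur $\Q_{A,-}$, l'article le dit directement) et on conclut par la proposition~\ref{pr-Fdlin} et la stabilité de $\F_d(\A;k)$ par sommes et quotients ; pour le sens direct, on extrait $k[A]$ comme facteur direct de $\tau_x(\Q_{A,M})$ via la proposition~\ref{pr-tauQAM}. Rien à redire.
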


\begin{proof}
Le foncteur $\Q_{A,M}$ est quotient d'une somme directe (éventuellement infinie) de copies de $k[A]$. Comme $\F_d(\A;k)$ est stable par somme directe et par quotients (car les foncteurs $\tau_x$ et $\db_a$ sont cocontinus), la proposition~\ref{pr-Fdlin} montre que $\Q_{A,M}$ appartient à $\F_d(\A;k)$ si $\lgr(A)\le d$.

La proposition~\ref{pr-tauQAM} montre par ailleurs que, si $\Q_{A,M}$ appartient à $\F_d(\A;k)$ (avec $A$ de type fini et $M$ non nul), alors $k[A]$ est aussi dans $\F_d(\A;k)$, ce qui implique $\lgr(A)\le d$ par la proposition~\ref{pr-Fdlin}.
\end{proof}

\begin{coro}\label{cor-tttfFd}
Les assertions suivantes sont équivalentes :
\begin{enumerate}
    \item pour tout foncteur de type fini $F$ de $\F(\A;k)$, il existe $d\in\mathbb{N}$ tel que $F\in\F_d(\A;k)$ ;
    \item la catégorie abélienne $\rep(\A)$ est localement finie.
\end{enumerate}
\end{coro}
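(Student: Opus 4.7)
The plan is to derive both implications directly from Proposition~\ref{pr-Fdlin} and from the fact that, by the remarks just after the introduction of $\rep(\A)$, the category $\rep(\A)$ is generated by the representable functors $\A(a,-)$ (which are finitely generated, since $\mathrm{id}_a$ generates $\A(a,-)$ by Yoneda). Consequently, $\rep(\A)$ is locally finite if and only if each representable $\A(a,-)$ is of finite length: indeed, if all representables have finite length, they form a generating family of finite length objects; conversely, in a locally finite category, a finitely generated object is a quotient of a finite direct sum of finite length generators, hence of finite length.

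For the implication $(2)\Rightarrow(1)$, I would start with an arbitrary finitely generated $F$ of $\F(\A;k)$. By the description of finitely generated objects in $\F(\A;k)$ recalled earlier, $F$ is a quotient of a finite direct sum $\bigoplus_{i=1}^n k[\A(a_i,-)]$. Under assumption $(2)$, each $\A(a_i,-)$ has some finite length $d_i$, so Proposition~\ref{pr-Fdlin} places $k[\A(a_i,-)]$ in $\F_{d_i}(\A;k)$. Setting $d:=\max_i d_i$, and using that $\F_d(\A;k)\subset\F_{d+1}(\A;k)$, all summands lie in $\F_d(\A;k)$. Since $\F_d(\A;k)$ is bilocalizing (hence in particular stable under finite direct sums and under quotients), the direct sum and then the quotient $F$ lie in $\F_d(\A;k)$.

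For the converse $(1)\Rightarrow(2)$, I would simply test the assumption on the representables. For each object $a$ of $\A$, the functor $k[\A(a,-)]$ is finitely generated in $\F(\A;k)$ (it even represents evaluation at $a$, by Yoneda). Hypothesis $(1)$ then provides an integer $d$ with $k[\A(a,-)]\in\F_d(\A;k)$, and Proposition~\ref{pr-Fdlin} translates this into $\lgr(\A(a,-))\le d$. Thus every representable has finite length in $\rep(\A)$, and since the representables form a generating family of $\rep(\A)$, that category is locally finite.

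There is no substantial obstacle here: once the equivalence between local finiteness of $\rep(\A)$ and finite length of all representables is made explicit, the statement is a straightforward corollary of Proposition~\ref{pr-Fdlin} together with the bilocalizing nature of the subcategories $\F_d(\A;k)$ (which is what allows one to pass from representables to arbitrary finitely generated objects of $\F(\A;k)$ via finite direct sums and quotients).
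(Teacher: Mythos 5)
Your argument is correct and is essentially the paper's proof: both directions come down to Proposition~\ref{pr-Fdlin} applied to the representables $\A(a,-)$, using that a finitely generated object of $\F(\A;k)$ is a quotient of a finite direct sum of functors $k[\A(a_i,-)]$ and that a finitely generated object of a locally finite category has finite length. One small caution: the corollary is stated \emph{before} the standing $k$-triviality hypothesis, so you should not invoke the bilocalizing property (Proposition~\ref{pr-Fdbiloc}, proved only for $k$-trivial $\A$) but merely the stability of $\F_d(\A;k)$ under direct sums and quotients, which holds in general because the functors $\tau_x$ and $\db_a$ are cocontinuous.
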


\begin{proof}
La première condition entraîne la seconde grâce à la proposition~\ref{pr-Fdlin}, appliquée aux foncteurs additifs représentables $\A(a,-)$. L'implication réciproque se déduit de la même proposition, puisque $\F_d(\A;k)$ est stable par somme directe arbitraire et par quotient.
\end{proof}

\begin{hyp}
\textbf{\emph{Dans toute la suite de cet article, on suppose, sauf mention expresse du contraire\,\footnote{Cette exception concerne exclusivement les remarques~\ref{rq-pol1} et~\ref{rq-pol12}.}, que la catégorie additive $\A$ est $k$-triviale.}}
\end{hyp}

\begin{prop}\label{pr-Fdbiloc} La sous-catégorie $\F_d(\A;k)$ de $\F(\A;k)$ est bilocalisante pour tout entier $d\ge -1$.
\end{prop}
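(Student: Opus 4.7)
La démonstration devrait être essentiellement formelle à partir des résultats déjà établis sur les foncteurs $\tb_x$ et $\db_x$. Le plan est le suivant.

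Tout d'abord, je ferais observer que, pour tout $(a_0,\dots,a_d,x)\in\mathrm{Ob}\,\A^{d+2}$, l'endofoncteur
$$\Phi_{a_0,\dots,a_d,x}:=\db_{a_0}\circ\dots\circ\db_{a_d}\circ\tau_x$$
de $\F(\A;k)$ est \emph{exact et bicontinu}. En effet, $\tau_x$ est toujours bicontinu, comme tout foncteur de précomposition, et chacun des $\db_{a_i}$ l'est également, l'hypothèse que $\A$ est $k$-triviale étant supposée valable depuis le début de cette section, grâce à la proposition~\ref{pr-delbarexact}. C'est précisément ici — et uniquement ici — que l'hypothèse $k$-triviale intervient, comme le souligne la remarque~\ref{rq-eqcar} : en dehors de ce cadre, $\db_x$ n'est généralement pas exact à gauche et la proposition serait mise en défaut.

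Ensuite, je vérifierais que, pour tout endofoncteur exact et bicontinu $\Phi$ d'une catégorie abélienne de Grothendieck, sa sous-catégorie pleine noyau $\ker\Phi:=\{F\,|\,\Phi(F)=0\}$ est bilocalisante. L'exactitude entraîne directement la stabilité par sous-objets, quotients et extensions (i.e. le caractère de sous-catégorie épaisse) ; la cocontinuité entraîne la stabilité par sommes directes arbitraires, puisque $\Phi(\bigoplus_i F_i)\simeq\bigoplus_i\Phi(F_i)$ ; et la continuité entraîne dualement la stabilité par produits. Toutes les conditions caractérisant une sous-catégorie bilocalisante, rappelées dans l'appendice~\ref{ap-catab}, sont donc vérifiées.

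Il reste alors à constater que, par la définition~\ref{df-Fd},
$$\F_d(\A;k)=\bigcap_{(a_0,\dots,a_d,x)\in\mathrm{Ob}\,\A^{d+2}}\ker\Phi_{a_0,\dots,a_d,x}\,,$$
et qu'une intersection (quelconque) de sous-catégories bilocalisantes d'une catégorie abélienne est bilocalisante, toutes les propriétés de stabilité en jeu étant préservées par intersection. La proposition en résulte. L'argument ne présente aucun obstacle réel : toute la substance a été concentrée en amont dans la proposition~\ref{pr-delbarexact}, qui exploite de façon décisive l'hypothèse que les groupes $\A(x,a)$ ont pour ordre un inversible de $k$.
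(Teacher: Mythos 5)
Votre démonstration est correcte et suit exactement la même voie que celle de l'article, qui se contente d'invoquer la bicontinuité des $\tau_x$ et des $\db_a$ (proposition~\ref{pr-delbarexact}) en laissant implicites les étapes formelles que vous explicitez (noyau d'un foncteur exact bicontinu, intersection de sous-catégories bilocalisantes). Rien à redire.
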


\begin{proof}
Les foncteurs de décalage $\tau_x$ sont toujours bicontinus. C'est également le cas des foncteurs de différence parabolique $\db_a$ lorsque $\A$ est $k$-triviale (proposition~\ref{pr-delbarexact}), d'où le résultat.
\end{proof}

Dans ce qui suit, si $f$ est un morphisme d'une catégorie abélienne, on note $\rg(f):=\lgr(\mathrm{Im}\,f)$.

\begin{prop}\label{pr-Fd-tftcf} Soient $d\ge -1$ un entier et $F$ un foncteur de type fini et de type cofini de $\F(\A;k)$. Les assertions suivantes sont équivalentes :
\begin{enumerate}
    \item[(1)] $F$ appartient à $\F_d(\A;k)$ ;
    \item[(2)] il existe des familles finies $(A_i)$ et $(B_j)$ de foncteurs finis de $\rep(\A)$ et des familles $(\lambda_{i,j}(f))_{f\in\mathrm{Hom}(A_i,B_j)}$ d'éléments de $k$ telles que
\begin{equation}\label{eq-imFd}
    F\simeq\mathrm{Im}\;\bigoplus_i k[A_i]\xrightarrow{\big(\Theta_{A_i,B_j}(\sum_{f\in\mathrm{Hom}(A_i,B_j)}\lambda_{i,j}(f)[f])\big)_{i,j}}\bigoplus_j k[B_j]
\end{equation}
et que $\rg(f)\le d$ pour tout $f\in\mathrm{Hom}(A_i,B_j)$ tel que $\lambda_{i,j}(f)\ne 0$ ;
    \item[(3)] $F$ est un sous-quotient d'une somme directe finie de foncteurs de la forme $k[A]$ pour $A$ dans $\rep(\A)$ tel que $\lgr(A)\le d$.
\end{enumerate}
\end{prop}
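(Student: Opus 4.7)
The plan is to prove the cycle $(3)\Rightarrow(1)\Rightarrow(2)\Rightarrow(3)$, with the substantive content lying in the implication $(1)\Rightarrow(2)$.

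The implication $(3)\Rightarrow(1)$ is formal: Proposition~\ref{pr-Fdlin} gives $k[A]\in\F_d(\A;k)$ for every $A$ of length at most $d$, and Proposition~\ref{pr-Fdbiloc} shows $\F_d(\A;k)$ is bilocalizing, hence closed under finite direct sums, sub-objects and quotients. For $(2)\Rightarrow(3)$, I would factor the presenting morphism $\phi\colon\bigoplus_i k[A_i]\to\bigoplus_j k[B_j]$ as $\beta\circ\alpha$ through $M:=\bigoplus_{(i,j,f)\,:\,\lambda_{i,j}(f)\neq 0}k[\mathrm{Im}\,f]$, where the component of $\alpha$ indexed by $f\colon A_i\to B_j$ (with $\lambda_{i,j}(f)\neq 0$) is $\lambda_{i,j}(f)$ times the linearization of $A_i\twoheadrightarrow\mathrm{Im}\,f$, and the component of $\beta$ indexed by $f$ is the linearization of $\mathrm{Im}\,f\hookrightarrow B_j$. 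A direct check confirms $\beta\circ\alpha=\phi$, so $F=\beta(\mathrm{Im}\,\alpha)$ is a quotient of the sub-object $\mathrm{Im}\,\alpha$ of $M$; hence $F$ is a sub-quotient of $M$, whose summands $k[\mathrm{Im}\,f]$ satisfy $\lgr(\mathrm{Im}\,f)=\rg(f)\leq d$ by the rank hypothesis in (2).

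The implication $(1)\Rightarrow(2)$ proceeds in two stages. First, Corollary~\ref{cor-tftcf}(c) supplies a presentation of $F$ of the required shape with $A_i,B_j$ merely of finite and cofinite type. For each $f$ with $\lambda_{i,j}(f)\neq 0$, Proposition~\ref{pr-sqtau} yields an object $x$ of $\A$ such that $k[\mathrm{Im}\,f]$ is a sub-quotient of $\tau_x(F)$; Proposition~\ref{pr-Fd-tau} gives $\tau_x(F)\in\F_d(\A;k)$, and the bilocalizing property together with Proposition~\ref{pr-Fdlin} then forces $\lgr(\mathrm{Im}\,f)\leq d$, establishing the rank condition. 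Second, one must reduce $A_i$ and $B_j$ to finite-length functors without disturbing the image or the prescribed shape of the morphism. On the target side, I would set $B'_j:=\sum_{i,f\,:\,\lambda_{i,j}(f)\neq 0}\mathrm{Im}\,f\subseteq B_j$, which is a finite sum---by (FH), $\mathrm{Hom}(A_i,B_j)$ is finite when $A_i$ is of finite type and $B_j$ of cofinite type---of sub-objects of length at most $d$, hence of finite length; the morphism factors through $\bigoplus_j k[B'_j]$ with unchanged image, and Proposition~\ref{pr-morlin} ensures the components retain the form $\Theta_{A_i,B'_j}\bigl(\sum_{f'}\lambda_{i,j}(f)[f']\bigr)$ with $f'$ the corestriction of $f$ (the $f'$'s being pairwise distinct, as composing with $B'_j\hookrightarrow B_j$ recovers $f$). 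On the source side, I would take $A'_i$ to be the image of the morphism $A_i\to\bigoplus_{j,f\,:\,\lambda_{i,j}(f)\neq 0}B'_j$ whose $f$-th component is $f'$; then $A'_i$ is finite-length as a sub-object of a finite-length functor, each $f'$ factors as $A_i\twoheadrightarrow A'_i\xrightarrow{\bar f}B'_j$ with $\mathrm{Im}\,\bar f=\mathrm{Im}\,f$, and $\phi$ factors further through $\bigoplus_i k[A'_i]$, yielding the desired presentation.

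The main obstacle is the bookkeeping in this final double reduction: one must verify at each step that the image is preserved, that the new morphism retains the form prescribed by Proposition~\ref{pr-morlin} (which requires the $f'$'s and $\bar f$'s to remain pairwise distinct so that $\Theta$ can be applied), and that the rank bound survives each factorization. Everything else reduces to direct appeals to the tools already developed in the preceding sections.
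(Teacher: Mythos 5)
Your proof is correct and follows essentially the same route as the paper: corollaire~\ref{cor-tftcf} plus proposition~\ref{pr-sqtau}, combined with propositions~\ref{pr-Fd-tau}, \ref{pr-Fdbiloc} and~\ref{pr-Fdlin}, for $(1)\Rightarrow(2)$, and the formal implications for the rest. The only difference is that you spell out the reduction to finite-length $A_i$, $B_j$ and the factorization through $\bigoplus k[\mathrm{Im}\,f]$ for $(2)\Rightarrow(3)$, steps the paper leaves implicit (by reference to the proof of corollaire~\ref{cor-tftcf} and by declaring $(2)\Rightarrow(3)$ clear); your bookkeeping there is sound.
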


\begin{proof} Si $F$ est de type fini et de type cofini, comme $\A$ est $k$-triviale, on peut écrire $F$ comme image d'un morphisme entre sommes directes finies de linéarisations de foncteurs finis de $\rep(\A)$, par le corollaire~\ref{cor-tftcf}, comme dans son point \emph{(b)}. Soit $f\in\mathrm{Hom}(A_i,B_j)$ tel que $\lambda_{i,j}(f)\ne 0$ : par la proposition~\ref{pr-sqtau}, il existe un objet $x$ de $\A$ tel que $k[\mathrm{Im}\,(f)]$ soit sous-quotient de $\tau_x(F)$. Si $F$ appartient à $\F_d(\A;k)$, on en déduit que c'est aussi le cas de $k[\mathrm{Im}\,(f)]$, puisque $\F_d(\A;k)$ est stable par sous-quotient (cf. proposition~\ref{pr-Fdbiloc}), et manifestement par $\tau_x$. La proposition~\ref{pr-Fdlin} montre alors que $\rg(f)\le d$, d'où l'implication $(1)\Rightarrow (2)$.

L'implication $(2)\Rightarrow (3)$ est claire. Quant à $(3)\Rightarrow (1)$, elle découle des propositions~\ref{pr-Fdlin} et~\ref{pr-Fdbiloc}.
\end{proof}

La proposition précédente entraîne aussitôt la variation suivante autour de l'une des implications du corollaire~\ref{cor-tttfFd} :

\begin{coro}\label{cor-tous_tftcf_ds_Fd} Supposons que tout foncteur de $\rep(\A)$ qui est simultanément de type fini et de type cofini est fini. Alors tout foncteur de type fini et de type cofini (en particulier, tout foncteur fini) de $\F(\A;k)$ appartient à $\F_d(\A;k)$ pour un $d\in\mathbb{N}$.
\end{coro}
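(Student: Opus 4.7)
The plan is to invoke Corollary~\ref{cor-tftcf}, combine it with the hypothesis and Proposition~\ref{pr-Fdlin}, and conclude via the bilocalizing property of the subcategories $\F_d(\A;k)$. The corollary should follow essentially at once from Proposition~\ref{pr-Fd-tftcf} once the uniform bound $d$ is extracted from the hypothesis.

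More precisely, let $F$ be a functor of $\F(\A;k)$ that is simultaneously of finite type and of cofinite type. Since $\A$ is $k$-triviale, Corollary~\ref{cor-tftcf} (in its formulation~$(c)$) provides finite families $(A_i)$ and $(B_j)$ of functors of $\rep(\A)$ that are both of finite type and of cofinite type, together with a presentation of $F$ as the image of a morphism
$$\bigoplus_i k[A_i] \longrightarrow \bigoplus_j k[B_j].$$
By the hypothesis, each $A_i$ and each $B_j$ is finite. Setting
$$d := \max_{i,j}\bigl(\lgr(A_i),\,\lgr(B_j)\bigr),$$
which is a well-defined natural integer because the families $(A_i)$ and $(B_j)$ are finite, Proposition~\ref{pr-Fdlin} ensures that every linearization $k[A_i]$ and $k[B_j]$ belongs to $\F_d(\A;k)$.

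Since $\F_d(\A;k)$ is bilocalizing in $\F(\A;k)$ by Proposition~\ref{pr-Fdbiloc}, it is stable under finite direct sums and under quotients. The finite direct sum $\bigoplus_i k[A_i]$ therefore lies in $\F_d(\A;k)$, hence so does its quotient $F$. The parenthetical case of a finite functor is immediate, since a functor of finite length is automatically of finite type and of cofinite type.

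No real obstacle is expected here: the substantive content has already been established in Corollary~\ref{cor-tftcf} (which relies, via duality, on the $k$-triviality of $\A$) and in Proposition~\ref{pr-Fdlin}. The only mildly nontrivial point is the use of the hypothesis to replace the abstract finiteness conditions on the $A_i$ and $B_j$ by actual finite length bounds, which then combine into a single integer $d$ controlling $F$.
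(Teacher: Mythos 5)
Your proof is correct and follows essentially the same route as the paper, which simply observes that the corollary is an immediate consequence of Proposition~\ref{pr-Fd-tftcf}: you make explicit the use of Corollary~\ref{cor-tftcf}(c) to get the presentation, the hypothesis to convert ``type fini et type cofini'' into finite length, and the implication $(3)\Rightarrow(1)$ (via Propositions~\ref{pr-Fdlin} and~\ref{pr-Fdbiloc}). No gap.
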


\begin{rema}\label{rq-simpathol} En s'appuyant sur des résultats classiques d'Auslander \cite{Aus74}, on peut voir (de façon élémentaire, mais avec un peu de travail) que la réciproque du corollaire précédent est vraie ; plus précisément, s'il existe dans $\rep(\A)$ un foncteur de type fini et de type cofini qui n'est pas fini, alors il existe un foncteur \emph{simple} de $\F(\A;k)$ qui n'appartient à aucune sous-catégorie $\F_d(\A;k)$. De plus, il existe des petites catégories additives $k$-triviales $\A$ vérifiant ces conditions, qui équivalent, lorsque les idempotents de $\A$ se scindent, à la suivante : si $a$ et $b$ sont des objets de $\A$, il n'existe qu'un nombre fini de classes d'équivalence d'objets \emph{indécomposables} $t$ de $\A$ tels qu'existe une composée $a\to t\to b$ non nulle.
\end{rema}

\begin{coro}\label{cor-Fd-eq}
Pour tout entier $d\ge -1$, $\F_d(\A;k)$ est la plus petite sous-catégorie bilocalisante de $\F(\A;k)$ contenant $k[A]$ pour $A$ dans $\rep(\A)$ tel que $\lgr(A)\le d$.
\end{coro}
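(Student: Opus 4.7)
The plan is to establish the two inclusions separately. The inclusion of the smallest bilocalisante subcategory $\C$ into $\F_d(\A;k)$ is immediate: $\F_d(\A;k)$ is itself bilocalisante by Proposition~\ref{pr-Fdbiloc} and contains every $k[A]$ with $\lgr(A)\le d$ by Proposition~\ref{pr-Fdlin}, so it contains $\C$ by minimality.

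For the reverse inclusion $\F_d(\A;k)\subseteq\C$, I would first reduce to objects of type fini. Every $F\in\F_d(\A;k)$ is a filtered colimit of its subobjets de type fini, all of which still lie in $\F_d(\A;k)$ since this subcategory is stable under sous-objets. Since $\C$ is closed under colimites filtrantes, it suffices to treat the case where $F$ is of type fini.

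Given such an $F$, the key step is to exploit the fact that the injective functors $k^{\A(-,b)}\simeq k[\A(-,b)]^\vee$, for $b\in\mathrm{Ob}\,\A$, form a famille co-génératrice of $\F(\A;k)$, by duality with the family of projective generators $k[\A(a,-)]$: one embeds $F\hookrightarrow\prod_j k^{\A(-,b_j)}$ for a suitable family $(b_j)$. Setting $F'_j:=\ker(F\to k^{\A(-,b_j)})$, one has $\bigcap_j F'_j=0$, and hence a monomorphism $F\hookrightarrow\prod_j F/F'_j$. Each quotient $F/F'_j$ is simultaneously de type fini (comme quotient de $F$) and de type cofini (comme sous-objet du foncteur de type cofini $k^{\A(-,b_j)}$), and belongs to $\F_d(\A;k)$ as a quotient of $F$. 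Proposition~\ref{pr-Fd-tftcf}, assertion~(3), then exhibits $F/F'_j$ as a sous-quotient d'une somme directe finie of functors $k[A]$ with $\lgr(A)\le d$, whence $F/F'_j\in\C$ by the stability of $\C$ under finite direct sums, subobjets and quotients. Stability of $\C$ under arbitrary products then gives $\prod_j F/F'_j\in\C$, and finally $F\in\C$ by stability under sous-objets.

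The main obstacle is precisely the passage from functors de type fini \emph{et} de type cofini, for which Proposition~\ref{pr-Fd-tftcf} applies directly, to arbitrary functors de type fini: the trick is to use the cogenerating family of injectives to present each type-fini $F\in\F_d(\A;k)$ as a sous-objet of a product of quotients that automatically inherit both finiteness conditions. All other steps are formal closure manipulations encoded in the bilocalisante hypothesis on $\C$.
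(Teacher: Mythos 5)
Your proof is correct and follows essentially the same route as the paper: both rest on the implication $(1)\Rightarrow(3)$ of Proposition~\ref{pr-Fd-tftcf} for objects that are simultaneously of finite and cofinite type, combined with the closure of a bilocalising subcategory under filtered colimits, arbitrary products and subquotients. The only (immaterial) difference is the order of the two reductions: the paper passes from the finite-and-cofinite-type case to cofinite-type objects (as filtered colimits of such subobjects) and then to arbitrary objects (as subobjects of products of their cofinite-type quotients), whereas you first reduce to finite-type objects by filtered colimits and then embed each of these into a product of quotients inheriting both finiteness conditions via the cogenerating injectives $k^{\A(-,b)}$.
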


\begin{proof}
Les propositions~\ref{pr-Fdlin} et~\ref{pr-Fdbiloc} montrent que $\F_d(\A;k)$ est une sous-catégorie bilocalisante de $\F(\A;k)$ contenant $k[A]$ pour $A$ dans $\rep(\A)$ tel que $\lgr(A)\le d$. Réciproquement, une sous-catégorie bilocalisante $\E$ de $\F(\A;k)$ contenant ces foncteurs contient tous les objets de type fini et de type cofini de $\F_d(\A;k)$, grâce à l'implication $(1)\Rightarrow (3)$ de la proposition~\ref{pr-Fd-tftcf}. Comme tout foncteur de type cofini est colimite de ses sous-foncteurs qui sont à la fois de type fini et de type cofini, $\E$ contient tous les foncteurs de type cofini de $\F_d(\A;k)$. Cela donne la conclusion recherchée car tout foncteur de $\F_d(\A;k)$ est  sous-objet du produit de ses quotients de type cofini.
\end{proof}

Le résultat ci-dessous n'est pas a priori évident puisque l'endofoncteur $\db_x$ de $\F(\A;k)$ n'est pas auto-dual.

\begin{coro}\label{cor-FdAutoDual} Soient $d\ge -1$ un entier et $F$ un foncteur de $\F(\A;k)$. Alors $F$ appartient à $\F_d(\A;k)$ si et seulement si $F^\vee$ appartient à $\F_d(\A^\op;k)$.
\end{coro}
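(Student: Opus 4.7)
The plan is a bootstrap argument, reducing to the case where $F$ is simultaneously of type fini and of type cofini (``TF\&TC'' for short), where Proposition~\ref{pr-Fd-tftcf} furnishes an explicit, manifestly self-dual characterization of membership in $\F_d(\A;k)$.

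First, suppose $F$ is TF\&TC. By Proposition~\ref{pr-Fd-tftcf}(2), $F\in\F_d(\A;k)$ iff $F$ is the image of a morphism $\bigoplus_i k[A_i]\to\bigoplus_j k[B_j]$ (with $A_i,B_j$ finite in $\rep(\A)$) induced via $\Theta$ by formal combinations of arrows of rank at most $d$. Dualising this presentation and applying Corollary~\ref{cor-dual_lin} together with the naturality of $\Theta$ (Proposition~\ref{pr-morlin}), $F^\vee$ appears as the image of an analogous morphism $\bigoplus_j k[B_j^\sharp]\to\bigoplus_i k[A_i^\sharp]$ in $\F(\A^\op;k)$. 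Since $(-)^\sharp$ is an equivalence on finite functors that interchanges image with cokernel, one has $\mathrm{Im}(f^\sharp)\simeq(\mathrm{Im}\,f)^\sharp$ and so $\rg(f^\sharp)=\rg(f)\le d$; hence $F^\vee$ satisfies the analogue of Proposition~\ref{pr-Fd-tftcf}(2) in $\F(\A^\op;k)$ and lies in $\F_d(\A^\op;k)$. The converse is symmetric, using $F^{\vee\vee}\simeq F$ which is valid because $F$ has values of finite dimension.

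Next, I would establish the following \emph{key lemma} by the argument already present in the proof of Corollary~\ref{cor-Fd-eq}: for any $F$, one has $F\in\F_d(\A;k)$ iff every TF\&TC subquotient of $F$ lies in $\F_d(\A;k)$. The forward direction is the stability of $\F_d$ under subobjects and quotients (Proposition~\ref{pr-Fdbiloc}); the converse comes from writing $F$ as a subobject of the product of its type-cofini quotients $Q_i$, then writing each $Q_i$ as the filtered colimit of its type-fini subobjects (which are TF\&TC, as subobjects of the type-cofini $Q_i$, hence in $\F_d$ by hypothesis), and invoking closure of $\F_d$ under colimits, products, and subobjects. The identical statement holds in $\F(\A^\op;k)$.

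Finally, I would match TF\&TC subquotients of $F$ with those of $F^\vee$ via $(-)^\vee$. The forward direction is immediate: given $H/H'\subset F$ TF\&TC, its dual $(H/H')^\vee$ embeds in $H^\vee$, which is a quotient of $F^\vee$. The hard part---the only step that goes substantively beyond formal use of earlier results---is the converse: for $K/K'\subset F^\vee$ TF\&TC, one needs the natural map $F\to K^\vee$ (the composite of $F\to F^{\vee\vee}$ with the dual of $K\hookrightarrow F^\vee$) to be surjective. Pointwise this reduces to the elementary observation that a finite linearly independent family $l_1,\dots,l_n$ in the dual $V^\vee$ of a vector space $V=F(a)$ yields a surjective evaluation map $V\to k^n$. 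Since double-dualising is the identity on TF\&TC functors, the two constructions are mutual inverses, and the corollary follows by chaining: $F\in\F_d(\A;k)$ iff every TF\&TC subquotient of $F$ is in $\F_d$ (key lemma) iff every TF\&TC subquotient of $F^\vee$ is in $\F_d(\A^\op;k)$ (first step plus bijection) iff $F^\vee\in\F_d(\A^\op;k)$ (key lemma in $\A^\op$).
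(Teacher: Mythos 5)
Your proposal is correct, and its core --- the case where $F$ est de type fini et de type cofini, settled by combining Proposition~\ref{pr-Fd-tftcf} with Proposition~\ref{pr-dual_lin} --- is exactly the paper's starting point. (One small caveat there: rather than dualising the presentation of condition~(2) and tracking the morphisms through the isomorphism $k[B]^\vee\simeq k[B^\sharp]$, whose naturality in $B$ is only asserted in the paper when $k$ contains enough roots of unity, it is safer to pass first to condition~(3) of Proposition~\ref{pr-Fd-tftcf}, which only involves the ambient objects $k[\mathrm{Im}\,f]$ of length $\le d$ and dualises with no naturality question; your key computation $\rg(f^\sharp)=\rg(f)$ is in any case the right one, $(-)^\sharp$ being exact and hence sending images to images rather than ``interchanging image with cokernel''.) Where you genuinely diverge from the paper is in the reduction of the general case to this one. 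The paper bootstraps in two stages: for $F$ de type fini it writes $F$ as a pointwise-trivial filtered limit of its quotients that are both de type fini and de type cofini, so that $F^\vee$ is the colimit of their duals; the general case then follows by writing $F$ as the colimit of its subfunctors de type fini and using the continuity of $(-)^\vee$. You instead prove a detection lemma ($F\in\F_d(\A;k)$ if and only if all of its subquotients that are de type fini et de type cofini lie in $\F_d(\A;k)$) by recycling the argument of Corollary~\ref{cor-Fd-eq}, and then set up a duality bijection on such subquotients; the one genuinely new ingredient this requires --- the surjectivity of the evaluation map $F\to K^\vee$ for $K$ a finite-dimensional-valued subfunctor of $F^\vee$ --- is correctly supplied by your linear-algebra observation. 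Both reductions are valid: the paper's is shorter because it never needs to pass from subobjects of $F^\vee$ back to quotients of $F$, while yours isolates the (independently useful) fact that membership in $\F_d(\A;k)$ is detected on subquotients de type fini et de type cofini.
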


\begin{proof}
Si $F$ est de type fini et de type cofini, l'équivalence découle des propositions~\ref{pr-Fd-tftcf} et~\ref{pr-dual_lin}. Supposons maintenant $F$ de type fini : comme $F$ est à valeurs de dimensions finies (grâce à la condition (FH)), $F$ est limite filtrante \emph{ponctuellement triviale} (cf. le début de la démonstration de la proposition~\ref{pr-ExtLin}) de ses quotients de type fini et de type cofini $G_i$, ce qui implique que $F^\vee$ est colimite de ses sous-objets $G_i^\vee$, qui sont de type fini et de type cofini. Ainsi, l'équivalence vaut pour $F$ de type fini par le cas précédent. Le cas général s'en déduit en écrivant un foncteur comme colimite de ses sous-foncteurs de type fini et en utilisant la continuité du foncteur de dualité $(-)^\vee : \F(\A;k)^\op\to\F(\A^\op;k)$.
\end{proof}

La propriété ci-dessous de la filtration $(\F_d(\A;k))_{d\in\mathbb{N}}$ est analogue au comportement bien connu de la filtration polynomiale par rapport au produit tensoriel.

\begin{coro}\label{cor-Fd-ptens}
Soient $n, m\in\mathbb{N}$, $F$ et $G$ des foncteurs de $\F_n(\A;k)$ et $\F_m(\A;k)$ respectivement. Alors le foncteur $F\otimes G$ appartient à $\F_{n+m}(\A;k)$.
\end{coro}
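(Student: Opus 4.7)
Le plan est de combiner la proposition~\ref{pr-Fdlin} (qui traite le cas de base $F = k[A]$, $G = k[B]$) avec la caractérisation de $\F_d(\A;k)$ comme la plus petite sous-catégorie bilocalisante contenant les $k[C]$ avec $\lgr(C) \leq d$ (corollaire~\ref{cor-Fd-eq}), à l'aide de deux préimages bilocalisantes auxiliaires.

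Tout d'abord, pour $A, B$ dans $\rep(\A)$, l'isomorphisme ponctuel canonique $k[A] \otimes k[B] \simeq k[A \oplus B]$, combiné à l'égalité $\lgr(A \oplus B) = \lgr(A) + \lgr(B)$ et à la proposition~\ref{pr-Fdlin}, donne $k[A] \otimes k[B] \in \F_{n+m}(\A;k)$ dès que $\lgr(A) \leq n$ et $\lgr(B) \leq m$.

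Fixons ensuite un tel $A$. Étant de longueur finie, $A$ est de type fini dans $\rep(\A)$, donc l'hypothèse (FH) entraîne que $A(x)$ est fini pour tout $x$ et que $k[A]$ est à valeurs de dimensions finies. L'endofoncteur $k[A] \otimes -$ de $\F(\A;k)$ est alors exact, cocontinu, et également continu (tensoriser ponctuellement par un $k$-espace vectoriel de dimension finie commute aux produits arbitraires). Par conséquent, la classe $\mathcal{S}_A := \{ H \in \F(\A;k) \mid k[A] \otimes H \in \F_{n+m}(\A;k)\}$, préimage d'une sous-catégorie bilocalisante par un foncteur exact et bicontinu, est une sous-catégorie bilocalisante de $\F(\A;k)$. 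L'étape précédente montre qu'elle contient $k[B]$ pour tout $B$ avec $\lgr(B) \leq m$, d'où $\mathcal{S}_A \supseteq \F_m(\A;k)$ par le corollaire~\ref{cor-Fd-eq} : ainsi $k[A] \otimes G \in \F_{n+m}(\A;k)$ pour tout $G \in \F_m(\A;k)$.

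Fixons enfin $G \in \F_m(\A;k)$ : on veut établir $F \otimes G \in \F_{n+m}(\A;k)$ pour tout $F \in \F_n(\A;k)$. La difficulté principale est qu'un $F$ arbitraire de $\F_n(\A;k)$ n'est pas à valeurs de dimensions finies --- la catégorie $\F_n(\A;k)$ est stable par produits arbitraires --- si bien que l'argument symétrique ne s'applique pas directement au foncteur $-\otimes G$. On contourne cet obstacle en écrivant $G = \varinjlim_\alpha G_\alpha$ comme colimite filtrante de ses sous-foncteurs de type fini dans $\F(\A;k)$. Chaque $G_\alpha$ reste dans $\F_m(\A;k)$ par stabilité par sous-objets et, par (FH), est à valeurs de dimensions finies. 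Le foncteur $-\otimes G_\alpha$ est donc bicontinu et exact, et l'argument précédent s'applique en permutant les rôles : la préimage bilocalisante $\mathcal{T}_{G_\alpha} := \{H \in \F(\A;k) \mid H \otimes G_\alpha \in \F_{n+m}(\A;k)\}$ contient, par l'étape précédente, $k[A]$ pour tout $\lgr(A) \leq n$, donc contient $\F_n(\A;k)$. Ainsi $F \otimes G_\alpha \in \F_{n+m}(\A;k)$ pour tout $F \in \F_n(\A;k)$ ; comme le produit tensoriel commute aux colimites et que $\F_{n+m}(\A;k)$ est stable par colimites filtrantes (étant bilocalisante), $F \otimes G = \varinjlim_\alpha (F \otimes G_\alpha)$ appartient à $\F_{n+m}(\A;k)$, ce qui conclut.
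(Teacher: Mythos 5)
Your proof is correct and follows essentially the same route as the paper's: the base case $k[A]\otimes k[B]\simeq k[A\oplus B]$ combined with proposition~\ref{pr-Fdlin}, then a bootstrap one variable at a time via the bilocalizing preimage of $\F_{n+m}(\A;k)$ under the bicontinuous functor of tensoring with a functor with finite-dimensional values, using corollary~\ref{cor-Fd-eq}. The only (harmless) difference is organizational: the paper reduces both arguments to finite-type functors at the outset, whereas you perform the colimit reduction only on $G$ at the final step.
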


\begin{proof}
Comme le produit tensoriel commute aux colimites par rapport à chaque variable et que les sous-catégories $\F_d(\A;k)$ sont localisantes, il suffit de montrer le résultat lorsque $F$ et $G$ sont de type fini (le cas général s'obtenant en écrivant un foncteur comme la colimite de ses sous-foncteurs de type fini). En particulier, $F$ et $G$ sont à valeurs de dimension finie sur $k$, grâce à l'hypothèse (FH). Cela entraîne que l'endofoncteur $F\otimes -$ de $\F(\A;k)$ est bicontinu. Il s'ensuit que la sous-catégorie pleine des foncteurs $T$ de $\F(\A;k)$ tels que $F\otimes T\in\F_{n+m}(\A;k)$ est, comme $\F_{n+m}(\A;k)$, bilocalisante. Ainsi, il suffit de montrer que $F\otimes k[B]$ appartient à $\F_{n+m}(\A;k)$ pour $B$ dans $\rep(\A)$ fini tel que $\lgr(B)\le m$, par le corollaire~\ref{cor-Fd-eq}. En raisonnant de la même façon sur l'autre argument du produit tensoriel, on voit qu'il suffit de montrer que $k[A]\otimes k[B]$ appartient à $\F_{n+m}(\A;k)$ pour $A$ et $B$ finis dans $\rep(\A)$ tels que $\lgr(A)\le n$ et $\lgr(B)\leq m$. Cela résulte de la proposition~\ref{pr-Fdlin} et de l'isomorphisme canonique $k[A]\otimes k[B]\simeq k[A\oplus B]$.
\end{proof}

\begin{rema}
Nous ne connaissons pas de démonstration plus directe de ce résultat, car les foncteurs $\db_a$ ne semblent pas posséder de propriété de compatibilité simple au produit tensoriel.
\end{rema}

\section{Théorème de structure}\label{sec-ThStruc}

\subsection{Les foncteurs $S_d$, $C_d$ et $T_d$}

\begin{nota}
Pour $d\in\mathbb{N}$, on choisit un système complet $\LL_d(\A)$ de représentants des classes d'isomorphisme de foncteurs finis de $\rep(\A)$ de longueur égale à $d$.
\end{nota}

Ce choix effectué, nous introduisons les protagonistes de notre théorème de structure~\ref{th-princ} ci-après : 

\begin{nota}
Pour $d\in\mathbb{N}$, on note :
\begin{enumerate}
    \item $\pi_d : \F_d(\A;k)\to\F_d(\A;k)/\F_{d-1}(\A;k)$ le foncteur canonique \cite[chap.~III, §\,1]{Gabriel} ;
    \item $S_d, C_d, T_d : \prod_{A\in\LL_d(\A)}k[\mathrm{Aut}(A)]\Md\to\F_d(\A;k)$ les foncteurs donnés par
$$S_d : (M_A)_{A\in\LL_d(\A)}\mapsto\prod_{A\in\LL_d(\A)}\Q_{A,M_A}\;,$$
$$C_d : (M_A)_{A\in\LL_d(\A)}\mapsto\bigoplus_{A\in\LL_d(\A)}\Q^{A,M_A}\;;$$
et
$$T_d : (M_A)_{A\in\LL_d(\A)}\mapsto\bigoplus_{A\in\LL_d(\A)}\Q(A,M_A)\;;$$
    \item $\Phi_d:=\pi_d\circ S_d :  \prod_{A\in\LL_d(\A)}k[\mathrm{Aut}(A)]\Md\to\F_d(\A;k)/\F_{d-1}(\A;k)$.
\end{enumerate}
\end{nota}

(Le corollaire~\ref{cor-QAM-Fd} et la proposition~\ref{pr-Fdbiloc} garantissent que $S_d$, $C_d$ et $T_d$ prennent bien leurs valeurs dans $\F_d(\A;k)$.)

\begin{nota}\label{not-FE} Soit $X$ un objet de $\rep(\A)$. On note $(\mathrm{FE}(X))$ la propriété suivante :
$$(\mathrm{FE}(X))\qquad\qquad\text{le groupe abélien }\bigoplus_{S\in\LL_1(\A)}\mathrm{Ext}^1(X,S)\text{ est fini.}$$

On note $(\mathrm{FE}(\A))$ la propriété suivante :
$$(\mathrm{FE}(\A))\qquad\qquad(\mathrm{FE}(X))\text{ est vérifiée pour tout }X\in\LL_1(\A).$$
\end{nota}

\begin{prop}\label{pr-ExtFE} Soit $X$ un objet de $\rep(\A)$. 
\begin{enumerate}
    \item Si la propriété $(\mathrm{FE}(X))$ est vérifiée, alors il n'existe qu'un nombre fini de classes d'équivalence d'objets $Y$ de $\rep(\A)$ tels qu'il existe une suite exacte courte non scindée $0\to S\to Y\to X\to 0$ avec $S$ simple ;
    \item si $X$ est simple et qu'il n'existe qu'un nombre fini de classes d'équivalence d'objets $Y$ de $\rep(\A)$ tels qu'il existe une suite exacte courte non scindée $0\to S\to Y\to X\to 0$ avec $S$ simple, alors $(\mathrm{FE}(X))$ est vérifiée ;
    \item la classe des objets $X$ de $\rep(\A)$ tels que $(\mathrm{FE}(X))$ soit vérifiée est stable par extension ;
    \item si $(\mathrm{FE}(\A))$ est vérifiée, alors $(\mathrm{FE}(X))$ est satisfaite pour tout objet fini $X$ de $\rep(\A)$ ;
    \item si $X$ est de présentation finie, alors $(\mathrm{FE}(X))$ est satisfaite.
\end{enumerate}
\end{prop}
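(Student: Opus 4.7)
La stratégie s'appuie sur l'interprétation classique de $\mathrm{Ext}^1(X,S)$ comme ensemble des classes de Baer de suites exactes courtes $0\to S\to Y\to X\to 0$, et sur le fait que deux telles classes donnent le même $Y$ à isomorphisme près si et seulement si elles sont dans la même orbite sous l'action naturelle de $\mathrm{Aut}(S)\times\mathrm{Aut}(X)$. Pour \emph{(1)}, sous $(\mathrm{FE}(X))$, l'ensemble des classes de Baer non triviales (avec $S\in\LL_1(\A)$) est fini ; il en va donc de même de son quotient par cette action, qui s'identifie à l'ensemble des classes d'isomorphisme des $Y$ correspondants. Pour \emph{(2)}, $X$ étant simple, un tel $Y$ est de longueur $2$ et non semi-simple, donc admet son socle pour unique sous-objet propre non nul : ainsi $S$ est déterminé à isomorphisme près par $Y$, et pour chaque $S$ les classes d'isomorphisme de $Y$ correspondent aux orbites de $\mathrm{Aut}(S)\times\mathrm{Aut}(X)$ sur $\mathrm{Ext}^1(X,S)\setminus\{0\}$. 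La condition (FH), incluse dans l'hypothèse $k$-triviale, rend ces groupes d'automorphismes finis, de sorte que chaque orbite est finie ; la finitude du nombre total de $Y$ impose alors à la fois que seul un nombre fini de $S$ contribue et que chaque $\mathrm{Ext}^1(X,S)$ est fini, d'où $(\mathrm{FE}(X))$.

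L'assertion \emph{(3)} résulte immédiatement de la suite exacte longue associée à une suite exacte courte $0\to X_1\to X\to X_2\to 0$, dans laquelle $\mathrm{Ext}^1(X,S)$ est un sous-quotient de $\mathrm{Ext}^1(X_1,S)\oplus\mathrm{Ext}^1(X_2,S)$, la somme directe sur $S\in\LL_1(\A)$ préservant la finitude. L'assertion \emph{(4)} en découle par récurrence sur la longueur d'un foncteur fini de $\rep(\A)$, le cas de base (foncteur simple) étant précisément $(\mathrm{FE}(\A))$.

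Pour \emph{(5)}, on choisit une présentation $P_1\to P_0\to X\to 0$ avec $P_0,P_1$ projectifs de type fini dans $\rep(\A)$, donc facteurs directs de puissances de foncteurs représentables $\A(a_i,-)$. L'annulation de $\mathrm{Ext}^1(P_0,S)$ identifie $\mathrm{Ext}^1(X,S)$ à un sous-quotient de $\mathrm{Hom}(P_1,S)$, lui-même facteur direct de $S(a_1)^{n_1}$. Pour $S$ simple, $S(a_1)$ est un quotient de $\A(a_1,a_1)$, donc fini sous (FH), d'où la finitude de chaque $\mathrm{Ext}^1(X,S)$. De plus, $\mathrm{Hom}(P_1,S)\ne 0$ force $S(a_1)\ne 0$, et un tel $S$ est un quotient simple de $\A(a_1,-)$, donc apparaît comme composante isotypique du cosocle $\A(a_1,-)/\rad\,\A(a_1,-)$ dont l'évaluation en $a_1$ est un quotient de $\A(a_1,a_1)$, fini ; il n'existe donc qu'un nombre fini de telles classes d'isomorphisme de simples. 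On conclut que $\bigoplus_{S\in\LL_1(\A)}\mathrm{Ext}^1(X,S)$ est fini.

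Le principal obstacle prévisible se situe dans la dernière étape de \emph{(5)} : la majoration rigoureuse du nombre de classes d'isomorphisme de simples $S$ vérifiant $S(a_1)\ne 0$, qui demande de relier proprement ces simples aux composantes simples du cosocle de $\A(a_1,-)$ et d'exploiter la finitude du quotient semi-simple $\A(a_1,a_1)/\rad\,\A(a_1,a_1)$. Les autres assertions reposent sur des raisonnements très formels (suite exacte longue, action des automorphismes sur les extensions), pourvu qu'on ait soin en \emph{(2)} de bien distinguer classes de Baer et classes d'isomorphisme de $Y$.
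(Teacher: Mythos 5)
Votre démonstration est correcte et suit pour l'essentiel la même stratégie que celle de l'article : interprétation de $\mathrm{Ext}^1(X,S)$ par classes de Baer et action de $\mathrm{Aut}(S)\times\mathrm{Aut}(X)$ pour (1) et (2), suite exacte longue pour (3), récurrence sur la longueur pour (4), et présentation par des représentables combinée au lemme de Yoneda et à (FH) pour (5). Seule petite imprécision sans conséquence : en (1), l'application des classes de Baer vers les classes d'isomorphisme de $Y$ n'est pas une bijection sur les orbites en général (des $S$ non isomorphes peuvent donner le même $Y$), mais la surjectivité suffit pour conclure à la finitude.
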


\begin{proof}
La première assertion est immédiate.

Si $X$ et $S$ sont simples et que $0\to S\to Y\to X\to 0$ est une suite exacte courte non scindée, alors $S$ est le socle de $Y$ et $X$ son cosocle. Il s'ensuit que tout automorphisme de $Y$ préserve $S$ et $X$. Par conséquent, s'il n'existe qu'un nombre fini de classes d'équivalence d'objets $Y$ de $\rep(\A)$ tels qu'il existe une suite exacte courte non scindée $0\to S\to Y\to X\to 0$ avec $S$ simple, alors l'ensemble
$$\bigsqcup_{S\in\LL_1(\A)}{}_{\mathrm{Aut}(S)\backslash}\mathrm{Ext}^1(X,S)_{/\mathrm{Aut}(X)}$$
est fini. Comme le groupe $\mathrm{Aut}(S)$ est fini pour $S$ simple (en raison de la condition (FH)), on en déduit la deuxième assertion.

La troisième assertion résulte de la suite exacte longue en Ext associée à une suite exacte courte ; la quatrième assertion s'en déduit par récurrence sur la longueur de $X$.

Enfin, si $X$ est de présentation finie, il existe des objets $a$ et $b$ de $\A$ et une suite exacte $\A(a,-)\to\A(b,-)\to X\to 0$ dans $\rep(\A)$, ainsi $\mathrm{Ext}^1(X,S)$ est un sous-quotient de $S(a)$. Comme $\A$ vérifie (FH), $S(a)$ est un groupe fini pour $S$ simple, et il n'existe qu'un nombre fini de classes d'isomorphisme d'objets simples $S$ tels que $S(a)\ne 0$, ce qui achève la démonstration.
\end{proof}

\begin{exem}\label{ex-FE}
La dernière assertion de la proposition précédente montre que la condition $(\mathrm{FE}(\A))$ est vérifiée lorsque $\rep(\A)$ est localement noethérienne. Les travaux d'Auslander \cite{Aus74} donnent des exemples de petites catégories \emph{abéliennes} $k$-triviales $\A$ telles que tous les simples de $\rep(\A)$ soient de présentation finie, mais que cette catégorie ne soit pas localement noethérienne.
\end{exem}

\begin{rema}\label{rq-pasFE}
Il existe des catégories additives $k$-triviales $\A$ telles que $(\mathrm{FE}(\A))$ ne soit pas vérifiée. Considérons par exemple un corps fini $\FF$ de caractéristique différente de celle de $k$ et $\E$ la catégorie des foncteurs depuis la petite catégorie associée à l'ensemble $E:=\mathbb{N}$ ordonné de telle sorte que $0$ soit le plus petit élément et que tous les éléments de $\mathbb{N}^*$ soient deux à deux incomparables. Prenons pour $\A$ la catégorie opposée de la sous-catégorie pleine des objets projectifs de type fini de $\E$ : alors $\A$ est $k$-triviale, et $\rep(\A)\simeq\E$. Si l'on définit $S_i$ comme l'objet de $\E$ valant $\FF$ évalué sur $i\in E$ et nul ailleurs, alors $(S_i)_{i\in E}$ est un ensemble complet de représentants des classes d'isomorphisme d'objets simples de $\E$. On vérifie aussitôt que $\mathrm{Ext}^1(S_0,S_n)\simeq\FF$ pour tout $n\in\mathbb{N}^*$, ainsi la condition $(\mathrm{FE}(S_0))$ n'est-elle pas satisfaite. 
\end{rema}

Nous omettrons la démonstration de l'énoncé facile suivant.

\begin{lemm}\label{lm-bicont} Soient $E$ un ensemble et, pour chaque $i\in E$, $R_i$ un anneau et $\Xi_i : R_i\Md\to\F(\A;k)$ un foncteur bicontinu. Les assertions suivantes sont équivalentes :
\begin{enumerate}
    \item le foncteur $\prod_{i\in E}R_i\Md\to\F(\A;k)\quad (X_i)\mapsto\bigoplus_{i\in E}\Xi_i(X_i)$ est bicontinu ;
    \item le foncteur $\prod_{i\in E}R_i\Md\to\F(\A;k)\quad (X_i)\mapsto\prod_{i\in E}\Xi_i(X_i)$ est bicontinu ;
    \item le monomorphisme canonique $\bigoplus_{i\in E}\Xi_i(X_i)\to\prod_{i\in E}\Xi_i(X_i)$ est un isomorphisme pour toute famille $(X_i)$ de $R_i$-modules ;
    \item pour tout objet $a$ de $\A$, l'ensemble des $i\in E$ tels que $\Xi_i(R_i)(a)\ne 0$ est fini.
\end{enumerate}
\end{lemm}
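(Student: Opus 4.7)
The plan is to reduce every statement to evaluation at a single object $a\in\mathrm{Ob}\,\A$, at which point all assertions become pointwise statements about $k$-vector spaces. The elementary fact I will invoke repeatedly is that the canonical map $\bigoplus_{i\in E}V_i\to\prod_{i\in E}V_i$ of $k$-vector spaces is an isomorphism if and only if $V_i$ vanishes for all but finitely many~$i$.

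First I would establish $(3)\Leftrightarrow(4)$. Since each $\Xi_i$ is cocontinuous, hence right exact, any $R_i$-module $X_i$ fits in a presentation $R_i^{(J)}\to R_i^{(I)}\to X_i\to 0$, so applying $\Xi_i$ and evaluating at $a$ exhibits $\Xi_i(X_i)(a)$ as a quotient of a direct sum of copies of $\Xi_i(R_i)(a)$. In particular $\Xi_i(R_i)(a)=0$ forces $\Xi_i(X_i)(a)=0$ for every $X_i$, so $(4)\Rightarrow(3)$ by the vector-space observation above (applied pointwise), while the converse is just the special case $X_i=R_i$ of $(3)$.

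Next I would deduce $(3)\Rightarrow(1)$ and $(3)\Rightarrow(2)$ in one stroke: under $(3)$ the two functors in question coincide, and this common functor is cocontinuous (as a direct sum of cocontinuous functors, since limits and colimits in $\prod_{i\in E}R_i\Md$ are computed componentwise) and continuous (as a product of continuous ones), hence bicontinuous.

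Finally, for $(1)\Rightarrow(4)$ and, symmetrically, $(2)\Rightarrow(4)$, I would test the missing half-continuity on a suitable diagonal family. Concretely, for $(1)\Rightarrow(4)$, consider the family indexed by $j\in E$ whose $j$-th object in $\prod_iR_i\Md$ is $(Y_i^{(j)})_{i\in E}$ with $Y_i^{(j)}=R_i$ if $i=j$ and $0$ otherwise. Its product in $\prod_iR_i\Md$, computed componentwise, equals $(R_i)_{i\in E}$; the direct-sum functor sends this product to $\bigoplus_i\Xi_i(R_i)$, whereas the product of the individual images equals $\prod_j\Xi_j(R_j)$. Continuity of $(X_i)\mapsto\bigoplus_i\Xi_i(X_i)$ forces the canonical monomorphism $\bigoplus_i\Xi_i(R_i)\to\prod_j\Xi_j(R_j)$ to be an isomorphism, which on evaluation at $a$ gives $(4)$. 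The implication $(2)\Rightarrow(4)$ is obtained symmetrically by testing cocontinuity of $(X_i)\mapsto\prod_i\Xi_i(X_i)$ on the direct sum of the same diagonal family. There is no real obstacle; the only point needing any care is the componentwise computation of the relevant (co)limits in the source category, which is why the paper is content to omit the proof.
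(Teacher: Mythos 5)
Votre démonstration est correcte et complète : le cycle d'implications $(3)\Leftrightarrow(4)$, $(3)\Rightarrow(1)$, $(3)\Rightarrow(2)$, $(1)\Rightarrow(4)$, $(2)\Rightarrow(4)$ est bien fermé, et chaque étape (réduction à l'évaluation en un objet $a$, présentation libre de $X_i$ pour $(4)\Rightarrow(3)$, test de la (co)continuité sur la famille diagonale $(Y^{(j)})_{j\in E}$) est justifiée. Le papier omet volontairement la démonstration de ce lemme, la qualifiant de facile ; votre argument est exactement celui, standard, que les auteurs avaient en tête.
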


\begin{prop}\label{pr-bicont} Soit $d\in\mathbb{N}$.
\begin{enumerate}
    \item Le foncteur $T_d$ est bicontinu.
    \item Le foncteur $S_d$ est exact et continu. Il est bicontinu si et seulement si la condition $(\mathrm{FE}(\A))$ est vérifiée ou que $d\le 1$.
    \item Le foncteur $C_d$ est exact et cocontinu. Il est bicontinu si et seulement si la condition $(\mathrm{FE}(\A^\op))$ est vérifiée ou que $d\le 1$.
    \item On a $\Phi_d\simeq\pi_d\circ C_d\simeq\pi_d\circ T_d$.
    \item Les foncteurs $\pi_d$ et $\Phi_d$ sont bicontinus.
\end{enumerate}
\end{prop}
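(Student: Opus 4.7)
The five assertions split into (a) exactness and one-sided continuity for $S_d$ and $C_d$, (b) the three bicontinuity claims, and (c) the identifications in (4) and the bicontinuity of $\pi_d$ and $\Phi_d$. I would address them in that order.

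\textbf{Part (a).} Proposition~\ref{pr-Qexact} shows that each factor $M \mapsto \Q_{A,M}$ is exact; evaluated at any $a \in \mathrm{Ob}\,\A$ and combined with the exactness of products of short exact sequences of $k$-vector spaces, this yields exactness of $S_d$. Continuity of $S_d$ follows from continuity of each $\Q_{A,-}$ (also Proposition~\ref{pr-Qexact}, since $A$ is finite and hence has finite values) and the commutation of products with limits. The case of $C_d$ is formally dual, using Corollary~\ref{cor-QAMdual}(1) for exactness and cocontinuity of $\Q^{A,-}$ and the commutation of direct sums with colimits.

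\textbf{Part (b).} All three bicontinuity claims pass through Lemma~\ref{lm-bicont}: given the bicontinuity of each factor (Proposition~\ref{pr-Qexact} for $\Q_{A,-}$, Corollary~\ref{cor-QAMdual}(1) for $\Q^{A,-}$, Corollary~\ref{cor-QAMequiv} for $\Q(A,-)$), it remains to verify, for each $a$, the finiteness of the set of $A \in \LL_d(\A)$ for which the relevant value at $a$ is non-zero. For $S_d$, Corollary~\ref{cor-valQ} identifies $\Q_A(a) \neq 0$ with: $A$ is a cyclic quotient of $\A(a,-)$ of length $d$. For $d = 0$ only the zero functor qualifies, and for $d = 1$ the finiteness reduces to that of simple quotients of $\A(a,-)$, which is a consequence of (FH) since $\mathrm{End}(\A(a,-)) = \A(a,a)$ is finite. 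For $d \geq 2$ I would induct via short exact sequences $0 \to S \to A \to T \to 0$, parameterizing extensions of $T$ by a simple $S$ via $\bigoplus_S \mathrm{Ext}^1(T,S)$; Proposition~\ref{pr-ExtFE}, in particular parts (3) and (4), shows this is finite for all $T$ of length $<d$ iff $(\mathrm{FE}(\A))$ holds. The case of $C_d$ is formally dual via Proposition~\ref{pr-dualQ}, which transports the question to $\A^\op$. For $T_d$ the value to control is $\Q(A)(a)$; using the description in Proposition~\ref{pr-QA_copres} of $\Q(A)$ as the kernel of $\Q_A \to \prod_{0 \neq N \subset \rad(A)} \Q_{A/N}$, I expect the additional rigidity imposed by this kernel condition to force finiteness of iso classes with $\Q(A)(a) \neq 0$ \emph{unconditionally}. \emph{This unconditional finiteness is the main obstacle of the proof:} one must exhibit, even when $(\mathrm{FE}(\A))$ fails, that the simultaneous cyclicity at $a$ and vanishing modulo every non-trivial $N \subset \rad(A)$ leaves only finitely many iso classes of length $d$.

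\textbf{Part (c).} For assertion (4), the factorization $\Q^{A,M} \twoheadrightarrow \Q(A,M) \hookrightarrow \Q_{A,M}$ of Notation~\ref{not-prl_interm} and Corollary~\ref{cor-QAMdual}(3) assembles into natural transformations $C_d \to T_d \to S_d$. Proposition~\ref{pr-resol_QA} furnishes exact resolutions of $\Q(A)$ whose other terms are $\Q_{A/N_0}$ and $\Q^{\tilde T_0}$ with $A/N_0$ and $\tilde T_0$ of length strictly less than $d$; by Corollary~\ref{cor-QAM-Fd} these all lie in $\F_{d-1}(\A;k)$, and since $\F_{d-1}(\A;k)$ is bilocalising (Proposition~\ref{pr-Fdbiloc}) the kernels and cokernels of $C_d \to T_d \to S_d$ all lie in $\F_{d-1}(\A;k)$. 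Applying $\pi_d$ kills them, yielding $\pi_d \circ C_d \simeq \pi_d \circ T_d \simeq \Phi_d$. Finally, assertion (5) follows: the quotient $\pi_d$ by the bilocalising subcategory $\F_{d-1}$ is bicontinuous by the standard theory of Serre quotients by bilocalising subcategories, and composing with the bicontinuous $T_d$ from (1) gives bicontinuity of $\Phi_d \simeq \pi_d \circ T_d$.
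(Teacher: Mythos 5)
Your treatment of the exactness/one-sided continuity statements, of assertion (4) via the factorisation $\Q^{A,M}\twoheadrightarrow\Q(A,M)\hookrightarrow\Q_{A,M}$, and of assertion (5), as well as the reduction of the bicontinuity of $S_d$ and $C_d$ to Lemma~\ref{lm-bicont} and Proposition~\ref{pr-ExtFE}, all follow the paper's route (for the "only if" direction when $d\ge 2$ you would still need to propagate the failure of $(\mathrm{FE}(\A))$ from simples up to length $d$, e.g.\ by noting that infinitely many length-$d$ quotients of $\A(a,-)$ produce infinitely many length-$(d+1)$ quotients of $\A(a^{\oplus 2},-)$, but this is a minor omission).

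The genuine gap is exactly the point you flag yourself: the unconditional bicontinuity of $T_d$. Your hope that the kernel condition of Proposition~\ref{pr-QA_copres} "forces finiteness" is not an argument, and the paper does not argue that way. The resolution is much more direct: $\Q(A)$ is by construction a \emph{subobject} of $\Q_A$ and a \emph{quotient} of $\Q^A$ (Notation~\ref{not-prl_interm}, Corollary~\ref{cor-QAMdual}), so $\Q(A)(a)\ne 0$ forces \emph{both} $\Q_A(a)\ne 0$ and $\Q^A(a)\ne 0$. By Propositions~\ref{pr-descrQ} and~\ref{pr-dualQ} this means $A$ is simultaneously a quotient of $\A(a,-)$ and a subobject of $\A(-,a)^\sharp$, hence is the image of some morphism $\A(a,-)\to\A(-,a)^\sharp$. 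Since $\mathrm{Hom}(\A(a,-),\A(-,a)^\sharp)\simeq\A(a,a)^\sharp$ is finite by (FH) and $k$-triviality, only finitely many isomorphism classes of $A$ can occur, and Lemma~\ref{lm-bicont} together with Corollary~\ref{cor-QAMequiv} concludes. It is precisely the interplay between the "generated at $a$" condition (from $\Q_A$) and the dual "cogenerated at $a$" condition (from $\Q^A$) that makes $T_d$ bicontinuous with no hypothesis, whereas each condition alone gives only $S_d$ or $C_d$. Note also that your derivation of (5) leans on (1), so the gap propagates there; alternatively one can get the bicontinuity of $\Phi_d$ from (2), (3) and (4) alone, since $\pi_d\circ S_d$ is continuous, $\pi_d\circ C_d$ is cocontinuous, and they are isomorphic.
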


\begin{proof} Soit $a$ un objet de $\A$. L'espace vectoriel $\Q_A(a)$ (resp. $\Q^A(a)$) est non nul si et seulement s'il existe un épimorphisme $\A(a,-)\twoheadrightarrow A$ (resp. un monomorphisme $A\hookrightarrow\A(-,a)^\sharp$), par les propositions~\ref{pr-descrQ} et~\ref{pr-dualQ}.

La proposition~\ref{pr-Qexact} et le lemme~\ref{lm-bicont} montrent par conséquent que $S_d$ est exact et continu, et qu'il est bicontinu si et seulement si pour tout objet $a$ de $\A$, l'ensemble des $A\in\LL_d(\A)$ qui sont quotients de $\A(a,-)$ est fini. Pour $d=1$, c'est toujours vrai car $\A(a,-)$, qui est de type fini, a un cosocle fini grâce à l'hypothèse (FH). Pour $d=2$, la condition nécessaire et suffisante de la deuxième assertion découle de la finitude de ce cosocle, de la proposition~\ref{pr-ExtFE} et de l'observation que si $0\to Y\to X\to 0$ est une suite exacte non scindée de $\rep(\A)$ avec $S$ simple qu'il existe un épimorphisme $\A(a,-)\twoheadrightarrow X$, alors il existe un épimorphisme $\A(a,-)\twoheadrightarrow Y$. Le cas $d\ge 2$ s'obtient en général par récurrence en utilisant encore la proposition~\ref{pr-ExtFE}, les observations précédentes et le fait que s'il existe une infinité d'éléments de $\LL_d(\A)$ (avec $d\ge 1$) sur lesquels $\A(a,-)$ se surjecte, alors il existe une infinité d'éléments de $\LL_{d+1}(\A)$ sur lesquels $\A(a^{\oplus 2},-)$ se surjecte. Cela termine la démonstration de la deuxième assertion.

La troisième assertion est duale de la deuxième.

La première se déduit elle aussi du lemme~\ref{lm-bicont} et de l'observation du début de cette démonstration, en utilisant le corollaire~\ref{cor-QAMequiv} : la non-nullité de $\Q(A)(a)$ entraîne celle de $\Q_A(a)$ et de $\Q^A(a)$ (puisque $\Q(A)$ est un sous-objet de $\Q_A$ et un quotient de $\Q^A$), donc que $A$ est l'image d'un morphisme $\A(a,-)\to\A(-,a)^\sharp$. Comme $\mathrm{Hom}(\A(a,-),A(-,a)^\sharp)\simeq\A(a,a)^\sharp$ est fini, cela montre la première assertion.

La quatrième assertion découle de ce que, pour $A\in\LL_d(\A)$ et $M\in\mathrm{Ob}\,k[\mathrm{Aut}(A)]\Md$, les morphismes canoniques $\Q^{A,M}\twoheadrightarrow\Q(A,M)\hookrightarrow\Q_{A,M}$ deviennent des isomorphismes dans $\F_d(\A;k)/\F_{d-1}(\A;k)$.

La cinquième assertion résulte des première et quatrième.
\end{proof}

\subsection{Les catégories $\F_d(\A;k)/\F_{d-1}(\A;k)$}

\begin{theo}\label{th-princ}
Pour tout $d\in\mathbb{N}$, le foncteur
$$\Phi_d :  \prod_{A\in\LL_d(\A)}k[\mathrm{Aut}(A)]\Md\xrightarrow{\simeq}\F_d(\A;k)/\F_{d-1}(\A;k)$$
est une équivalence de catégories. De plus, la composée de l'équivalence inverse avec le foncteur $S_d$ (resp. $C_d$) $\prod_{A\in\LL_d(\A)}k[\mathrm{Aut}(A)]\Md\to\F_d(\A;k)$ est le fonteur section (resp. co-section) $\F_d(\A;k)/\F_{d-1}(\A;k)\to\F_d(\A;k)$, c'est-à-dire l'adjoint à droite (resp. à gauche) du foncteur canonique.
\end{theo}

Ce théorème, qui constitue le résultat principal du présent article, sera démontré en plusieurs étapes dans la suite de cette section. Pour ce faire, nous introduisons quelques notations.

Nous noterons $HR(d)$ la conclusion du théorème. Celle-ci (notamment la pleine fidélité de $\Phi_d$) sera partiellement démontrée par récurrence sur $d$, via les deux propositions suivantes :

\begin{prop}\label{pr-Ext_HR}
Supposons que $HR(i)$ est vérifié pour tout $i<d$. Alors $\mathrm{Ext}^*_{\F(\A;k)}(T,F)=0$ pour tout foncteur $T$ de $\F_{d-1}(\A;k)$ et tout foncteur $F$ appartenant à l'image essentielle de $S_d$.
\end{prop}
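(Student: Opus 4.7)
Le plan est d'établir, par récurrence sur $i\in\{-1,0,\dots,d-1\}$, l'assertion $P(i)$ suivante : pour tout $T\in\F_i(\A;k)$, tout $A\in\LL_d(\A)$ et tout $M\in k[\mathrm{Aut}(A)]\Md$, on a $\mathrm{Ext}^*_{\F(\A;k)}(T,\Q_{A,M})=0$. La proposition s'en déduira en appliquant $P(d-1)$ : tout foncteur $F$ de l'image essentielle de $S_d$ est isomorphe à $\prod_{A\in\LL_d(\A)}\Q_{A,M_A}$ pour certains modules $M_A$, et $\mathrm{Ext}^n_{\F(\A;k)}(T,-)$ commute aux produits arbitraires (le foncteur $\mathrm{Hom}$ commute aux produits en sa seconde variable, et les produits dans $k\Md$ sont exacts).

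L'initialisation $P(-1)$ est triviale puisque $\F_{-1}(\A;k)=0$. Pour l'hérédité, on suppose $P(i)$ vérifiée avec $i+1\le d-1$, et on se donne $T\in\F_{i+1}(\A;k)$. L'hypothèse $HR(i+1)$, disponible car $i+1<d$, identifie $C_{i+1}$ à l'adjoint à gauche du foncteur quotient $\pi_{i+1} : \F_{i+1}(\A;k)\to\F_{i+1}(\A;k)/\F_i(\A;k)$. Comme $\pi_{i+1}\circ C_{i+1}\simeq\mathrm{Id}$, le foncteur $\pi_{i+1}$ annule le noyau $K$ et le conoyau $L$ de la counité canonique $C_{i+1}\pi_{i+1}(T)\to T$, qui appartiennent donc à $\F_i(\A;k)$. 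Par définition de $C_{i+1}$, on a $C_{i+1}\pi_{i+1}(T)\simeq\bigoplus_{B\in\LL_{i+1}(\A)}\Q^{B,N_B}$ pour certains $k[\mathrm{Aut}(B)]$-modules $N_B$.

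Pour chaque $B\in\LL_{i+1}(\A)$, l'inégalité $\lgr(B)=i+1<d=\lgr(A)$ et la proposition~\ref{pr-ExtQAMQ} donnent $\mathrm{Ext}^*(\Q^{B,N_B},\Q_{A,M})=0$. Puisque $\mathrm{Ext}^*(-,\Q_{A,M})$ envoie les sommes directes sur des produits dans $k\Md$, on en déduit $\mathrm{Ext}^*(C_{i+1}\pi_{i+1}(T),\Q_{A,M})=0$. En notant $J$ l'image de la counité, les suites exactes courtes
$$0\to K\to C_{i+1}\pi_{i+1}(T)\to J\to 0\quad\text{et}\quad 0\to J\to T\to L\to 0$$
et l'hypothèse $P(i)$ appliquée à $K,L\in\F_i(\A;k)$ permettent, via les suites exactes longues d'Ext associées, de conclure que $\mathrm{Ext}^*(T,\Q_{A,M})=0$, soit $P(i+1)$.

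La difficulté centrale réside dans l'articulation entre l'hypothèse de récurrence $HR(i+1)$ et la proposition~\ref{pr-ExtQAMQ} : $HR(i+1)$ fournit \emph{simultanément} la description explicite de $C_{i+1}\pi_{i+1}(T)$ comme somme directe de foncteurs $\Q^{B,N_B}$ (auxquels la proposition~\ref{pr-ExtQAMQ} s'applique précisément dans le bon sens, $\Q^{B,N_B}$ étant en première position) et le contrôle du noyau et du conoyau de la counité via l'interprétation adjointe de $C_{i+1}$ ; c'est cette double information qui rend la récurrence possible. Le reste est formel, étant entendu que $\bigoplus_B\Q^{B,N_B}$ appartient bien à $\F_{i+1}(\A;k)$ : cela résulte de la bilocalisation (proposition~\ref{pr-Fdbiloc}), du corollaire~\ref{cor-QAMdual} qui décrit $\Q^{B,N_B}$ comme sous-foncteur d'un quotient d'une somme directe de copies de $k[B]$, et de la proposition~\ref{pr-Fdlin} qui donne $k[B]\in\F_{i+1}(\A;k)$ puisque $\lgr(B)=i+1$.
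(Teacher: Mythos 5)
Votre démonstration est correcte et suit essentiellement la même voie que celle de l'article : la suite exacte à quatre termes $0\to X\to\bigoplus_B\Q^{B,N_B}\to T\to Y\to 0$ qu'utilise le texte n'est autre que la décomposition noyau/image/conoyau de la coünité $C_{i+1}\pi_{i+1}(T)\to T$ que vous construisez, et la conclusion s'obtient de part et d'autre en combinant la proposition~\ref{pr-ExtQAMQ} avec l'hypothèse de récurrence. Vous explicitez simplement des points laissés implicites dans l'article (provenance de cette suite exacte via l'adjonction fournie par $HR(i+1)$, commutation de $\mathrm{Ext}^*(T,-)$ aux produits pour passer des $\Q_{A,M_A}$ à l'image essentielle de $S_d$), ce qui est bienvenu.
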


\begin{proof}
On montre par récurrence sur $i\in\{0,\dots,d-1\}$ que $\mathrm{Ext}^*_{\F(\A;k)}(T,\Q_{A,M})=0$ pour tous $T$ dans $\F_i(\A;k)$, tout $A\in\LL_d(\A)$ et toute représentation $M$ de $\mathrm{Aut}(A)$. L'hypothèse $HR(i)$ entraîne qu'il existe une suite exacte
$$0\to X\to\bigoplus_{B\in\LL_i(\A)}\Q^{B,N_B}\to T\to Y\to 0$$
où les $N_B$ sont des représentations des $\mathrm{Aut}(B)$ et $X$ et $Y$ sont des foncteurs de $\F_{i-1}(\A;k)$. Les groupes d'extensions $\mathrm{Ext}^*_{\F(\A;k)}(X,\Q_{A,M})$ et $\mathrm{Ext}^*_{\F(\A;k)}(Y,\Q_{A,M})$ s'annulent grâce à l'hypothèse de récurrence sur $i$, tandis que les $\mathrm{Ext}^*_{\F(\A;k)}(\Q^{B,N_B},\Q_{A,M})$ s'annulent pour $B\in\LL_i(\A)$ grâce à la proposition~\ref{pr-ExtQAMQ}. Il s'ensuit que $\mathrm{Ext}^*_{\F(\A;k)}(T,\Q_{A,M})$ est également nul, d'où la conclusion.
\end{proof}

La notion d'objet $\C$-fermé dans une catégorie abélienne $\E$, où $\C$ est une sous-catégorie épaisse de $\E$, est définie par Gabriel \cite[chap.~III, §\,2, p.~371, avant le lemme~2]{Gabriel}. Le corollaire ci-dessous résulte de la proposition~\ref{pr-Ext_HR} et de \cite[chap.~III, §\,2, lemme~1]{Gabriel}.

\begin{coro}\label{cor-Fd_ferme} Si $HR(i)$ est vérifié pour tout $i<d$, tout objet de $\F_d(\A;k)$ appartenant à l'image essentielle de $S_d$ est $\F_{d-1}(\A;k)$-fermé.
\end{coro}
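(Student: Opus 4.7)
The plan is to apply the characterization of closed objects from Gabriel \cite[chap.~III, §\,2]{Gabriel} together with the cohomological vanishing just established in Proposition~\ref{pr-Ext_HR}. Recall that if $\C$ is a thick (épaisse) subcategory of an abelian category $\E$, then an object $F$ of $\E$ is called $\C$-\emph{fermé} when, for every short exact sequence $0\to T'\to T\to T''\to 0$ in $\E$ with both $T'$ and $T''$ in $\C$, the induced map $\mathrm{Hom}_\E(T'',F)\to\mathrm{Hom}_\E(T,F)$ is a bijection. By \cite[chap.~III, §\,2, lemme~1]{Gabriel}, this is equivalent to the joint vanishing $\mathrm{Hom}_\E(T,F)=0$ and $\mathrm{Ext}^1_\E(T,F)=0$ for every object $T$ of $\C$.

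I would first note that $\F_{d-1}(\A;k)$ is a thick subcategory of $\F(\A;k)$: this is immediate from Proposition~\ref{pr-Fdbiloc}, which asserts that it is in fact bilocalisante. Hence Gabriel's criterion is available with $\E=\F(\A;k)$ and $\C=\F_{d-1}(\A;k)$. Now let $F$ be an object of $\F_d(\A;k)$ in the essential image of $S_d$, and assume the inductive hypothesis that $HR(i)$ holds for all $i<d$. Proposition~\ref{pr-Ext_HR} then gives
$$\mathrm{Ext}^n_{\F(\A;k)}(T,F)=0\quad\text{for all }T\in\F_{d-1}(\A;k)\text{ and all }n\geq 0.$$
In particular, the cases $n=0$ and $n=1$ furnish exactly the hypotheses of Gabriel's lemma, so $F$ is $\F_{d-1}(\A;k)$-fermé, which is the assertion of the corollary.

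The argument is genuinely a formal packaging of two already established ingredients, so there is no serious obstacle to surmount: all the cohomological content is absorbed into Proposition~\ref{pr-Ext_HR}, while the translation from vanishing of Hom and Ext$^1$ to closedness in the sense of Gabriel is entirely categorical. The only point worth checking explicitly, and trivial given Proposition~\ref{pr-Fdbiloc}, is that $\F_{d-1}(\A;k)$ is thick so that Gabriel's definitions apply verbatim.
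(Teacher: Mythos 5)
Your argument is correct and is exactly the paper's (one-line) proof: the corollary is deduced from Proposition~\ref{pr-Ext_HR} combined with the characterization of $\F_{d-1}(\A;k)$-closed objects by the vanishing of $\mathrm{Hom}$ and $\mathrm{Ext}^1$ against all objects of the subcategory, as given in \cite[chap.~III, §\,2, lemme~1]{Gabriel}. (One small caveat: the definition of \emph{fermé} you recall at the outset is not the right one --- it should involve morphisms $N'\to N$ between \emph{arbitrary} objects of $\F(\A;k)$ whose kernel and cokernel lie in $\F_{d-1}(\A;k)$, whereas your formulation only involves objects of the subcategory and reduces to the $\mathrm{Hom}$-vanishing alone --- but this does not affect the argument, since you only use the correct $\mathrm{Hom}/\mathrm{Ext}^1$ criterion from Gabriel's lemma.)
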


 Avant d'énoncer notre prochain résultat, rappelons \cite[déf.~3.6]{Psa-hom} qu'on dit qu'un foncteur $\Phi : \E'\to\E$ entre deux catégories de Grothendieck est un \emph{plongement homologique} si $\Phi$ est exact et que le morphisme de groupes abéliens gradués $\mathrm{Ext}^*_{\E'}(X,Y)\to\mathrm{Ext}^*_\E(\Phi(X),\Phi(Y))$ qu'il induit est un isomorphisme pour tous objets $X$ et $Y$ de $\E'$.

\begin{prop}\label{pr-Spf} Supposons que $HR(i)$ est vérifié pour tout $i<d$.
\begin{enumerate}
    \item\label{it-pf} Le foncteur $\Phi_d$ est pleinement fidèle.
    \item\label{it-ph} Le foncteur composé
    $$\prod_{A\in\LL_d(\A)}k[\mathrm{Aut}(A)]\Md\xrightarrow{S_d}\F_d(\A;k)\hookrightarrow\F(\A;k)$$
    est un plongement homologique.
\end{enumerate}
\end{prop}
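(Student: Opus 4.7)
Je compte établir d'abord l'assertion~\eqref{it-ph} ; l'assertion~\eqref{it-pf} en découlera aussitôt grâce au corollaire~\ref{cor-Fd_ferme}. En effet, celui-ci garantit que $S_d(N)$ est $\F_{d-1}(\A;k)$-fermé, d'où l'isomorphisme $\mathrm{Hom}_{\F_d(\A;k)/\F_{d-1}(\A;k)}(\Phi_d M,\Phi_d N)\simeq\mathrm{Hom}_{\F(\A;k)}(S_d M,S_d N)$, lequel vaudra $\prod_A\mathrm{Hom}_{k[\mathrm{Aut}(A)]}(M_A,N_A)$ par le cas $n=0$ de~\eqref{it-ph}.

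L'ingrédient pivot pour~\eqref{it-ph} sera l'identification canonique, valable pour toute famille $(M_A)$,
\[
\bigoplus_{A\in\LL_d(\A)}\Q(A,M_A)\xrightarrow{\ \simeq\ }\prod_{A\in\LL_d(\A)}\Q(A,M_A),
\]
conséquence de la bicontinuité de $T_d$ (proposition~\ref{pr-bicont}(1)) via le lemme~\ref{lm-bicont}. Les produits étant exacts dans $\F(\A;k)$, on a $\mathrm{Ext}^n_{\F(\A;k)}(S_d M,S_d N)\simeq\prod_B\mathrm{Ext}^n(S_d M,\Q_{B,N_B})$ ; pour $B\in\LL_d(\A)$ fixé, le biproduit $S_d M=\Q_{B,M_B}\oplus K_B$ avec $K_B:=\prod_{A\ne B}\Q_{A,M_A}$ scindera cet Ext, le facteur en $\Q_{B,M_B}$ donnant $\mathrm{Ext}^n_{k[\mathrm{Aut}(B)]}(M_B,N_B)$ par le théorème~\ref{thm-Ext-QAM}. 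Tout se ramènera donc à l'annulation $\mathrm{Ext}^*_{\F(\A;k)}(K_B,\Q_{B,N_B})=0$.

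Pour cette annulation, je partirai, pour chaque $A\in\LL_d(\A)$, de la suite exacte courte $0\to\Q(A,M_A)\to\Q_{A,M_A}\to E_A\to 0$ fournie par la proposition~\ref{pr-QA_copres}, dans laquelle $E_A$ est sous-objet d'un produit fini de foncteurs $\Q_{A/N,-}$ avec $\lgr(A/N)<d$, donc $E_A\in\F_{d-1}(\A;k)$. En prenant le produit (exact) sur $A\ne B$, le terme de droite $\prod_{A\ne B}E_A$ appartiendra encore à $\F_{d-1}(\A;k)$ (sous-catégorie bilocalisante, stable par produit) et le terme de gauche s'identifiera, par l'identification ci-dessus, à la somme directe $\bigoplus_{A\ne B}\Q(A,M_A)$. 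La proposition~\ref{pr-Ext_HR} annulera les Ext depuis le terme de droite, tandis que la commutation de l'Ext avec les sommes directes au premier argument transformera l'Ext depuis la somme directe en un produit $\prod_{A\ne B}\mathrm{Ext}^*(\Q(A,M_A),\Q_{B,N_B})$ ; chaque facteur se ramènera, via la même suite exacte courte et à nouveau la proposition~\ref{pr-Ext_HR}, à $\mathrm{Ext}^*(\Q_{A,M_A},\Q_{B,N_B})$, nul pour $A\ne B$ par le théorème~\ref{thm-Ext-QAM}. La suite exacte longue des Ext conclura.

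L'obstacle principal dans ce plan est la manipulation du produit infini $K_B$, puisque $\mathrm{Ext}^*(-,Y)$ ne commute pas aux produits dans sa première variable. Le pivot qui débloque la situation est précisément le remplacement des $\Q_{A,M_A}$ par les $\Q(A,M_A)$, modulo des termes dans $\F_{d-1}(\A;k)$ contrôlés par la proposition~\ref{pr-Ext_HR} : la bicontinuité de $T_d$ transforme leur produit en une véritable somme directe, restituant le bon comportement du foncteur Ext.
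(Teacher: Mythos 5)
Votre démonstration est correcte et suit pour l'essentiel la même stratégie que celle de l'article : réduction de l'assertion (1) à l'assertion (2) via le corollaire~\ref{cor-Fd_ferme}, puis calcul des Ext reposant sur le théorème~\ref{thm-Ext-QAM}, la proposition~\ref{pr-Ext_HR} et l'identification $\bigoplus\simeq\prod$ issue de la bicontinuité (proposition~\ref{pr-bicont} et lemme~\ref{lm-bicont}). La seule différence est d'organisation : l'article montre directement que le conoyau du monomorphisme global $\bigoplus_A\Q_{A,M_A}\hookrightarrow\prod_A\Q_{A,M_A}$ appartient à $\F_{d-1}(\A;k)$, tandis que vous remplacez terme à terme $\Q_{A,M_A}$ par $\Q(A,M_A)$ avant de prendre le produit, ce qui revient au même.
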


\begin{proof} Soit $F$ un foncteur de $\F_d(\A;k)$ appartenant à l'image essentielle de $S_d$.
Le corollaire~\ref{cor-Fd_ferme} et \cite[chap.~III, §\,2, lemme~1]{Gabriel} impliquent que, pour tout foncteur $Y$ de $\F_d(\A;k)$, le morphisme
$$\mathrm{Hom}_{\F_d(\A;k)}(Y,X)\to\mathrm{Hom}_{\F_d(\A;k)/\F_{d-1}(\A;k)}(\pi_d(Y),\pi_d(X))$$
qu'induit $\pi_d$ est un isomorphisme. Il suffit donc d'établir l'assertion~\ref{it-ph}.

Soient $(M_A)_{A\in\LL_d(\A)}$ et $(N_A)_{A\in\LL_d(\A)}$ des familles de réprésentations $k$-linéaires des groupes $\mathrm{Aut}(A)$. On dispose d'isomorphismes canoniques
$$\mathrm{Ext}^*_{\F(\A;k)}\Big(\bigoplus_{A\in\LL_d(\A)}\Q_{A,M_A},\prod_{A\in\LL_d(\A)}\Q_{A,N_A}\Big)\simeq$$
$$\prod_{(A,B)\in\LL_d(\A)^2}\mathrm{Ext}^*_{\F(\A;k)}(\Q_{A,M_A},\Q_{B,N_B})\simeq\prod_{A\in\LL_d(\A)}\mathrm{Ext}^*_{k[\mathrm{Aut}(A)]}(M_A,N_A)$$
grâce au théorème~\ref{thm-Ext-QAM}.

Par ailleurs, les assertions 1. et 4. de la proposition~\ref{pr-bicont}, le lemme~\ref{lm-bicont} et la proposition~\ref{pr-Fdbiloc} montrent que le conoyau du monomorphisme canonique
$$\bigoplus_{A\in\LL_d(\A)}\Q_{A,M_A}\hookrightarrow\prod_{A\in\LL_d(\A)}\Q_{A,M_A}$$
appartient à $\F_{d-1}(\A;k)$. En utilisant la proposition~\ref{pr-Ext_HR}, on en déduit
$$\mathrm{Ext}^*_{\F(\A;k)}\Big(\prod_{A\in\LL_d(\A)}\Q_{A,M_A},\prod_{A\in\LL_d(\A)}\Q_{A,N_A}\Big)\simeq$$
$$\mathrm{Ext}^*_{\F(\A;k)}\Big(\bigoplus_{A\in\LL_d(\A)}\Q_{A,M_A},\prod_{A\in\LL_d(\A)}\Q_{A,N_A}\Big)\simeq$$
$$\simeq\prod_{A\in\LL_d(\A)}\mathrm{Ext}^*_{k[\mathrm{Aut}(A)]}(M_A,N_A)\;,$$
l'isomorphisme étant induit par la composée de $S_d$ et de l'inclusion $\F_d(\A;k)\hookrightarrow\F(\A;k)$, ce qui achève la démonstration.
\end{proof}

\subsection{Démonstration du théorème~\ref{th-princ}}

Nous introduisons au préalable quelques notations qui nous serviront à établir, en plusieurs étapes, l'essentielle surjectivité du foncteur $\Phi_d$.

Nous appellerons \emph{décomposition de type $D_d$} d'un foncteur $F$ de $\F(\A;k)$ tout isomorphisme de la forme~\eqref{eq-imFd} (page~\pageref{eq-imFd})
avec $\rg(f)=d$ pour tout $f\in\mathrm{Hom}(A_i,B_j)$ tel que $\lambda_{i,j}(f)\ne 0$. Nous dirons qu'une telle décomposition est de type $I_d$ (resp. $M_d$, $E_d$) si de plus tout $f\in\mathrm{Hom}(A_i,B_j)$ tel que $\lambda_{i,j}(f)\ne 0$ est un isomorphisme (resp. un monomorphisme, un épimorphisme). 

Tout foncteur de type $D_d$ est de type fini et de type cofini et appartient à $\F_d(\A;k)$ (cf. proposition~\ref{pr-Fd-tftcf}) ; réciproquement :

\begin{lemm}\label{lm-typeD} Soit $F$ un foncteur de type fini et de type cofini de $\F_d(\A;k)$. Alors il existe un foncteur $G$ admettant une décomposition de type $D_d$ tel que $\pi_d(F)\simeq\pi_d(G)$.
\end{lemm}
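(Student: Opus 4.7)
The strategy is to start with the description of $F$ given by Proposition~\ref{pr-Fd-tftcf} and to prune the presenting morphism by discarding the $f$'s of rank strictly less than $d$; the remaining morphism yields a functor with a decomposition of type $D_d$, while what has been discarded contributes only objects of $\F_{d-1}(\A;k)$.

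Concretely, since $F$ is de type fini et de type cofini dans $\F_d(\A;k)$, Proposition~\ref{pr-Fd-tftcf} furnishes an isomorphism $F \simeq \mathrm{Im}(\varphi)$ with
$$\varphi : \bigoplus_i k[A_i] \longrightarrow \bigoplus_j k[B_j]$$
of the form~\eqref{eq-imFd}, where the $A_i$ and $B_j$ are finite in $\rep(\A)$ and $\rg(f)\le d$ whenever $\lambda_{i,j}(f)\ne 0$. I write $\varphi = \alpha + \beta$, where $\alpha$ retains only the coefficients $\lambda_{i,j}(f)$ with $\rg(f)=d$ and $\beta$ retains those with $\rg(f)<d$. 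By construction the functor $G := \mathrm{Im}(\alpha)$ admits a decomposition of type $D_d$; it remains to check that $\pi_d(F) \simeq \pi_d(G)$.

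The main point, and the only place where real content enters, is to verify that $H := \mathrm{Im}(\beta)$ lies in $\F_{d-1}(\A;k)$. For each $f$ occurring in $\beta$ we have $\lgr(\mathrm{Im}\,f) = \rg(f) < d$, hence $k[\mathrm{Im}\,f]\in\F_{d-1}(\A;k)$ by Proposition~\ref{pr-Fdlin}. Since the map $\lambda_{i,j}(f)\,k[f]$ factors through $k[\mathrm{Im}\,f]\subset k[B_j]$, the $j$-th component of $\beta$ takes values in the finite sum of subfunctors $\sum_{i,f} k[\mathrm{Im}\,f]\subset k[B_j]$, which lies in $\F_{d-1}(\A;k)$ because the latter subcategory is bilocalisante (Proposition~\ref{pr-Fdbiloc}), hence stable under finite sums of subobjects. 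Therefore $H$ embeds into a finite direct sum of objects of $\F_{d-1}(\A;k)$, and thus itself belongs to $\F_{d-1}(\A;k)$.

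Now view $F$, $G$, $H$ as subfunctors of $\bigoplus_j k[B_j]$. The identity $\varphi = \alpha + \beta$ gives $F\subset G+H$, and, since $\alpha = \varphi - \beta$, also $G\subset F+H$; hence $F+H = G+H$ inside $\bigoplus_j k[B_j]$. Both sides belong to $\F_d(\A;k)$, as $F,G\in\F_d(\A;k)$ and $H\in\F_{d-1}(\A;k)\subset\F_d(\A;k)$. The quotients $(F+H)/F$ and $(G+H)/G$ are quotients of $H$, hence lie in $\F_{d-1}(\A;k)$. Applying the exact functor $\pi_d$ to the short exact sequences
$$0\to F\to F+H\to (F+H)/F\to 0\qquad\text{and}\qquad 0\to G\to G+H\to (G+H)/G\to 0$$
yields $\pi_d(F)\simeq\pi_d(F+H)=\pi_d(G+H)\simeq\pi_d(G)$, as required.
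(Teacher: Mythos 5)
Your proof is correct and follows essentially the same route as the paper's (very terse) argument: starting from the presentation of Proposition~\ref{pr-Fd-tftcf}, one discards the terms $\lambda_{i,j}(f)[f]$ with $\rg(f)<d$, whose contributions live in $\F_{d-1}(\A;k)$ because $k[f]$ factors through $k[\mathrm{Im}\,f]$. Your explicit bookkeeping with $F+H=G+H$ inside $\bigoplus_j k[B_j]$ is just a hands-on substitute for the paper's appeal to the vanishing of $k[f]$ in $\F(\A;k)/\F_{d-1}(\A;k)$ together with exactness of the quotient functor.
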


\begin{proof}
Cela résulte de la proposition~\ref{pr-Fd-tftcf} et du fait que si $f : A\to B$ est un morphisme de $\rep(\A)$ tel que $\rg(f)<d$, alors $k[f] : k[A]\to k[B]$ est nul dans  $\F(\A;k)/\F_{d-1}(\A;k)$.
\end{proof}

\begin{lemm}\label{lm-Id}
Si $F$ est un foncteur admettant une décomposition de type $I_d$, alors $\pi_d(F)$ appartient à l'image essentielle du foncteur $\Phi_d$.
\end{lemm}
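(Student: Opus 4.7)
Le plan est de tirer parti de l'exactitude du foncteur quotient $\pi_d$ et des résultats de la section~\ref{sQA}. Pour tout foncteur $A$ de $\rep(\A)$ de longueur $d$, la résolution fournie par la proposition~\ref{pr-ressimpl} ne fait intervenir, hormis les termes $k[A]$ et $\Q_A$, que des foncteurs $k[T_0]$ avec $T_0\subsetneq A$, donc de longueur au plus $d-1$, c'est-à-dire dans $\F_{d-1}(\A;k)$. L'exactitude de $\pi_d$ entraîne donc que la projection canonique $k[A]\twoheadrightarrow\Q_A$ devient un isomorphisme dans $\F_d(\A;k)/\F_{d-1}(\A;k)$. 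Par ailleurs, par la proposition~\ref{pr-foncQ}, tout isomorphisme $f : A\xrightarrow{\simeq} B$ entre foncteurs de longueur $d$ induit un isomorphisme $\Q_f : \Q_A\xrightarrow{\simeq}\Q_B$ compatible à $k[f]$.

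Écrivons $F\simeq\mathrm{Im}\big(\bigoplus_i k[A_i]\to\bigoplus_j k[B_j]\big)$, décomposition de type $I_d$, de sorte que les composantes non nulles soient portées par des isomorphismes $f : A_i\xrightarrow{\simeq}B_j$ entre foncteurs finis de longueur $d$. Compte tenu des identifications précédentes et de la commutation de $\pi_d$ à l'image, on obtient
$$\pi_d(F)\simeq\mathrm{Im}\Big(\bigoplus_i\pi_d(\Q_{A_i})\to\bigoplus_j\pi_d(\Q_{B_j})\Big),$$
où les composantes non nulles sont les $\lambda_{i,j}(f).\pi_d(\Q_f)$. Regroupons alors les $A_i$ et les $B_j$ par classe d'isomorphisme dans $\LL_d(\A)$ : pour chaque $A\in\LL_d(\A)$, notons $I_A:=\{i\,|\,A_i\simeq A\}$ et $J_A:=\{j\,|\,B_j\simeq A\}$, en fixant des isomorphismes. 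Le théorème~\ref{thm-Ext-QA} fournit $\mathrm{Hom}(\Q_A,\Q_B)=0$ pour $A\not\simeq B$ dans $\LL_d(\A)$, de sorte que le morphisme ci-dessus se scinde en une somme directe indexée par $A\in\LL_d(\A)$ de morphismes $\Q_A^{\oplus I_A}\to\Q_A^{\oplus J_A}$.

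Pour conclure, on utilise l'isomorphisme canonique $\Q_A^{\oplus n}\simeq\Q_{A,k[\mathrm{Aut}(A)]^{\oplus n}}$ et l'identification $\mathrm{End}(\Q_A)\simeq k[\mathrm{Aut}(A)]$ (théorème~\ref{thm-Ext-QA}) : la matrice décrivant le morphisme $\Q_A^{\oplus I_A}\to\Q_A^{\oplus J_A}$ correspond à un morphisme $u_A : k[\mathrm{Aut}(A)]^{\oplus I_A}\to k[\mathrm{Aut}(A)]^{\oplus J_A}$ de $k[\mathrm{Aut}(A)]$-modules, et l'exactitude du foncteur $\Q_{A,-}$ (proposition~\ref{pr-Qexact}) identifie son image à $\Q_{A,M_A}$, où $M_A:=\mathrm{Im}(u_A)$. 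Finalement, $\pi_d(F)\simeq\bigoplus_{A\in\LL_d(\A)}\pi_d(\Q_{A,M_A})\simeq\Phi_d((M_A))$. La principale subtilité du plan réside dans la cohérence entre le regroupement par classe d'isomorphisme et la description matricielle du morphisme ; néanmoins, une fois les isomorphismes canoniques de la proposition~\ref{pr-foncQ} et l'annulation fournie par le théorème~\ref{thm-Ext-QA} acquis, ces vérifications sont essentiellement formelles.
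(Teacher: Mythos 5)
Votre démonstration est correcte et suit essentiellement la même stratégie que celle de l'article : remplacer chaque $k[A_i]$ et $k[B_j]$ par $\Q_{A_i}$ et $\Q_{B_j}$ dans la catégorie quotient, regrouper par classes d'isomorphisme dans $\LL_d(\A)$, puis utiliser l'identification $\mathrm{End}(\Q_A)\simeq k[\mathrm{Aut}(A)]$ et l'exactitude de $\Q_{A,-}$ pour identifier l'image à $\Q_{A,M_A}$ avec $M_A$ image du morphisme de modules correspondant. Les seuls écarts sont cosmétiques (le recours au théorème~\ref{thm-Ext-QA} pour scinder le morphisme est superflu, les composantes croisées étant déjà nulles par définition d'une décomposition de type $I_d$).
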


\begin{proof}
Pour $A$ dans $\rep(\A)$ de longueur $d$, l'épimorphisme canonique $k[A]\twoheadrightarrow\Q_A$ induit un isomorphisme dans $\F_d(\A;k)/\F_{d-1}(\A;k)$. Il s'ensuit que $F$ est isomorphe dans $\F_d(\A;k)/\F_{d-1}(\A;k)$ à l'image d'un morphisme de la forme
$$\bigoplus_{A\in\LL_d(\A)}\Q_A^{\oplus n(A)}\xrightarrow{\bigoplus_{A\in\LL_d(\A)}(\sum_{f\in\mathrm{Aut}(A)}\lambda_{i,j}(f).f_*)_{1\le i\le n(A),1\le j\le m(A)}}\bigoplus_{A\in\LL_d(\A)}\Q_A^{\oplus m(A)}$$
où les $n(A)$ et les $m(A)$ sont des entiers naturels, nuls sauf pour un nombre fini de $A\in\LL_d(\A)$, et les $\lambda_{i,j}(A)$ sont des éléments de $k$. Si l'on note $M_A$ l'image du morphisme
$$k[\mathrm{Aut}(A)]^{\oplus n(A)}\xrightarrow{(\sum_{f\in\mathrm{Aut}(A)}\lambda_{i,j}(f)[f])_{1\le i\le n(A),1\le j\le m(A)}}k[\mathrm{Aut}(A)]^{\oplus m(A)}$$
de $k[\mathrm{Aut}(A)]\Md$, la proposition~\ref{pr-Qexact} montre que $\pi_d(F)\simeq\Phi_d\big((M_A)_{A\in\LL_d(\A)}\big)$.
\end{proof}

\begin{nota} Soient $A$, $B$ des foncteurs de $\rep(\A)$, $A'$ et $B'$ des sous-foncteurs de $A$ et $B$ respectivement. On note
$$\Upsilon_{A',B'} : k[\mathrm{Hom}(A,B)]\to k[\mathrm{Hom}(A/A',B/B')]$$
l'application linéaire envoyant $[f]$ sur $[\bar{f}]$ si $f(A')=B'$ et sur $0$ sinon, pour $f\in\mathrm{Hom}(A,B)$, où $\bar{f} : A/A'\to B/B'$ désigne le morphisme induit par $f$.

Pour $x\in\mathrm{Ob}\,\A$, $\xi\in A(x)$ et $\zeta\in B(x)$, on note $\Upsilon_{\xi,\zeta}$ pour $\Upsilon_{A_\xi,B_\zeta}$.

Pour $A'\subset A''\subset A$, on note $r_{A',A''} : k[\mathrm{Hom}(A/A'',B)]\to k[\mathrm{Hom}(A/A',B)]$ le morphisme induit par la projection $A/A'\twoheadrightarrow A/A''$.
\end{nota}

Dans la suite, on note $\mathrm{Hom}_d(A,B)$ le sous-ensemble de $\mathrm{Hom}(A,B)$ constitué des $f : A\to B$ tels que $\rg(f)=d$.

\begin{lemm}\label{lm-decD-db}
Soient $A$ et $B$ des foncteurs de $\rep(\A)$, avec $A$ fini, et $A'$ un sous-foncteur de $A$. Alors les applications linéaires
$$\Upsilon_{A',0}\quad\text{et}\quad\underset{A''\supset A'\,,\,\lgr(A/A'')=d}{\sum_{A''\subset A}}r_{A',A''}\circ\Upsilon_{A'',0} : k[\mathrm{Hom}(A,B)]\to k[\mathrm{Hom}(A/A',B)]$$ coïncident sur $k[\mathrm{Hom}_d(A,B)]$.
\end{lemm}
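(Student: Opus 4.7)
The plan is to evaluate both linear maps on an arbitrary basis vector $[f]$ with $f\in\mathrm{Hom}_d(A,B)$ and check that the results coincide. This reduces the lemma to a short combinatorial count on subfunctors of $A$.

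First I would unpack the definitions. For $\Upsilon_{A',0}$, the image of $[f]$ is $[\bar f]\in k[\mathrm{Hom}(A/A',B)]$ if $f(A')=0$ (equivalently $A'\subset\ker f$), and is $0$ otherwise. For a single term $r_{A',A''}\circ\Upsilon_{A'',0}$ of the sum, the image of $[f]$ is $[\bar f]\in k[\mathrm{Hom}(A/A',B)]$ (viewing $f$ as factoring through $A/A'\twoheadrightarrow A/A''\to B$) if $f(A'')=0$, and is $0$ otherwise; note that the class of $\bar f$ in $k[\mathrm{Hom}(A/A',B)]$ does not depend on which intermediate $A''$ we use, it is always the morphism induced by $f$ on $A/A'$.

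Then I split into two cases. If $A'\not\subset\ker f$, then no $A''\supset A'$ can satisfy $A''\subset\ker f$, so every term of the sum vanishes; and $\Upsilon_{A',0}([f])=0$ as well. If $A'\subset\ker f$, then $\Upsilon_{A',0}([f])=[\bar f]$, while the sum equals $N\cdot[\bar f]$ where
$$N=\#\{A''\subset A\,|\,A'\subset A''\subset\ker f\text{ and }\lgr(A/A'')=d\}.$$
The key point—and this is the only thing that uses the hypothesis $f\in\mathrm{Hom}_d(A,B)$—is that $\rg(f)=d$ means $\lgr(A/\ker f)=d$. Since any $A''\subset\ker f$ gives an epimorphism $A/A''\twoheadrightarrow A/\ker f$, we have $\lgr(A/A'')\ge\lgr(A/\ker f)=d$, with equality iff $A''=\ker f$. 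Hence $N=1$, and both sides equal $[\bar f]$.

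There is no real obstacle here: the statement is a direct bookkeeping argument, and the only substantive input is the observation that, among subfunctors of $A$ contained in $\ker f$, the property $\lgr(A/A'')=d$ pins down $A''=\ker f$ uniquely precisely when $\rg(f)=d$. This is why the restriction to $k[\mathrm{Hom}_d(A,B)]$ is essential: for $f$ with $\rg(f)>d$ there would generically be many eligible $A''$, and for $f$ with $\rg(f)<d$ none at all, so the equality would fail.
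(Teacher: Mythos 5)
Your proof is correct and follows essentially the same route as the paper's: evaluate both maps on a basis vector $[f]$, observe that the nonzero terms of the sum are indexed by the $A''$ with $A'\subset A''\subset\ker f$ and $\lgr(A/A'')=d$, and use $\lgr(A/\ker f)=\rg(f)=d$ to conclude that $\ker f$ is the unique such $A''$. The only difference is cosmetic: you make the counting argument (your $N=1$) slightly more explicit than the paper does.
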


\begin{proof}
Soient $f\in\mathrm{Hom}_d(A,B)$ et $N$ le noyau de $f$ ; on a donc $\lgr(A/N)=d$. On a $\Upsilon_{A',0}([f])=[f]$ si $N\supset A'$, $0$ sinon, tandis que
$$\underset{\lgr(A/A'')=d}{\sum_{A''\supset A'}}r_{A',A''}\circ\Upsilon_{A'',0}([f])=\underset{\lgr(A/A'')=d}{\sum_{N\supset A''\supset A'}}[\bar{f}]\;;$$
comme $N\supset A''\supset A'$ implique $A''=N$ si $\lgr(A/A'')=d=\lgr(A/N)$, cela démontre le lemme.
\end{proof}

L'énoncé suivant est formel et immédiat.

\begin{lemm}\label{lm-rg}
Soient $A$, $B$ des foncteurs finis de $\rep(\A)$, $x$ un objet de $\A$, $\xi\in A(x)$ et $\zeta\in B(x)$ et $f : A\to B$ un morphisme tel que $f_*(\xi)=\zeta$. Alors $f$ induit des morphismes $\tilde{f} : A_\xi\to B_\zeta$ et $\bar{f} : A/A_\xi\to B/B_\zeta$ s'insérant dans un diagramme commutatif aux lignes exactes
$$\xymatrix{0 \ar[r] & A_\xi\ar[r]\ar@{->>}[d]_-{\tilde{f}} & A\ar[d]^-f\ar[r] & A/A_\xi\ar[r]\ar[d]^-{\bar{f}} & 0 \\
0 \ar[r] & B_\zeta\ar[r] & B\ar[r] & B/B_\zeta\ar[r] & 0
}\;;$$
de plus, on a $\rg(\bar{f})\le\rg(f)$, avec égalité si et seulement si $\zeta=0$.
\end{lemm}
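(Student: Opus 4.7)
The plan is to build $\tilde{f}$ directly from the Yoneda description of generated subfunctors, then read off $\bar{f}$ by passage to the quotient, and finally extract the rank comparison from an elementary computation of the image of $\bar{f}$.

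First I would observe that $A_\xi$ is, by definition (notation~\ref{noteng}), the image of the morphism $\varphi_\xi : \A(x,-) \to A$ corresponding to $\xi$ via the Yoneda isomorphism $A(x) \simeq \mathrm{Hom}(\A(x,-),A)$. The composite $f \circ \varphi_\xi : \A(x,-) \to B$ sends (under Yoneda) the identity of $x$ to $f_*(\xi) = \zeta$, so it equals $\varphi_\zeta$, whose image is $B_\zeta$. Since the image of $f \circ \varphi_\xi$ equals $f(\mathrm{Im}\,\varphi_\xi) = f(A_\xi)$, we obtain $f(A_\xi) = B_\zeta$, and $\tilde{f} : A_\xi \twoheadrightarrow B_\zeta$ is the (surjective) corestriction of the restriction of $f$. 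The induced morphism $\bar{f} : A/A_\xi \to B/B_\zeta$ then exists by the universal property of the quotient, and the claimed commutative diagram with exact rows is immediate.

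For the rank comparison, since $B_\zeta = f(A_\xi) \subseteq f(A) = \mathrm{Im}\,f$, the image of $\bar{f}$ inside $B/B_\zeta$ is $(\mathrm{Im}\,f + B_\zeta)/B_\zeta = \mathrm{Im}\,f / B_\zeta$. Using additivity of length on the short exact sequence $0 \to B_\zeta \to \mathrm{Im}\,f \to \mathrm{Im}\,\bar{f} \to 0$, one gets
\[
\rg(\bar{f}) = \lgr(\mathrm{Im}\,f) - \lgr(B_\zeta) = \rg(f) - \lgr(B_\zeta).
\]
Hence $\rg(\bar{f}) \le \rg(f)$, with equality if and only if $B_\zeta = 0$. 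The latter is equivalent to $\zeta = 0$, since on the one hand $\zeta \in B_\zeta(x)$ forces $\zeta = 0$ when $B_\zeta = 0$, and on the other hand $\zeta = 0$ makes $\varphi_\zeta$ the zero morphism, whose image is $0$.

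There is essentially no obstacle here; the lemma is a direct unwinding of the Yoneda description of $A_\xi$, $B_\zeta$ combined with the length-additivity of $\lgr$ on short exact sequences of finite objects in $\rep(\A)$. The only small point worth verifying carefully is that the equality case really forces $\zeta$ itself (and not only its generated subfunctor evaluated elsewhere) to vanish, which is immediate from $\zeta \in B_\zeta(x)$.
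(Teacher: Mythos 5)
Votre démonstration est correcte : l'identification $f(A_\xi)=B_\zeta$ via Yoneda, l'additivité de la longueur sur $0\to B_\zeta\to\mathrm{Im}\,f\to\mathrm{Im}\,\bar{f}\to 0$ et l'observation $\zeta\in B_\zeta(x)$ donnent exactement l'énoncé. L'article ne rédige pas de preuve (le lemme y est qualifié de \guillemotleft~formel et immédiat~\guillemotright), et votre argument est la vérification de routine attendue.
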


\begin{lemm}\label{lm-regularis} Soit $F$ un foncteur de $\F(\A;k)$ admettant une décomposition de type $D_d$ : 
$$F\simeq\mathrm{Im}\,\bigoplus_{i\in I}k[A_i]\xrightarrow{(\Theta_{A_i,B_j}(f_{i,j}))_{(i,j)\in I\times J}}\bigoplus_{j\in J}k[B_j]$$
où $I$ et $J$ sont des ensembles finis, les $A_i$ et $B_j$ sont des foncteurs finis de $\rep(\A)$ et  $f_{i,j}\in\mathrm{Hom}_d(A_i,B_j)$.

Alors il existe un objet $x$ de $\A$ tel que $F$ soit isomorphe dans $\F_d(\A;k)/\F_{d-1}(\A;k)$ à l'image de
$$\underset{\lgr(A_i/T)=d}{\bigoplus_{i\in I\,,\,T\subset A_i}}k[A_i/T]\xrightarrow{(\Theta_{A_i/T,B_j}(\Upsilon_{T,0}(f_{i,j})))_{i,T,j}}\bigoplus_{j\in J}k[B_j]$$
\end{lemm}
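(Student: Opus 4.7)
Le plan est de construire explicitement une inclusion naturelle $F\hookrightarrow F'$ entre les deux images, puis de contrôler le conoyau modulo $\F_{d-1}(\A;k)$. À cette fin, on introduit le morphisme ``diagonale''
\[
D : \bigoplus_{i\in I}k[A_i] \longrightarrow \bigoplus_{\substack{i\in I,\ T\subset A_i\\ \lgr(A_i/T)=d}}k[A_i/T]
\]
dont les composantes $k[A_i]\to k[A_i/T]$ sont les linéarisations des projections canoniques $A_i\twoheadrightarrow A_i/T$. En appliquant le lemme~\ref{lm-decD-db} avec $A'=0$ (ou par un calcul direct), on vérifie la factorisation $\beta\circ D=\alpha$, où $\alpha$ et $\beta$ désignent respectivement le morphisme définissant $F$ et celui dont l'image est le foncteur $F'$ du membre de droite de l'énoncé. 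En effet, pour tout $g\in\mathrm{Hom}_d(A_i,B_j)$ intervenant dans $f_{i,j}$, le sous-foncteur $\ker g$ est l'unique $T\subset A_i$ tel que $\lgr(A_i/T)=d$ et $T\subset\ker g$, et le morphisme induit $\bar g:A_i/\ker g\to B_j$ vérifie $\bar g\circ(A_i\twoheadrightarrow A_i/\ker g)=g$.

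Il en résulte une inclusion canonique $F\subset F'$ dans $\bigoplus_j k[B_j]$. Prouver $\pi_d(F)\simeq\pi_d(F')$ revient alors à établir que $F'/F$ appartient à $\F_{d-1}(\A;k)$. Comme $F'/F$ est un quotient de $\mathrm{coker}\,D=\bigoplus_i\mathrm{coker}\,D_i$ et que $\F_{d-1}(\A;k)$ est stable par quotient d'après la proposition~\ref{pr-Fdbiloc}, il suffit d'établir, pour chaque $i\in I$, l'appartenance $\mathrm{coker}\,D_i\in\F_{d-1}(\A;k)$.

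Pour cette dernière étape, on choisit un objet $x$ de $\A$ tel que tout sous-foncteur $T$ de l'un des $A_i$ ($i\in I$) avec $\lgr(A_i/T)\le d$ soit de la forme $(A_i)_\xi$ pour un $\xi\in A_i(x)$ ; ce choix est permis par la finitude des $A_i$ (qui n'ont qu'un nombre fini de sous-foncteurs, tous de type fini). Grâce à la proposition~\ref{pr-taubarlin}, on dispose alors de la décomposition $\tb_x(k[A_i])\simeq\bigoplus_{\xi\in A_i(x)}k[A_i/(A_i)_\xi]$, et les facteurs d'indice $\xi$ avec $\lgr(A_i/(A_i)_\xi)<d$ appartiennent à $\F_{d-1}(\A;k)$ en vertu de la proposition~\ref{pr-Fdlin}. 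L'obstacle principal, que l'on prévoit comme la partie la plus technique, consiste à identifier $\mathrm{coker}\,D_i$ comme un sous-quotient d'une somme directe finie de tels facteurs ``de petite longueur''. Cette identification reposera sur les relations combinatoires entre les projections $k[A_i]\twoheadrightarrow k[A_i/T]$ (pour des $T\subset A_i$ de codimensions diverses) exprimées par le lemme~\ref{lm-decD-db}, combinées avec la caractérisation explicite de $\F_{d-1}(\A;k)$ par des décompositions de type $D_{d-1}$ donnée par la proposition~\ref{pr-Fd-tftcf} ; l'exactitude du foncteur $\tb_x$ (proposition~\ref{pr-taubarexact}) sur une catégorie $k$-triviale garantit la compatibilité de toute l'analyse avec la formation de conoyaux.
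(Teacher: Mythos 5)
Your strategy is genuinely different from the paper's, and its skeleton is sound as far as it goes: the factorization $\beta\circ D=\alpha$ is correct (for $g\in\mathrm{Hom}_d(A_i,B_j)$, $\ker g$ is the \emph{unique} $T\subset A_i$ with $T\subset\ker g$ and $\lgr(A_i/T)=d$, so exactly one term of the sum survives and reproduces $k[g]$), hence $F\subset F'$ inside $\bigoplus_j k[B_j]$, and $F'/F$ is indeed a quotient of $\mathrm{Coker}\,D=\bigoplus_i\mathrm{Coker}\,D_i$. The problem is that you stop exactly where the content of the lemma lies: the assertion $\mathrm{Coker}\,D_i\in\F_{d-1}(\A;k)$ is announced as « l'obstacle principal » and « la partie la plus technique », and the paragraph that should establish it only lists ingredients (lemme~\ref{lm-decD-db}, proposition~\ref{pr-Fd-tftcf}, exactitude de $\tb_x$) without combining them into an argument. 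This claim is not formal: $D_i$ is far from surjective in general (already for $A_i$ semi-simple of length $>d$ the target is much larger than the source, and $k[A_i]$ itself need not even lie in $\F_d(\A;k)$), so one must actually exhibit the cokernel, modulo $\F_{d-1}(\A;k)$, as built from linearizations of functors of length $<d$; nothing in your text does this. A further symptom is that the object $x$ you introduce plays no role whatsoever in your argument, whereas it is precisely the lever of the intended proof.

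For comparison, the paper avoids your cokernel entirely. It replaces $F$ by $\tb_x(F)$, which is legitimate in $\F_d(\A;k)/\F_{d-1}(\A;k)$ since $\db_x(F)\in\F_{d-1}(\A;k)$ (proposition~\ref{pr-Fd-tau}); by exactness of $\tb_x$ and proposition~\ref{pr-taubarlin}, $\tb_x(F)$ is the image of a map from $\bigoplus_{i,\xi}k[A_i/(A_i)_\xi]$ to $\bigoplus_{j,\zeta}k[B_j/(B_j)_\zeta]$; the lemme~\ref{lm-rg} shows the components with $\zeta\ne 0$ have rank $<d$ and hence die in the quotient category, which lands directly on a map $\bigoplus_{i,T}k[A_i/T]\to\bigoplus_jk[B_j]$; finally the lemme~\ref{lm-decD-db} allows one to discard the $T$ with $\lgr(A_i/T)<d$, their images being contained in the sum of the remaining ones. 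If you want to salvage your route you must prove $\mathrm{Coker}\,D_i\in\F_{d-1}(\A;k)$, and the natural way to do so is essentially to rerun this $\tb_x$-argument on $k[A_i]$; the detour through the diagonal map therefore does not appear to buy anything.
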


\begin{proof} On choisit $x\in\mathrm{Ob}\,\A$ tel que tout sous-foncteur d'un $A_i$ soit quotient de $\A(x,-)$ (un tel $x$ existe car les $A_i$ sont finis).

Par la proposition~\ref{pr-Fd-tau}, $F$ est isomorphe à $\tb_x(F)$ dans $\F_d(\A;k)/\F_{d-1}(\A;k)$. Par ailleurs, la proposition~\ref{pr-taubarlin} (et l'exactitude de $\tb_x$ --- cf. proposition~\ref{pr-taubarexact}) montrent que $\tb_x(F)$ est isomorphe à l'image de 
$$\underset{\xi\in A_i(x)}{\bigoplus_{i\in I}}k[A_i/(A_i)_\xi]\xrightarrow{(\Theta_{A_i/(A_i)_\xi,B_j/(B_j)_\zeta}(\Upsilon_{\xi,\zeta}(f_{i,j})))_{(i,\xi),(j,\zeta)}}\underset{\zeta\in B_j(x)}{\bigoplus_{j\in J}}k[B_j/(B_j)_\zeta].$$
Le lemme~\ref{lm-rg} et notre choix de $x$ montrent que $F$ est isomorphe dans $\F_d(\A;k)/\F_{d-1}(\A;k)$ à l'image de
$$\underset{T\subset A_i}{\bigoplus_{i\in I}}k[A_i/T]\xrightarrow{(\Theta_{A_i/T,B_j}(\Upsilon_{T,0}(f_{i,j})))_{i,T,j}}\bigoplus_{j\in J}k[B_j].$$
Il suffit donc de montrer que, pour tous $i\in I $ et $A'\subset A_i$, l'image de
$$\alpha_{A'} : k[A_i/A']\xrightarrow{(\Theta_{A_i/A',B_j}(\Upsilon_{T,0}(f_{i,j})))_j}\bigoplus_{j\in J}k[B_j]$$
est incluse dans la somme des images des morphismes $\alpha_T$ pour $T\subset A_i$ tel que $\lgr(A_i/t)=d$. Cela provient du lemme~\ref{lm-decD-db}, qui montre que
$$\alpha_{A'}=\underset{T\supset A'\,,\,\lgr(A/T)=d}{\sum_{T\subset A_i}}\Big(k[A_i/A']\twoheadrightarrow k[A_i/T]\xrightarrow{\alpha_T}\bigoplus_{j\in J}k[B_j]\Big)\;,$$
d'où la conclusion.
\end{proof}

\begin{lemm}\label{lm-ReducType} Soit $F$ un foncteur de $\F_d(\A;k)$ possédant une décomposition de type $D_d$ (resp. $E_d$). Alors $F$ est isomorphe dans $\F_d(\A;k)/\F_{d-1}(\A;k)$ à un foncteur possédant une décomposition de type $M_d$ (resp. $I_d$).
\end{lemm}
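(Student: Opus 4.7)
The plan is to apply Lemma~\ref{lm-regularis} directly and then inspect the nature of the morphisms that appear in the transformed decomposition.

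Suppose first that $F$ admits a decomposition of type $D_d$, written as
$$F\simeq\mathrm{Im}\,\bigoplus_{i\in I}k[A_i]\xrightarrow{(\Theta_{A_i,B_j}(f_{i,j}))_{(i,j)}}\bigoplus_{j\in J}k[B_j]$$
with $\rg(f)=d$ for every $f\in\mathrm{Hom}(A_i,B_j)$ contributing nontrivially to $f_{i,j}$. By Lemma~\ref{lm-regularis}, $F$ is isomorphic in $\F_d(\A;k)/\F_{d-1}(\A;k)$ to the image of
$$\underset{\lgr(A_i/T)=d}{\bigoplus_{i\in I,\,T\subset A_i}}k[A_i/T]\xrightarrow{(\Theta_{A_i/T,B_j}(\Upsilon_{T,0}(f_{i,j})))_{i,T,j}}\bigoplus_{j\in J}k[B_j].$$
The key observation is this: the morphisms $\bar{f}: A_i/T\to B_j$ appearing with a nonzero coefficient in $\Upsilon_{T,0}(f_{i,j})$ are exactly those induced by some $f:A_i\to B_j$ of rank $d$ satisfying $f(T)=0$, i.e.\ $T\subset\ker f$. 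Since $\lgr(A_i/T)=d=\rg(f)=\lgr(A_i/\ker f)$ and $T\subset\ker f$, we must have $T=\ker f$. Consequently $\bar{f}:A_i/T=A_i/\ker f\hookrightarrow B_j$ is a \emph{monomorphism}. This gives a decomposition of type $M_d$, settling the first case.

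Now assume that $F$ admits a decomposition of type $E_d$: the same setup holds but with the further assumption that every $f_{i,j}$-contributing morphism $f$ is an epimorphism. Applying the previous construction produces, for each such $f$, the morphism $\bar{f}:A_i/\ker f\to B_j$, which we have already shown is a monomorphism. But now $f$ being an epimorphism forces $\bar f$ to be surjective as well, hence an isomorphism. Thus the decomposition produced by Lemma~\ref{lm-regularis} is in fact of type $I_d$, which finishes the second case.

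The main (and essentially only) substantive point is the length-matching argument $T\subset\ker f$ together with $\lgr(A_i/T)=d=\lgr(A_i/\ker f)$ forcing $T=\ker f$; no further obstacle is anticipated, as the rest is formal bookkeeping within the quotient category $\F_d(\A;k)/\F_{d-1}(\A;k)$.
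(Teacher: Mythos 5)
Your proof is correct and follows essentially the same route as the paper's: apply Lemma~\ref{lm-regularis} and observe that, for $f$ of rank $d$ and $T$ with $\lgr(A_i/T)=d$, the condition $f(T)=0$ forces $T=\ker f$, so that only the induced monomorphisms $\bar f$ (isomorphisms when $f$ is epi) survive. The paper states this same length-matching observation more tersely, but the argument is identical.
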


\begin{proof} Pour $f\in\mathrm{Hom}_d(A,B)$ et $T\subset A$ tel que $\lgr(A/T)=d$, on a $\Upsilon_{T,0}([f])=[\bar{f}]$ si $\mathrm{Ker}\,f=T$, $0$ sinon, où $\bar{f} : A/T\to B$ désigne le \emph{monomorphisme} induit par $f$, monomorphisme qui est un isomorphisme si $f$ est un épimorphisme. La conclusion découle donc du lemme~\ref{lm-regularis}. 
\end{proof}

\begin{proof}[Démonstration du théorème~\ref{th-princ}]
On montre $HR(d)$ par récurrence sur $d$, on peut donc supposer $HR(i)$ établi pour $i<d$ (condition qui est vide si $d=0$). Cela entraîne que $\Phi_d$ est pleinement fidèle par la proposition~\ref{pr-Spf}.\ref{it-pf}.

Montrons que $\Phi_d$ est essentiellement surjectif : il s'agit de montrer que tout foncteur $F$ de $\F_d(\A;k)$ devient isomorphe dans $\F_d(\A;k)/\F_{d-1}(\A;k)$ à un objet de l'image essentielle de $\Phi_d$. Si $F$ possède une décomposition de type $I_d$, la conclusion est donnée par le lemme~\ref{lm-Id}. Le cas où $F$ possède une décomposition de type $E_d$ se ramène au précédent grâce au lemme~\ref{lm-ReducType}. Le cas où $F$ possède une décomposition de type $M_d$ se ramène au type $E_d$ (cf. corollaire~\ref{cor-FdAutoDual} --- $F$ possède une décomposition de type $M_d$ dans $\F(\A;k)$ si et seulement si $F^\vee$ possède une décomposition de type $E_d$ dans $\F(\A^\op;k)$). Le cas où $F$ possède une décomposition de type $D_d$ se ramène au cas où il possède une décomposition de type $M_d$ par le lemme~\ref{lm-ReducType}. Maintenant, le lemme~\ref{lm-typeD} montre que $F$ appartient à l'image essentielle de $\Phi_d$ si $F$ est de type fini et de type cofini. Dans le cas général, on écrit $F$ comme colimite d'une limite de foncteurs qui sont à la fois de type fini et de type cofini et l'on utilise la bicontinuité de $\Phi_d$ (proposition~\ref{pr-bicont}) et sa pleine fidélité, établie ci-avant, pour conclure.

Finalement, le fait que le foncteur $S_d$ soit isomorphe au foncteur section  $\F_d(\A;k)/\F_{d-1}(\A;k)\to\F_d(\A;k)$ résulte du corollaire~\ref{cor-Fd_ferme} et de \cite[chap.~III, §\,2, corollaire de la prop.~3]{Gabriel} ; le fait que $C_d$ soit isomorphe au foncteur co-section se vérifie de façon analogue.
\end{proof}

\section{Foncteurs simples et propriétés de finitude}

\subsection{Description des foncteurs simples de $\F_d(\A;k)$}

\begin{nota}
Si $\E$ est une catégorie de Grothendieck, on note $\irr(\E)$ l'ensemble des classes d'isomorphisme d'objets simples de $\E$. La classe d'un tel objet $S$ dans $\irr(\E)$ sera notée $[S]$. 

On note $G_0^f(\E)$ le groupe de Grothendieck des objets finis de $\E$, c'est-à-dire le quotient du groupe abélien libre sur les classes d'isomorphisme d'objets finis de $\E$ par le sous-groupe engendré par $[Y]+[Z]-[X]$ pour chaque suite exacte courte $0\to Y\to X\to Z\to 0$ de $\E$ avec $X$ fini ; la classe d'un objet fini $X$ de $\E$ dans $G_0^f(\E)$ sera notée $\{X\}$. Ainsi, $\mathbb{Z}[\irr(\E)]\to G_0^f(\E)\quad [S]\mapsto\{S\}$ définit un isomorphisme de groupes.

Si $R$ est un anneau, on note simplement $\irr(R)$ (resp. $G_0^f(R)$) pour $\irr(R\Md)$ (resp. $G_0^f(R\Md)$).
\end{nota}

Le résultat suivant constitue l'une des conséquences les plus importantes du théorème~\ref{th-princ}.

\begin{theo}\label{th-simp_Fd}
\begin{enumerate}
    \item Si $A$ est un foncteur fini de $\rep(\A)$ et $M$ un $k[\mathrm{Aut}(A)]$-module simple, alors $\Q(A,M)$ est un foncteur simple de $\F(\A;k)$.
    \item Pour tout $d\in\mathbb{N}$, on dispose d'une bijection
    $$\underset{A\in\LL_n(\A)}{\bigsqcup_{n\le d}}\irr(k[\mathrm{Aut}(A)])\xrightarrow{\simeq}\irr(\F_d(\A;k))\qquad (n,A,[M])\mapsto [\Q(A,M)].$$
    \item Supposons que tout fonteur de type fini et de type cofini de $\rep(\A)$ est fini (par exemple, que $\rep(\A)$ est localement finie). Alors on dispose d'une bijection
    $$\underset{A\in\LL_n(\A)}{\bigsqcup_{n\in\mathbb{N}}}\irr(k[\mathrm{Aut}(A)])\xrightarrow{\simeq}\irr(\F(\A;k))\qquad (n,A,[M])\mapsto [\Q(A,M)].$$
\end{enumerate}
\end{theo}

\begin{proof}
Pour tout $A\in\LL_d(\A)$, le foncteur
$$\Q(A,-) : k[\mathrm{Aut}(A)]\Md\to\F_d(\A;k)$$
est isomorphe à la composée du foncteur d'inclusion $$k[\mathrm{Aut}(A)]\Md\hookrightarrow\prod_{B\in\LL_d(\A)}k[\mathrm{Aut}(B)]\Md\;,$$
de l'équivalence $\prod_{B\in\LL_d(\A)}k[\mathrm{Aut}(B)]\Md\simeq\F_d(\A;k)/\F_{d-1}(\A;k)$ du théorème~\ref{th-princ} et du \emph{prolongement intermédiaire} \cite[§\,4]{Ku2}
$$\F_d(\A;k)/\F_{d-1}(\A;k)\to\F_d(\A;k)$$
(cela résulte de la description des foncteurs section et co-section dans le théorème~\ref{th-princ} ainsi que de la définition de $\Q(A,M)$). Il s'ensuit qu'on dispose d'une bijection $\irr(\F_{d-1}(\A;k))\sqcup\underset{A\in\LL_d(\A)}{\bigsqcup}\irr(k[\mathrm{Aut}(A)])\xrightarrow{\simeq}\irr(\F_d(\A;k))$ dont la restriction à $\irr(\F_{d-1}(\A;k))$ est induite par l'inclusion $\F_{d-1}(\A;k)\to\F_d(\A;k)$ et dont la restriction à $\irr(k[\mathrm{Aut}(A)])$ est donnée par $[M]\mapsto [\Q(A,M)]$. On en déduit aussitôt la deuxième assertion par récurrence sur $d$. La première assertion en découle également puisque l'inclusion $\F_d(\A;k)\to\F(\A;k)$ préserve les objets simples.

La dernière assertion résulte de la deuxième et du corollaire~\ref{cor-tous_tftcf_ds_Fd}.
\end{proof}

\begin{rema}\label{rq-param_simples}
Si les idempotents se scindent dans $\A$, on dispose également, de façon élémentaire et classique (cf. par exemple \cite[prop.~1.15]{DTV}), d'une bijection explicite (également donnée par des prolongements intermédiaires) $$\bigsqcup\irr(k[\mathrm{Aut}_\A(a)])\xrightarrow{\simeq}\irr(\F(\A;k))\;,$$
où la réunion est prise sur un ensemble complet de représentants des classes d'isomorphisme d'objets $a$ de $\A$.

Lorsque tout foncteur de type fini et de type cofini de $\rep(\A)$ est fini et que les idempotents de $\A$ se scindent --- par exemple, lorsque $\A=\mathbf{P}(R)$ pour un anneau $k$-trivial $R$ --- on dispose donc de \emph{deux} paramétrisations différentes de $\irr(\F(\A;k))$ comme réunion disjointe d'ensembles de la forme $\irr(k[G])$ pour des groupes finis $G$. Ces paramétrisations ne sont pas équivalentes (sauf lorsque $\rep(\A)$ est semi-simple, comme on peut le déduire de \cite{Ku-adv}), et il semble difficile de comprendre la correspondance entre les deux (ce qui constitue un problème de représentations de groupes finis), bien qu'elles soient explicites.
\end{rema}

\begin{rema}\label{rq-simpathol2} S'il existe dans $\rep(\A)$ un foncteur de type fini et de type cofini mais pas fini, alors il existe dans $\F(\A;k)$ des foncteurs simples qui n'entrent pas dans la classification précédente (cf. remarque~\ref{rq-simpathol}). La classification de tels foncteurs simples semble hors d'atteinte.
\end{rema}

On dispose d'une autre base importante des groupes abéliens libres $G_0^f(\F_d(\A;k))$ que celle constituée des classes des $\Q(A,M)$ :

\begin{prop}\label{pr-G0Fd}
Pour tout $d\in\mathbb{N}$, le morphisme de groupes abéliens
$$\underset{A\in\LL_n(\A)}{\bigoplus_{n\le d}}G_0^f(k[\mathrm{Aut}(A)])\to G_0^f(\F_d(\A;k))$$
dont la composante $G_0^f(k[\mathrm{Aut}(A)])\to G_0^f(\F_d(\A;k))$ est $\{M\}\mapsto\{\Q_{A,M}\}$ est un isomorphisme.
\end{prop}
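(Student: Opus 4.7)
The plan is to argue by induction on $d$ using the exact sequence of Grothendieck groups associated with the bilocalization $\F_{d-1}(\A;k)\subset\F_d(\A;k)$ from Proposition~\ref{pr-Fdbiloc}.

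First I would settle well-definedness. Since $\Q_{A,-}$ is exact (Proposition~\ref{pr-Qexact}), it induces a group homomorphism on Grothendieck groups of finite objects, provided $\Q_{A,M}$ has finite length whenever $M$ does. As every finite $M$ is a quotient of $k[\mathrm{Aut}(A)]^{\oplus r}$, this reduces to showing that $\Q_A$ is finite for every finite $A$, which I would prove by induction on $\lgr(A)$ via Proposition~\ref{pr-QA_copres}: that proposition presents $\Q_A$ as an extension of a subobject of the finite product $\prod_{0\ne N\subset\rad(A)}\Q_{A/N}$ (finite by the induction hypothesis, since $\lgr(A/N)<\lgr(A)$) by $\Q(A)$, itself finite by exactness of $\Q(A,-)$ (Corollary~\ref{cor-QAMequiv}) applied to the finite-length module $k[\mathrm{Aut}(A)]$, combined with the simplicity of $\Q(A,S)$ for $S$ simple (Theorem~\ref{th-simp_Fd}). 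Corollary~\ref{cor-QAM-Fd} then guarantees that $\{\Q_{A,M}\}$ lies in the appropriate subcategory $\F_n(\A;k)\subset\F_d(\A;k)$.

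For the inductive step, I would compare the obvious split-exact sequence
\[
0\to\bigoplus_{n<d}\bigoplus_{A\in\LL_n(\A)}G_0^f(k[\mathrm{Aut}(A)])\to\bigoplus_{n\le d}\bigoplus_{A\in\LL_n(\A)}G_0^f(k[\mathrm{Aut}(A)])\to\bigoplus_{A\in\LL_d(\A)}G_0^f(k[\mathrm{Aut}(A)])\to 0
\]
with the sequence
\[
0\to G_0^f(\F_{d-1}(\A;k))\to G_0^f(\F_d(\A;k))\xrightarrow{\pi_d^*}G_0^f(\F_d(\A;k)/\F_{d-1}(\A;k))\to 0,
\]
whose exactness flows from the classification of simples in Theorem~\ref{th-simp_Fd}: $G_0^f(\F_d(\A;k))$ is free abelian on the classes $[\Q(A,N)]$ for $A\in\LL_n(\A)$ with $n\le d$ and $[N]\in\irr(k[\mathrm{Aut}(A)])$; those with $\lgr(A)<d$ span the image of $G_0^f(\F_{d-1}(\A;k))$, while those with $\lgr(A)=d$ map under $\pi_d^*$ onto a basis of the rightmost term identified via Theorem~\ref{th-princ}. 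The leftmost vertical comparison arrow is an isomorphism by the induction hypothesis (the base case $d=0$ is immediate, $\LL_0(\A)=\{0\}$ and $\Q_{0,k}$ is the constant functor $k$); the rightmost vertical comparison arrow is an isomorphism because under the equivalence of Theorem~\ref{th-princ}, $\pi_d(\Q_{A,M})$ corresponds to $M$ in the $A$-th component (since $\pi_d\circ S_d\simeq\mathrm{Id}$), so this arrow is essentially the identity on each summand. Commutativity of both squares is clear from the definitions, and the five lemma then forces the middle vertical arrow to be an isomorphism.

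The main technical obstacle is the finiteness of $\Q_A$ for $A$ finite, which is essentially the content of Proposition~\ref{pr-lin_fini}; once this is secured, the result reduces to a formal comparison of bases already furnished by Theorems~\ref{th-princ} and~\ref{th-simp_Fd}.
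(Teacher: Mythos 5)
Your proof is correct and follows, in detail, the second of the two routes the paper's own (two-line) proof indicates, namely induction on $d$ from Theorem~\ref{th-princ}, supplemented by the classification of simples in Theorem~\ref{th-simp_Fd} to identify $G_0^f$ with $\mathbb{Z}[\irr(-)]$ and obtain the localization sequence. Your preliminary finiteness argument for $\Q_{A,M}$ (via Proposition~\ref{pr-QA_copres} and Corollary~\ref{cor-QAMequiv}) is exactly the induction carried out later in the paper's proof of Proposition~\ref{pr-lin_fini}, and is non-circular here; the only blemish is the parenthetical ``$\pi_d\circ S_d\simeq\mathrm{Id}$'', which should read ``$\pi_d\circ S_d=\Phi_d$ is the equivalence of Theorem~\ref{th-princ}''.
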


\begin{proof}
Cette proposition se déduit soit du théorème~\ref{th-simp_Fd} combiné à la proposition~\ref{pr-resol_QA} et ses corollaires, soit directement du théorème~\ref{th-princ} par récurrence sur $d$.
\end{proof}

\subsection{Finitude locale de $\F(\A;k)$ et de $\F_d(\A;k)$}\label{sfl}

\begin{prop}\label{pr-lin_fini} Soit $A$ un fonteur de $\rep(\A)$. Les assertions suivantes sont équivalentes :
\begin{enumerate}
    \item $A$ est un foncteur fini de $\rep(\A)$ ;
    \item $k[A]$ est un foncteur fini de $\F(\A;k)$.
\end{enumerate}

De plus, lorsqu'elles sont vérifiées, les facteurs de composition de $k[A]$ sont exactement les $\Q(B,M)$, où $B$ est un sous-quotient de $A$ et $M$ un $k[\mathrm{Aut}(B)]$-module simple.
\end{prop}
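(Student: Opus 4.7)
The plan is to handle (2) $\Rightarrow$ (1) by an elementary monomorphism argument and to treat (1) $\Rightarrow$ (2), together with the composition factor description, by induction on $d = \lgr(A)$, using three exact sequences already at our disposal.

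For (2) $\Rightarrow$ (1), I would proceed by contraposition: if $A$ is not of finite length, it admits an infinite strictly ascending or descending chain of subfunctors. Since $k[-] : \rep(\A) \to \F(\A;k)$ preserves monomorphisms and turns a strict inclusion $B \subsetneq A$ into a strict inclusion $k[B] \subsetneq k[A]$ (witnessed at any object $a$ where $B(a) \subsetneq A(a)$), this yields an infinite chain of subfunctors of $k[A]$, contradicting finiteness.

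For (1) $\Rightarrow$ (2) the base case $d = 0$ is immediate, since then $k[A] = k$ is constant and simple. Assuming all $k[B]$ with $\lgr(B) < d$ are finite and taking $\lgr(A) = d$, the finiteness of $A$ combined with (FH) ensures, via Proposition~\ref{sousobjets-nbfini}, that $A$ has only finitely many subfunctors. I then reduce the finiteness of $k[A]$ to that of $\Q(A)$ in two steps. First, Proposition~\ref{pr-ressimpl} exhibits the kernel of $k[A] \twoheadrightarrow \Q_A$ as a quotient of the \emph{finite} direct sum $\bigoplus_{B \subsetneq A} k[B]$, which is finite by induction since each $\lgr(B) < d$. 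Second, Proposition~\ref{pr-QA_copres} gives an exact sequence
$$0 \to \Q(A) \to \Q_A \to \prod_{0 \ne N \subset \rad(A)} \Q_{A/N}$$
in which the right-hand product is finite and each $\Q_{A/N}$ is a quotient of $k[A/N]$, finite by induction since $\lgr(A/N) < d$. Finally, the exact sequence~\eqref{eq-seQ1} from Proposition~\ref{pr-resol_QA} embeds $\Q(A)$ into the finite direct sum $\bigoplus_{N \in \Qse_0(\rad(A))} \Q_{A/N_0}$, finite by the same argument; thus $\Q(A)$ is finite, $\Q_A$ is finite as an extension of a subfunctor of the product by $\Q(A)$, and $k[A]$ is finite as an extension of $\Q_A$ by the finite kernel.

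For the composition factor statement I would track simples through these three exact sequences. The crucial input is that the functor $\Q(A,-) : k[\mathrm{Aut}(A)]\Md \to \F(\A;k)$ is exact (Corollary~\ref{cor-QAMequiv}) and sends simple modules to simple functors (Theorem~\ref{th-simp_Fd}), so the composition factors of $\Q(A) = \Q(A, k[\mathrm{Aut}(A)])$ are exactly the $\Q(A,M)$ with $M$ a simple composition factor of the regular representation, i.e.\ with $M$ an arbitrary simple $k[\mathrm{Aut}(A)]$-module. All remaining composition factors of $k[A]$ come from terms involving $k[B]$ with $B \subsetneq A$ or $\Q_{A/N}$ with $N \ne 0$, and by induction these are of the form $\Q(B',M')$ with $B'$ a subquotient of $B$ or of $A/N$, hence of $A$. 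Conversely, any subquotient $B = A''/A'$ of $A$ yields $k[B]$ as a quotient of $k[A''] \hookrightarrow k[A]$, and the same analysis applied to $B$ shows that $\Q(B,M)$ appears as a composition factor of $k[B]$ for every simple $M$, whence in $k[A]$. The main obstacle is the second reduction: neither $\Q_A$ nor $\Q(A)$ is obviously finite, and only the combination of Propositions~\ref{pr-QA_copres} and~\ref{pr-resol_QA}—which control them via $\Q_{A/N}$ for nonzero $N$, strictly decreasing the length—makes the induction close.
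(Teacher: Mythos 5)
Your overall strategy coincides with the paper's: the direction $(2)\Rightarrow(1)$ via preservation of strict subobjects, the induction on $\lgr(A)$, the use of the defining presentation of $\Q_A$ and of Proposition~\ref{pr-QA_copres}, and the composition-factor bookkeeping are all as in the paper's proof. There is, however, one genuine gap: the finiteness of $\Q(A)$. The sequence~\eqref{eq-seQ1} does not embed $\Q(A)$ into $\bigoplus_{N\in\Qse_0(\rad(A))}\Q_{A/N_0}$. As the proof of Proposition~\ref{pr-resol_QA} indicates, its beginning is supplied by Proposition~\ref{pr-QA_copres}, so the term immediately following $\Q(A)$ is $\Q_A$ (a $\Q_A$ has been omitted from the displayed formula); read literally, with $\Qse_0(\rad(A))$ consisting of the \emph{nonzero} subfunctors of $\rad(A)$, the displayed sequence would force $\Q(A)=0$ whenever $\rad(A)=0$ --- for instance for $A$ simple, where $\Q(A)\simeq k[A]^{\red}\neq 0$. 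With the corrected sequence, your step only yields the tautological inclusion $\Q(A)\hookrightarrow\Q_A$ (the composite $\Q(A)\to\bigoplus_{N\in\Qse_0(\rad(A))}\Q_{A/N_0}$ being zero), so the induction does not close at this point.

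The repair is precisely the argument you already deploy for the composition factors, and it is what the paper does: $\mathrm{Aut}(A)$ is a finite group (by (FH), since $A$ is of finite type), so the regular representation $k[\mathrm{Aut}(A)]$ has a finite composition series; applying the exact functor $\Q(A,-)$ of Corollary~\ref{cor-QAMequiv} and invoking Theorem~\ref{th-simp_Fd} (each $\Q(A,M)$ with $M$ simple is a simple functor) produces a finite filtration of $\Q(A)=\Q(A,k[\mathrm{Aut}(A)])$ with simple subquotients the $\Q(A,M)$. This gives at once the finiteness of $\Q(A)$ and the exact list of its composition factors; with this substitution the remainder of your proof goes through.
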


\begin{proof}
Comme le foncteur de linéarisation $\rep(\A)\to\F(\A;k)$ préserve les sous-objets stricts, il est clair que $A$ est nécessairement fini si $k[A]$ l'est. De plus, si $B$ est un sous-quotient de $A$ et $M$ un $k[\mathrm{Aut}(B)]$-module simple, alors $k[B]$, donc $\Q_B$, puis $\Q(B)$ et $\Q(B,M)$ sont des sous-quotients de $k[A]$.

On montre maintenant par récurrence sur l'entier $d$ que si $A$ est fini de longueur au plus $d$, alors $k[A]$ est fini et a ses facteurs de composition parmi les $\Q(B,M)$, où $B$ est un sous-quotient de $A$ et $M$ un $k[\mathrm{Aut}(B)]$-module simple.

Tout d'abord, le corollaire~\ref{cor-QAMequiv} montre que $\Q(A)$ est fini et a pour facteurs de composition les $\Q(A,M)$, où $M$ parcourt les $k[\mathrm{Aut}(A)]$-modules simples. La proposition~\ref{pr-QA_copres} et l'hypothèse de récurrence permettent d'en déduire que $\Q_A$ est fini et a ses facteurs de composition parmi les $\Q(B,M)$, où $B$ est un sous-quotient de $A$ et $M$ un $k[\mathrm{Aut}(B)]$-module simple.

On conclut alors en utilisant de nouveau l'hypothèse de récurrence et la suite exacte
$$\bigoplus_{B\subsetneq A} k[B]\to k[A]\to\Q_A\to 0$$
qui définit $\Q_A$.
\end{proof}

\begin{rema}
Ce résultat contraste grandement avec la situation d'égale caractéristique, c'est-à-dire celle où les groupes abéliens $\A(x,y)$ sont des $p$-groupes finis, où $p$ est la caractéristique de $k$. Il est alors facile de voir que, si $A$ est un foncteur fini de $\rep(\A)$, le foncteur $k[A]$ de $\F(\A;k)$ est de dimension de Krull au moins $\lgr(A)$, en particulier, il n'est fini que si $A$ est nul.
\end{rema}

\begin{coro}\label{cor-lf}
La catégorie $\F(\A;k)$ est localement finie si et seulement si $\rep(\A)$ est localement finie.
\end{coro}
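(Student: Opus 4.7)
Le plan est de ramener cet énoncé à la proposition \ref{pr-lin_fini} en exploitant le fait que $\rep(\A)$ et $\F(\A;k)$ sont engendrées par des familles de foncteurs de type fini, à savoir les représentables $\{\A(a,-)\}_{a\in\mathrm{Ob}\,\A}$ et leurs linéarisés $\{k[\A(a,-)]\}_{a\in\mathrm{Ob}\,\A}$. J'utiliserais aussi le fait général qu'une catégorie de Grothendieck localement de type fini est localement finie si et seulement si tout objet de type fini y est de longueur finie, ou encore si et seulement si chacun des générateurs de type fini d'une famille génératrice est de longueur finie.

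Pour le sens direct, je partirais de l'hypothèse que $\rep(\A)$ est localement finie : alors chaque représentable $\A(a,-)$, qui est de type fini, est de longueur finie. La proposition \ref{pr-lin_fini} fournit alors la finitude de chaque $k[\A(a,-)]$ dans $\F(\A;k)$, et comme cette famille engendre $\F(\A;k)$, cette catégorie sera engendrée par des objets de longueur finie, donc localement finie.

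Pour la réciproque, l'observation cruciale sera que $k[\A(a,-)]$ est \emph{cyclique} dans $\F(\A;k)$ : le lemme de Yoneda montre en effet qu'il est engendré par l'élément $[\mathrm{id}_a] \in k[\A(a,a)] = k[\A(a,-)](a)$. Sous l'hypothèse que $\F(\A;k)$ est localement finie, $k[\A(a,-)]$ --- étant de type fini --- sera donc de longueur finie, et la proposition \ref{pr-lin_fini} entraînera alors la finitude de $\A(a,-)$ dans $\rep(\A)$. Comme les représentables engendrent $\rep(\A)$, on conclura que cette catégorie est localement finie.

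La seule véritable difficulté a déjà été surmontée dans la proposition \ref{pr-lin_fini}, dont la démonstration requiert notamment le corollaire \ref{cor-QAMequiv} et les développements de la section \ref{sect-Qprolint} ; le corollaire \ref{cor-lf} s'en déduit alors de façon essentiellement formelle à partir de la compatibilité du foncteur de linéarisation aux familles de générateurs.
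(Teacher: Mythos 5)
Votre démonstration est correcte et suit essentiellement la même voie que celle de l'article : ramener la finitude locale à la finitude des générateurs représentables $\A(a,-)$ et $k[\A(a,-)]$ (qui sont de type fini), puis appliquer la proposition~\ref{pr-lin_fini} dans les deux sens. Les détails supplémentaires que vous explicitez (cyclicité de $k[\A(a,-)]$, équivalence entre finitude locale et longueur finie des générateurs de type fini) sont exacts et implicites dans la preuve de l'article.
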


\begin{proof}
La catégorie $\F(\A;k)$ (resp. $\rep(\A)$) est localement finie si et seulement si le foncteur $k[\A(x,-)]$ (resp. $\A(x,-)$) est fini pour tout $x\in\mathrm{Ob}\,\A$. La conclusion découle donc de la proposition~\ref{pr-lin_fini}.
\end{proof}

\begin{rema}\label{rq-SSn} En particulier, si $R$ est un anneau $k$-trivial, alors la catégorie $\F(R,k)$ est localement finie. Ce résultat est aussi obtenu dans \cite[pr.~11.7]{DTV} comme conséquence (facile) du corollaire~\ref{cor-dual_lin} et d'un théorème (difficile) de Putman-Sam-Snowden \cite{PSam,SamSn} affirmant que $\F(A,k)$ est localement noethérienne pour tout anneau fini $A$ (non nécessairement $k$-trivial).

On notera également qu'il est possible de déduire le corollaire~\ref{cor-lf} du cas particulier précédent à l'aide de résultats classiques d'Auslander sur les foncteurs finis de $\rep(\A)$.
\end{rema}

Pour donner d'autres conséquences de la proposition~\ref{pr-lin_fini}, nous aurons besoin du résultat suivant, qui constitue une amélioration du corollaire~\ref{cor-Fd-eq} :

\begin{prop}\label{pr-Fd-eng_loc} Pour tout $d\in\mathbb{N}$, $\F_d(\A;k)$ est la plus petite sous-catégorie localisante de $\F(\A;k)$ contenant $k[A]$ pour $A$ dans $\rep(\A)$ tel que $\lgr(A)\le d$.
\end{prop}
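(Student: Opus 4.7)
Je raisonnerai par récurrence sur $d$, en remplaçant l'argument du corollaire~\ref{cor-Fd-eq} (qui utilise des produits arbitraires pour passer des foncteurs de type cofini à tous les foncteurs de $\F_d(\A;k)$) par l'usage du foncteur co-section fourni par le théorème~\ref{th-princ}. Le cas $d = -1$ étant trivial, je supposerai le résultat établi au rang $d-1$ et noterai $\E$ la plus petite sous-catégorie localisante de $\F(\A;k)$ contenant les $k[A]$ avec $\lgr(A) \le d$. Les propositions~\ref{pr-Fdlin} et~\ref{pr-Fdbiloc} donnent $\E \subseteq \F_d(\A;k)$, et l'hypothèse de récurrence entraîne $\F_{d-1}(\A;k) \subseteq \E$.

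La première étape consistera à établir que $\Q^{A, M}$ appartient à $\E$ pour tout foncteur fini $A$ de longueur au plus $d$ et tout $k[\mathrm{Aut}(A)]$-module $M$. Comme $\A$ est $k$-triviale, l'ordre du groupe fini $\mathrm{Aut}(A)$ est inversible dans $k$, de sorte que la $k$-algèbre $k[\mathrm{Aut}(A)]$ est semi-simple et que le foncteur $M \mapsto M \otimes_{k[\mathrm{Aut}(A)]} k[A]$ est exact et cocontinu. Pour $M$ de type fini sur $k[\mathrm{Aut}(A)]$, c'est-à-dire un quotient de $k[\mathrm{Aut}(A)]^n$, le foncteur $M \otimes_{k[\mathrm{Aut}(A)]} k[A]$ sera un quotient de $k[A]^n$, donc dans $\E$ ; j'obtiendrai le cas général en écrivant $M$ comme colimite filtrante de ses sous-modules de type fini et en invoquant la cocontinuité. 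Le monomorphisme naturel $\Q^{A, M} \hookrightarrow M \otimes_{k[\mathrm{Aut}(A)]} k[A]$ du corollaire~\ref{cor-QAMdual} et la stabilité de $\E$ par sous-objets permettront alors de conclure.

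Pour la dernière étape, je prendrai $F$ quelconque dans $\F_d(\A;k)$ et je noterai $(M_A)_{A \in \LL_d(\A)}$ la famille de $k[\mathrm{Aut}(A)]$-modules correspondant à $\pi_d(F)$ via l'équivalence $\Phi_d$ du théorème~\ref{th-princ}. La counité de l'adjonction donnée par la co-section (adjoint à gauche de $\pi_d$) fournira un morphisme
$$\bigoplus_{A \in \LL_d(\A)} \Q^{A, M_A} \to F$$
dont le conoyau appartient à $\F_{d-1}(\A;k)$, puisque son image par $\pi_d$ est un isomorphisme ($\pi_d$ étant exact et $\pi_d \circ C_d \simeq \Phi_d$ par la proposition~\ref{pr-bicont}). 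L'image $F''$ de ce morphisme est un quotient de $\bigoplus_A \Q^{A, M_A}$, lequel est dans $\E$ par la première étape et par la stabilité de $\E$ par sommes directes arbitraires. La suite exacte $0 \to F'' \to F \to F/F'' \to 0$ avec $F'' \in \E$ et $F/F'' \in \F_{d-1}(\A;k) \subseteq \E$ donnera alors $F \in \E$ par stabilité par extensions. La difficulté principale est conceptuelle : reconnaître que l'usage de la co-section (adjoint à gauche) plutôt que de la section (adjoint à droite) est crucial, car cette dernière donnerait une unité $F \to \prod_A \Q_{A, M_A}$ vers un produit qui n'est généralement pas une somme directe, et sortirait donc du cadre des sous-catégories localisantes.
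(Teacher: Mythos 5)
Votre démonstration est correcte et suit pour l'essentiel la même stratégie que celle de l'article : récurrence sur $d$, appartenance des valeurs du foncteur $C_d$ à la sous-catégorie localisante engendrée par les $k[A]$ (via le monomorphisme $\Q^{A,M}\hookrightarrow M\underset{k[\mathrm{Aut}(A)]}{\otimes}k[A]$ et la cocontinuité), puis utilisation de la suite exacte $C_d(\pi_d(F))\to F\to G\to 0$ avec $G\in\F_{d-1}(\A;k)$ fournie par le foncteur co-section du théorème~\ref{th-princ}. Une affirmation intermédiaire est toutefois fausse : l'ordre du groupe fini $\mathrm{Aut}(A)$ n'est \emph{pas} inversible dans $k$ en général, et $k[\mathrm{Aut}(A)]$ n'est donc pas semi-simple (prendre $A$ simple avec $\mathrm{End}(A)\simeq\FF_4$ et $k$ de caractéristique $3$ : $\mathrm{Aut}(A)\simeq\FF_4^\times$ est d'ordre $3$) ; c'est d'ailleurs précisément cette non-semi-simplicité qui rend non triviaux le corollaire~\ref{cor-QAMequiv} et la distinction entre $\Q_{A,M}$, $\Q^{A,M}$ et $\Q(A,M)$. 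Cette erreur est heureusement sans conséquence pour votre argument : tout $k[\mathrm{Aut}(A)]$-module étant quotient d'un module libre (éventuellement de rang infini), l'exactitude à droite et la cocontinuité du produit tensoriel suffisent à montrer que $M\underset{k[\mathrm{Aut}(A)]}{\otimes}k[A]$ est quotient d'une somme directe de copies de $k[A]$, donc appartient à votre sous-catégorie $\E$, et le reste de la démonstration s'enchaîne comme vous l'indiquez.
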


\begin{proof} Soit $\C_d$ la plus petite sous-catégorie localisante de $\F(\A;k)$ contenant $k[A]$ pour $\lgr(A)\le d$. On a clairement $\C_d\subset\F_d(\A;k)$. On montre l'inclusion inverse par récurrence sur $d$.

Le foncteur $C_d$ prend ses valeurs dans $\C_d$, puisque $\Q^A$ est un sous-foncteur de $k[A]$ et que le foncteur $M\mapsto\Q^{A,M}$ est cocontinu (par les propositions~\ref{pr-Qexact} et~\ref{pr-dQAM}). Le théorème~\ref{th-princ} fournit pour tout foncteur $F$ de $\F_d(\A;k)$ une suite exacte $C_d(\pi_d(F))\to F\to G\to 0$ avec $G$ dans $\F_{d-1}(\A;k)=\C_{d-1}\subset\C_d$ par l'hypothèse de récurrence, d'où $F\in\C_d$, ce qui termine la démonstration.
\end{proof}

\begin{coro}\label{cor-Kdim} Pour tout $d\in\mathbb{N}$, la catégorie de Grothendieck $\F_d(\A;k)$ est de dimension de Krull-Gabriel nulle, c'est-à-dire qu'elle est égale à sa plus petite sous-catégorie localisante contenant tous ses objets simples.
\end{coro}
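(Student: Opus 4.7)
The plan is to combine the generating result of Proposition~\ref{pr-Fd-eng_loc} with the finiteness of linearizations established in Proposition~\ref{pr-lin_fini}, in order to reduce the generation of $\F_d(\A;k)$ as a localizing subcategory from the family of functors $k[A]$ (with $\lgr(A)\le d$) to the family of simple objects of $\F_d(\A;k)$.

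More precisely, let $\mathcal{D}_d$ denote the smallest localizing subcategory of $\F(\A;k)$ containing every simple object of $\F_d(\A;k)$. By the classification of simple objects in $\F_d(\A;k)$ given by Theorem~\ref{th-simp_Fd}, $\mathcal{D}_d$ contains $\Q(B,M)$ for every finite functor $B\in\rep(\A)$ with $\lgr(B)\le d$ and every simple $k[\mathrm{Aut}(B)]$-module $M$. Since any Serre (in particular localizing) subcategory is stable under extensions of finite length, Proposition~\ref{pr-lin_fini} shows that for any finite functor $A\in\rep(\A)$ with $\lgr(A)\le d$, the functor $k[A]$ belongs to $\mathcal{D}_d$: indeed, $k[A]$ is a finite object of $\F(\A;k)$ whose composition factors are all of the form $\Q(B,M)$ with $B$ a subquotient of $A$ (hence of length at most $d$) and $M$ a simple $k[\mathrm{Aut}(B)]$-module, and each such composition factor belongs to $\mathcal{D}_d$ by the previous observation.

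Thus $\mathcal{D}_d$ is a localizing subcategory of $\F(\A;k)$ containing $k[A]$ for every $A\in\rep(\A)$ with $\lgr(A)\le d$. By Proposition~\ref{pr-Fd-eng_loc}, this forces $\F_d(\A;k)\subset\mathcal{D}_d$. The reverse inclusion is immediate since every simple object of $\F_d(\A;k)$ lies in $\F_d(\A;k)$, which is itself localizing (Proposition~\ref{pr-Fdbiloc}). Hence $\mathcal{D}_d=\F_d(\A;k)$, which is exactly the assertion that $\F_d(\A;k)$ has Krull–Gabriel dimension zero.

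No serious obstacle arises: the work has already been done, and the corollary is a direct bookkeeping consequence of Propositions~\ref{pr-lin_fini} and~\ref{pr-Fd-eng_loc} together with Theorem~\ref{th-simp_Fd}. The only minor point to verify is the stability of localizing subcategories under finite composition series, which follows from their definition (closure under subobjects, quotients and extensions, in particular finite extensions).
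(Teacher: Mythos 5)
Votre démonstration est correcte et suit exactement la même stratégie que celle de l'article, dont la preuve se réduit à \guillemotleft~combiner les propositions~\ref{pr-Fd-eng_loc} et~\ref{pr-lin_fini}~\guillemotright\ : vous explicitez simplement ce que cette combinaison signifie (les facteurs de composition de $k[A]$ sont des simples de $\F_d(\A;k)$, donc $k[A]$ appartient à la sous-catégorie localisante engendrée par ceux-ci, et la proposition~\ref{pr-Fd-eng_loc} conclut). Le recours au théorème~\ref{th-simp_Fd} n'est même pas indispensable, la finitude de $k[A]$ et la stabilité de $\F_d(\A;k)$ par sous-quotients suffisant déjà à placer ses facteurs de composition parmi les simples de $\F_d(\A;k)$.
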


\begin{proof}
Combiner les propositions~\ref{pr-Fd-eng_loc} et~\ref{pr-lin_fini}.
\end{proof}
\begin{prop}\label{pr-Fd_lf} Soit $d\in\mathbb{N}$.
\begin{enumerate}
\item Si $d\le 1$, alors la catégorie $\F_d(\A;k)$ est localement finie.
\item Si $d\ge 2$, les assertions suivantes sont équivalentes :
\begin{enumerate}
    \item[(a)] la catégorie $\F_d(\A;k)$ est localement finie ;
    \item[(b)] la catégorie $\F_d(\A;k)$ est localement noethérienne ;
    \item[(c)] la catégorie $\F_{d-1}(\A;k)$ est localement noethérienne et le foncteur $S_d$ est bicontinu ;
    \item[(d)] la catégorie $\F_{d-1}(\A;k)$ est localement noethérienne et le foncteur $\pi_d$ préserve les objets de type fini ;
    \item[(e)] la condition $(\mathrm{FE}(\A))$ (cf. notation~\ref{not-FE}) est vérifiée.
\end{enumerate}
\end{enumerate}
\end{prop}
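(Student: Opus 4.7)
I will proceed by induction on $d$, using the structure theorem~\ref{th-princ} (which provides the adjunction $\pi_d\dashv S_d$ between $\F_d(\A;k)$ and $\prod_{A\in\LL_d(\A)}k[\mathrm{Aut}(A)]\Md$) together with the bicontinuity criterion of Proposition~\ref{pr-bicont} ($S_d$ is bicontinuous iff $d\le 1$ or $(\mathrm{FE}(\A))$ holds). In part (1), the cases $d=-1,0$ are immediate ($\F_{-1}=0$, $\F_0\simeq k\Md$), and $d=1$ serves as the base case for the main induction. In part (2), the implication (a)$\Rightarrow$(b) is trivial; (c)$\Leftrightarrow$(d) follows from the adjunction principle that a left adjoint preserves finite generation iff its right adjoint preserves filtered colimits, combined with the continuity of $S_d$; (c)$\Leftrightarrow$(e)$\,+\,$``$\F_{d-1}$ locally noetherian'' is Proposition~\ref{pr-bicont} for $d\ge 2$; and (b)$\Rightarrow$(c) uses that the bilocalizing subcategory $\F_{d-1}$ (Proposition~\ref{pr-Fdbiloc}) inherits local noetherianity from $\F_d$, while in a locally noetherian Grothendieck category the quotient functor automatically preserves finite generation.

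The core implication is (e)$\Rightarrow$(a), which I prove by induction on $d$ with base case (1). Assuming (e), Proposition~\ref{pr-bicont} gives $S_d$ bicontinuous; since the hypothesis is independent of $d$, induction yields $\F_{d-1}$ locally finite (hence locally noetherian). For $F\in\F_d(\A;k)$ finitely generated, I choose a surjection $L_dk[\A(a,-)]\twoheadrightarrow F$ using the reflector $L_d:\F(\A;k)\to\F_d(\A;k)$, and compute by Yoneda and adjunction
\[
\mathrm{Hom}_{\F_d/\F_{d-1}}\bigl(\pi_dL_dk[\A(a,-)],(M_A)\bigr)\simeq(S_d(M_A))(a)=\bigoplus_{A\in\LL_d(\A)}\Q_{A,M_A}(a),
\]
where Proposition~\ref{pr-bicont} ensures that under (e) only finitely many $A\in\LL_d(\A)$ contribute a nonzero term. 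Hence $\pi_dF$ has finite support with each component a finitely generated (therefore finite) $k[\mathrm{Aut}(A)]$-module, so $S_d\pi_dF$ is finite in $\F_d(\A;k)$; the cokernel of $\eta_F:F\to S_d\pi_dF$, being a quotient of $S_d\pi_dF$, is likewise finite, and similarly the image of $\eta_F$ (a subobject of the finite $S_d\pi_dF$) is finite.

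\textbf{The main obstacle} is to show that $\ker(\eta_F)\in\F_{d-1}(\A;k)$ is finitely generated, so that the exact sequence $0\to\ker(\eta_F)\to F\to F/\ker(\eta_F)\to 0$ has finite outer terms and forces $F$ to be finite. I expect to address this by applying the snake lemma to the map of short exact sequences obtained from the presentation $0\to K\to L_dk[\A(a,-)]\to F\to 0$ and its image under the exact functor $S_d\pi_d$, using the inductive local noetherianity of $\F_{d-1}$ together with the finiteness of $S_d\pi_dL_dk[\A(a,-)]$ established above to control $\ker(\eta_F)$ by the analogous kernel for the projective-like object $L_dk[\A(a,-)]$. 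The interplay between finite generation in $\F_d(\A;k)$ and finite generation inside the localizing subcategory $\F_{d-1}(\A;k)$---in particular the passage from noetherianity of $\F_{d-1}$ to a controlled coreflector $R_{d-1}(F)=\ker(\eta_F)$---is the most delicate feature of the argument.
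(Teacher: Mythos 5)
Your reduction of the easy implications is essentially the paper's (the cycle (a)$\Rightarrow$(b)$\Rightarrow$(c)$\Leftrightarrow$(d), (c)$\Leftrightarrow$(e) via la proposition~\ref{pr-bicont} et l'hypothèse de récurrence), and your base case $d\le 1$ matches the paper's use of the splitting $\F_1(\A;k)\simeq(\F_1/\F_0)\times k\Md$. But the core implication contains a genuine gap, and you have named it yourself: you must show that $\ker(\eta_F)\in\F_{d-1}(\A;k)$ is finitely generated, where $\eta_F\colon F\to S_d\pi_d F$ is the \emph{unit} of the adjunction $\pi_d\dashv S_d$. The snake-lemma device you sketch does not close it: from a presentation $0\to K\to P\to F\to 0$ with $P=L_dk[\A(a,-)]$, the snake lemma expresses $\ker(\eta_F)$ as an extension involving a quotient of $\ker(\eta_P)$ and a \emph{subobject} of $\mathrm{coker}(\eta_K)$; but $K$ is only a subobject of $P$, with no finiteness control, and bounding subobjects of objects of $\F_{d-1}(\A;k)$ is precisely the noetherianity you are trying to establish. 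The argument is circular as stated.

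The paper sidesteps the kernel entirely by working with the \emph{co-section} rather than the section: by the théorème~\ref{th-princ}, $C_d$ is left adjoint to $\pi_d$ composed with the inverse equivalence, and the counit yields an exact sequence $C_d(\pi_d(F))\to F\to G\to 0$ with $G\in\F_{d-1}(\A;k)$. Now every term is controlled by \emph{quotients}: $G$ is a quotient of the finitely generated $F$, hence finitely generated, hence finite because $\F_{d-1}(\A;k)$ is locally finite by induction; $\pi_d(F)$ is finitely generated, hence finite in the locally finite category $\F_d/\F_{d-1}\simeq\prod k[\mathrm{Aut}(A)]\Md$; and $C_d(\pi_d(F))=\bigoplus\Q^{A,M_A}$ is finite because each $\Q^{A,M}$ embeds in a finite direct sum of copies of $k[A]$, which is finite by la proposition~\ref{pr-lin_fini}. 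Thus $F$ is an extension of the finite $G$ by a quotient of the finite $C_d(\pi_d(F))$, hence finite. If you want to salvage your version with $S_d$, you would need to replace the unit by the counit (or dualize); as written, the finiteness of $\ker(\eta_F)$ remains unproved.
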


\begin{proof}
La catégorie $\F_1(\A;k)/\F_0(\A;k)$ est localement finie grâce au théorème~\ref{th-princ} ; comme $\F_0(\A;k)$ est réduite aux foncteurs constants et que tout foncteur de $\F(\A;k)$ se scinde naturellement en la somme directe de son terme constant et de sa partie réduite, la catégorie
\begin{equation}\label{eq-F1}
\F_1(\A;k)\simeq (\F_1(\A;k)/\F_0(\A;k))\times\F_0(\A;k)\simeq (\F_1(\A;k)/\F_0(\A;k))\times (k\Md)
\end{equation}
est également localement finie, d'où la première assertion.

Nous allons maintenant établir la deuxième assertion par récurrence sur $d$.

L'implication $(a)\Rightarrow (b)$ est immédiate. Si $(b)$ est vérifié, alors la sous-catégorie localisante $\F_{d-1}(\A;k)$ de $\F_d(\A;k)$ est localement noethérienne, et le foncteur section, qui s'identifie à $S_d$ par le théorème~\ref{th-princ}, commute nécessairement aux colimites filtrantes \cite[chap.~III, §\,4, cor.~1 de la prop.~9]{Gabriel}. Comme $S_d$ est exact et continu (proposition~\ref{pr-bicont}), il est bicontinu. Ainsi, $(b)$ entraîne $(c)$.

L'implication $(c)\Rightarrow (d)$ résulte de ce que $\pi_d$ est adjoint à gauche à un foncteur isomorphe au foncteur $S_d$.

Si la catégorie $\F_{d-1}(\A;k)$ est localement noethérienne, alors elle est nécessairement finie grâce à l'hypothèse de récurrence, ou à la première assertion de la proposition si $d=2$.

Par conséquent, l'équivalence $(c)\Leftrightarrow (e)$ découle de la proposition~\ref{pr-bicont}.

Supposons $(d)$ vérifié. Comme $\F_d(\A;k)$ est localement de type fini (comme $\F(\A;k)$), pour établir $(a)$, il suffit de montrer que tout foncteur de type fini $F$ de $\F_d(\A;k)$ est fini. L'objet $\pi_d(F)$ de $\F_d(\A;k)/\F_{d-1}(\A;k)$ est de type fini, donc fini puisque cette catégorie est localement finie, par le théorème~\ref{th-princ}. Ce même théorème fournit une suite exacte $C_d(\pi_d(F))\to F\to G\to 0$ avec $G$ dans $\F_{d-1}(\A;k)$. Quotient de $F$, $G$ est de type fini, donc fini puisque $\F_{d-1}(\A;k)$ est localement finie grâce à l'observation ci-dessus. Par ailleurs, la finitude de $\pi_d(F)$ et la proposition~\ref{pr-lin_fini} montrent que $C_d(\pi_d(F))$ est fini. Ainsi $F$ est fini, d'où l'implication $(d)\Rightarrow (a)$ et la proposition.
\end{proof}

\begin{rema}
La proposition~\ref{pr-Fd_lf} et la remarque~\ref{rq-pasFE} montrent que la catégorie $\F_2(\A;k)$ n'est pas nécessairement localement finie.
\end{rema}

\subsection{Fonctions de dimensions des foncteurs finis}

La proposition~\ref{pr-G0Fd} permet de résoudre une conjecture de \cite{DTV}, dans un cadre un peu plus général. Nous introduisons en préalable quelques notations :

\begin{nota}
Soit $k_0(\A)$ le monoïde commutatif des classes d'isomorphisme d'objets de $\A$ (pour la somme directe). Si $F$ est un foncteur de $\F(\A;k)$ prenant des valeurs de dimension finie (par exemple, un foncteur de type fini), on note $\mathrm{d}_F : k_0(\A)\to\mathbb{Q}$ la fonction envoyant la classe d'isomorphisme de $a\in\mathrm{Ob}\,\A$ sur la dimension du $k$-espace vectoriel $F(a)$.

On note $\chi(\A)$ le sous-ensemble de $\mathbb{N}^*$ constitué des entiers naturels qui divisent le cardinal d'un groupe abélien $\A(x,y)$. C'est un sous-monoïde multiplicatif de $\mathbb{N}^*$ puisque $\A$ est additive. On note $\Xi(\A)$ le sous-$\mathbb{Q}$-espace vectoriel de l'espace vectoriel $\mathbb{Q}^{k_0(\A)}$ des fonctions de $k_0(\A)$ dans $\mathbb{Q}$ engendré par les fonctions composées d'un morphisme de monoïdes du groupe abélien $k_0(\A)$ vers le monoïde multiplicatif $\chi(\A)$ et de l'inclusion $\chi(\A)\hookrightarrow\mathbb{Q}$. C'est donc une sous-algèbre de $\mathbb{Q}^{k_0(\A)}$, munie de la multiplication au but.

Les éléments de $\Xi(\A)$ sont appelés {\em $\chi$-polynômes}.
\end{nota}

\begin{rema}
Comme les anneaux d'endomorphismes des objets de $\A$ sont finis, le théorème classique de Krull-Schmidt implique que, si l'on suppose de plus que les idempotents se scindent dans $\A$, alors $k_0(\A)$ est un monoïde commutatif libre. C'est le cas en particulier lorsque $\A=\mathbf{P}(R)$ (où $R$ est un anneau $k$-trivial) ; le groupe de $K$-théorie $K_0(R)$ s'identifie alors à la complétion en groupes de $k_0(\mathbf{P}(R))$. On dispose d'un morphisme de monoïdes canonique $\mathbb{N}\to k_0(\mathbf{P}(R))$ (associant à $n$ la classe du module libre $R^n$) qui est injectif si $R$ est non nul et bijectif si $R$ est local.
\end{rema}

\begin{rema}\label{rq-pol1}
Les définitions introduites ci-avant font sens pour une catégorie additive $\A$ quelconque (non nécessairement $k$-triviale). Dans ce cadre général, on notera qu'un foncteur $F$ de $\F(\A;k)$ à valeurs de dimensions finies est polynomial de degré $d$ si et seulement si la fonction $\mathrm{d}_F : k_0(\A)\to\mathbb{Q}$ est polynomiale de degré $d$ au sens d'Eilenberg-MacLane \cite[§\,8]{EML}, parce que $\mathrm{d}_{\delta_x(F)}([a])=\mathrm{d}_F([x]+[a])-\mathrm{d}_F([a])$ pour tous objets $x$ et $a$ de $\A$.

En revanche, la fonction de dimensions de $\db_x(F)$ ne s'exprime pas comme une fonction de celle de $F$, ce qui rend le résultat ci-dessous (dans lequel on revient au cadre où la catégorie $\A$ est $k$-triviale) non trivial.
\end{rema}

\begin{theo}\label{th-fct_dim}
Si $F$ est un foncteur fini appartenant à une catégorie $\F_d(\A;k)$, alors la fonction $\mathrm{d}_F$ est un $\chi$-polynôme.
\end{theo}

\begin{proof}
Si $A$ est un foncteur fini de $\rep(\A)$, alors $\mathrm{d}_{k[A]}$ est un morphisme de monoïdes du groupe abélien $k_0(\A)$ vers le monoïde multiplicatif sous-jacent à $\mathbb{Q}$ ; de plus, ses valeurs appartiennent à $\chi(\A)$, car $A$ est quotient d'un foncteur représentable $\A(a,-)$. Ainsi $\mathrm{d}_{k[A]}\in\Xi(\A)$. La proposition~\ref{pr-ressimpl} permet d'en déduire la relation $\mathrm{d}_{\Q_A}\in\Xi(\A)$. Comme le groupe $\mathrm{Aut}(A)$ opère librement sur $\Q_\A$ (cf. proposition~\ref{pr-Qexact}), si $M$ est une représentation de $\mathrm{Aut}(A)$ de dimension finie sur $k$, on a
\begin{equation}\label{eq-dimQ}
\mathrm{d}_{\Q_{A,M}}=\frac{\dim_k(M)}{\cd(\mathrm{Aut}(A))}\,\mathrm{d}_{\Q_A}\;,
\end{equation}
d'où $\mathrm{d}_{\Q_{A,M}}\in\Xi(\A)$. La conclusion résulte maintenant de la proposition~\ref{pr-G0Fd}, puisque $\{F\}\mapsto\mathrm{d}_F$ définit un morphisme de groupes abéliens $G_0^f(\F_d(\A;k))\to\mathbb{Q}^{k_0(\A)}$.
\end{proof}

\begin{rema}\label{rq-pol12}
Revenons dans cette remarque au cas où la catégorie additive $\A$ est arbitraire (non nécessairement $k$-triviale), et supposons que tout foncteur de type fini et de type cofini de $\rep(\A/\I)$ est fini lorsque $\I$ est un idéal $k$-cotrivial de $\A$ (par exemple, $\A=\mathbf{P}(R)$ pour un anneau arbitraire $R$). On déduit du théorème précédent, du corollaire~\ref{cor-tous_tftcf_ds_Fd}, de la remarque~\ref{rq-pol1} sur les fonctions de dimensions des foncteurs polynomiaux et de \cite[cor.~4.11]{DTV} que, si $F$ est un foncteur fini à valeurs de dimensions finies de $\F(\A;k)$, alors il existe une fonction $P : k_0(\A)\times k_0(\A)\to\mathbb{Q}$ qui est polynomiale par rapport à la première variable et $\chi$-polynomiale par rapport à la seconde telle que $\mathrm{d}_F(x)=P(x,x)$ pour tout $x\in k_0(\A)$.
\end{rema}

\begin{coro}\label{cor-conj_DTV} Soient $p$ un nombre premier différent de la caractéristique de $k$ et $R$ un anneau dont le cardinal est une puissance de $p$. Si $F$ est un foncteur de type fini de $\F(R,k)$, alors il existe un polynôme $P\in\mathbb{Q}[X]$ tel que $\dim_k(F(R^n))=P(p^n)$ pour tout $n\in\mathbb{N}$. 
\end{coro}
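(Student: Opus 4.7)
The plan is to reduce this to Théorème~\ref{th-fct_dim} by showing that any finitely generated functor in $\F(R,k)$ is automatically finite and lies in some $\F_d(\mathbf{P}(R);k)$, then to specialize the general conclusion to the particular monoid homomorphism $n\mapsto[R^n]$.

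First I would observe that since $|R|=p^r$ is invertible in $k$, the ring $R$ is $k$-trivial, so the category $\A:=\mathbf{P}(R)$ is $k$-triviale (définition~\ref{df-ktriv}) and all results of the article apply. Because $R$ is finite, every finitely generated right $R$-module is finite, so $\rep(\A)$ (which is equivalent to $\mathbf{Mod}\text{-}R$) is localement finie. Consequently $\F(R,k)$ is localement finie by corollaire~\ref{cor-lf}, and corollaire~\ref{cor-tttfFd} guarantees that the given finitely generated functor $F$ belongs to $\F_d(\A;k)$ for some $d\in\mathbb{N}$.

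Next I would promote \emph{type fini} to \emph{fini}. The standard generators of $\F(R,k)$ are the $k[\A(a,-)]$ for $a\in\mathrm{Ob}\,\A$; but $\A(a,-)$ is an object of $\rep(\A)$, hence corresponds to a finite $R$-module, hence is of longueur finie, so proposition~\ref{pr-lin_fini} shows that $k[\A(a,-)]$ has finite length in $\F(R,k)$. Since $F$ is a quotient of a finite direct sum of such generators, $F$ is itself of longueur finie. Thus the hypotheses of théorème~\ref{th-fct_dim} are met, and the function $\mathrm{d}_F:k_0(\A)\to\mathbb{Q}$ belongs to $\Xi(\A)$.

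Finally I would unwind what a $\chi$-polynomial says in this special case. Since $\A(x,y)$ is a finite $R$-bimodule whose cardinality divides some power of $|R|=p^r$, the monoid $\chi(\A)$ is contained in the multiplicative monoid $\{p^k\mid k\in\mathbb{N}\}$. Composing a monoid homomorphism $\varphi:k_0(\A)\to\chi(\A)$ with the monoid homomorphism $\mathbb{N}\to k_0(\A)$, $n\mapsto[R^n]$, we obtain a monoid homomorphism $\mathbb{N}\to\{p^k\mid k\in\mathbb{N}\}$, necessarily of the form $n\mapsto p^{kn}=(p^n)^k$ for some $k\in\mathbb{N}$ depending on $\varphi$. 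A finite $\mathbb{Q}$-linear combination of such maps is exactly a polynomial in $p^n$ with rational coefficients, so there exists $P\in\mathbb{Q}[X]$ with $\dim_k F(R^n)=\mathrm{d}_F([R^n])=P(p^n)$ for all $n$.

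There is no serious obstacle here: the only substantive input is théorème~\ref{th-fct_dim}, and the only subtlety is the passage from \emph{type fini} to \emph{fini}, which is immediate once one invokes proposition~\ref{pr-lin_fini} on the representable generators.
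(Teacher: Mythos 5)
Your proof is correct and follows essentially the same route as the paper, which derives the corollary from the théorème~\ref{th-fct_dim} together with the corollaires~\ref{cor-tttfFd} et~\ref{cor-lf} (your passage from \emph{type fini} to \emph{fini} via the proposition~\ref{pr-lin_fini} applied to the generators $k[\A(a,-)]$ is exactly the content of the corollaire~\ref{cor-lf}). The final unwinding — $\chi(\A)\subset\{p^m\}$ and monoid homomorphisms $\mathbb{N}\to k_0(\A)\to\chi(\A)$ giving $n\mapsto(p^n)^m$ — is the intended, elementary last step.
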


Ce résultat, qui découle du théorème~\ref{th-fct_dim} et des corollaires~\ref{cor-tttfFd} et~\ref{cor-lf}, démontre la conjecture~6.8 de \cite{DTV} --- ou plus précisément, une version légèrement affaiblie où l'on remplace une fonction polynomiale à valeurs entières par une fonction polynomiale à valeurs rationnelles (la version à valeurs entières de \cite[conj.~6.8]{DTV} n'est pas exacte à cause du dénominateur qui apparaît dans \eqref{eq-dimQ}).

\section{Propriétés homologiques des catégories $\F_d(\A;k)$}\label{shom}

Pour tout $d\in\mathbb{N}$, la catégorie $\F_d(\A;k)$ est une catégorie de Grothendieck ; en particulier, elle possède des enveloppes injectives. On a également :
\begin{prop}\label{pr-proj} Soit $d\in\mathbb{N}$. La catégorie $\F_d(\A;k)$ possède assez d'objets projectifs. De plus, tout objet de type fini de $\F_d(\A;k)$ possède une couverture projective.
\end{prop}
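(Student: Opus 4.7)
The plan is to produce a generating family of finitely generated projectives in $\F_d(\A;k)$ by transferring the standard generators of $\F(\A;k)$ via a left adjoint to the inclusion, and then to derive the existence of projective covers from the finite-dimensionality of the resulting endomorphism algebras.

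Since $\F_d(\A;k)$ is bilocalisante in $\F(\A;k)$ (proposition~\ref{pr-Fdbiloc}), it is closed under all limits and colimits, so the inclusion $i_d : \F_d(\A;k) \hookrightarrow \F(\A;k)$ is bicontinu. As $\F(\A;k)$ is a Grothendieck category (hence locally presentable), the special adjoint functor theorem provides a left adjoint $\lambda_d : \F(\A;k) \to \F_d(\A;k)$. Exactness of $i_d$ forces $\lambda_d$ to preserve projectives; setting $P_a := \lambda_d(k[\A(a,-)])$, the adjunction combined with Yoneda's lemma gives, for every $F \in \F_d(\A;k)$,
\[
\mathrm{Hom}_{\F_d(\A;k)}(P_a,F) \simeq \mathrm{Hom}_{\F(\A;k)}(k[\A(a,-)], i_d(F)) \simeq F(a).
\]
It follows that $(P_a)_{a \in \mathrm{Ob}\,\A}$ is a generating family of projectives of $\F_d(\A;k)$, proving the first assertion.

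For projective covers, I first check that each unit $k[\A(a,-)] \to i_d(P_a)$ is an epimorphism: its image $I$ is a subobject of an object of $\F_d(\A;k)$, hence itself lies in $\F_d(\A;k)$; by the universal property of $P_a$, the canonical epimorphism $k[\A(a,-)] \twoheadrightarrow I$ factors through $P_a$, producing a morphism $P_a \to I$ which, composed with $I \hookrightarrow i_d(P_a)$, retracts that inclusion, forcing $I = i_d(P_a)$. Evaluating at $a$ then shows that $\mathrm{End}_{\F_d(\A;k)}(P_a) \simeq P_a(a)$ is a quotient of the finite-dimensional $k$-algebra $k[\A(a,a)]$ (using hypothesis (FH)); hence $\mathrm{End}(P_a)$, and more generally the endomorphism ring of any finite sum of $P_a$'s, is finite-dimensional over $k$ and in particular semiperfect. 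Semiperfectness of $\mathrm{End}(P_a)$ yields a canonical decomposition of $P_a$ into a finite direct sum of indecomposable projectives with local endomorphism rings, producing a generating family $(Q_\alpha)$ of indecomposable projectives of $\F_d(\A;k)$ with simple cosocles $S_\alpha := Q_\alpha/\rad(Q_\alpha)$.

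For a finitely generated $F \in \F_d(\A;k)$, the cosocle $F/\rad(F)$ is semisimple of finite length, being a quotient of a finite sum of the $Q_\alpha$; selecting multiplicities so that $\bigoplus_\alpha Q_\alpha^{\oplus n_\alpha} \twoheadrightarrow F/\rad(F)$ and lifting through projectivity produces the desired projective cover, essentiality following by a Nakayama-type argument. The main obstacle is this final step: one must verify that for any finitely generated $F \in \F_d(\A;k)$, the radical $\rad(F)$ is superfluous (equivalently, $F/\rad(F)$ is of finite length and the lift from $F/\rad(F)$ to $F$ has superfluous kernel), which requires carefully matching the ring-theoretic Jacobson radical of the finite-dimensional algebra $\mathrm{End}\big(\bigoplus Q_\alpha^{\oplus n_\alpha}\big)$ with the categorical radical of $F$ in $\F_d(\A;k)$.
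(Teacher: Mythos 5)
Your route is genuinely different from the paper's, which settles everything by duality: for $F$ de type fini dans $\F_d(\A;k)$, le dual $F^\vee$ est de type cofini dans $\F_d(\A^\op;k)$ (corollaire~\ref{cor-FdAutoDual}), son enveloppe injective $J$ dans cette catégorie est encore de type cofini, donc à valeurs de dimensions finies par (FH), et $J^\vee$ est la couverture projective cherchée ; l'existence d'assez de projectifs s'en déduit ensuite par génération locale de type fini. La première moitié de votre argument (transport des générateurs $k[\A(a,-)]$ par l'adjoint à gauche de l'inclusion) est correcte et correspond exactement à la remarque qui suit la proposition dans l'article ; la finitude de $\dim_k\mathrm{End}(P_a)\simeq\dim_k P_a(a)$ est également juste, et donne bien la première assertion.

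La lacune se situe dans le passage de la semi-perfection de $\mathrm{End}(P_a)$ aux couvertures projectives, et elle est placée plus tôt que là où vous la localisez. L'étape que vous signalez comme \guillemotleft~obstacle principal~\guillemotright\ --- le caractère superflu de $\rad(F)$ pour $F$ de type fini --- est en réalité la partie facile : si $N+\rad(F)=F$ avec $N\neq F$, la type-finitude place $N$ dans un sous-objet maximal $M$, lequel contient aussi $\rad(F)$, contradiction ; aucune comparaison avec le radical de Jacobson d'une algèbre d'endomorphismes n'est nécessaire. Ce qui demande un argument, et est affirmé sans preuve, c'est que chaque facteur indécomposable $Q_\alpha$ a un cosocle \emph{simple}. (Votre justification de la semi-simplicité de $F/\rad(F)$ --- \guillemotleft~quotient d'une somme finie de $Q_\alpha$~\guillemotright\ --- ne prouve rien telle quelle, puisque $F$ lui-même est un tel quotient ; il faut montrer que $F/\rad(F)$ est quotient de $\bigoplus Q_\alpha/\rad(Q_\alpha)$, ce qui se ramène précisément à l'assertion sur les cosocles.) L'assertion est vraie : si $Q_\alpha$ avait deux sous-objets maximaux distincts $M_1\neq M_2$, alors $M_1+M_2=Q_\alpha$, et en relevant $\mathrm{Id}_{Q_\alpha}$ à travers l'épimorphisme $M_1\oplus M_2\twoheadrightarrow Q_\alpha$ on écrirait $\mathrm{Id}=f_1+f_2$ avec chaque $f_i$ non surjectif, donc non inversible dans l'anneau local $\mathrm{End}(Q_\alpha)$, ce qui est absurde. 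Une fois ce point acquis, votre démonstration se referme correctement, mais au prix de refaire dans le cadre des catégories de Grothendieck un morceau de théorie des anneaux semi-parfaits que l'argument de dualité de l'article évite entièrement.
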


\begin{proof}
Soit $F$ un foncteur de type fini de $\F_d(\A;k)$. Alors $F^\vee$ est un objet de type cofini de $\F_d(\A^\op;k)$ (corollaire~\ref{cor-FdAutoDual}). Par conséquent, si $J$ est une enveloppe injective de $F^\vee$ dans $\F_d(\A^\op;k)$, alors $J$ est de type cofini, donc un foncteur à valeurs de dimensions finies. Il s'ensuit que $J^\vee$ est une couverture projective de $F$ dans $\F(\A;k)$. Comme la catégorie $\F_d(\A;k)$ est localement de type fini, l'existence de couvertures projectives pour ses objets de type fini entraîne qu'elle possède assez de projectifs.
\end{proof}

\begin{rema}
Un ensemble de générateurs projectifs de $\F_d(\A;k)$ s'obtient, pour des raisons formelles, en appliquant l'adjoint à gauche de l'inclusion $\F_d(\A;k)\hookrightarrow\F(\A;k)$ aux foncteurs $k[\A(a,-)]$ (c'est-à-dire en considérant leur plus grand quotient appartenant à $\F_d(\A;k)$), où $a$ parcourt un squelette de $\A$. Nous ne connaissons toutefois pas de description simple de cet adjoint.

Néanmoins, en utilisant le théorème~\ref{th-princ}, on voit que $\Q^A$ (resp. $\Q_A$) est projectif (resp. injectif) dans $\F_d(\A;k)$ lorsque $A$ est un foncteur de $\rep(\A)$ tel que $\lgr(A)=d$.
\end{rema}

Le prochain résultat de cette section sur les catégories $\F_d(\A;k)$ (corollaire~\ref{cor-plg_hom}) dépend de la propriété générale suivante des catégories abéliennes.

\begin{prop}\label{pr-locExt} Soient $\F$ une catégorie de Grothendieck, $\E$ une sous-catégorie localisante de $\F$ possédant assez d'objets projectifs et $\E'$ une sous-catégorie bilocalisante de $\E$. Supposons que :
\begin{enumerate}
    \item[$1$.] l'inclusion $\E'\to\F$ est un plongement homologique ;
    \item[$2$.] le foncteur section $s: \E/\E'\to\E$ est exact ;
    \item[$2^*$.] le foncteur co-section $c: \E/\E'\to\E$ est exact ;
    \item[$3$.] le foncteur $\E/\E'\to\F$ composé de $s$ et de  l'inclusion $\E\to\F$ est un plongement homologique ;
    \item[$3^*$.] le foncteur $\E/\E'\to\F$ composé de $c$ et de  l'inclusion $\E\to\F$ est un plongement homologique ;
    \item[$4$.] Si $X$ est un objet de $\E'$ et $T$ un objet de $\E/\E'$, alors $\mathrm{Ext}^*_\F(X,s(T))=0$ ;
    \item[$4^*$.] Si $X$ est un objet de $\E'$ et $T$ un objet de $\E/\E'$, alors $\mathrm{Ext}^*_\F(c(T),X)=0$.
\end{enumerate}

Alors l'inclusion $\E\to\F$ est un plongement homologique.
\end{prop}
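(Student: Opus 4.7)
The plan is to show that the natural map $\mathrm{Ext}^n_\E(X,Y) \to \mathrm{Ext}^n_\F(X,Y)$ is an isomorphism for every $X, Y \in \E$ and every $n \ge 0$, by a dévissage in both arguments. Both the counit $c\pi X \to X$ and the unit $Y \to s\pi Y$ have kernel and cokernel in $\E'$; splitting each four-term exact sequence into two short exact sequences and applying the resulting long exact sequences of $\mathrm{Ext}^\ast_\E$ and $\mathrm{Ext}^\ast_\F$, a double use of the five lemma reduces the claim to the four \emph{pure} cases in which $X \in \{cT, X'\}$ and $Y \in \{sT', Y'\}$ with $T,T' \in \E/\E'$ and $X', Y' \in \E'$.

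I would then dispatch the pure cases as follows. Hypotheses 2 and 2* (exactness of $s$ and $c$) combined with $\pi s \simeq \mathrm{Id} \simeq \pi c$ imply that $s$ and $c$ are themselves homological embeddings $\E/\E' \to \E$, because $s$ preserves injectives (since $\pi$ is exact) and dually $c$ preserves projectives; applying $s$ to an injective resolution of $T'$ computes $\mathrm{Ext}^\ast_\E(sT, sT') \simeq \mathrm{Ext}^\ast_{\E/\E'}(T,T')$, and via the adjunction $c \dashv \pi$ the pure $\E$-side computations all take the form $\mathrm{Ext}^\ast_{\E/\E'}(T, \pi Y)$. On the $\F$-side, hypotheses 4 and 4* provide the vanishings $\mathrm{Ext}^\ast_\F(X', sT) = 0 = \mathrm{Ext}^\ast_\F(cT, X')$, which absorb the $\E'$-pieces; the canonical morphism $cT \to sT$ (whose kernel and cokernel sit in $\E'$) together with these vanishings yields $\mathrm{Ext}^\ast_\F(cT, sT') \simeq \mathrm{Ext}^\ast_\F(sT, sT')$, and hypothesis 3 identifies the latter with $\mathrm{Ext}^\ast_{\E/\E'}(T, T')$. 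Matching the $\E$- and $\F$-sides handles three of the four pure cases outright.

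The hardest step is the \emph{sticky} case $X, Y \in \E'$. Here hypothesis 1 only yields $\mathrm{Ext}^\ast_{\E'}(X,Y) \simeq \mathrm{Ext}^\ast_\F(X,Y)$, and the natural factorisation $\mathrm{Ext}^\ast_{\E'} \to \mathrm{Ext}^\ast_\E \to \mathrm{Ext}^\ast_\F$ decomposes the target map as a split monomorphism followed by a split epimorphism, so neither arrow is automatically an isomorphism. To upgrade this, I would show that the inclusion $i : \E' \to \E$ is already a homological embedding, equivalently that the right adjoint $i^! : \E \to \E'$ (the $\E'$-torsion functor) is exact, so that $i$ preserves injectives and an $\E'$-injective resolution lifts to an $\E$-injective resolution. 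I expect to derive this exactness from a snake-lemma chase on the unit and counit exact sequences applied to a short exact sequence in $\E$, in which the connecting $\E'$-valued morphisms represent $\mathrm{Ext}^1$-classes that the vanishings of hypotheses 4 and 4*, combined with hypothesis 1, force to be zero. With this lemma in hand, the sticky case collapses to hypothesis 1 and the full dévissage delivers the proposition.
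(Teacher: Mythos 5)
Your global dévissage is sound and is essentially the one the paper uses: both reduce, via the unit $Y\to s\pi(Y)$ and counit $c\pi(X)\to X$ (whose kernels and cokernels lie in $\E'$), to pure cases, and your treatment of the three cases involving $s(T)$ or $c(T)$ — derived adjunction on the $\E$-side, hypotheses $3$, $3^*$, $4$, $4^*$ on the $\F$-side — is correct modulo routine compatibility checks. The genuine gap is in the case $X,Y\in\E'$, which you rightly identify as the hard one, and your proposed key lemma there is false. First, a formal slip: exactness of the \emph{right} adjoint $i^!$ of the inclusion $i:\E'\to\E$ would not imply that $i$ preserves injectives (that is governed by the exactness of the \emph{left} adjoint of $i$), and it is not equivalent to $i$ being a homological embedding. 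More seriously, $i^!$ is simply not exact in the situations covered by the hypotheses. Indeed, for $A$ torsion-free (i.e. $i^!(A)=0$) one has an exact sequence $0\to A\to s\pi(A)\to B\to 0$ with $B\in\E'$ and $i^!(s\pi(A))=0$ (an object in the image of $s$ has no nonzero subobject in $\E'$); if $i^!$ were exact, applying it would give $B=i^!(B)=0$, i.e. every torsion-free object would be $\E'$-closed. This fails precisely in the application the proposition is designed for: $\Q(A)$ is a torsion-free but non-closed subobject of $\Q_A$ in $\F_d(\A;k)$ whenever $\Q(A)\ne\Q_A$. Consequently the connecting morphisms in your snake-lemma chase live in groups $\mathrm{Ext}^1_\E(C,A)$ with $C\in\E'$ and $A$ torsion-free but not closed, which is exactly the region that hypotheses $1$, $4$ and $4^*$ do not control; no combination of them will make these classes vanish.

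What the paper proves instead is the strictly weaker, but sufficient, statement that $i^!(J)$ is $\mathrm{Ext}^{>0}_\E(X,-)$-acyclic for every $X\in\E'$ and every injective $J$ of $\E$ (such objects form a class of injective cogenerators of $\E'$, which is enough to conclude that $\E'\to\E$, hence $\E'\to\F$ after hypothesis $1$, computes the same Ext). The mechanism is a bootstrap on the four-term sequence $0\to i^!(J)\to J\to s\pi(J)\to\beta(J)\to 0$ with $\beta(J)\in\E'$: its two middle terms are $\mathrm{Ext}^{>0}_\E(X,-)$-acyclic for $X\in\E'$ (the first because $J$ is injective, the second by the derived adjunction $\pi\dashv s$), so the Yoneda product with the class $h(J)\in\mathrm{Ext}^2_\E(\beta(J),i^!(J))$ of this sequence gives isomorphisms $\mathrm{Ext}^n_\E(X,\beta(J))\simeq\mathrm{Ext}^{n+2}_\E(X,i^!(J))$ for $n>0$; the groups $\mathrm{Ext}^i_\E(X,i^!(J))$ for $i\in\{1,2\}$ vanish because the comparison map to $\mathrm{Ext}^i_\F$ is injective in these degrees ($\E$ being thick in $\F$) and the target vanishes by hypothesis $1$ together with the injectivity of $i^!(J)$ in $\E'$; in particular $h(J)=0$, which kills all the higher groups through the isomorphisms above. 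Some argument of this kind, rather than exactness of $i^!$, is what you need to close the sticky case.
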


\begin{proof}
On montre d'abord que l'inclusion $\E'\to\E$ est un plongement homologique. Soit $\alpha : \E\to\E'$ l'adjoint à droite de l'inclusion : les objets $\alpha(J)$, où $J$ est un objet injectif de $\E'$, forment une classe de cogénérateurs injectifs de $\E'$, il suffit donc de vérifier que $\mathrm{Ext}^n_\E(X,\alpha(J))=0$ pour tout $n\in\mathbb{N}^*$, tout objet $X$ de $\E'$ et tout objet injectif $J$ de $\E$.

De plus, si l'on note $\pi : \E\to\E/\E'$ le foncteur canonique, on dispose pour tout objet $T$ de $\E$ d'une suite exacte naturelle
\begin{equation}\label{eq-Ext2}
    0\to\alpha(T)\to T\to s\pi(T)\to\beta(T)\to 0
\end{equation}
où $\beta(T)$ appartient à $\E'$ (cf. par exemple \cite[prop.~2.6\,(ii)]{Psa-hom}). Notons $h(T)$ la classe dans $\mathrm{Ext}^2_\E(\beta(T),\alpha(T))$ de l'extension \eqref{eq-Ext2}. Si $n>0$ est un entier et $J$ un objet injectif de $\E$, on a $\mathrm{Ext}^n_\E(-,J)=0$, et la restriction à $\E'$ de $\mathrm{Ext}^n_\E(-,s\pi(J))$ est également nulle en raison de l'isomorphisme
\begin{equation}\label{eq-isadj}
    \mathrm{Ext}^n_\E(X,s\pi(J))\simeq\mathrm{Ext}^n_{\E/\E'}(\pi(X),\pi(J))
\end{equation}
naturel en l'objet $X$ de $\E$ déduit de l'adjonction entre les foncteurs exacts $\pi$ et $s$ (hypothèse $2$.). Il s'ensuit que le produit de composition par $h(J)$ définit un isomorphisme
\begin{equation}\label{eq-isoExt}
    \cdot h(J) : \mathrm{Ext}^n_\E(X,\beta(J))\xrightarrow{\simeq}\mathrm{Ext}^{n+2}_\E(X,\alpha(J))
\end{equation}
pour tout objet $X$ de $\E'$, tout objet injectif $J$ de $\E$ et tout entier $n>0$. Mais comme $\E$ est une sous-catégorie épaisse de $\F$, le morphisme canonique $\mathrm{Ext}^i_\E(X,\alpha(J))\to\mathrm{Ext}^i_\F(X,\alpha(J))$ est bijectif pour $i\le 1$ et injectif pour $i=2$. Or $\mathrm{Ext}^i_\F(X,\alpha(J))\simeq\mathrm{Ext}^i_{\E'}(X,\alpha(J))$ en vertu de la l'hypothèse 1., donc pour $J$ injectif dans $\E$ et $X$ dans $\E'$, on a $\mathrm{Ext}^i_\E(X,\alpha(J))=0$ pour $i\in\{1,2\}$. En particulier, $h(J)=0$, de sorte que \eqref{eq-isoExt} montre qu'on a aussi $\mathrm{Ext}^{n+2}_\E(X,\alpha(J))=0$ pour $n>0$. Finalement, on a bien $\mathrm{Ext}^i_\E(X,\alpha(J))=0$ pour tout entier $i>0$ lorsque $X$ est dans $\E'$ et que $J$ est un injectif de $\E$. Cela achève de montrer que l'inclusion $\E'\to\E$ est un plongement homologique.

En utilisant l'hypothèse 1., cela montre que le morphisme canonique $\mathrm{i}(X,Y) : \mathrm{Ext}^*_\E(X,Y)\to\mathrm{Ext}^*_\F(X,Y)$ est bijectif lorsque $X$ et $Y$ appartiennent à $\E'$. 

Par ailleurs, $\mathrm{i}(X,Y)$ est bijectif pour $Y$ dans l'image essentielle de $s$, disons $Y\simeq s(A)$, grâce à la succession d'isomorphismes
$$\mathrm{Ext}^*_\E(X,s(A))\simeq\mathrm{Ext}^*_{\E/\E'}(\pi(X),A)\simeq\mathrm{Ext}^*_\F(s\pi(X),s(A))\simeq\mathrm{Ext}^*_\F(X,s(A))$$
dont le premier se déduit de l'hypothèse $2$., le second de l'hypothèse $3$. et le troisième de l'hypothèse $4$. et de la suite exacte \eqref{eq-Ext2} (appliquée avec $T=X$).

Maintenant, $\mathrm{i}(X,Y)$ est bijectif lorsque $X$ appartient à $\E'$, comme il résulte du cas où $Y$ est aussi dans $\E'$, de celui où $Y$ appartient à l'image essentielle de $s$, tous deux précédemment traités, et de la suite exacte \eqref{eq-Ext2} (appliquée avec $T=Y$).

On montre de façon duale (en utilisant les hypothèses $2^*$, $3^*$ et $4^*$ au lieu de $2$, $3$ et $4$ respectivement) que $\mathrm{i}(X,Y)$ est bijectif lorsque $Y$ appartient à $\E'$.

Le cas général se déduit de celui où $Y$ appartient à $\E'$ et de celui où $Y$ appartient à l'image essentielle du foncteur $s$ à l'aide de la suite exacte \eqref{eq-Ext2} (appliquée avec $T=Y$), d'où la proposition.
\end{proof}

\begin{coro}\label{cor-plg_hom}
Pour tout $d\in\mathbb{N}$, l'inclusion $\F_d(\A;k)\to\F(\A;k)$ est un plongement homologique.
\end{coro}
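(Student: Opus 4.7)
The plan is to proceed by induction on $d$, applying the general criterion Proposition~\ref{pr-locExt} with $\F=\F(\A;k)$, $\E=\F_d(\A;k)$ and $\E'=\F_{d-1}(\A;k)$. The base case $d=0$ is immediate: $\F_0(\A;k)$ is the category of constant functors, and since $k\simeq k[\A(0,-)]$ is a representable (hence projective) object of $\F(\A;k)$, every constant functor is projective in $\F(\A;k)$; therefore all higher $\mathrm{Ext}$-groups between constant functors vanish both in $\F_0(\A;k)\simeq k\Md$ and in $\F(\A;k)$, so the $d=0$ case is trivial.

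For the inductive step, I would verify the hypotheses of Proposition~\ref{pr-locExt} one by one. The category $\F_d(\A;k)$ has enough projectives by Proposition~\ref{pr-proj}. Hypothesis~1 is exactly the inductive assumption. Under the equivalence $\Phi_d$ of Theorem~\ref{th-princ}, the section and co-section functors are identified with $S_d$ and $C_d$ respectively; hypotheses~2 and~$2^*$ (their exactness) are then furnished by Proposition~\ref{pr-bicont}. Hypothesis~3 is precisely the second assertion of Proposition~\ref{pr-Spf}, and hypothesis~4 is precisely Proposition~\ref{pr-Ext_HR}; both were originally established under the inductive condition $HR(i)$ for $i<d$, but since Theorem~\ref{th-princ} is already proven, they now hold unconditionally for any $d$.

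It remains to handle the dual hypotheses $3^*$ and $4^*$. For $3^*$, the proof of the second assertion of Proposition~\ref{pr-Spf} transports verbatim to $C_d$ using the bicontinuity of $\Q^{A,-}$ (first item of Corollary~\ref{cor-QAMdual}) together with the $\mathrm{Ext}$-vanishing between functors of the form $\Q^{A,M}$ and $\Q^{B,N}$, itself deduced from Theorem~\ref{thm-Ext-QAM} by applying the duality $(-)^\vee$ via Proposition~\ref{pr-dQAM}. Hypothesis~$4^*$ is proved along the same lines as Proposition~\ref{pr-Ext_HR}, with the four-term structural sequence now featuring $\bigoplus\Q_{B,N_B}$ in middle position (again produced by Theorem~\ref{th-princ}); the essential input becomes the vanishing $\mathrm{Ext}^*(\Q^{A,M},\Q_{B,N_B})=0$ when $\lgr(B)<\lgr(A)$, which follows from Proposition~\ref{pr-ExtQAMQ} by dualization through Corollary~\ref{cor-FdAutoDual} and Proposition~\ref{pr-dQAM}.

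The main obstacle is not conceptual — each hypothesis of Proposition~\ref{pr-locExt} is matched by a preceding result — but organizational: one has to check that the duality arguments used to derive $3^*$ and $4^*$ do not silently require finiteness assumptions on the arguments, which is precisely why the general bicontinuity statements for $\Q_{A,-}$, $\Q^{A,-}$ and $\Q(A,-)$ developed earlier, as well as the autoduality of the filtration (Corollary~\ref{cor-FdAutoDual}), are indispensable.
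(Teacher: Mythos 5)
Your proof is correct and follows essentially the same route as the paper: induction on $d$, application of Proposition~\ref{pr-locExt} with $\E=\F_d(\A;k)$ and $\E'=\F_{d-1}(\A;k)$, the unstarred hypotheses being supplied by Theorem~\ref{th-princ}, Propositions~\ref{pr-Spf} and~\ref{pr-Ext_HR}, and the starred ones by duality. The paper merely asserts that $2^*$, $3^*$ and $4^*$ are \guillemotleft~entirely analogous~\guillemotright, so your explicit sketch of the dualization (via Proposition~\ref{pr-dQAM} and Proposition~\ref{pr-ExtQAMQ}) is a legitimate filling-in of that gap rather than a divergence.
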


\begin{proof} Raisonnant par récurrence sur $d$ (l'assertion est vide pour $d<0$ et triviale pour $d=0$), on peut peut supposer établi que l'inclusion $\F_{d-1}(\A;k)\to\F(\A;k)$ est un plongement homologique.

Montrons que les hypothèses de la proposition~\ref{pr-locExt} sont vérifiées avec $\F=\F(\A;k)$, $\E=\F_d(\A;k)$ et $\E'=\F_{d-1}(\A;k)$. Tout d'abord, $\F_d(\A;k)$ est une sous-catégorie bilocalisante de $\F(\A;k)$ avec assez de projectifs et $\F_{d-1}(\A;k)$ une sous-catégorie bilocalisante de $\F_d(\A;k)$ grâce aux propositions~\ref{pr-Fdbiloc} et~\ref{pr-proj}. La première hypothèse de la proposition~\ref{pr-locExt} n'est autre que notre hypothèse de récurrence ; nous nous bornerons dans la suite à vérifier les hypothèses 2, 3 et 4 de la proposition~\ref{pr-locExt}, les hypothèses $2^*$, $3^*$ et $4^*$ se prouvant de façon entièrement analogue.

L'hypothèse 2. est vérifiée grâce au théorème~\ref{th-princ} et à la proposition~\ref{pr-Qexact}. La proposition~\ref{pr-Spf}.\ref{it-ph}  et le théorème~\ref{th-princ} entraînent que l'hypothèse 3. est également vérifiée. Enfin,
la proposition~\ref{pr-Ext_HR} et le théorème~\ref{th-princ} montrent que l'hypothèse 4. est vérifiée, d'où la conclusion.
\end{proof}

\begin{rema}
La proposition précédente est à mettre en regard du comportement homologique de l'inclusion des foncteurs polynomiaux d'un certain degré $d$ dans la catégorie de tous les foncteurs depuis une petite catégorie additive (qu'on ne suppose pas $k$-triviale !) vers les $k$-espaces vectoriels : si $k$ est de caractéristique nulle, l'inclusion est toujours un plongement homologique, mais ce n'est pas le cas en général, cf. \cite{D-Ext_pol}. En revanche, l'inclusion des foncteurs polynomiaux de degré $d$ dans la catégorie de tous les foncteurs sur les groupes libres de type fini est toujours un plongement homologique, cf. \cite{DPV}.
\end{rema}

On rappelle que la \emph{dimension globale} d'une catégorie de Grothendieck $\E$ est l'élément $\operatorname{gldim}(\E)$ de $\mathbb{N}\cup\{\infty\}$ défini par
$$\forall n\in\mathbb{N}\qquad(n>\operatorname{gldim}(\E))\Leftrightarrow(\mathrm{Ext}^n_\E=0)\;.$$

\begin{prop}\label{pr-gldim}
Supposons le corps $k$ de caractéristique nulle. Alors pour tout entier $d\ge 1$, la catégorie $\F_d(\A;k)$ est de dimension globale finie, au plus égale à $2(d-1)$.
\end{prop}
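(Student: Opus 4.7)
Le plan est de procéder par récurrence sur $d$. Pour le cas de base $d=1$, j'utiliserais que $\F_0(\A;k)$ est réduite aux foncteurs constants et que, par le théorème~\ref{th-princ} appliqué à $d=1$, le quotient $\F_1(\A;k)/\F_0(\A;k)$ est équivalent au produit $\prod_{A\in\LL_1(\A)}k[\mathrm{Aut}(A)]\Md$. Comme $k$ est de caractéristique nulle et les groupes $\mathrm{Aut}(A)$ sont finis (hypothèse (FH)), le théorème de Maschke garantit la semi-simplicité de chaque $k[\mathrm{Aut}(A)]$, donc du produit. La décomposition~\eqref{eq-F1} permet d'en déduire que $\F_1(\A;k)$ est elle-même semi-simple, d'où $\operatorname{gldim}(\F_1(\A;k))=0=2(1-1)$.

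Pour le pas inductif, soit $d\ge 2$ ; on suppose $\operatorname{gldim}(\F_{d-1}(\A;k))\le 2(d-2)$ et l'on veut montrer que $\mathrm{Ext}^n_{\F_d(\A;k)}(X,Y)=0$ pour tous $X,Y\in\F_d(\A;k)$ et tout $n>2(d-1)$. L'ingrédient-clé est l'annulation cohomologique
\begin{equation*}
\mathrm{Ext}^n_{\F_d(\A;k)}(X,S_d\pi_d(Y))\simeq\mathrm{Ext}^n_{\F_d(\A;k)/\F_{d-1}(\A;k)}(\pi_d(X),\pi_d(Y))=0
\end{equation*}
pour $n>0$, obtenue par l'adjonction entre le foncteur exact $\pi_d$ et son adjoint à droite exact $S_d$ (qui préserve donc les injectifs), conjuguée à la semi-simplicité du quotient $\F_d(\A;k)/\F_{d-1}(\A;k)$ (théorème~\ref{th-princ} et théorème de Maschke). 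Dualement, $\mathrm{Ext}^n_{\F_d(\A;k)}(C_d\pi_d(X),Y)=0$ pour $n>0$. On utilise alors les deux suites exactes canoniques à quatre termes associées à la bilocalisation, $0\to\alpha(Y)\to Y\to S_d\pi_d(Y)\to\beta(Y)\to 0$ et $0\to\alpha'(X)\to C_d\pi_d(X)\to X\to\beta'(X)\to 0$, avec $\alpha(Y),\beta(Y),\alpha'(X),\beta'(X)\in\F_{d-1}(\A;k)$, suites utilisées également dans la démonstration de la proposition~\ref{pr-locExt}.

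Le c\oe ur du calcul consistera à propager l'hypothèse de récurrence par deux applications successives des suites exactes longues obtenues en découpant ces suites à quatre termes en deux suites exactes courtes. D'abord, pour $Z\in\F_{d-1}(\A;k)$, l'annulation totale $\mathrm{Ext}^*(C_d\pi_d(X),Z)=0$ (puisque $\pi_d(Z)=0$) et l'hypothèse de récurrence appliquée à $\mathrm{Ext}^n(\alpha'(X),Z)$ et $\mathrm{Ext}^n(\beta'(X),Z)$ doivent donner $\mathrm{Ext}^n(X,Z)=0$ pour $n\ge 2d-2$. Spécialisant ensuite à $Z=\alpha(Y)$ puis $Z=\beta(Y)$, et combinant avec l'annulation $\mathrm{Ext}^n(X,S_d\pi_d(Y))=0$ pour $n\ge 1$, on déduira de la suite exacte associée à $Y$ que $\mathrm{Ext}^n(X,Y)=0$ dès que $n\ge 2d-1$, soit $n>2(d-1)$. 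On utilisera à plusieurs reprises que l'inclusion $\F_d(\A;k)\hookrightarrow\F(\A;k)$ est un plongement homologique (corollaire~\ref{cor-plg_hom}) pour identifier sans ambiguïté les groupes d'extensions calculés dans $\F_d(\A;k)$, $\F_{d-1}(\A;k)$ ou $\F(\A;k)$.

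L'obstacle principal sera essentiellement comptable : il faudra suivre soigneusement les décalages de degré à travers les multiples suites exactes longues imbriquées, en s'assurant qu'à chaque pas de la stratification $\F_{d-1}(\A;k)\subset\F_d(\A;k)$, l'augmentation de la dimension globale est bornée par $2$ et non par un entier plus grand. Aucune difficulté conceptuelle nouvelle n'apparaît au-delà de ce qui est déjà mis en place dans l'article (notamment la structure formelle de la proposition~\ref{pr-locExt}), la caractéristique nulle n'intervenant que via le théorème de Maschke pour assurer la semi-simplicité des quotients successifs de la filtration $(\F_d(\A;k))$.
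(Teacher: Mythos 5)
Votre démonstration est correcte et suit essentiellement la même stratégie que celle de l'article : récurrence sur $d$ via le recollement $(\F_{d-1},\F_d,\F_d/\F_{d-1})$, semi-simplicité du quotient par Maschke, exactitude des foncteurs section et co-section, et plongement homologique de $\F_{d-1}$ dans $\F_d$. La seule différence est que vous redémontrez à la main, via les suites exactes à quatre termes, l'inégalité $\operatorname{gldim}(\E)\le\operatorname{gldim}(\E')+\operatorname{gldim}(\E/\E')+2$ que l'article emprunte directement à \cite[prop.~4.15]{Psa-hom}, et votre décompte des degrés est exact.
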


\begin{proof} Le recollement de catégories abéliennes
$$(\F_{d-1}(\A;k),\F_d(\A;k),\F_d(\A;k)/\F_{d-1}(\A;k))$$
vérifie l'hypothèse (iv) de la proposition~4.15 de \cite{Psa-hom}. En effet, le foncteur d'inclusion $\F_{d-1}(\A;k)\hookrightarrow\F_d(\A;k)$ est un plongement homologique grâce au corollaire~\ref{cor-plg_hom}, et le foncteur section $\F_d(\A;k)/\F_{d-1}(\A;k))\to\F_d(\A;k)$ est exact par le théorème~\ref{th-princ} et la proposition~\ref{pr-Qexact}. Il suffit alors d'utiliser \cite[prop.~4.12]{Psa-hom} pour voir que \cite[prop.~4.15]{Psa-hom} s'applique à notre recollement, d'où
$$\operatorname{gldim}(\F_d(\A;k))\le\operatorname{gldim}(\F_{d-1}(\A;k))+\operatorname{gldim}(\F_d(\A;k)/\F_{d-1}(\A;k))+2\;,$$
or $\operatorname{gldim}(\F_d(\A;k)/\F_{d-1}(\A;k))=0$ lorsque $k$ est de caractéristique nulle, par le théorème~\ref{th-princ}. Celui-ci, combiné à l'équivalence \eqref{eq-F1}, fournit également l'égalité $\operatorname{gldim}(\F_1(\A;k))=0$, ce qui permet de conclure la démonstration par récurrence sur $d$.
\end{proof}

\begin{rema}
La proposition précédente est comparable à un résultat sur les foncteurs polynomiaux de degré au plus $d$ des groupes libres de type fini vers les $k$-espaces vectoriels : lorsque $k$ est de caractéristique nulle, ils forment une catégorie de dimension globale $d-1$, cf. \cite[prop.~4.6]{DPV}. Toutefois, contrairement à la situation de \cite{DPV}, il ne semble pas facile de préciser la proposition~\ref{pr-gldim} en déterminant la dimension globale \emph{exacte} de $\F_d(\A;k)$, qui dépend de toute façon de $\A$. Par exemple, pour $d>1$, cette dimension globale est nulle si et seulement si la catégorie $\rep(\A)$ est semi-simple, comme on le déduit de \cite[cor.~1.3]{Ku-adv} (et de l'exemple~\ref{ex-Qsom} pour le sens réciproque).
\end{rema}

\appendix

\section{Rappels sur les catégories abéliennes}\label{ap-catab}

Cet appendice rassemble quelques définitions et propriétés classiques dans les catégories abéliennes ; on pourra se reporter pour davantage de détails à \cite{Gabriel,Pop}, notamment.

\paragraph*{Sous-catégories (bi)localisantes et recollements}

 Une \emph{catégorie de Grothendieck} est une catégorie abélienne cocomplète où les colimites filtrantes sont exactes et possédant un générateur ; une telle catégorie est complète et admet des enveloppes injectives, on peut en particulier y faire de l'algèbre homologique. Toutes les catégories de foncteurs qui apparaissent dans cet article sont des catégories de Grothendieck.

Une sous-catégorie $\E'$ d'une catégorie abélienne $\E$ est dite \emph{épaisse} si elle est pleine et stable par sous-objet, quotient et extensions ; on peut alors définir la catégorie quotient $\E/\E'$, elle-même abélienne, qui est la localisation obtenue en inversant formellement les morphismes de $\E$ dont le noyau et le conoyau appartiennent à $\E'$. On dispose alors d'un foncteur exact canonique $\E\to\E/\E'$ ; $\E'$ est dite \emph{localisante} (resp. \emph{colocalisante}) si ce foncteur possède un adjoint à droite (resp. un adjoint à gauche), nommé foncteur section (resp. co-section). Une sous-catégorie épaisse est dite \emph{bilocalisante} si elle est à la fois localisante et colocalisante. Une sous-catégorie épaisse d'une catégorie de Grothendieck est localisante si et seulement si elle est stable par coproduits arbitraires ; c'est alors une catégorie de Grothendieck, de même que $\E/\E'$. Une sous-catégorie épaisse d'une catégorie de Grothendieck est colocalisante si et seulement si elle est bilocalisante, ce qui équivaut encore à dire qu'elle stable par produits arbitraires.

Un \emph{recollement} $(\E',\E,\E/\E')$ de catégories abéliennes est la donnée d'une catégorie abélienne $\E$, d'une sous-catégorie bilocalisante $\E'$ et des différents foncteurs canoniques reliant les catégories $\E'$, $\E$ et $\E/\E'$ ; on renvoie à \cite[§\,2]{Psa-hom} pour davantage de détails à ce propos. Les recollements de catégories abéliennes semblent être apparus explicitement pour la première fois dans l'article \cite{Ku2}, qui traite des catégories $\F(k,k)$ pour un corps fini $k$. L'une des utilités des recollements provient de ce que, si $\E'$ est une sous-catégorie bilocalisante d'une catégorie abélienne $\E$, on peut décrire les objets \emph{simples} (i.e. non nuls mais sans sous-objet non trivial) de $\E$ à partir de ceux de $\E'$ et de $\E/\E'$.

\paragraph*{Propriétés de finitude dans les catégories abéliennes (Cf. \cite[§\,5.7]{Pop})}

Un objet $X$ d'une catégorie de Grothendieck est dit \emph{de type fini} si le foncteur $\mathrm{Hom}(X,-)$ commute aux colimites filtrantes de monomorphismes ; cela équivaut à dire que toute famille filtrante croissante de sous-objets dont la borne supérieure est $X$ contient $X$ \cite[chap.~3, prop.~5.6]{Pop}.

On dit qu'un objet $X$ d'une catégorie abélienne est \emph{de type cofini} si toute famille filtrante décroissante de sous-objets de $X$ d'intersection $0$ contient $0$. Un objet est dit noethérien (resp. artinien) si toute suite croissante (resp. décroissante) de sous-objets stationne. Un objet est dit de longueur finie, ou simplement \emph{fini}, s'il est artinien et noethérien. Un objet est dit \emph{simple} s'il est non nul et ne contient aucun sous-objet strict non nul. Il est classique d'un objet $X$ est fini si et seulement s'il possède une filtration finie dont les sous-quotients sont simples ; la longueur d'une telle filtration est appelée simplement \emph{longueur} de $X$, nous la noterons $\lgr(X)$. Ainsi $\lgr(X)=0$ (resp. $1$) si et seulement si $X$ est nul (resp. simple). Si $X$ n'est pas fini, on pose par convention $\lgr(X)=\infty$.

Une catégorie abélienne est dite \emph{localement noethérienne} (resp. \emph{localement finie}) si elle est engendrée par ses objets noethériens (resp. de type fini).

Le résultat suivant se montre facilement par récurrence sur $\lgr(X)$.

\begin{prop}\label{sousobjets-nbfini} Soient $\E$ une catégorie abélienne et $X$ un objet de $\E$. On suppose que $X$ est fini et que tous les sous-quotients simples de $X$ ont des corps d'endomorphismes finis. Alors l'ensemble des sous-objets de $X$ est fini.
\end{prop}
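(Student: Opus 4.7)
La démonstration procède par récurrence sur $n := \lgr(X)$, les cas $n \le 1$ étant immédiats (un objet nul ou simple possédant respectivement un ou deux sous-objets). Supposons $n \ge 2$ et le résultat acquis pour toute longueur strictement inférieure. Choisissons un sous-objet simple $S \subset X$ (existant car $X$ est artinien non nul) : alors $X/S$ est de longueur $n-1$ et hérite de l'hypothèse sur les corps d'endomorphismes de ses sous-quotients simples (qui sont des sous-quotients simples de $X$), donc possède un nombre fini de sous-objets.

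Tout sous-objet $Y \subset X$ vérifie $Y \cap S \in \{0, S\}$, la simplicité de $S$ ne laissant que ces deux options. Les $Y$ contenant $S$ sont en bijection, via $Y \mapsto Y/S$, avec les sous-objets de $X/S$, et sont donc en nombre fini. Pour ceux vérifiant $Y \cap S = 0$, la projection $X \twoheadrightarrow X/S$ induit un monomorphisme $Y \hookrightarrow X/S$ d'image un sous-objet $\bar Y$ de $X/S$ : il y a donc un nombre fini de choix pour $\bar Y$. Fixons un tel $\bar Y$ et notons $Y'$ son image réciproque dans $X$. Les sous-objets $Y \subset X$ tels que $Y \cap S = 0$ et se projetant sur $\bar Y$ sont exactement les images des sections de la suite exacte courte $0 \to S \to Y' \to \bar Y \to 0$ ; cet ensemble, s'il n'est pas vide, est un torseur sous le groupe abélien $\mathrm{Hom}(\bar Y, S)$.

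Il reste alors à contrôler le cardinal de $\mathrm{Hom}(\bar Y, S)$, ce qui constitue l'étape principale et requiert un dévissage auxiliaire par récurrence sur $\lgr(\bar Y)$. Pour $\bar Y$ simple, ce groupe est soit nul soit isomorphe au corps $\mathrm{End}(S)$, fini par hypothèse ; dans le cas général, un sous-objet simple $T \subset \bar Y$ fournit une suite exacte $0 \to \mathrm{Hom}(\bar Y/T, S) \to \mathrm{Hom}(\bar Y, S) \to \mathrm{Hom}(T, S)$ dont les termes extrêmes sont finis par l'hypothèse de récurrence appliquée à $\bar Y/T$ et à $T$. L'obstacle principal, quoique léger, réside donc dans ce contrôle des groupes $\mathrm{Hom}$ ; en combinant cette finitude avec les bijections et l'action libre décrites ci-dessus, on majore le nombre total de sous-objets de $X$ par une somme finie de nombres finis, ce qui conclut la récurrence.
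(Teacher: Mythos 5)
Votre démonstration est correcte et suit exactement la voie que l'article se contente d'indiquer (\guillemotleft~récurrence sur $\lgr(X)$~\guillemotright, sans détails) : le dévissage par un sous-objet simple $S$, la dichotomie $Y\cap S\in\{0,S\}$, la description des compléments comme torseur sous $\mathrm{Hom}(\bar Y,S)$ et la finitude de ce groupe par dévissage secondaire constituent une réalisation valide de cet argument.
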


\backmatter

\bibliographystyle{smfplain}
\bibliography{bib-car-cr.bib}

\providecommand{\bysame}{\leavevmode ---\ }
\providecommand{\og}{``}
\providecommand{\fg}{''}
\providecommand{\smfandname}{\&}
\providecommand{\smfedsname}{\'eds.}
\providecommand{\smfedname}{\'ed.}
\providecommand{\smfmastersthesisname}{M\'emoire}
\providecommand{\smfphdthesisname}{Th\`ese}
\begin{thebibliography}{10}

\bibitem{Aus74}
{\scshape M.~Auslander} -- {\og Representation theory of {A}rtin algebras. {I},
  {II}\fg}, \emph{Comm. Algebra} \textbf{1} (1974), p.~177--268; ibid. 1
  (1974), 269--310.

\bibitem{Bki}
{\scshape N.~Bourbaki} -- \emph{\'el\'ements de math\'ematique. {A}lg\`ebre.
  {C}hapitre 8. {M}odules et anneaux semi-simples}, Springer, Berlin, 2012,
  Second revised edition of the 1958 edition.

\bibitem{D-Ext_pol}
{\scshape A.~Djament} -- {\og Groupes d’extensions et foncteurs
  polynomiaux\fg}, \emph{J. Lond. Math. Soc., II. Ser.} \textbf{92} (2015),
  no.~1, p.~63--88 (French).

\bibitem{DPV}
{\scshape A.~Djament, T.~Pirashvili {\normalfont \smfandname} C.~Vespa} -- {\og
  Cohomologie des foncteurs polynomiaux sur les groupes libres\fg}, \emph{Doc.
  Math.} \textbf{21} (2016), p.~205--222 (English).

\bibitem{DT-ext}
{\scshape A.~Djament {\normalfont \smfandname} A.~Touz\'{e}} -- {\og Functor
  homology over an additive category\fg}, Pr\'epublication disponible sur
  https://hal.archives-ouvertes.fr/hal-03432824.

\bibitem{DT-schw}
{\scshape A.~Djament {\normalfont \smfandname} A.~Touz{\'e}} -- {\og Finitude
  homologique des foncteurs sur une cat{\'e}gorie additive et applications\fg},
  \emph{Trans. Am. Math. Soc.} \textbf{376} (2023), no.~2, p.~1113--1154
  (French).

\bibitem{DT-noeth}
{\scshape A.~Djament {\normalfont \smfandname} A.~Touz\'{e}} -- {\og Sur la
  noeth\'erianité locale des foncteurs polynomiaux\fg}, \emph{{T}unisian
  {J}ournal of {M}athematics} \textbf{6} (2024), no.~1, p.~97--113.

\bibitem{DTV}
{\scshape A.~Djament, A.~Touz\'{e} {\normalfont \smfandname} C.~Vespa} -- {\og
  D\'{e}compositions à la {S}teinberg sur une catégorie additive\fg},
  \emph{Ann. Sci. {\'E}c. Norm. Sup{\'e}r. (4)} \textbf{56} (2023), no.~2,
  p.~427--516.

\bibitem{DV-pol}
{\scshape A.~Djament {\normalfont \smfandname} C.~Vespa} -- {\og Foncteurs
  faiblement polynomiaux\fg}, \emph{Int. Math. Res. Not. IMRN} (2019), no.~2,
  p.~321--391.

\bibitem{EML}
{\scshape S.~Eilenberg {\normalfont \smfandname} S.~Mac~Lane} -- {\og On the
  groups {$H(\Pi,n)$}. {II}. {M}ethods of computation\fg}, \emph{Ann. of Math.
  (2)} \textbf{60} (1954), p.~49--139.

\bibitem{fct-surv}
{\scshape V.~Franjou {\normalfont \smfandname} A.~Touz{\'e}} (\smfedsname) --
  \emph{Lectures on functor homology. {Proceedings} of the conference on
  functor homology, {Nantes}, {France}, {April} 2012}, Prog. Math., vol. 311,
  Cham: Birkh{\"a}user/Springer, 2015 (English).

\bibitem{Gabriel}
{\scshape P.~Gabriel} -- {\og Des cat\'{e}gories ab\'{e}liennes\fg},
  \emph{Bull. Soc. Math. France} \textbf{90} (1962), p.~323--448.

\bibitem{these}
{\scshape T.~Gaujal} -- {\og Etude des repr\'esentations g\'en\'eriques des
  groupes lin\'eaires en in\'egale caract\'eristique\fg}, \smfphdthesisname,
  Universit\'e de {L}ille, 2022.

\bibitem{HLS}
{\scshape H.-W. Henn, J.~Lannes {\normalfont \smfandname} L.~Schwartz} -- {\og
  The categories of unstable modules and unstable algebras over the {S}teenrod
  algebra modulo nilpotent objects\fg}, \emph{Amer. J. Math.} \textbf{115}
  (1993), no.~5, p.~1053--1106.

\bibitem{Kov}
{\scshape L.~G. Kov\'acs} -- {\og Semigroup algebras of the full matrix
  semigroup over a finite field\fg}, \emph{Proc. Amer. Math. Soc.} \textbf{116}
  (1992), no.~4, p.~911--919.

\bibitem{Ku1}
{\scshape N.~J. Kuhn} -- {\og Generic representations of the finite general
  linear groups and the {S}teenrod algebra. {I}\fg}, \emph{Amer. J. Math.}
  \textbf{116} (1994), no.~2, p.~327--360.

\bibitem{Ku2}
\bysame , {\og Generic representations of the finite general linear groups and
  the {S}teenrod algebra. {II}\fg}, \emph{$K$-Theory} \textbf{8} (1994), no.~4,
  p.~395--428.

\bibitem{Ku3}
\bysame , {\og Generic representations of the finite general linear groups and
  the {S}teenrod algebra. {III}\fg}, \emph{$K$-Theory} \textbf{9} (1995),
  no.~3, p.~273--303.

\bibitem{Ku-adv}
\bysame , {\og Generic representation theory of finite fields in nondescribing
  characteristic\fg}, \emph{Adv. Math.} \textbf{272} (2015), p.~598--610.

\bibitem{Mi72}
{\scshape B.~Mitchell} -- {\og Rings with several objects\fg}, \emph{Advances
  in Math.} \textbf{8} (1972), p.~1--161.

\bibitem{Nag1}
{\scshape R.~Nagpal} -- {\og {VI}-modules in nondescribing characteristic.
  {I}\fg}, \emph{Algebra Number Theory} \textbf{13} (2019), no.~9,
  p.~2151--2189 (English).

\bibitem{Nag2}
\bysame , {\og {VI}-modules in non-describing characteristic. {II}\fg},
  \emph{J. Reine Angew. Math.} \textbf{781} (2021), p.~187--205 (English).

\bibitem{Pira88}
{\scshape T.~I. Pirashvili} -- {\og Polynomial functors\fg}, \emph{Trudy
  Tbiliss. Mat. Inst. Razmadze Akad. Nauk Gruzin. SSR} \textbf{91} (1988),
  p.~55--66.

\bibitem{Pop}
{\scshape N.~Popescu} -- \emph{Abelian categories with applications to rings
  and modules}, Academic Press, London-New York, 1973, London Mathematical
  Society Monographs, No. 3.

\bibitem{GP-cow}
{\scshape G.~M.~L. Powell} -- {\og The structure of indecomposable injectives
  in generic representation theory\fg}, \emph{Trans. Amer. Math. Soc.}
  \textbf{350} (1998), no.~10, p.~4167--4193.

\bibitem{Psa-hom}
{\scshape C.~Psaroudakis} -- {\og Homological theory of recollements of abelian
  categories\fg}, \emph{J. Algebra} \textbf{398} (2014), p.~63--110 (English).

\bibitem{PSam}
{\scshape A.~Putman {\normalfont \smfandname} S.~V. Sam} -- {\og Representation
  stability and finite linear groups\fg}, \emph{Duke Math. J.} \textbf{166}
  (2017), no.~13, p.~2521--2598.

\bibitem{SamSn}
{\scshape S.~V. Sam {\normalfont \smfandname} A.~Snowden} -- {\og Gr\"obner
  methods for representations of combinatorial categories\fg}, \emph{J. Amer.
  Math. Soc.} \textbf{30} (2017), no.~1, p.~159--203.

\end{thebibliography}
 
 \end{document}